\theoremstyle{plain}
\newtheorem{lemma}{Lemma}[section]
\newtheorem{theorem}[lemma]{Theorem}
\newtheorem{corollary}[lemma]{Corollary}
\newtheorem{proposition}[lemma]{Proposition}
\newtheorem{thm*}{Theorem}
\theoremstyle{definition}
\newtheorem{definition}[lemma]{Definition}
\newtheorem{remark}[lemma]{Remark}
\newtheorem{example}[lemma]{Example}
\theoremstyle{remark}
\newtheorem*{note}{Note}
\numberwithin{equation}{section}
\newcommand{\Gr}{\operatorname{Gr}}
	\newcommand{\Grcone}{\operatorname{\widehat{\Gr}}}
\newcommand{\End}{\operatorname{End}}
	\newcommand{\stabEnd}{\operatorname{\underline{End}}}
\newcommand{\Hom}{\operatorname{Hom}}
	\newcommand{\stabHom}{\operatorname{\underline{Hom}}}
\newcommand{\Ext}{\operatorname{Ext}}
\newcommand{\Sub}{\operatorname{Sub}}
	\newcommand{\stabSub}{\operatorname{\underline{Sub}}}
\newcommand{\ind}{\operatorname{ind}}
\newcommand{\add}{\operatorname{add}}
\newcommand{\module}{\operatorname{mod}}
	\newcommand{\projmod}{\operatorname{proj}}
\newcommand{\opp}{\mathrm{opp}}
\newcommand{\RHom}{\operatorname{\mathbf{R}Hom}}
\newcommand{\CM}{\operatorname{CM}}
\newcommand{\GP}{\operatorname{GP}}
\newcommand{\bdd}{\mathrm{b}}
\newcommand{\F}{\mathcal{F}}
\newcommand{\PR}{\mathcal{P}}
\newcommand{\C}{\mathcal{C}}
\newcommand{\CC}{\mathbb{C}}
\newcommand{\B}{\mathcal{B}}
\newcommand{\D}{\mathcal{D}}
\newcommand{\E}{\mathcal{E}}
\newcommand{\K}{\mathrm{K}}
\newcommand{\QQ}{\mathbb{Q}}
\newcommand{\T}{\mathcal{T}}
\newcommand{\U}{\mathcal{U}}
\newcommand{\X}{\mathcal{X}}
\newcommand{\ZZ}{\mathbb{Z}}
\newcommand{\M}{\ensuremath{M^{\perp_1}}}
\newcommand{\Ical}{\mathcal{I}}
\newcommand{\bSigma}{\mathbf{\Sigma}}
\newcommand{\A}{\mathcal{A}}
\newcommand{\field}{\mathbb{K}}
\colorlet{shadecolor}{gray!70}
\newcommand{\mc}[1]{\ensuremath{\mathcal{#1}}}   %
\newcommand{\col}[1]{{\color{lightgray} #1}}
\newcommand{\bl}[1]{{\color{blue} #1}}
\newcommand{\gr}[1]{{\color{Purple} #1}}
\newcommand{\ye}[1]{{\color{red} #1}}
\newcommand{\dual}{\mathrm{D}}
\newcommand{\EE}{\mathbb{E}}
\newcommand{\FKcc}[2]{\Phi_{#1}^{#2}}
\newcommand{\clucha}[2]{\Phi_{#1}^{#2}}
\newcommand{\QGr}[2]{\mathrm{Gr}_{#1}(#2)}
\newcommand{\s}{\mathfrak{s}}
\newcommand{\infl}{\rightarrowtail}
\newcommand{\defl}{\twoheadrightarrow}
\newcommand{\confl}{\dashrightarrow}
\newcommand{\lperp}[1]{{}^{\perp_1}{#1}}
\newcommand{\rperp}[1]{{#1}^{\perp_1}}
\newcommand{\powser}[2]{#1\llbracket#2\rrbracket}
\begin{document}

\title{Reduction of Frobenius extriangulated categories}

\author[Faber]{Eleonore Faber}
\address[EF]{
School of Mathematics,
University of Leeds,
Leeds LS2 9JT, U.K.
}
\email{E.M.Faber@leeds.ac.uk}

\author[Marsh]{Bethany Rose Marsh}
\address[BRM]{
School of Mathematics,
University of Leeds,
Leeds LS2 9JT, U.K.
}
\email{B.R.Marsh@leeds.ac.uk}

\author[Pressland]{Matthew Pressland}
\address[MP]{
School of Mathematics \& Statistics,
University of Glasgow,
Glasgow G12 8QQ, U.K.
}
\email{matthew.pressland@glasgow.ac.uk}

\begin{abstract}
We describe a reduction technique for stably $2$-Calabi--Yau Frobenius extriangulated categories $\F$ with respect to a functorially finite rigid subcategory $\X$.
The reduction of such a category is another category $\rperp{\X}\subseteq\F$ of the same kind, whose cluster-tilting subcategories are those cluster-tilting subcategories $\T\subseteq\F$ such that $\X\subseteq\T$.
This reduction operation generalises Iyama--Yoshino's reduction for $2$-Calabi--Yau triangulated categories, which is recovered by passing to stable categories.
Moreover, for a certain class of categories $\F$ and rigid objects $M$, we show that the relationship between $\F$ and $\M$ may also be expressed in terms of internally Calabi--Yau algebras, in the sense of the third author.
As an application, we give a conceptual proof of a result on frieze patterns originally obtained by the first author with Baur, Gratz, Serhiyenko, and Todorov.
\end{abstract}

\thanks{
All three authors would like to thank Mikhail Gorsky and Yann Palu for useful conversations.
This work was partially supported by a grant from the Simons Foundation, supporting the second author.
This work was supported by the Engineering and Physical Sciences Research Council [grant numbers EP/R014604/1, EP/T001771/2, EP/W007509]. We thank the Isaac Newton Institute for Mathematical Sciences in Cambridge for support and hospitality during the programme \emph{Cluster algebras and representation theory} in 2021, where work on this paper was undertaken.
This work was also supported by the Centre for Advanced Study, Oslo, which the second author visited as part of the programme \emph{Representation theory: combinatorial aspects and applications} in 2023.
}

\maketitle

\section{Introduction}

Cluster categories, as introduced by Buan, Marsh, Reineke, Reiten, and Todorov \cite{BMRRT06} and generalised by Amiot \cite{Amiot}, are $2$-Calabi--Yau triangulated categories which provide a powerful framework for connecting the combinatorics of cluster algebras \cite{FZ02} to the representation theory of quivers and finite-dimensional algebras.
These constructions cover cluster algebras defined from quivers admitting a Jacobi-finite non-degenerate potential (for example, acyclic quivers) and having no frozen variables.

Many of the most important families of examples of cluster algebras are those isomorphic to the coordinate rings of algebraic varieties, and these typically do have frozen variables.
These examples include coordinate rings of Grassmannians \cite{Scott06} and other (partial) flag varieties \cite{GLS08} and open pieces within them, Schubert cells and other open positroid varieties \cite{GL,SSBW}, and more general braid varieties \cite{CGGLSS}.
Many examples of this kind also have categorifications, now not by $2$-Calabi--Yau triangulated categories but rather by stably $2$-Calabi--Yau Frobenius exact categories \cite{DI,GLS08,JKS16,Pre22}.

Motivated in part by the many similarities in the theories of triangulated and of exact categories, Nakaoka and Palu \cite{NakaokaPalu} recently introduced a common generalisation, the notion of an extriangulated category.
Subsequently, Yilin Wu \cite{Wu} has introduced a cluster category associated to any Jacobi-finite ice quiver with potential, in the style of Amiot's construction, which is a stably $2$-Calabi--Yau Frobenius extriangulated category, but typically neither triangulated nor exact.

A significant result in the theory of triangulated cluster categories is that of Iyama--Yoshino reduction \cite{IY08}.
This construction takes as input a $2$-Calabi--Yau triangulated category $\C$ and a rigid object $M\in\C$ and outputs a second $2$-Calabi--Yau triangulated category whose cluster-tilting subcategories (corresponding to seeds in a cluster algebra) are in bijection with those of $\C$ which contain $M$.
On the cluster algebra side, this provides information on the collection of seeds containing a chosen fixed set of cluster variables.
Iyama--Yoshino reduction is closely related to other reduction theorems in representation theory, such as $\tau$-tilting reduction \cite{Jasso} and silting reduction \cite{AI12}.

In this paper, we describe an Iyama--Yoshino-style reduction procedure for stably $2$-Calabi--Yau Frobenius extriangulated categories in general.
Precisely, we show the following.
\begin{thm*}
\label{t:subcategory}
If $\F$ is a stably $2$-Calabi--Yau Frobenius extriangulated category, and $\X\subseteq\F$ is a functorially finite rigid subcategory, then the full extension-closed subcategory
\[\rperp{\X}=\{M\in\F:\text{$\EE_{\F}(X,M)=0$ for all $X\in\X$}\}\]
is itself a stably $2$-Calabi--Yau Frobenius extriangulated category (Proposition~\ref{Prop:MperpisFrobenius-extri}), and its cluster-tilting subcategories are those of $\F$ which contain $\X$ (Proposition~\ref{p:ct-bij-extri}).
\end{thm*}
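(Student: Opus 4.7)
The plan is to verify the four required properties---extriangulated structure, Frobenius property, stable $2$-Calabi--Yau duality, and the cluster-tilting bijection---in sequence, by lifting standard constructions from $\F$ and using functorial finiteness and rigidity of $\X$ to stay inside $\rperp{\X}$.

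First, I would observe that $\rperp{\X}$ is closed under extensions in $\F$: the long exact sequence of $\EE_\F(X,-)$ applied to a conflation $L \infl M \defl N$ with $L,N \in \rperp{\X}$ has vanishing flanking terms, forcing $M \in \rperp{\X}$. Hence $\rperp{\X}$ inherits an extriangulated structure from $\F$ in the standard way, with $\EE_{\rperp{\X}}$ the restriction of $\EE_\F$ and conflations those of $\F$ whose three terms all lie in $\rperp{\X}$.

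Next, I would identify the projective-injective subcategory of $\rperp{\X}$ as $\add(\X \cup \PR)$, where $\PR$ is the projective-injective subcategory of $\F$. Objects of $\PR$ remain projective-injective upon restriction; each $X \in \X$ is injective in $\rperp{\X}$ by the defining vanishing, and projectivity of $X$ follows from the stable $2$-Calabi--Yau duality of $\F$ combined with rigidity. The main technical step is to produce injective hulls: given $M \in \rperp{\X}$, I would take a minimal left $\add\X$-approximation $f \colon M \to X^M$ (provided by functorial finiteness) and an injective hull $\iota \colon M \infl I_M$ in $\F$, and assemble them into an inflation $[f,\iota] \colon M \infl X^M \oplus I_M$. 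Its cokernel $N$ lies in $\rperp{\X}$: applying $\Hom_\F(-,X')$ for $X' \in \X$ yields surjectivity of $\Hom(X^M \oplus I_M, X') \to \Hom(M,X')$ from the approximation property and projectivity of $I_M$, so $\EE_\F(N,X') = 0$, whence $2$-Calabi--Yau duality gives $\EE_\F(X',N) = 0$. A dual construction supplies enough projectives, and the two classes coincide, so $\rperp{\X}$ is Frobenius.

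For the stable $2$-Calabi--Yau property I would transport Serre duality from $\underline{\F}$ to $\rperp{\X}/[\X \cup \PR]$: this quotient factors through $\underline{\F}$ (morphisms through $\PR$ are already killed), and further quotienting by $\X$ preserves the duality because $\X$ has become projective-injective in $\rperp{\X}$. For the cluster-tilting bijection, if $\T \supseteq \X$ is cluster-tilting in $\F$ then $\T \subseteq \rperp{\X}$, and rigidity and maximality transfer verbatim because $\EE$-spaces agree. Conversely, any cluster-tilting $\T \subseteq \rperp{\X}$ contains $\X$ (being projective-injective), and given $N \in \F$ with $\EE_\F(\T,N) = 0 = \EE_\F(N,\T)$, the same $\add\X$-approximation construction replaces $N$ stably by an object of $\rperp{\X}$, reducing to cluster-tilting of $\T$ there. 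The main obstacle throughout is verifying that the injective-hull construction in the Frobenius step lands back in $\rperp{\X}$: the interaction of the $\add\X$-approximation property with $2$-Calabi--Yau duality is the single technical ingredient, and it also drives the reverse direction of the cluster-tilting bijection.
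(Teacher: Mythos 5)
Your overall strategy matches the paper's: show $\rperp{\X}$ is extension-closed (Lemma~\ref{l:closed-extri}), identify $\add(\X,\PR)$ as the projective-injectives, produce projective covers and injective envelopes inside $\rperp{\X}$ via $\add\X$-approximations together with the ambient Frobenius structure (Lemmas~\ref{l:weakwakamatsu-extri}, \ref{l:MPRstrongly}, \ref{l:enough-extri}, Proposition~\ref{Prop:MperpisFrobenius-extri}), and deduce the $2$-Calabi--Yau and cluster-tilting statements (Propositions~\ref{p:Mperp-2cy-extri}, \ref{p:ct-bij-extri}). Your inflation $[f,\iota]\colon M\infl X^M\oplus I_M$ is precisely the (dual of the) construction in the proof of Lemma~\ref{l:MPRstrongly}, and the argument that the cokernel lands in $\rperp{\X}$ is Lemma~\ref{l:weakwakamatsu-extri}; the paper gets there directly from rigidity, whereas you route through $\lperp{\X}$ and use $2$-Calabi--Yau duality, but since $\rperp{\X}=\lperp{\X}$ in the stably $2$-CY setting these are equivalent.

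A few corrections. You have projective and injective swapped: the defining vanishing $\EE_\F(X,M)=0$ for $M\in\rperp{\X}$ makes each $X\in\X$ Ext-\emph{projective} in $\rperp{\X}$ by Lemma~\ref{l:proj-extri}, and it is the $2$-Calabi--Yau duality (together with rigidity, which ensures $\X\subseteq\rperp{\X}$ in the first place) that gives Ext-\emph{injectivity}. Your treatment of the stable $2$-Calabi--Yau property is also more elaborate than necessary: because $\rperp{\X}$ carries the restricted extriangulated structure, we literally have $\EE_{\rperp{\X}}(M,N)=\EE_\F(M,N)$ on the nose, so the functorial isomorphism $\EE(M,N)\cong\dual\EE(N,M)$ is inherited verbatim, and the only thing to check is $\Hom$-finiteness of the stable category, which holds because $\stabHom_{\rperp{\X}}(M,N)$ is a quotient of $\stabHom_\F(M,N)$; there is no need to transport Serre duality through a chain of quotient functors.

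The genuine gap is in the converse direction of the cluster-tilting correspondence. The perpendicular condition is in fact easier than you describe: if $\T\subseteq\rperp{\X}$ is cluster-tilting, then $\X\subseteq\T$ (as you note), and for any $N\in\F$ with $\EE_\F(\T,N)=0$ the inclusion $\X\subseteq\T$ already forces $N\in\rperp{\X}$, so no stable replacement is needed. What you omit entirely is the check that $\T$ is functorially finite \emph{in $\F$}, not merely in $\rperp{\X}$ --- this is part of the definition of cluster-tilting and does not come for free. It requires showing that $\rperp{\X}$ is itself functorially finite in $\F$, which is Lemma~\ref{l:Mperpff} and is established via the cotorsion pair $(\add(\X,\PR),\rperp{\X})$; this is a substantive step, and without it your proof of Proposition~\ref{p:ct-bij-extri} is incomplete.
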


Note that the article~\cite{ZZ18} gives results related to Iyama--Yoshino reduction for extriangulated categories. However, these results involve taking additive factor categories or modifying the extriangulated structure in the original category, rather than taking a subcategory as we do here.

In Section~\ref{s:examples}, we give an example illustrating this theorem in the case of a Grassmannian cluster category~\cite{JKS16} by computing the Auslander--Reiten quiver of the subcategory of Grassmannian cluster category of $\M$ for $M$ an indecomposable rigid object in the Grassmannian cluster category for the Grassmannian $\Gr(2,6)$.

We relate this reduction operation to Iyama--Yoshino's by showing that the triangulated stable categories $\underline{\F}$ and $\underline{\rperp{\X}}$ are related by Iyama--Yoshino reduction at $\X$.

\begin{thm*}[Theorem~\ref{thm:triangleequivalence-extri}]
If $\F$ is a stably $2$-Calabi--Yau Frobenius extriangulated category and $\X\subseteq\F$ is a functorially finite rigid subcategory, then there is a triangle equivalence
\[\underline{\rperp{\X}}=\rperp{\X}_{\underline{\F}}/\add(\X)\]
between the stable category of the reduction of $\F$ at $\X$, and the Iyama--Yoshino reduction of the stable category $\underline{\F}$ at $\X$.
\end{thm*}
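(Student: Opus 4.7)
The plan is to construct an additive functor and show it descends to a triangle equivalence $\Phi\colon\underline{\rperp{\X}}\to\rperp{\X}_{\underline{\F}}/\add(\X)$. The main tool is the Nakaoka--Palu description of the stable category: since $\F$ is Frobenius extriangulated, $\underline{\F}$ is triangulated with suspension $Y[1]$ computed from an inflation $Y\infl I$ into a projective-injective as the cokernel, and one has a natural isomorphism $\EE_{\F}(X,Y)\cong\Hom_{\underline{\F}}(X,Y[1])$. In particular, an object of $\F$ lies in $\rperp{\X}$ if and only if its image in $\underline{\F}$ lies in $\rperp{\X}_{\underline{\F}}$.

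The composite $\rperp{\X}\hookrightarrow\F\to\underline{\F}\twoheadrightarrow\rperp{\X}_{\underline{\F}}/\add(\X)$ is additive and, by Proposition~\ref{Prop:MperpisFrobenius-extri}, annihilates every projective-injective of $\rperp{\X}$: these lie in $\add(\X\cup\projmod(\F))$, and the $\projmod(\F)$-summands die already in $\underline{\F}$ while the $\add(\X)$-summands are killed by the final quotient. Hence we obtain $\Phi$. Fully faithfulness reduces to verifying that for $A,B\in\rperp{\X}$ a morphism in $\F$ from $A$ to $B$ factors through $\add(\X\cup\projmod(\F))$ if and only if its image in $\underline{\F}$ factors through $\add(\X)$, which is an easy direct check. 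Essential surjectivity is immediate, as every object of $\rperp{\X}_{\underline{\F}}$ lifts to one of $\F$ which by the identification above lies in $\rperp{\X}$.

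Matching the triangulated structures is the heart of the proof. Every distinguished triangle of $\underline{\rperp{\X}}$ arises from a conflation $A\infl B\defl C$ in $\rperp{\X}$; since $\rperp{\X}$ inherits its extriangulated structure from $\F$, this is also a conflation in $\F$, producing a triangle in $\underline{\F}$ with all terms in $\rperp{\X}_{\underline{\F}}$, which descends to a triangle in the IY reduction. Conversely, every triangle of $\rperp{\X}_{\underline{\F}}/\add(\X)$ lifts to a triangle in $\underline{\F}$ with terms in $\rperp{\X}_{\underline{\F}}$, arising from a conflation in $\F$ whose middle term lies in $\rperp{\X}$ by extension-closure.

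The main technical obstacle is the compatibility of the shift functors, defined differently on the two sides. In $\underline{\rperp{\X}}$ the shift of $A\in\rperp{\X}$ uses an inflation $A\infl X_A\oplus P_A$ into a projective-injective of $\rperp{\X}$, with $X_A\in\add(\X)$ and $P_A\in\projmod(\F)$, while in the IY reduction it uses a left $\add(\X)$-approximation $A\to X_A'$ in $\underline{\F}$. I would reconcile these by showing that the image of $A\infl X_A\oplus P_A$ in $\underline{\F}$, followed by the projection to $X_A$, is itself a left $\add(\X)$-approximation: this uses that $X_A\oplus P_A$ is injective in $\rperp{\X}$, so that any $f\colon A\to X'$ with $X'\in\add(\X)$ factors through it in $\F$, and that the $P_A$-component vanishes in $\underline{\F}$. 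By uniqueness of approximation triangles up to summands in $\add(\X)$, the two shifts then agree up to a canonical isomorphism in $\rperp{\X}_{\underline{\F}}/\add(\X)$, and the triangle equivalence follows.
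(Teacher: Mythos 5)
Your proposal is correct in outline but takes a more direct, less modular route than the paper. The paper factors the equivalence as
\[
\underline{\rperp{\X}}\;\simeq\;\underline{\smash{\rperp{\X}/\PR}}\;\simeq\;\rperp{\X}_{\underline{\F}}/\add(\X),
\]
using three reusable ingredients: Corollary~\ref{c:2-step-stab} (a two-step stable category collapses to a one-step one), Proposition~\ref{p:extri-equiv} (the intermediate extriangulated equivalence $\rperp{\X}/\PR\simeq\rperp{\X}_{\underline{\F}}$), and Corollary~\ref{c:stab-equiv} (an equivalence of Frobenius extriangulated categories preserving projective-injectives induces a triangle equivalence of stable categories; this last rests on Lemma~\ref{l:stab-fun}). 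You instead build the single composite functor $\rperp{\X}\hookrightarrow\F\to\underline{\F}\to\rperp{\X}_{\underline{\F}}/\add(\X)$, show it descends to an additive equivalence $\Phi$, and then verify directly that $\Phi$ intertwines the two suspensions and sends standard triangles to distinguished triangles. What your approach buys is concreteness: you can see the comparison of the two $\Sigma$'s explicitly via the inflation $A\infl X_A\oplus P_A$. What the paper's approach buys is that the delicate suspension-matching is done once in Lemma~\ref{l:stab-fun} for a general extriangulated functor, by making coherent choices of conflations so that the naive suspensions \emph{literally} commute, after which all the pieces assemble without further triangle-level bookkeeping.

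One place where your sketch is a little thin, and where the paper spends most of its effort: showing that the canonical isomorphism reconciling the two shifts is \emph{natural} in $A$, and that under $\Phi$ the connecting morphism $C\to\Sigma A$ of a standard triangle in $\underline{\rperp{\X}}$ goes to the connecting morphism of the corresponding approximation triangle in $\rperp{\X}_{\underline{\F}}/\add(\X)$ \emph{modulo this natural isomorphism}. Your paragraph on ``Matching the triangulated structures'' asserts that the triangles correspond, but the connecting maps are constructed from different data on the two sides (an injective coresolution inside $\rperp{\X}$ versus a left $\add(\X)$-approximation triangle in $\underline{\F}$), and making the diagrams commute up to the natural isomorphism is exactly the content of \cite[Cor.~3.5]{NakaokaPalu} as used in Lemma~\ref{l:stab-fun}. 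This can certainly be done along your lines, but it is not quite a formality; at minimum you should invoke the well-definedness argument (as in Remark~\ref{r:stabcat-well-def}, or \cite[Thm.~4.7]{IY08} in the Iyama--Yoshino setting) to conclude that the two constructions of the suspension and of the standard triangles differ by a canonical natural isomorphism.
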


If $\F$ has a cluster-tilting object $T$ and $\X\subseteq\add(T)$, then both $\F$ and $\rperp{\X}$ admit a cluster character with respect to $T$, thanks to work of Wang, Wei and Zhang. We show in Section~\ref{s:CC} that, as one would expect, the cluster character on $\rperp{\X}$ is the restriction of that on $\F$.

In Section~\ref{s:icy}, we consider the construction of stably $2$-Calabi--Yau Frobenius exact categories $\F=\GP(B)$ from an internally Calabi--Yau pair $(A,e)$, in the sense of \cite{Pre17}.
Here $A$ is a Noetherian algebra and $e=e^2\in A$ an idempotent element, satisfying several conditions, most notably a Calabi--Yau symmetry property (Definition~\ref{def:bi3cy}).
These conditions imply that the category $\GP(B)$ of Gorenstein projective modules over $B=eAe$ is a stably $2$-Calabi--Yau Frobenius exact category containing the cluster-tilting object $eA$.
It is immediate from the definition that if $e'\in A$ is an idempotent such that $ee'=e=e'e$, then $(A,e')$ is another internally Calabi--Yau pair, and we obtain a second Frobenius exact category $\GP(B')$, for $B'=e'Ae'$. 

\begin{thm*}[Theorem~\ref{t:GP-reduction}]
In the context of the previous paragraph, the Frobenius exact category $\GP(B')$ is naturally equivalent to the reduction $\M$ of $\GP(B)$ at the rigid object $M=eAe'$.
\end{thm*}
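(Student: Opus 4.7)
The plan is to construct the equivalence via the Schur-type functor $\Phi\colon B'\text{-mod}\to B\text{-mod}$, $N\mapsto eN$. This is well-defined because the relation $ee'=e=e'e$ forces $e$ to be an idempotent of $B'=e'Ae'$ with $eB'e=eAe=B$, and it is exact because $eB'$ is a summand of $B'$ as a right $B'$-module. Crucially $\Phi(B')=eB'=eAe'=M$, so $\Phi$ sends the projective generator of $\GP(B')$ to the rigid object $M$. Equivalently $\Phi=\Hom_{B'}(B'e,\blank)=eB'\otimes_{B'}\blank$, so it has left adjoint $\Psi=B'e\otimes_B\blank$ and right adjoint $\Psi'=\Hom_B(M,\blank)$.

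The two key inputs are the natural isomorphisms
\[\End_B(M)^{\mathrm{op}}\cong B',\qquad eB'\otimes_{B'}B'e\cong B,\]
both consequences of the internally Calabi--Yau hypothesis on $(A,e)$, which supplies $\End_B(eA)^{\mathrm{op}}\cong A$. The first is obtained by restricting this identification via the idempotent $e'\in A$. The second is direct, since $eB'e=e(e'Ae')e=eAe=B$, and, via Hom-tensor adjunction, implies that the counit
\[\Phi\Psi'(X)=e\Hom_B(M,X)\cong\Hom_{B'}(B'e,\Hom_B(eB',X))\cong\Hom_B(eB'\otimes_{B'}B'e,X)\cong X\]
is an isomorphism for every $B$-module $X$. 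The first isomorphism implies, by additivity, that the unit $\eta_P\colon P\to\Psi'\Phi(P)$ is an isomorphism for every projective $B'$-module $P$.

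From these inputs, fully faithfulness of $\Phi$ on $\GP(B')$ extends from the projective case to all of $\GP(B')$ via a diagram chase along the totally acyclic projective resolutions available there, using exactness of $\Phi$ and the vanishing of $\Ext^1_B(M,\blank)$ on $\add(M)$ by rigidity of $M$ (so that $\Psi'$ preserves exactness of $\Phi$-complexes). That $\Phi(N)\in\M$ for $N\in\GP(B')$ splits into the two conditions $\Phi(N)\in\GP(B)$ and $\Ext^1_B(M,\Phi N)=0$. The first follows from the adjunction
\[\Ext^i_B(\Phi N,B)=\Ext^i_{B'}(N,\Hom_B(M,B))=\Ext^i_{B'}(N,B'e)=0\quad(i>0),\]
using the identification $\Hom_B(M,B)\cong B'e$ (another consequence of the internally Calabi--Yau structure) and that $B'e$ is a projective summand of $B'$. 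The second is obtained by applying $\Hom_B(M,\blank)$ to a short exact sequence $0\to\Phi N\to\Phi P^0\to\Phi N'\to 0$ arising from a projective presentation of $N$ in $\GP(B')$, and invoking rigidity $\Ext^1_B(M,\add(M))=0$. For essential surjectivity, the counit isomorphism gives $\Phi\Psi'(X)\cong X$ for any $X\in B\text{-mod}$, so it suffices to show $\Psi'(X)\in\GP(B')$ when $X\in\M$, which follows from an analogous Ext-adjunction calculation. The main anticipated obstacle is verifying the match of exact structures, namely that $\Phi$ transports short exact sequences in $\GP(B')$ to conflations of the inherited extriangulated structure on $\M$ and vice versa; this should reduce to exactness of $\Phi$ and the matching of relative projectives $B'\leftrightarrow M$ established above.
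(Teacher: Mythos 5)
You take a genuinely different direction from the paper: you work with the restriction functor $\Phi\colon\GP(B')\to\module B$, $N\mapsto eN$, whereas the paper constructs the inverse functor $F=\Hom_B(M,-)\colon\M\to\GP(B')$ (your $\Psi'$), shows $F$ is exact, lands in $\GP(B')$ (Lemma~\ref{l:liesin}) and restricts to an equivalence $\add(eA)\simeq\add(e'A)$, and then invokes a clean general result (Proposition~\ref{p:ctenough}, adapting Keller--Reiten): an exact functor between stably $2$-Calabi--Yau Frobenius exact categories restricting to an equivalence of cluster-tilting subcategories is an equivalence. The advantage of going in the paper's direction is that the target is simply $\GP(B')$, with no perpendicularity condition to verify; going in your direction you must additionally verify $\Ext^1_B(M,\Phi N)=0$, and this is where your argument has a genuine gap.

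Concretely, for the rigidity condition $\Ext^1_B(M,\Phi N)=0$, you propose applying $\Hom_B(M,-)$ to $0\to\Phi N\to\Phi P^0\to\Phi N'\to0$ and invoking $\Ext^1_B(M,\add M)=0$. This only gives
\[
\Ext^1_B(M,\Phi N)\cong\coker\bigl(\Hom_B(M,\Phi P^0)\to\Hom_B(M,\Phi N')\bigr),
\]
and you have not shown this cokernel vanishes. Via the adjunction this map is $\Psi'\Phi(P^0)\to\Psi'\Phi(N')$, and the obvious attempt to show it is surjective (apply $\Psi'$ to the epimorphism $\Phi P^0\twoheadrightarrow\Phi N'$) requires precisely $\Ext^1_B(M,\Phi N)=0$, so the reasoning is circular. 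Using the other presentation $0\to K\to P^0\to N\to0$ fares no better, as one only gets $\Ext^1_B(M,\Phi N)\hookrightarrow\Ext^2_B(M,\Phi K)$, with no control on the right side. This piece can be repaired (for instance, by using the stable $2$-Calabi--Yau property to reduce to computing $\Ext^1_B(\Phi N,M)$, applying the derived adjunction $\mathbf{R}\Hom_B(\Phi N,M)\simeq\mathbf{R}\Hom_{B'}(N,\mathbf{R}\Hom_B(M,M))$, and noting that in total degree $1$ the relevant hyper-Ext spectral sequence has $E_2$ terms $\Ext^1_{B'}(N,\End_B M)=\Ext^1_{B'}(N,B')=0$ since $N\in\GP(B')$, and $\Hom_{B'}(N,\Ext^1_B(M,M))=0$ by rigidity), but the argument you wrote does not establish it. A smaller remark: your Ext-adjunction computation for $\Phi N\in\GP(B)$ implicitly uses that $\mathbf{R}\Hom_B(M,B)$ is concentrated in degree $0$, which requires $M\in\GP(B)$; that is true but worth flagging. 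Your sketch of fully faithfulness ("diagram chase along totally acyclic projective resolutions") is essentially gesturing at the Keller--Reiten bootstrap the paper isolates as Proposition~\ref{p:ctenough}, and would need to be spelled out to constitute a proof.
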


Propp~\cite{propp20} and Cuntz--Holm--J{\o}rgensen \cite[Defn.~2.1]{CuntzHolmJorgensen} introduced the notion of a frieze pattern with coefficients. Such friezes are defined in a way similar to Conway--Coxeter friezes~\cite{CCFrieze1,CCFrieze2}, but allow the entries in the top and bottom rows to be arbitrary non-zero values. Conway--Coxeter friezes are closely related to cluster algebras (a relationship first pointed out in~\cite{CC06}) and, as mentioned in~\cite[\S1]{CuntzHolmJorgensen}, passing to a frieze pattern with coefficients corresponds to allowing the frozen variables to take values other than $1$.

In~\cite[\S4]{CuntzHolmJorgensen}, it is pointed out that frieze patterns with coefficients can be obtained from Conway--Coxeter frieze patterns by cutting out subpolygons from the corresponding triangulation of a regular polygon. Part of the motivation for this paper was to understand this procedure from a categorical point of view (i.e.\ using the Grassmannian cluster category~\cite{JKS16}). We give an example of this relationship in Section~\ref{s:friezes}.

Moreover, we apply our reduction method to mesh friezes coming from Grassmannian cluster categories of finite type and use it to give an alternative proof of the following result \cite[Prop.~5.3]{BFGST}, which was proved in an ad hoc way in loc.~cit.

\begin{thm*}[Theorem~\ref{Thm:friezered}] Let $F$ be a mesh frieze coming from a Grassmannian cluster category of finite type $\F$ and let $M$ be a rigid indecomposable non-projective object in $\F$. If $F(M)=1$, then $F|_{M^{\perp_1}}$ is a mesh frieze for the category $M^{\perp_1}$.
\end{thm*}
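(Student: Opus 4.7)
The plan is to combine the main reduction theorem with an analysis of how Auslander--Reiten conflations transfer between $\F$ and $\M$. By Proposition~\ref{Prop:MperpisFrobenius-extri} applied to $\X = \add(M)$, the subcategory $\M$ is itself a Hom-finite stably $2$-Calabi--Yau Frobenius extriangulated category, of finite type since $\F$ is. In particular, $\M$ admits Auslander--Reiten conflations, so the notion of mesh frieze on $\M$ is well-defined, and the task is to verify that $F|_{\M}$ satisfies the mesh relation at every non-projective indecomposable of $\M$.

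By Proposition~\ref{p:ct-bij-extri}, the cluster-tilting subcategories of $\M$ are precisely the cluster-tilting subcategories of $\F$ that contain $M$, so $M$ must be projective-injective in $\M$. Hence the projectives of $\M$ are the projectives of $\F$ together with $M$, and the indecomposables of $\M$ at which the mesh relation must be checked are those $X\in\M$ that are non-projective in $\F$ and not isomorphic to $M$. For such an $X$, the mesh relation for $F$ in $\F$ reads $F(\tau_{\F} X)\,F(X) = F(E_{\F}) + 1$, where $\tau_{\F} X \infl E_{\F} \defl X$ is the AR conflation of $X$ in $\F$.

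The central step is to compare this AR conflation with the AR conflation $\tau_{\M} X \infl E_{\M} \defl X$ of $X$ in $\M$. Combining Theorem~\ref{thm:triangleequivalence-extri} with the standard description of AR triangles in an Iyama--Yoshino reduction, and then lifting back to the Frobenius level, one finds that $\tau_{\M} X$ and $E_{\M}$ differ from $\tau_{\F} X$ and $E_{\F}$ respectively only by summands in $\add(M)$, up to projective summands whose contributions match on both sides. Since mesh friezes are multiplicative on direct sums and $F(M) = 1$ by hypothesis, these modifications leave the relevant frieze values unchanged: $F(\tau_{\M} X) = F(\tau_{\F} X)$ and $F(E_{\M}) = F(E_{\F})$. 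The mesh relation in $\F$ then immediately yields the mesh relation $F(\tau_{\M} X)\,F(X) = F(E_{\M}) + 1$ in $\M$, as required.

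The main obstacle is this comparison of AR conflations, namely making precise that the modifications involve only $\add(M)$-summands (plus projectives that cancel). This can be handled either by appealing to the general theory of Auslander--Reiten sequences in functorially finite extension-closed subcategories, or, in the specific finite-type Grassmannian setting, by direct computation with the known AR quiver; the fact that $\M$ corresponds combinatorially to arcs in the sub-polygons cut off by $M$ should make the comparison transparent and align the argument with the cutting-out picture for friezes with coefficients discussed in Section~\ref{s:friezes}.
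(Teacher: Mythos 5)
Your proposal takes a fundamentally different route from the paper's, and unfortunately the central step fails.

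The paper does not compare Auslander--Reiten conflations in $\F$ and $\M$ directly. Instead, it realises the given mesh frieze $F$ as a specialised cluster character $\Phi^T_{\C_{k,n}}$ for a slice cluster-tilting object $T$ (using Propositions~\ref{p:FKCC} and~\ref{p:Extonedim}, and the fact that a mesh frieze is determined by its values on a slice), then invokes Theorem~\ref{t:cc_restrict} to say that $\Phi^T_{\C_{k,n}}\vert_{\M}$ is a cluster character on $\M$, and finally observes that a cluster character on $\M$ taking value $1$ on the projective-injectives of $\M$ is a mesh frieze. The hypothesis $F(M)=1$ enters precisely in this last step. The cluster character framework absorbs all the work of comparing approximation-theoretic data between the two categories.

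Your key claim — that $\tau_{\M} X$ and the middle term $E_{\M}$ of the AR conflation in $\M$ differ from $\tau_{\F} X$ and $E_{\F}$ only by summands in $\add(M)$ (up to projectives) — is false, and the paper's own example refutes it. In the $\Gr(2,6)$ computation of Section~\ref{ss:Gr(2,6)} with $M=M_{14}$, the AR sequence at $M_{24}$ in $\F$ is
$M_{35}\rightarrow M_{34}\oplus M_{25}\rightarrow M_{24}$,
while in $\M$ the AR sequence at $M_{24}$ is, by \eqref{e:ARMperp},
$M_{13}\rightarrow M_{34}\oplus M_{12}\rightarrow M_{24}$.
Here $\tau_{\F}M_{24}=M_{35}$ and $\tau_{\M}M_{24}=M_{13}$ are entirely different objects, neither of which lies in $\add(M_{14})$ or is projective, and similarly $M_{25}$ (not in $\M$, not projective) is replaced by $M_{12}$ (projective). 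Iyama--Yoshino reduction redefines the suspension and hence the AR translate via $\add(M)$-approximations; the resulting translate is not a mere summand-adjustment of the old one. Consequently the asserted equalities $F(\tau_{\M}X)=F(\tau_{\F}X)$ and $F(E_{\M})=F(E_{\F})$ do not hold in general, and the mesh relation in $\M$ cannot be deduced from the one in $\F$ term-by-term. The relation that actually replaces it is nontrivial, and this is exactly the content that the cluster character argument packages up; without it, you would need an independent argument (e.g.\ a precise comparison formula for AR triangles in Iyama--Yoshino reductions, combined with multiplicativity of cluster characters along exchange triangles), not just the claim that the objects agree up to $\add(M)$.
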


\section{Background and definitions}

Here we briefly discuss the features of
extriangulated categories that we need, while introducing notation.
We do not recall the full definition here, but note that full details can be found in Nakaoka and Palu's original article~\cite{NakaokaPalu} or in Palu's survey \cite{Palu-Extri}.

Let $\field$ be a field.
A $\field$-linear \emph{extriangulated category} $\C=(\C,\EE_{\C},\s)$ is a $\field$-linear additive category $\C$, a $\field$-bilinear bifunctor $\EE_{\C}\colon\C\times\C\to\module{\field}$, and a family $\s$ of \emph{realisation maps} assigning to each element $\delta\in\EE_{\C}(C,A)$, for objects $A,C\in\C$, an equivalence class of diagrams
\[\begin{tikzcd}
A\arrow{r}{f}&B\arrow{r}{g}\arrow{r}&C
\end{tikzcd}\]
with $g\circ f=0$.
The diagrams in classes appearing in this way are called \emph{conflations} (or sometimes extriangles), and we write
\[\begin{tikzcd}
A\arrow[infl]{r}{f}&B\arrow[defl]{r}{g}\arrow{r}&C\arrow[confl]{r}{\delta}&\phantom{}
\end{tikzcd}\]
to indicate that the pair of morphisms $(f,g)$ is a conflation in the equivalence class $\s(\delta)$.
An \emph{inflation} is a morphism which appears as the first map in a conflation, and a \emph{deflation} is a morphism which appears as the second map in a conflation.
In such a conflation, $g$ is called a \emph{cone} of $f$ and $f$ is called \emph{cocone} of $g$.
We also apply these terms to the objects $C$ and $A$ respectively.
The triple $\C=(\C,\EE_{\C},\s)$ must satisfy various axioms which can be found in \cite{NakaokaPalu}.

\begin{example}
\label{eg:extri-cats}
While we establish our abstract results for general extriangulated categories, our applications are primarily in the following two extremal cases.
\begin{enumerate}
\item An exact category $\C$ is canonically extriangulated by taking $\EE_{\C}(C,A)=\Ext^1_{\C}(C,A)$, and $\s$ to be the Yoneda realisation function associating a short exact sequence
\[\begin{tikzcd}
\s(\delta)\colon\ 0\arrow{r}&A\arrow{r}{f}&B\arrow{r}{g}\arrow{r}&C\arrow{r}&0
\end{tikzcd}\]
to each $\delta\in\EE_{\C}(C,A)$.
In the terminology of \cite{Buehler}, the conflations are the admissible short exact sequences, the inflations the admissible monomorphisms, and the deflations the admissible epimorphisms.
\item A triangulated category $\C$ is canonically extriangulated by taking $\EE_{\C}(C,A)=\Hom_\C(C,\Sigma A)$, and $\s$ defined by completing a morphism $\delta\in\EE_{\C}(C,A)$ to a distinguished triangle
\[\begin{tikzcd}
A\arrow{r}{f}&B\arrow{r}{g}\arrow{r}&C\arrow{r}{\delta}&\Sigma A
\end{tikzcd}\]
and then forgetting the map $\delta$.
The conflations are these truncated triangles, and every morphism is both an inflation and a deflation.
\end{enumerate}
The final class of examples appearing explicitly in this paper is the following; see \cite[Rem.~2.18]{NakaokaPalu}.
\begin{enumerate}[resume]
\item\label{eg:subcats} Let $\C$ be an extriangulated category and $\D\subseteq\C$ an extension-closed subcategory; that is, if
\[\begin{tikzcd}
A\arrow[infl]{r}&B\arrow[defl]{r}\arrow{r}&C\arrow[confl]{r}&\phantom{}
\end{tikzcd}\]
is a conflation in $\C$ such that $A,C\in\D$, then $B\in\D$.
Then $\D$ naturally inherits an extriangulated structure by restricting $\EE_{\C}$ and $\s$ from $\C$.
The conflations in $\D$ are conflations in $\C$ with all terms in $\D$.
A map in $\D$ is an inflation if and only if it is an inflation in $\C$ and its cone is in $\D$ and, dually, a map in $\D$ is a deflation if and only if it is a deflation in $\C$ and its cocone is in $\D$.

If $\C$ is an exact category then so is $\D$ (see e.g.~\cite[Lem.~10.20]{Buehler}), but if $\C$ is triangulated then $\D$ may be neither triangulated nor exact.
A standard example is to take $\C=K^\bdd(\projmod{A})$ the category of bounded complexes of projective modules over a (non-zero) finite-dimensional algebra $A$, and $\D=K^{[-1,0]}(\projmod{A})$ the extension closed subcategory of complexes concentrated in degrees $-1$ and $0$.
This category, with the extriangulated structure described above, is not triangulated because it has the non-zero projective object $0\to A$ (contradicting \cite[Cor.~7.4]{NakaokaPalu}), and is not exact because the morphism from $0\to A$ to the zero object is an inflation (since its cone $A\to 0$ lies in $K^{[-1,0]}(\projmod{A})$), but not a monomorphism.
\end{enumerate}
\end{example}

\begin{definition}
\label{d:Frobenius}
If $\C$ is an extriangulated category, an object $P\in\C$ is \emph{projective} if $\Hom_{\C}(P,f)$ is surjective for any deflation $f$, and \emph{injective} if $\Hom_{\C}(g,P)$ is surjective for any inflation $g$~\cite[Defn.\ 3.23]{NakaokaPalu}.
The category $\C$ has \emph{enough projectives} if for every object $X\in\C$ there is a deflation $P\defl X$ with $P$ projective, and it has \emph{enough injectives} if for every $X\in\C$ there is an inflation $X\infl P$ with $P$ injective~\cite[Defn.\ 3.25]{NakaokaPalu}.
We say $\C$ is a \emph{Frobenius extriangulated category}~\cite[Defn.\ 7.1]{NakaokaPalu} if it has enough projective and enough injective objects, and an object is projective if and only if it is injective.
\end{definition}

\begin{example}
An exact category $\E$ is Frobenius as an extriangulated category if and only if it is Frobenius as an exact category in the usual sense.
A triangulated category $\C$ is always Frobenius as an extriangulated category: only the zero object is projective or injective, but this is enough because of the conflations
\[\begin{tikzcd}[row sep=0pt]
\Sigma^{-1}X\arrow[infl]{r}&0\arrow[defl]{r}&X\arrow[confl]{r}&\phantom{}\\
X\arrow[infl]{r}&0\arrow[defl]{r}&\Sigma X\arrow[confl]{r}&\phantom{}
\end{tikzcd}\]
which exist for any $X\in\C$.
In fact, by \cite[Cor.~7.6]{NakaokaPalu}, the class of Frobenius extriangulated categories with full subcategory of projective-injectives $\PR=\{0\}$ is precisely the class of triangulated categories.
\end{example}

As is familiar from the theory of exact categories \cite[Thm.~I.2.6]{Happel}, a Frobenius extriangulated category has an associated triangulated stable category.

\begin{theorem}[{\cite[Cor.~7.4]{NakaokaPalu}}]
\label{t:stabcat}
If $\F$ is a Frobenius extriangulated subcategory and $\PR$ its full subcategory of projective-injective objects, then the stable category $\underline{\F}=\F/(\PR)$ is canonically triangulated.
\end{theorem}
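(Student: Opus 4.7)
The plan is to follow Happel's classical construction for the stable category of a Frobenius exact category~\cite{Happel}, adapting each step to use only the extriangulated axioms (ET1)--(ET4) of~\cite{NakaokaPalu}. The key structural input is that in a Frobenius extriangulated category, for every $X\in\F$ there exists a conflation $X\infl I_X\defl\Sigma X$ with $I_X$ projective-injective (by enough injectives), and dually a conflation $\Sigma^{-1}X\infl P_X\defl X$ with $P_X$ projective-injective (by enough projectives). These are the building blocks of both the shift functor and the distinguished triangles.

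First I would construct the suspension functor $\Sigma\colon\underline{\F}\to\underline{\F}$. For each object $X$, fix a choice of conflation $X\infl I_X\defl\Sigma X$ as above. Given a morphism $f\colon X\to Y$ in $\F$, one uses injectivity of $I_Y$ to lift $f$ along the inflation $X\infl I_X$, producing a morphism $I_X\to I_Y$; the induced map on cones gives $\Sigma f\colon\Sigma X\to\Sigma Y$. The main verification here is well-definedness modulo $(\PR)$, which reduces to showing that any two lifts differ by a map factoring through $I_Y$, hence vanish in $\underline{\F}$. The dual construction using enough projectives produces $\Sigma^{-1}$, and the fact that $I_X$ is projective-injective gives natural isomorphisms $\Sigma\Sigma^{-1}\cong\id\cong\Sigma^{-1}\Sigma$ in $\underline{\F}$, exhibiting $\Sigma$ as an autoequivalence.

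Next I would define the distinguished triangles. Starting from a conflation
\[\begin{tikzcd}
A\arrow[infl]{r}{f}&B\arrow[defl]{r}{g}&C\arrow[confl]{r}{\delta}&\phantom{}
\end{tikzcd}\]
in $\F$, I compare it with the chosen conflation $A\infl I_A\defl\Sigma A$ using the extriangulated analogue of the pushout/comparison axiom (part of (ET4) and the realisation axioms): injectivity of $I_A$ produces a morphism $B\to I_A$, and hence an induced morphism $h\colon C\to\Sigma A$, yielding a candidate triangle $A\to B\to C\xrightarrow{h}\Sigma A$. The class of \emph{distinguished triangles} consists of all triangles in $\underline{\F}$ isomorphic to one arising this way. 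One then verifies axioms (TR1)--(TR4): (TR1) uses that $X\xrightarrow{\id}X\to 0\to\Sigma X$ comes from the split conflation, and that any morphism in $\underline{\F}$ can be completed to a conflation by taking its mapping cone via (ET3); (TR2), the rotation axiom, follows by splicing the given conflation with $A\infl I_A\defl\Sigma A$ and applying (ET4) to rotate; (TR3), the morphism axiom, reduces to lifting morphisms against injectives.

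The hardest step will be the octahedral axiom (TR4). This requires starting from two composable conflations and producing a third from their composition, together with an octahedral diagram relating them in $\underline{\F}$. In the exact setting Happel uses the $3\times3$-lemma; in the extriangulated setting the corresponding tool is the full strength of axiom (ET4) (and its dual), which produces precisely the commutative diagram of conflations required. Translating this diagram into $\underline{\F}$ and checking that the connecting morphisms it produces agree with those defined in the previous paragraph is the technical core of the argument. Once this is done, the theorem follows. I would expect the bookkeeping surrounding independence of choices (of $I_X$ and of lifts) to recur throughout and to be the principal source of length, even though no single such check is individually difficult.
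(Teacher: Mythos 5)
Your proposal is essentially correct and follows the Happel-style construction that the paper itself elaborates at the start of Section~\ref{s:IYred} (leading up to Theorem~\ref{thm:stabletriangulated-extri}): fix conflations $X\infl I_{\F}(X)\defl\bSigma_{\F}(X)$ with injective middle term, use injectivity together with \cite[Cor.~3.5]{NakaokaPalu} to make the cone functorial modulo $(\PR)$, declare the standard triangles to be images of conflations, and verify the octahedral axiom via (ET4). Be aware, though, that the cited source \cite[Cor.~7.4]{NakaokaPalu} takes a genuinely different, more structural route, which the paper also records: first show (their Prop.~3.30, reproduced here as Proposition~\ref{p:partial-stab}) that the additive quotient by any class of projective-injective objects inherits a natural extriangulated structure; then observe that for the full class $\PR$ the resulting category has no nonzero projective-injectives; and finally apply the characterisation \cite[Cor.~7.6]{NakaokaPalu} of triangulated categories as exactly those extriangulated categories with $\PR=\{0\}$. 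That route avoids axiom-by-axiom verification entirely, delegating the work to the proof that the inherited bifunctor and realisation satisfy (ET1)--(ET4). Your Happel-style route has the advantage of giving an explicit description of the suspension and the distinguished triangles, which is precisely why the paper also presents it (it is needed later to track these under functors); both approaches are valid.

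One small imprecision in your sketch of (TR1): you say any morphism in $\underline{\F}$ can be completed to a triangle ``by taking its mapping cone via (ET3)''. In an extriangulated category only inflations have cones, and an arbitrary $f\colon X\to Y$ in $\F$ need not be an inflation. The standard fix, required in Happel's proof as well, is to replace $f$ by the inflation $\begin{pmatrix}f\\\alpha^{\F}_X\end{pmatrix}\colon X\to Y\oplus I_{\F}(X)$, which is an inflation because $\alpha^{\F}_X$ is (using weak idempotent completeness or a direct check from (ET4)), and which represents the same morphism as $f$ in $\underline{\F}$ since $I_{\F}(X)$ becomes zero there. With that adjustment your outline is sound.
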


We will adopt the usual notation $\stabHom_{\F}(-,-):=\Hom_{\underline{\F}}(-,-)$ for morphism spaces in $\underline{\F}$.
Some more details on the triangulated structure of $\underline{\F}$ may be found in Section~\ref{s:IYred}.
We recall in particular that if
\[\begin{tikzcd}
X\arrow[infl]{r}&P\arrow[defl]{r}\arrow{r}&Y\arrow[confl]{r}&\phantom{}
\end{tikzcd}\]
is a conflation in $\F$ with $P\in\PR$, then $Y\cong\Sigma X$ in $\underline{\F}$.
Moreover, $\stabHom_{\F}(Z,\Sigma X)=\EE_{\F}(Z,X)$ for any $Z\in\F$.

\section{Reduction of extriangulated categories}

Throughout this section, we let $\C=(\C,\EE_\C,\s)$ be an extriangulated category.
If $\X\subset\C$ is a subcategory, we write
\begin{align*}
\X^{\perp_1}&=\{Y\in\C:\text{$\EE_{\C}(X,Y)=0$ for all $X\in\X$}\},\\
{}^{\perp_1}\X&=\{Y\in\C:\text{$\EE_{\C}(Y,X)=0$ for all $X\in\X$}\},
\end{align*}
and abbreviate $X^{\perp_1}=\add(X)^{\perp_1}$ and ${}^{\perp_1}{X}=\prescript{\perp_1}{}{\smash{\add(X)}}$ when $X\in\C$ is an object. 
We say that a subcategory $\X\subseteq\C$ is \emph{rigid} if $\X\subseteq\lperp{\X}$ (or equivalently $\X\subseteq\rperp{\X}$).
An object $X\in\C$ is rigid if $\add(X)$ is rigid.

\begin{lemma} \label{l:closed-extri}
For any subcategory $\X$, the subcategories $\X^{\perp_1}$ and ${}^{\perp_1}\X$ are extension closed in $\C$.
\end{lemma}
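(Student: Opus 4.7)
The plan is to prove both statements by a direct application of the standard long exact sequences that extriangulated categories admit for the bifunctor $\EE_{\C}$. Concretely, for any conflation
\[\begin{tikzcd}
A\arrow[infl]{r}&B\arrow[defl]{r}&C\arrow[confl]{r}{\delta}&\phantom{}
\end{tikzcd}\]
in $\C$ and any object $Y\in\C$, Nakaoka--Palu show that applying $\Hom_{\C}(Y,\blank)$ (respectively $\Hom_{\C}(\blank,Y)$) yields a six-term exact sequence ending in $\EE_{\C}(Y,A)\to\EE_{\C}(Y,B)\to\EE_{\C}(Y,C)$ (respectively $\EE_{\C}(C,Y)\to\EE_{\C}(B,Y)\to\EE_{\C}(A,Y)$). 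These are the tools to be invoked; no further structural ingredient is needed.

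First I would treat $\X^{\perp_1}$. Given a conflation as above with $A,C\in\X^{\perp_1}$, the goal is to show $B\in\X^{\perp_1}$. Fix $X\in\X$ and apply $\Hom_{\C}(X,\blank)$; the middle stretch of the resulting long exact sequence reads $\EE_{\C}(X,A)\to\EE_{\C}(X,B)\to\EE_{\C}(X,C)$. By hypothesis the outer two terms vanish, so $\EE_{\C}(X,B)=0$. Since $X\in\X$ was arbitrary, $B\in\X^{\perp_1}$.

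The case of ${}^{\perp_1}\X$ is formally dual. For $A,C\in{}^{\perp_1}\X$ and any $X\in\X$, applying $\Hom_{\C}(\blank,X)$ to the conflation yields the exact segment $\EE_{\C}(C,X)\to\EE_{\C}(B,X)\to\EE_{\C}(A,X)$, whose outer terms are zero, forcing $\EE_{\C}(B,X)=0$ and hence $B\in{}^{\perp_1}\X$.

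There is no real obstacle here: the only conceptual point is the existence of the relevant six-term sequences, which is a built-in feature of the Nakaoka--Palu axioms rather than something to be established. In writing the proof it would therefore suffice to cite the appropriate result (e.g.\ \cite[Prop.~3.3]{NakaokaPalu}) and observe that the above exact segments immediately give the conclusion.
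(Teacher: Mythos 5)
Your proof is correct and follows essentially the same route as the paper: both arguments apply the extension functor to the conflation and extract the three-term exact segment $\EE_{\C}(X,A)\to\EE_{\C}(X,B)\to\EE_{\C}(X,C)$ (resp.\ its contravariant dual), so vanishing of the outer terms forces $\EE_{\C}(X,B)=0$. The paper cites \cite[Cor.~3.12]{NakaokaPalu} for this exact sequence rather than \cite[Prop.~3.3]{NakaokaPalu}, but this is an inessential difference of reference.
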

\begin{proof}
Let $A\infl B\defl C\confl$ 
be a conflation in $\C$, with $A$ and $C$ in $\X^{\perp_1}$.
Then by \cite[Cor.~3.12]{NakaokaPalu} we have an exact sequence
\[\begin{tikzcd}
\EE_\C(X,A)\arrow{r}& \EE_\C(X,B)\arrow{r}& \EE_\C(X,C)\end{tikzcd}\]
for any $X\in\X$. Since $\EE_\C(X,A)=0$ and $\EE_\C(X,C)=0$, we have $\EE_\C(X,B)=0$,
so $B\in \X^{\perp_1}$.
The proof for ${}^{\perp_1}\X$ is dual.
\end{proof}

As a consequence of Lemma~\ref{l:closed-extri}, both $\X^{\perp_1}$ and ${}^{\perp_1}\X$ become extriangulated categories in their own right by Example~\ref{eg:extri-cats}\ref{eg:subcats}.

\begin{definition}
An object $P$ in $\C$ is \emph{Ext-projective} if
$\EE_\C(P,X)=0$ for all $X$ in $\C$, and \emph{Ext-injective} if
$\EE_\C(X,P)=0$ for all $X$ in $\C$.
\end{definition}

Recall the notions of projectivity and injectivity from Definition~\ref{d:Frobenius}, in terms of lifting properties for deflations and inflations respectively.
We recall also the following from~\cite{NakaokaPalu}.

\begin{lemma}[{\cite[Prop.\ 3.24]{NakaokaPalu}}]
\label{l:proj-extri}
An object in $\C$ is
Ext-injective (respectively, Ext-projective) if and only if it is injective (respectively, projective).
\end{lemma}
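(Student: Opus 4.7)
The plan is to use the long exact sequence in $\EE_\C$ associated to a conflation, which is the same tool \cite[Cor.~3.12]{NakaokaPalu} that was invoked in Lemma~\ref{l:closed-extri}. I will prove the statement for projectivity/Ext-projectivity; the injective/Ext-injective case is dual.

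For the forward direction, suppose $P$ is Ext-projective, so $\EE_\C(P,-)=0$. Let $f\colon B\defl C$ be a deflation, fitting in a conflation $A\infl B\defl C\confl$. Applying $\Hom_\C(P,-)$ yields, by \cite[Cor.~3.12]{NakaokaPalu}, an exact sequence
\[
\Hom_\C(P,B)\xrightarrow{\Hom(P,f)}\Hom_\C(P,C)\longrightarrow\EE_\C(P,A)
\]
whose third term vanishes by hypothesis. Hence $\Hom_\C(P,f)$ is surjective, and $P$ is projective.

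For the reverse direction, suppose $P$ is projective and let $\delta\in\EE_\C(P,X)$ for some $X\in\C$. Realise $\delta$ as a conflation $X\infl Y\xrightarrow{g}P\confl\delta$. Since $g$ is a deflation and $P$ is projective, the map $\Hom_\C(P,g)$ is surjective, so $\id_P$ lifts along $g$. This produces a section of $g$; the standard consequence is that the conflation splits, i.e.\ $\delta=0$. Concretely, one can invoke the exact sequence
\[
\Hom_\C(P,Y)\xrightarrow{\Hom(P,g)}\Hom_\C(P,P)\longrightarrow\EE_\C(P,X)\longrightarrow\EE_\C(P,Y)
\]
again from \cite[Cor.~3.12]{NakaokaPalu}: surjectivity of $\Hom(P,g)$ forces the connecting map $\Hom_\C(P,P)\to\EE_\C(P,X)$ to be zero, and this connecting map sends $\id_P$ to $\delta$, giving $\delta=0$. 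Since $\delta$ and $X$ were arbitrary, $\EE_\C(P,-)=0$ and $P$ is Ext-projective.

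The argument is short and essentially formal once one has the six-term exact sequence of \cite[Cor.~3.12]{NakaokaPalu}; the only point that takes a moment of care is the identification, in the backward direction, of the connecting morphism $\Hom_\C(P,P)\to\EE_\C(P,X)$ as the map sending $\id_P$ to $\delta$. This is built into the construction of the long exact sequence in \cite{NakaokaPalu}, so no additional work is required. I expect no substantive obstacle; the lemma is really a translation exercise between two equivalent formulations of the same property, and this is why it is attributed directly to \cite[Prop.~3.24]{NakaokaPalu}.
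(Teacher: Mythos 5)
Your proof is correct, and it is exactly the standard argument for this equivalence: the forward direction reads projectivity off the vanishing of $\EE_\C(P,A)$ in the six-term exact sequence of \cite[Cor.~3.12]{NakaokaPalu}, and the reverse direction lifts $\id_P$ along the deflation realising $\delta$ to force $\delta=0$. The paper itself gives no proof and simply cites \cite[Prop.~3.24]{NakaokaPalu}; your argument is essentially the one found there, so there is no substantive difference to report.
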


Recall that an additive category is said to be \emph{weakly idempotent complete} if every retraction in $\C$ has a kernel (or, equivalently, every section has a cokernel)~\cite[Defn.\ 7.2]{Buehler}.

\begin{remark}
\label{rem:WIC}
By \cite[Prop.~2.7]{Klapproth}, weak idempotent completeness of $\C$ is equivalent to Nakaoka and Palu's WIC condition \cite[Cond.~5.8]{NakaokaPalu}:
for morphisms $f\colon A\to B$ and $g\colon B\to C$ in $\C$, if $gf$ is a deflation then $g$ is a deflation, and if $gf$ is an inflation then $f$ is an inflation.
\end{remark}

\begin{lemma}
\label{l:Mperpwic}
Assume $\C$ is weakly idempotent complete, and let $\X\subseteq\C$ be a subcategory.
Then $\rperp{\X}$ and $\lperp{\X}$ are also weakly idempotent complete.
\end{lemma}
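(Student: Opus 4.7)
The plan is to exploit the fact that weak idempotent completeness is a purely additive-categorical property (independent of the extriangulated structure), together with the observation that $\rperp{\X}$ and $\lperp{\X}$ are closed under direct summands in $\C$.

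First I would work with the characterisation of weak idempotent completeness via retractions: an additive category is weakly idempotent complete if and only if every retraction has a kernel, equivalently, every section $s\colon A\to B$ admits a complement so that $B\cong A\oplus K$ with $K$ the kernel of the associated retraction. (This avoids invoking Remark~\ref{rem:WIC}, whose WIC formulation refers to deflations and inflations, which could in principle differ between $\C$ and the subcategory.)

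Next I would verify that $\rperp{\X}$ is closed under direct summands in $\C$: since $\EE_{\C}(X,\blank)$ is additive, $\EE_{\C}(X,A\oplus K)\cong\EE_{\C}(X,A)\oplus\EE_{\C}(X,K)$, so the vanishing of the left side for all $X\in\X$ forces the vanishing of each summand. The dual statement gives summand-closure of $\lperp{\X}$.

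Now take a retraction $r\colon B\defl A$ in $\rperp{\X}$ with section $s\colon A\to B$. Viewing $r$ as a retraction in $\C$ and applying weak idempotent completeness of $\C$, we obtain a decomposition $B\cong A\oplus K$ in $\C$ with $K=\ker(r)$. By the summand-closure step, $K\in\rperp{\X}$. Since $\rperp{\X}$ is a full subcategory, the universal property of $K$ as a kernel in $\C$ restricts immediately to a universal property in $\rperp{\X}$: any morphism $h\colon L\to B$ with $L\in\rperp{\X}$ and $rh=0$ factors uniquely through $K$ in $\C$, and this factorisation is automatically a morphism in $\rperp{\X}$. The argument for $\lperp{\X}$ is exactly dual, using sections instead of retractions and cokernels instead of kernels.

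There is no serious obstacle here: the content is entirely the summand-closure of $\rperp{\X}$, which is immediate from the bifunctoriality of $\EE_{\C}$. The only mild subtlety worth flagging is the remark above about using the retraction/kernel formulation of weak idempotent completeness rather than the WIC condition of Remark~\ref{rem:WIC}, since the latter is phrased in terms of the extriangulated structure and the classes of inflations/deflations can shrink on passing to an extension-closed subcategory (cf.\ Example~\ref{eg:extri-cats}\ref{eg:subcats}).
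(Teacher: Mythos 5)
Your proof is correct and follows essentially the same route as the paper's: both reduce the claim to the observation that the kernel of a retraction is a direct summand, and that $\rperp{\X}$ (resp.\ $\lperp{\X}$) is closed under direct summands by additivity of $\EE_{\C}(X,\blank)$ (resp.\ $\EE_{\C}(\blank,X)$). The paper packages the decomposition as a split conflation rather than working purely additively, but the substance is identical; your remark about preferring the retraction/kernel formulation over Remark~\ref{rem:WIC} is a reasonable precaution, though not a point where the paper's argument actually differs.
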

\begin{proof}
We must show that the kernel in $\C$ of any retraction in $\rperp{\X}$ lies in $\rperp{\X}$.
Let $f\colon Y\rightarrow Z$ be a retraction in $\rperp{\X}$. Since $\rperp{\X}$ is a full subcategory, any retraction in $\rperp{\X}$ is also a retraction in $\C$, so $f$ has a kernel in $\C$ 
and we have a conflation
\begin{equation}
\label{eq:kernelretraction}
\begin{tikzcd}
 K \arrow[infl]{r} & Y \arrow[defl]{r}{f} & Z \arrow[confl]{r}&\phantom{},
\end{tikzcd}
\end{equation}
which is split because $f$ is a retraction.
Let $X\in\X$.
Since $\Ext^1_{\C}(X,Y)=0$ and $K$ is a direct summand of $Y$, it follows that $\Ext^1_{\C}(X,Y)=0$, so $K$ lies in $\rperp{\X}$ as required.
The proof for $\lperp{\X}$ is analogous.
\end{proof}

Recall that a map $p\colon X\rightarrow Y$ is said to be \emph{right minimal} if, whenever $\varphi\in \End(X)$ and
$p\varphi=p$, then $\varphi$ is an isomorphism.
If $\X$ is a subcategory, then a morphism $f\colon X\rightarrow A$
is a \emph{right $\X$-approximation} of $A\in\C$ if
$X$ lies in $\X$, and for any $X'$ in $\X$ and $g\colon X'\rightarrow
A$ there is $h\colon X'\rightarrow X$ such that $fh=g$.
Left minimal maps and approximations are defined dually.
If every object in the category has a right $\X$-approximation (respectively, left approximation) then $\X$ is said to be \emph{contravariantly} (respectively, \emph{covariantly}) \emph{finite}.
If $\X$ is both contravariantly and covariantly finite, it is said to be \emph{functorially finite}.

If, for every object $A\in\C$, there is a right $\X$-approximation $f\colon X\to A$ which is also a deflation, then $\X$ is said to be \emph{strongly contravariantly finite}. The notions of strongly covariantly finite and strongly functorially finite are defined analogously~\cite[Defn.\ 3.19]{ZZ18}.

\begin{remark}
\label{r:weak-to-strong}
Recall that $\C$ is said to have enough projectives if, for each object $X$ in $\C$, there is a deflation $P\defl X$ where $P$ is projective.
If $\C$ has enough projectives, a contravariantly finite subcategory $\X$ containing all projective objects in $\C$ is automatically strongly contravariantly finite \cite[Rem.~3.22]{HLN2} (see also \cite[Rem.~2.9]{ZZ19}).
If additionally $\C$ is weakly idempotent complete then even more is true: every right $\X$-approximation is a deflation \cite[Rem.~5.5]{FGPPP}.
\end{remark}

\begin{lemma}
\label{l:weakwakamatsu-extri}
Let $\C$ be an extriangulated category, 
let
\[\begin{tikzcd}
K\arrow[infl]{r}{f}&X\arrow[defl]{r}{g}&A\arrow[confl]{r}&\phantom{}
\end{tikzcd}\]
be a conflation in $\C$, and let $\X$ be a rigid subcategory of $\C$.
If $g$ is a right $\X$-approximation of $A$ (so that, in particular, $X\in \X$), then $K\in\X^{\perp_1}$.
\end{lemma}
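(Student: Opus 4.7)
The plan is to apply the contravariant hom functor $\Hom_\C(X',-)$, for an arbitrary $X'\in\X$, to the given conflation and read off the vanishing of $\EE_\C(X',K)$ from the resulting long exact sequence, together with the approximation property of $g$ and the rigidity of $\X$.

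More precisely, fix $X'\in\X$ and invoke \cite[Cor.~3.12]{NakaokaPalu} for the conflation
\[\begin{tikzcd}
K\arrow[infl]{r}{f}&X\arrow[defl]{r}{g}&A\arrow[confl]{r}&\phantom{},
\end{tikzcd}\]
which yields an exact sequence
\[\begin{tikzcd}
\Hom_\C(X',X)\arrow{r}{g_*}&\Hom_\C(X',A)\arrow{r}&\EE_\C(X',K)\arrow{r}&\EE_\C(X',X).
\end{tikzcd}\]
The goal is then to show that both the leftmost map is surjective and the rightmost group vanishes, which together force $\EE_\C(X',K)=0$.

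Surjectivity of $g_*$ is precisely the defining property of $g$ as a right $\X$-approximation of $A$: any morphism $X'\to A$ with $X'\in\X$ factors through $g$. Vanishing of $\EE_\C(X',X)$ follows from the fact that $X\in\X$ (since $g$ is an $\X$-approximation) and $X'\in\X$, combined with the rigidity hypothesis $\X\subseteq\rperp{\X}$.

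I do not anticipate any real obstacle: the argument is a one-line diagram chase once the correct long exact sequence is in hand. The only point requiring any care is making explicit that the surjection hypothesis needed in the long exact sequence is exactly the factorisation property bundled into the definition of a right $\X$-approximation, and that rigidity is applied to the pair $(X',X)$ of objects both lying in $\X$ rather than to any object appearing outside $\X$. Since $X'\in\X$ was arbitrary, one concludes $K\in\rperp{\X}$.
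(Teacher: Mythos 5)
Your proof is correct and follows the same route as the paper: apply $\Hom_\C(X',-)$ to the conflation to obtain the four-term exact sequence, observe that the first map is surjective because $g$ is a right $\X$-approximation, note that $\EE_\C(X',X)=0$ by rigidity, and conclude $\EE_\C(X',K)=0$. (One small terminological slip: $\Hom_\C(X',-)$ is the covariant, not contravariant, Hom functor, but this does not affect the argument.)
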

\begin{proof}
The argument is standard but we include it for completeness. Let $X'$ be an object in $\X$. By~\cite[Prop.\ 3.3]{NakaokaPalu}, there is an exact sequence
\[\begin{tikzcd}
\Hom_{\C}(X',X) \arrow{r}{f_*} &
\Hom_{\C}(X',A)\arrow{r}{} &
\EE_{\C}(X',K) \arrow{r}{} &
\EE_{\C}(X',X). \end{tikzcd}\]
Since $g$ is a right $\X$-approximation, $f_*$ is surjective.
Since $\EE_{\C}(X',X)=0$ because $\X$ is rigid, we have $\EE_{\C}(X',K)=0$, giving the desired result.
\end{proof}

\begin{remark}
By (the dual of) \cite[Lem.~3.1]{LiuZhou}, the conclusion of Lemma~\ref{l:weakwakamatsu-extri} still holds if we replace the assumption that $\X$ is rigid by the assumption that $g$ is a minimal approximation; this is the extriangulated version of Wakamatsu's lemma.
\end{remark}

We denote by $\PR$ the subcategory of $\C$ consisting of the projective objects and by $\add(\X,\PR)$ the smallest full additive subcategory of $\C$ containing $\X$ and $\PR$.
In particular, if $\X$ is additively closed and contains $\PR$, then $\add(\X,\PR)=\X$.

\begin{lemma} \label{l:MPRstrongly}
Suppose that $\C$ has enough projective objects, and let $\X\subseteq\C$ be contravariantly finite. Then $\add(\X,\PR)$ is strongly contravariantly finite.
\end{lemma}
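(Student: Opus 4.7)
The plan is to combine a right $\X$-approximation with a deflation from a projective object. Given any $A\in\C$, since $\X$ is contravariantly finite there is a right $\X$-approximation $g\colon X\to A$, and since $\C$ has enough projectives there is a deflation $p\colon P\defl A$ with $P\in\PR$. I would consider the combined morphism
\[[g\ p]\colon X\oplus P\longrightarrow A,\]
whose source lies in $\add(\X,\PR)$, and show that it is both a right $\add(\X,\PR)$-approximation and a deflation.

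To see the approximation property, any object $Y\in\add(\X,\PR)$ is a summand of some $Y_\X\oplus Y_\PR$ with $Y_\X\in\add(\X)$ and $Y_\PR\in\add(\PR)$, and any morphism $h\colon Y\to A$ extends along the split projection $Y_\X\oplus Y_\PR\to Y$ to a morphism whose two components factor through $g$ (by the approximation property of $g$) and through $p$ (because $Y_\PR$ is projective, by Lemma~\ref{l:proj-extri}, so the map from $Y_\PR$ to $A$ lifts along the deflation $p$), respectively. Thus $h$ factors through $[g\ p]$.

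For the deflation property, the cleanest route is to appeal directly to Remark~\ref{r:weak-to-strong}: we have just shown that $\add(\X,\PR)$ is contravariantly finite, and by construction it contains every projective, so the cited consequence of \cite[Rem.~3.22]{HLN2} immediately gives strong contravariant finiteness. Alternatively, one can show this by hand by factoring
\[[g\ p]=[g\ \id_A]\circ(\id_X\oplus p),\]
noting that $\id_X\oplus p$ is a deflation as a direct sum of deflations, $[g\ \id_A]$ is a split epimorphism and hence a deflation, and composites of deflations are deflations in any extriangulated category.

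There is no serious obstacle; the only mild point to check is that the decomposition argument handles direct summands of objects of $\add(\X)\oplus\add(\PR)$, which is immediate from the fact that any such summand admits a splitting of the inclusion into some $Y_\X\oplus Y_\PR$.
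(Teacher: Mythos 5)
Your argument is essentially the same as the paper's: form the map $[g\ p]\colon X\oplus P\to A$ by combining a right $\X$-approximation $g$ with a deflation $p$ from a projective, check it is a right $\add(\X,\PR)$-approximation, and then invoke Remark~\ref{r:weak-to-strong} (equivalently \cite[Rem.~3.22]{HLN2}) to upgrade contravariant finiteness to strong contravariant finiteness. The paper leaves the verification of the approximation property implicit, whereas you spell it out via the summand decomposition of objects of $\add(\X,\PR)$ and projectivity of the $\PR$-component; you also offer an optional direct factorisation argument $[g\ p]=[g\ \id_A]\circ(\id_X\oplus p)$ showing the combined map is a deflation, which is correct (one sees $[g\ \id_A]$ is a deflation by composing the canonical split projection $X\oplus A\defl A$ with the automorphism $\begin{pmatrix}\id_X&0\\g&\id_A\end{pmatrix}$ of $X\oplus A$) and nicely avoids the citation if desired.
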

\begin{proof}
Let $M\in \C$, and let $f\colon X\rightarrow M$ be a right $\X$-approximation of $M$. Since $\C$ has enough projectives, there is a right $\PR$-approximation
$g\colon P\rightarrow M$ (which is moreover a deflation).
Then $h=(f,g)\colon X\oplus P\rightarrow M$ is a right $\add(\X,\PR)$-approximation of $M$, and so $\add(\X,\PR)$ is covariantly finite. Since it contains $\PR$, it is also strongly contravariantly finite by \cite[Rem.~3.22]{HLN2} (see Remark~\ref{r:weak-to-strong}).
Indeed, the argument given in loc.~cit.\ demonstrates that $h$ is itself a deflation.
\end{proof}

We denote the full subcategory of injective objects in $\C$ by 
$\Ical$.
The dual to Lemma~\ref{l:MPRstrongly} is as follows.

\begin{lemma} \label{l:MPRstronglydual}
Suppose that $\C$ has enough injective objects and let $\X\subseteq\C$ be covariantly finite. Then $\add(M,\Ical)$ is strongly covariantly finite.
\end{lemma}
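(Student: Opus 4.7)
The plan is to dualise the proof of Lemma~\ref{l:MPRstrongly} line by line. Given an object $N\in\C$, covariant finiteness of $\X$ produces a left $\X$-approximation $f\colon N\to X$, while the hypothesis that $\C$ has enough injectives produces an inflation $g\colon N\infl I$ with $I\in\Ical$. Such a $g$ is automatically a left $\Ical$-approximation: for any $I'\in\Ical$ and any $\alpha\colon N\to I'$, injectivity of $I'$ means $\Hom_\C(g,I')$ is surjective, so $\alpha$ factors through $g$. I would then consider the morphism
\[
h=\binom{f}{g}\colon N\longrightarrow X\oplus I,
\]
and verify that it is a left $\add(\X,\Ical)$-approximation of $N$ by decomposing any target into its summands from $\X$ and $\Ical$, and factoring through $f$ and $g$ respectively.

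The only step requiring care is to show that $h$ itself is an inflation, which is the dual of the assertion established in \cite[Rem.~3.22]{HLN2} and used in the proof of Lemma~\ref{l:MPRstrongly}. I would factor $h$ as
\[
N\xrightarrow{\binom{\id_N}{g}} N\oplus I\xrightarrow{f\oplus\id_I} \,\,\text{(does not work directly, so instead)}\,\, N\xrightarrow{\binom{f}{\id_N}} X\oplus N\xrightarrow{\id_X\oplus g} X\oplus I,
\]
where the first map is a split inflation (with retraction $(0,\id_N)$) and the second is a direct sum of inflations (namely $\id_X$ and $g$), hence an inflation. Since the composition of inflations is an inflation, $h$ is an inflation, completing the proof.

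I do not expect a genuine obstacle: the proof is a direct dualisation, and in particular does not require $\C$ to be weakly idempotent complete. The only mild subtlety is choosing a factorisation of $h$ into a split inflation followed by a direct sum of inflations, so that one avoids any appeal to WIC to conclude that $h$ is an inflation from the fact that $g=\pi_I\circ h$ is.
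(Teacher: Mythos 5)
Your proof is correct and is essentially the same as the paper's: the paper states the lemma as the dual of Lemma~\ref{l:MPRstrongly} without separate proof, which amounts exactly to the dualisation you carry out. The only difference is that you unpack the step showing $h$ is an inflation directly, via the factorisation $h=(\id_X\oplus g)\circ\binom{f}{\id_N}$, whereas the paper delegates this to \cite[Rem.~3.22]{HLN2}; your factorisation is precisely the argument that citation contains (in dual form). Your observation that no weak idempotent completeness is needed is also consistent with Remark~\ref{r:weak-to-strong}, which only invokes WIC for the stronger statement that \emph{every} approximation is a deflation/inflation. One tiny remark: the statement in the source has a typo, $\add(M,\Ical)$ should read $\add(\X,\Ical)$, which you correctly interpreted.
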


The following is a version of the argument in~\cite[Lem.~2.2]{buanmarsh}, adapted to extriangulated categories.

\begin{lemma} \label{l:perp-extri}
Suppose that $\C$ is has enough projective objects, and let $\X\subseteq\C$ be contravariantly finite and rigid. Then $\lperp{(\rperp{\X})}=\add(\X,\PR)$.
\end{lemma}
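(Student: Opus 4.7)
The plan is to prove the two inclusions separately.

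For the easy inclusion $\add(\X, \PR) \subseteq \lperp{(\rperp{\X})}$, I would argue directly from the definitions: for any $X \in \X$, the bifunctor $\EE_\C(X, -)$ vanishes on $\rperp{\X}$ by construction; and for any $P \in \PR$, Lemma~\ref{l:proj-extri} gives $\EE_\C(P, -) = 0$ everywhere. Since $\lperp{(\rperp{\X})}$ is manifestly closed under direct sums and direct summands, this yields the inclusion.

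For the reverse inclusion, fix $M \in \lperp{(\rperp{\X})}$. Using Lemma~\ref{l:MPRstrongly}, I would produce a deflation $h \colon X' \oplus P \defl M$ with $X' \in \X$ and $P \in \PR$ which is a right $\add(\X, \PR)$-approximation. Letting $K$ denote its cocone, we obtain a conflation
\[K \infl X' \oplus P \defl M.\]
The heart of the argument is the claim $K \in \rperp{\X}$: once this is verified, $M \in \lperp{(\rperp{\X})}$ forces the class of the conflation in $\EE_\C(M, K)$ to vanish, so the conflation splits, exhibiting $M$ as a direct summand of $X' \oplus P \in \add(\X, \PR)$.

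To establish $K \in \rperp{\X}$, I would fix $X \in \X$ and apply \cite[Cor.~3.12]{NakaokaPalu} to the conflation to obtain the exact sequence
\[\Hom_\C(X, X' \oplus P) \to \Hom_\C(X, M) \to \EE_\C(X, K) \to \EE_\C(X, X' \oplus P) \to \EE_\C(X, M).\]
The first map is surjective because $h$ restricts to a right $\X$-approximation of $M$, so $\EE_\C(X, K)$ injects into $\EE_\C(X, X') \oplus \EE_\C(X, P)$. The first summand vanishes by rigidity of $\X$, and in the Frobenius setting that pervades the paper (where projectives are also injective) the second vanishes as well, completing the verification.

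The main obstacle is precisely the vanishing $\EE_\C(X, P) = 0$ for $X \in \X$ and $P \in \PR$. This is automatic when $\C$ is Frobenius, which is the context throughout the paper, but in greater generality would require an additional hypothesis (e.g.\ imposing that $\X$ lies in $\rperp{\PR}$) or a more delicate argument tracking how the conflation class interacts with the deflation $P \defl M$.
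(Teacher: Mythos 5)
Your argument follows the paper's route exactly: both form the conflation $K\infl X'\oplus P\defl M$ supplied by Lemma~\ref{l:MPRstrongly}, argue that $K\in\rperp{\X}$, and then split the conflation using $M\in\lperp{(\rperp{\X})}$. The paper obtains $K\in\rperp{\X}$ by citing Lemma~\ref{l:weakwakamatsu-extri}, but applied with $\add(\X,\PR)$ in the role of the rigid subcategory; rigidity of $\add(\X,\PR)$ reduces to exactly the vanishing $\EE_\C(X,P)=0$ for $X\in\X$, $P\in\PR$ that you flag. So you have located precisely the point where the paper's argument quietly uses more than its stated hypothesis.

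You can in fact be less tentative: under ``enough projectives'' alone the lemma is false, so no delicate rearrangement of the argument will avoid the extra condition. Take $\C=\module\field A_3$ with orientation $1\to2\to3$ and $\X=\add(I_2)$, where $I_2$ is the indecomposable injective with socle $S_2$. Then $\X$ is rigid and functorially finite and $\C$ has enough projectives, but $S_2\in\lperp{(\rperp{\X})}$ (the indecomposables of $\rperp{\X}$ are $S_2$ together with the three injectives, and $\Ext^1_\C(S_2,S_2)=0$), while $S_2\notin\add(\X,\PR)$. The culprit is $\Ext^1_\C(I_2,P_2)\neq0$, so $\add(\X,\PR)$ is not rigid. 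One slip to correct in your write-up: the fallback hypothesis ``$\X$ lies in $\rperp{\PR}$'' is vacuous, since $\rperp{\PR}=\C$ whenever $\PR$ consists of Ext-projectives; what is needed is $\X\subseteq\lperp{\PR}$, i.e.\ $\EE_\C(\X,\PR)=0$, which forces $\add(\X,\PR)$ to be rigid and lets the Wakamatsu-style step (yours or the paper's) go through. That condition holds automatically when $\C$ is Frobenius, which is the only setting in which the lemma is invoked downstream, so none of the paper's later results are affected.
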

\begin{proof}
If $M\in\add(\X,\PR)$ and $N\in \rperp{\X}$ then $\EE_\C(M,N)=0$, so $\X\subseteq {}^{\perp_1}(\rperp{\X})$.
Hence $\add(\X,\PR)$ is contained in ${}^{\perp_1}(\rperp{\X})$.

Assume $M\in {}^{\perp_1}(\rperp{\X})$.
By Lemma~\ref{l:MPRstrongly}, there is a conflation
\[\begin{tikzcd}
K\arrow[infl]{r}& X\arrow[defl]{r}{f}& M\arrow[confl]{r}&\phantom{}
\end{tikzcd}\]
in which $f$ is a right $\add(\X,\PR)$-approximation.
Since $\X$ is rigid, it follows from Lemma~\ref{l:weakwakamatsu-extri} that $K\in \rperp{\X}$.
Since $M\in {}^{\perp_1}(\rperp{\X})$,
we have $\EE_\C(M,K)=0$, so the sequence splits and
$M\in \add(\X,\PR)$.
\end{proof}

We have a dual version of Lemma~\ref{l:perp-extri}, with a dual proof, as follows.

\begin{lemma}
\label{l:perp-dual}
Suppose that $\C$ has enough injective objects, and let $\X\subseteq\C$ be covariantly finite and rigid. Then
$({}^{\perp_1}\X)^{\perp_1}=\add(\X,\Ical)$.
\end{lemma}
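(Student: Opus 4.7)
The plan is to mirror the proof of Lemma~\ref{l:perp-extri} throughout, exchanging the roles of $\EE_{\C}(-,?)$ and $\EE_{\C}(?,-)$, and of inflations and deflations. Concretely, I would first verify the easy inclusion $\add(\X,\Ical)\subseteq({}^{\perp_1}\X)^{\perp_1}$: if $M\in\add(\X,\Ical)$ and $N\in{}^{\perp_1}\X$, then $\EE_{\C}(N,X)=0$ for every $X\in\X$ by definition of ${}^{\perp_1}\X$, while $\EE_{\C}(N,I)=0$ for every $I\in\Ical$ by Lemma~\ref{l:proj-extri} (Ext-injectivity equals injectivity). Since $\EE_{\C}$ is additive in the second variable, this gives $\EE_{\C}(N,M)=0$, as required.

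For the reverse inclusion, I would take $M\in({}^{\perp_1}\X)^{\perp_1}$ and apply the dual Lemma~\ref{l:MPRstronglydual} to produce a conflation
\[\begin{tikzcd}
M\arrow[infl]{r}{f}& Y\arrow[defl]{r}& C\arrow[confl]{r}&\phantom{}
\end{tikzcd}\]
in which $f\colon M\to Y$ is a left $\add(\X,\Ical)$-approximation (and an inflation, thanks to the strong covariant finiteness). The dual of Lemma~\ref{l:weakwakamatsu-extri} then guarantees that $C\in{}^{\perp_1}\X$: applying $\Hom_{\C}(-,X')$ to the conflation for any $X'\in\X$ yields, via~\cite[Prop.~3.3]{NakaokaPalu}, an exact sequence whose first map is surjective because $f$ is a left $\X$-approximation and whose last term $\EE_{\C}(M,X')$ vanishes by rigidity of $\X$, forcing $\EE_{\C}(C,X')=0$. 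Since $M\in({}^{\perp_1}\X)^{\perp_1}$ and $C\in{}^{\perp_1}\X$, we obtain $\EE_{\C}(C,M)=0$, so the conflation splits, exhibiting $M$ as a direct summand of $Y\in\add(\X,\Ical)$; since $\add(\X,\Ical)$ is additively closed, $M\in\add(\X,\Ical)$.

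There are essentially no obstacles here, since each ingredient used in the proof of Lemma~\ref{l:perp-extri} has already been dualised in the preceding lemmas of this section (Lemma~\ref{l:MPRstronglydual} for the approximation, the dual of Lemma~\ref{l:weakwakamatsu-extri} for the perpendicularity of the cone, and Lemma~\ref{l:proj-extri} to identify the injectives as Ext-injectives). The only small point worth double-checking is that the dual of Lemma~\ref{l:weakwakamatsu-extri} applies in this setting: the conflation $M\infl Y\defl C$ above has $Y\in\add(\X,\Ical)$, not $\X$, but any left $\add(\X,\Ical)$-approximation restricts to a map whose composition with projection to the $\X$-summand is a left $\X$-approximation, and that is enough to run the dual argument of Lemma~\ref{l:weakwakamatsu-extri} against every $X'\in\X$.
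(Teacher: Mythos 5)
Your argument is exactly the one the paper has in mind: the paper offers no separate proof of Lemma~\ref{l:perp-dual}, only the remark that it follows ``with a dual proof'' from Lemma~\ref{l:perp-extri}, and your proposal carries out precisely that dualisation (easy inclusion via injectivity and rigidity; hard inclusion via an $\add(\X,\Ical)$-approximation, the dual Wakamatsu-type argument, and splitting).

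That said, the patch in your final paragraph does not fully close the gap you correctly notice there. Running the dual of Lemma~\ref{l:weakwakamatsu-extri} on the conflation $M\infl Y\defl C$, with $Y=Y_0\oplus I$ where $Y_0\in\add\X$ and $I\in\Ical$, requires for each $X'\in\X$ \emph{two} things from the exact sequence
\[
\Hom_\C(Y,X')\longrightarrow\Hom_\C(M,X')\longrightarrow\EE_\C(C,X')\longrightarrow\EE_\C(Y,X'),
\]
namely surjectivity of the first map \emph{and} vanishing of $\EE_\C(Y,X')$. Your observation that the projection onto the $\X$-summand is a left $\X$-approximation delivers the surjectivity, but says nothing about the second point. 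The vanishing $\EE_\C(Y,X')=0$ needs $\EE_\C(I,X')=0$, and injectivity of $I$ only gives $\EE_\C(-,I)=0$, not $\EE_\C(I,-)=0$. What one actually needs is that $\add(\X,\Ical)$ be rigid, equivalently $\Ical\subseteq{}^{\perp_1}\X$; this holds in the Frobenius setting, where $\Ical=\PR$ consists of projectives, which is where Lemma~\ref{l:perp-dual} is ultimately applied (via Lemma~\ref{l:enough-extri} in Proposition~\ref{Prop:MperpisFrobenius-extri}). Note that the paper's own proof of Lemma~\ref{l:perp-extri} makes the same implicit move, citing Lemma~\ref{l:weakwakamatsu-extri} for an $\add(\X,\PR)$-approximation even though that lemma is stated for an $\X$-approximation; that step likewise needs $\EE_\C(X',P)=0$ for $P\in\PR$, automatic only when $P$ is also injective. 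So you have not introduced an error that the paper avoids, but you should not present your closing paragraph as a complete resolution: the honest statement is that the extra vanishing comes from projectivity of the added summand, not from the approximation property.
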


\begin{lemma} \label{l:enough-extri}
Suppose that $\C$ has enough projectives, and let $\X\subseteq\C$ be rigid and contravariantly finite. Then the subcategory of projective objects in $\rperp{\X}$ is $\add(\X,\PR)$.

Dually, if $\C$ has enough injectives and $\X\subseteq\C$ is rigid and covariantly finite, then the subcategory of injective objects in $\lperp{\X}$ is $\add(\X,\Ical)$.
\end{lemma}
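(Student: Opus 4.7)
The plan is to deduce the characterisation from Lemma~\ref{l:perp-extri} together with the identification of projectivity with Ext-projectivity provided by Lemma~\ref{l:proj-extri}. The key observation is that, since the $\EE$-bifunctor on the subcategory $\rperp{\X}$ is the restriction of $\EE_\C$ (Example~\ref{eg:extri-cats}\ref{eg:subcats}), an object $M\in\rperp{\X}$ is Ext-projective in the extriangulated category $\rperp{\X}$ if and only if $\EE_\C(M,Y)=0$ for every $Y\in\rperp{\X}$, i.e., if and only if $M\in\lperp{(\rperp{\X})}$.

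For the forward inclusion I would take $M\in\add(\X,\PR)$, first observing that $M\in\rperp{\X}$: rigidity of $\X$ gives $\X\subseteq\rperp{\X}$, while $\PR\subseteq\rperp{\X}$ in the Frobenius setting of primary interest, since there projectives are injective and so $\EE_\C(-,P)=0$. Lemma~\ref{l:perp-extri} then places $M$ in $\lperp{(\rperp{\X})}$, so by the observation above $M$ is Ext-projective, and hence projective, in $\rperp{\X}$ by Lemma~\ref{l:proj-extri}. Conversely, if $M$ is projective in $\rperp{\X}$, then Lemma~\ref{l:proj-extri} makes $M$ Ext-projective in $\rperp{\X}$, so $M\in\lperp{(\rperp{\X})}$; Lemma~\ref{l:perp-extri} then lands $M$ in $\add(\X,\PR)$, completing the equality.

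The dual statement runs in parallel, using Lemma~\ref{l:perp-dual} in place of Lemma~\ref{l:perp-extri} and Lemma~\ref{l:MPRstronglydual} in place of Lemma~\ref{l:MPRstrongly}, with the roles of projective/injective and $\rperp{\X}/\lperp{\X}$ swapped. There is no substantive obstacle beyond the two cited lemmas; the only point of care is the containment $\PR\subseteq\rperp{\X}$, which ensures the equality holds literally rather than as $\add(\X,\PR)\cap\rperp{\X}$, and is automatic whenever $\PR$ consists of injective-projectives.
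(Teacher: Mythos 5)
Your proof is correct and follows the paper's argument precisely: both identify the projective objects of $\rperp{\X}$ with $\rperp{\X}\cap\lperp{(\rperp{\X})}$ via Lemma~\ref{l:proj-extri}, then apply Lemma~\ref{l:perp-extri} to compute $\lperp{(\rperp{\X})}=\add(\X,\PR)$, and handle the dual analogously using Lemma~\ref{l:perp-dual}. You have moreover spotted a genuine subtlety that the paper's own proof glosses over: the stated equality requires $\PR\subseteq\rperp{\X}$ (so that $\rperp{\X}\cap\add(\X,\PR)=\add(\X,\PR)$), and the paper's proof attributes this containment to rigidity of $\X$, which only yields $\X\subseteq\rperp{\X}$. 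One genuinely needs $\EE_\C(X,P)=0$ for all $X\in\X$ and $P\in\PR$, which holds when projectives are injective but not under the lemma's stated hypotheses alone. Your weaker formulation $\add(\X,\PR)\cap\rperp{\X}$ is what those hypotheses actually deliver, and the literal statement is recovered under the Frobenius assumption, which is in force in every application of the lemma in the paper (notably Proposition~\ref{Prop:MperpisFrobenius-extri}).
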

\begin{proof}
The Ext-projective objects in $\rperp{\X}$ (which coincide with the projective objects by Lemma~\ref{l:proj-extri}) are the objects of $\rperp{\X}$ which also lie in
${}^{\perp_1}(\rperp{\X})$.
By Lemma~\ref{l:perp-extri}, we have ${}^{\perp_1}(\rperp{\X})=\add(\X,\PR)$. Since $\X$ is rigid, we also have $\add(\X,\PR)\subseteq\rperp{\X}$, giving the first statement.
The dual is proved similarly, using Lemma~\ref{l:perp-dual} in place of Lemma~\ref{l:perp-extri}.
\end{proof}

\begin{lemma}
\label{l:Mperpff}
Suppose that $\C$ is Frobenius. Let $\X\subseteq\C$ be rigid and functorially finite, and assume that $\rperp{\X}=\lperp{\X}$.
Then $\X^{\perp_1}$ is functorially finite in $\C$.
\end{lemma}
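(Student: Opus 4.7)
The plan is to show that $\X^{\perp_1}$ is both contravariantly and covariantly finite in $\C$. The two arguments are dual, so I sketch only the contravariant case.

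Let $M\in\C$. Since $\C$ is Frobenius it has enough projectives, so fix a conflation $M''\infl P\defl M$ with $P$ projective. Since $\C$ is Frobenius we also have $\add(\X,\PR)=\add(\X,\Ical)$, which by Lemma~\ref{l:MPRstronglydual} is strongly covariantly finite. Hence a left $\add(\X,\PR)$-approximation of $M''$ can be chosen to be an inflation $M''\infl Y$, producing a conflation $M''\infl Y\defl L$. Since $\X$ is rigid and projective-injective objects are Ext-orthogonal to everything, $\add(\X,\PR)$ is rigid; the dual of Lemma~\ref{l:weakwakamatsu-extri} therefore gives $L\in\lperp{\add(\X,\PR)}=\lperp{\X}$, which by hypothesis equals $\rperp{\X}$.

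Next I would form the pushout of the two inflations $M''\infl P$ and $M''\infl Y$, using the (ET4) axiom of~\cite{NakaokaPalu} and its resulting $3\times 3$ compatibility. This produces an object $F$ fitting into conflations
\[ Y \infl F \defl M \quad\text{and}\quad P \infl F \defl L. \]
The second is classified by an element of $\EE_\C(L,P)$, which vanishes because $P$ is injective; it is therefore split, giving $F\cong P\oplus L$. Substituting into the first yields a conflation
\[ Y\infl P\oplus L \defl M, \]
with $P\oplus L\in\rperp{\X}$, as both summands lie there. To verify that the deflation $P\oplus L\defl M$ is a right $\rperp{\X}$-approximation, let $N\in\rperp{\X}=\lperp{\X}$. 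By~\cite[Cor.~3.12]{NakaokaPalu}, applying $\Hom_\C(N,\blank)$ gives an exact sequence
\[ \Hom_\C(N,P\oplus L) \to \Hom_\C(N,M) \to \EE_\C(N,Y). \]
Since $Y\in\add(\X,\PR)$, the rightmost group is a sum of terms $\EE_\C(N,X')$ with $X'\in\X$, vanishing by $N\in\lperp{\X}$, and $\EE_\C(N,P')$ with $P'\in\PR$, vanishing by injectivity of $P'$. So the first map is surjective and every morphism $N\to M$ factors through $P\oplus L\defl M$, as required. The covariant case is then dual, starting from an inflation $M\infl I$ into an injective, taking a right $\add(\X,\PR)$-approximation of its cone, and applying the pullback construction to build a left $\rperp{\X}$-approximation $M\infl K\oplus I$.

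\textbf{Main obstacle.} The principal technical point is the pushout of two inflations sharing an object in an extriangulated category, producing the compatible pair of conflations above with $P\oplus L$ (after splitting) as the middle term. This is a consequence of the (ET4) axiom of~\cite{NakaokaPalu}, but must be invoked carefully in the extriangulated setting, since unlike in the exact or triangulated cases the axiom has to do the work of furnishing the object $F$ and both of the required conflations. Once this is in hand, the remaining verification is standard Wakamatsu-style reasoning, with the hypothesis $\rperp{\X}=\lperp{\X}$ bridging the left-Ext-orthogonality coming out of the approximation step and the right-Ext-orthogonality needed to conclude.
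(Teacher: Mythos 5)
Your proof is correct, and it takes a genuinely different route from the paper's. The paper cites the cotorsion-pair machinery of Chang--Zhou--Zhu: it first observes that $(\add(\X,\PR),\rperp{\X})$ is a cotorsion pair by \cite[Prop.~3.4]{CZZ19}, uses \cite[Rem.~3.2]{CZZ19} to conclude $\rperp{\X}$ is covariantly finite, and then dualises using the hypothesis $\rperp{\X}=\lperp{\X}$ and Lemma~\ref{l:MPRstronglydual} to obtain contravariant finiteness. Your argument instead constructs the approximation conflations directly: the pushout of $M''\infl P$ and $M''\infl Y$ followed by the splitting off of the projective-injective $P$ produces a conflation $Y\infl P\oplus L\defl M$ with $Y\in\add(\X,\PR)$ and $P\oplus L\in\rperp{\X}$, which is precisely the kind of conflation the cotorsion-pair theorem provides. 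In effect you are inlining the proof of the CZZ19 result in this particular situation. What the paper's route buys is brevity; what yours buys is self-containment and transparency about where the hypothesis $\rperp{\X}=\lperp{\X}$ enters.

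Two small points of precision. First, the pushout of two inflations out of a common object, producing a $3\times 3$ diagram in which the middle term splits, is not directly axiom (ET4) (which concerns two \emph{composable} inflations); the correct reference is \cite[Prop.~1.19]{Palu-Extri}, dual to \cite[Lem.~A.11]{ChenThesis}, which the paper itself invokes for exactly this construction in the proof of Theorem~\ref{t:cc_restrict}. Second, your appeal to Lemma~\ref{l:MPRstronglydual} to make the left $\add(\X,\PR)$-approximation of $M''$ an inflation implicitly uses $\PR=\Ical$ (which is the Frobenius hypothesis) and the fact that $\X$ functorially finite implies $\X$ covariantly finite — both fine, just worth saying explicitly. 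Neither of these is a gap.
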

\begin{proof}
We use the same approach as~\cite[Thm.~II.2.1(a)]{BIRS09}.
By Lemma~\ref{l:MPRstrongly}, the subcategory $\add(\X,\PR)$ is strongly contravariantly finite in $\C$.
Hence, by \cite[Prop.~3.4]{CZZ19}, we have that $(\add(\X, \PR),\add(\X,\PR)^{\perp_1})=
(\add(\X, \PR),\rperp{\X})$ is a cotorsion pair in $\C$.
Note that the assumption that $\X$ is rigid is sufficient for the proof of~\cite[Prop.~3.4]{CZZ19}, since it means that we can apply Lemma~\ref{l:weakwakamatsu-extri} in place of Wakamatsu's lemma: this means we do not need to refer to minimal approximations, and hence do not require $\C$ to be Krull--Schmidt.

It thus follows that $\rperp{\X}$ is covariantly finite in $\C$ by~\cite[Rem.\ 3.2]{CZZ19}.
Since $\rperp{\X}=\lperp{\X}=\prescript{\perp_1}{}{\smash{\add(\X,\Ical)}}$, Lemma~\ref{l:MPRstronglydual}
and the dual of~\cite[Prop.~3.4]{CZZ19} imply that $\rperp{\X}$ 
is also contravariantly finite in $\C$, and hence 
functorially finite in $\C$.
\end{proof}

\begin{remark}
\label{r:stronglyfinite}
In the situation of Lemma~\ref{l:Mperpff}, $\rperp{\X}$ is strongly functorially finite by \cite[Rem.~3.22]{HLN2}, as in Remark~\ref{r:weak-to-strong}, since $\rperp{\X}$ contains all of the projective-injective objects of $\C$.
\end{remark}

\begin{proposition} \label{Prop:MperpisFrobenius-extri}
Suppose that $\C$ is Frobenius, and let $\X\subseteq\C$ be a rigid and functorially finite subcategory such that $\rperp{\X}=\lperp{\X}$.
Then $\rperp{X}$ is also a Frobenius extriangulated category, with projective-injectives $\add(\X,\PR)$.
If $\C$ is weakly idempotent complete, then so is $\rperp{\X}$.
\end{proposition}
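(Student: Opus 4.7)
The plan is to verify the conditions of Definition~\ref{d:Frobenius} for $\rperp{\X}$, which is already extriangulated by Lemma~\ref{l:closed-extri} combined with Example~\ref{eg:extri-cats}\ref{eg:subcats}. The three tasks will be to exhibit enough projectives, enough injectives, and to check that projective equals injective; weak idempotent completeness will then follow immediately from a lemma already in hand.

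First I would identify the projective-injective objects of $\rperp{\X}$. By Lemma~\ref{l:enough-extri} its projective objects form $\add(\X,\PR)$, and dually, using the hypothesis $\rperp{\X}=\lperp{\X}$, its injective objects form $\add(\X,\Ical)$. Since $\C$ is Frobenius we have $\PR=\Ical$, so these two classes agree, giving the projective-equals-injective property with projective-injective subcategory $\add(\X,\PR)$ as claimed.

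For enough projectives, take $M\in\rperp{\X}$. Lemma~\ref{l:MPRstrongly}, applicable because $\C$ has enough projectives, tells us that $\add(\X,\PR)$ is strongly contravariantly finite in $\C$, so there is a conflation $K\infl P\defl M$ in $\C$ with $P\in\add(\X,\PR)$ whose deflation is a right $\add(\X,\PR)$-approximation. Before invoking Lemma~\ref{l:weakwakamatsu-extri} I would observe that $\add(\X,\PR)$ is itself rigid: $\X$ is rigid by hypothesis, and in a Frobenius category $\PR=\Ical$ forces $\EE_\C(\PR,-)=0=\EE_\C(-,\PR)$, which is all that is missing. Lemma~\ref{l:weakwakamatsu-extri} then places $K$ in $\bigl(\add(\X,\PR)\bigr)^{\perp_1}\subseteq\rperp{\X}$, so the conflation lies entirely in $\rperp{\X}$ and exhibits $P$ as the required projective deflation onto $M$ inside the inherited extriangulated structure. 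The dual argument, using Lemma~\ref{l:MPRstronglydual}, the dual of Lemma~\ref{l:weakwakamatsu-extri}, and the identification $\rperp{\X}=\lperp{\X}$, will produce enough injectives.

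Finally, weak idempotent completeness of $\rperp{\X}$ under the assumption that $\C$ is weakly idempotent complete is exactly the content of Lemma~\ref{l:Mperpwic}, so nothing further is needed. I do not foresee a serious obstacle; the only mildly delicate point will be confirming that the approximation-theoretic conflations actually sit inside $\rperp{\X}$, and this is handled cleanly by Lemma~\ref{l:weakwakamatsu-extri} once rigidity of $\add(\X,\PR)$ has been recorded.
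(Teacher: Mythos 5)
Your proof is correct and follows essentially the same route as the paper, identifying the projective-injective objects as $\add(\X,\PR)$ via Lemma~\ref{l:enough-extri} and the equality $\PR=\Ical$, and then using Lemmas~\ref{l:MPRstrongly}, \ref{l:MPRstronglydual}, and~\ref{l:weakwakamatsu-extri} to produce the required approximation conflations inside $\rperp{\X}$, with weak idempotent completeness supplied by Lemma~\ref{l:Mperpwic}. Your explicit remark that $\add(\X,\PR)$ is itself rigid (because $\PR=\Ical$ forces $\EE_\C(\PR,-)=0=\EE_\C(-,\PR)$), which is what legitimises applying Lemma~\ref{l:weakwakamatsu-extri} to this enlarged subcategory, is a welcome clarification of a step the paper leaves implicit.
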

\begin{proof}
The subcategory $\rperp{\X}$ inherits an extriangulated structure from $\C$ by Lemma~\ref{l:closed-extri}.
Since $\C$ is Frobenius, we have $\PR=\Ical$.
Thus the projective and injective objects in $\rperp{\X}$ are given by $\add(\X,\PR)=\add(\X,\Ical)$, by Lemma~\ref{l:enough-extri}, and hence coincide.
It remains to show that there are enough projectives and injectives in $\rperp{\X}$.

By Lemmas~\ref{l:MPRstrongly} and~\ref{l:MPRstronglydual}, the subcategory $\add(\X,\PR)$ is strongly functorially finite. Let $M\in \rperp{\X}$.
Then there is a conflation
\[\begin{tikzcd}
K\arrow[infl]{r}& X\arrow[defl]{r}{p}& M\arrow[confl]{r}&\phantom{}
\end{tikzcd}\]
in $\C$, in which $p$ is a right $\add(\X,\PR)$-approximation of $M$.
We have $X\in\add(\X,\PR)\subseteq\rperp{\X}$ since $\X$ is rigid and $\PR=\Ical$, and $K\in\add(\X,\PR)^{\perp_1}=\rperp{\X}$ by Lemma~\ref{l:weakwakamatsu-extri}.
Thus the whole conflation lies in $\rperp{\X}$, so in particular $p$ is a deflation in $\rperp{\X}$, and hence a projective cover of $M$.
A dual argument shows that $\lperp{\X}=\rperp{\X}$ has enough injectives.

The statement concerning weak idempotent completeness is Lemma~\ref{l:Mperpwic}.
\end{proof}

\begin{remark}
\label{r:BIRSproof}
If $\C$ is a Frobenius exact category and $\X\subseteq\C$ is functorially finite and rigid with $\rperp{\X}=\lperp{\X}$, the fact that $\rperp{\X}$ is Frobenius exact also follows from Lemma~\ref{l:Mperpff} and~\cite[Thm.\ II.2.6]{BIRS09}.
We'd like to thank Yann Palu for helpful remarks concerning this.
\end{remark}

\begin{definition}
\label{d:reduction}
Given a Frobenius extriangulated category $\C$ and a rigid functorially finite subcategory $\X\subseteq\C$ such that $\rperp{\X}=\lperp{\X}$, we call $\rperp{\X}$ the \emph{reduction} of $\C$ with respect to $\X$.
\end{definition}

Let $\T$ be a $\field$-linear Hom-finite triangulated category.
Then $\T$ is said to be \emph{$2$-Calabi--Yau} if there is a functorial isomorphism $\Ext^1_\T(X,Y)\cong \dual\Ext^1_\T(Y,X)$ for all objects $X$ and $Y$ in $\T$, where $\dual$ denotes the duality $\dual=\Hom_{\field}(-,\field)$.
This is called weakly $2$-Calabi--Yau in~\cite{Keller08}.
We call a Frobenius extriangulated category $\C$ \emph{stably $2$-Calabi--Yau} if it is $\field$-linear and its stable category $\underline{\C}=\C/(\PR)$ (see Theorem~\ref{t:stabcat}) is $2$-Calabi--Yau.
Note that in a stably $2$-Calabi--Yau Frobenius extriangulated category we have $\rperp{\X}=\lperp{\X}$ for any $\X\subseteq\C$.

\begin{proposition}
\label{p:Mperp-2cy-extri}
Let $\C$ be a stably $2$-Calabi--Yau Frobenius extriangulated category, and let $\X\subseteq\C$ be rigid and functorially finite.
Then the reduction $\rperp{\X}$ is also stably $2$-Calabi--Yau.
\end{proposition}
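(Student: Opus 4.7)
The plan is to transfer the stably 2-Calabi--Yau property from $\C$ to $\rperp{\X}$ using that extension groups in the subcategory agree with those in $\C$, combined with the vanishing conditions defining $\rperp{\X}$.

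First, by Proposition~\ref{Prop:MperpisFrobenius-extri}, $\rperp{\X}$ is a Frobenius extriangulated category, so by Theorem~\ref{t:stabcat} its stable category $\underline{\rperp{\X}}$ is canonically triangulated, and I would need to show this category is 2-Calabi--Yau. Using the identification $\Ext^1_{\underline{\F}}(Y,Z) = \EE_{\F}(Y,Z)$ recalled after Theorem~\ref{t:stabcat}, and the fact (Example~\ref{eg:extri-cats}\ref{eg:subcats}) that the extriangulated structure on $\rperp{\X}$ is inherited from $\C$, I would write
\[\Ext^1_{\underline{\rperp{\X}}}(Y,Z) = \EE_{\rperp{\X}}(Y,Z) = \EE_{\C}(Y,Z) = \Ext^1_{\underline{\C}}(Y,Z)\]
for all $Y,Z\in\rperp{\X}$.

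The stably 2-Calabi--Yau hypothesis on $\C$ supplies a functorial isomorphism $\EE_{\C}(Y,Z) \cong \dual \EE_{\C}(Z,Y)$ for all $Y,Z\in\C$; restricting to $\rperp{\X}$ gives a $\field$-linear isomorphism of the required kind between $\Ext^1_{\underline{\rperp{\X}}}(Y,Z)$ and $\dual\Ext^1_{\underline{\rperp{\X}}}(Z,Y)$. The only point that really needs care is the functoriality: the isomorphism is a priori natural for morphisms in $\underline{\C}$, but we need it to be natural for morphisms in $\underline{\rperp{\X}}$, which is obtained from $\rperp{\X}$ by quotienting out morphisms factoring through $\add(\X,\PR)$, rather than only through $\PR$. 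To check this, it suffices to observe that any morphism factoring through an object of $\add(\X)$ acts as zero on $\EE_{\C}(-,Z)$ and on $\EE_{\C}(Z,-)$ whenever $Z\in\rperp{\X}$: indeed, for $X\in\X$ we have $\EE_{\C}(X,Z)=0$ (since $Z\in\rperp{\X}=\X^{\perp_1}$) and $\EE_{\C}(Z,X)=0$ (since $Z\in\lperp{\X}$, using the automatic equality $\rperp{\X}=\lperp{\X}$ in the stably 2-Calabi--Yau setting noted just before the proposition).

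There is no substantial obstacle here; the argument is really just a bookkeeping verification that the bifunctor $\EE_{\C}(-,-)$ descends to the correct quotient. The main conceptual point is that the ``extra'' projective-injectives in $\rperp{\X}$ beyond those of $\C$ are exactly the rigid objects in $\X$, whose extension vanishing with objects of $\rperp{\X}$ is built into the definition, so quotienting by them does not disturb the Calabi--Yau duality.
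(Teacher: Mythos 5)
Your proof follows the same approach as the paper's: identify $\EE_{\rperp{\X}}(-,-)$ with the restriction of $\EE_{\C}(-,-)$ (via Lemma~\ref{l:closed-extri}), and transfer the Calabi--Yau duality from $\underline{\C}$ to $\underline{\rperp{\X}}$. Your explicit verification of functoriality — that morphisms factoring through $\add(\X)$ kill $\EE_{\C}(-,Z)$ and $\EE_{\C}(Z,-)$ for $Z\in\rperp{\X}$, so the duality descends to the larger quotient — is a genuine point that the paper's proof leaves implicit, and it is correct as you state it. The one thing you omit that the paper makes explicit is the check that $\underline{\rperp{\X}}$ is Hom-finite, which is a prerequisite in the paper's definition of $2$-Calabi--Yau; this follows immediately from the observation (which you do make in passing) that $\stabHom_{\rperp{\X}}(Y,Z)$ is a quotient of $\stabHom_{\C}(Y,Z)$ since $\PR\subseteq\add(\X,\PR)$, but you should state it rather than leave it implicit. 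With that small addition, the argument is complete.
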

\begin{proof}
Firstly, note that $\stabHom_{\rperp{\X}}(M,N)$ is a quotient of $\stabHom_{\C}(X,Y)$ for all objects $M,N$ in $\rperp{\X}$, since the objects of $\PR\subseteq\add(\X,\PR)\subseteq \rperp{\X}$ are projective-injective in $\rperp{\X}$ by Proposition~\ref{Prop:MperpisFrobenius-extri}.
It follows that $\underline{\rperp{\X}}$ is $\Hom$-finite.
By Lemma~\ref{l:closed-extri},
we have an equality $\EE_{\rperp{\X}}(-,-)=\EE_{\C}(-,-)|_{\rperp{\X}}$ of bifunctors.
Hence $\rperp{\X}$ is also stably $2$-Calabi--Yau.
\end{proof}

One justification for calling $\rperp{\X}$ a reduction
is the following proposition, which also explains why we restrict to rigid subcategories $\X$ in Definition~\ref{d:reduction}.
Another justification is given in Remark~\ref{r:compare-extri}.
Recall that a subcategory $\T\subseteq\C$ is called \emph{cluster-tilting} if
\[\lperp{\T}=\T=\rperp{\T}\]
and $\T$ is functorially finite.
In particular, a cluster-tilting subcategory is additively closed.
An object $T$ is cluster-tilting if $\add(T)$ is a cluster-tilting subcategory; note that $\add(T)$ is always functorially finite.

This matches the definition in \cite[Defn.\ 4.1]{CZZ19}, which is close to Iyama's original \cite[Defn.~2.2]{Iyama-HART}; in general there may be value in requiring cluster-tilting subcategories of extriangulated categories to be strongly functorially finite, but whenever the ambient category $\C$ has enough projectives and injectives this is automatic, as in Remark~\ref{r:stronglyfinite}.

\begin{proposition}
\label{p:ct-bij-extri}
Let $\rperp{\X}$ be the reduction of a Frobenius extriangulated category $\C$ at a rigid functorially finite subcategory $\X\subseteq\C$ such that $\rperp{\X}=\lperp{\X}$.
Then the cluster-tilting subcategories in $\rperp{\X}$ are precisely those cluster-tilting subcategories $\T\subseteq\C$ such that $\X\subseteq\T$.
\end{proposition}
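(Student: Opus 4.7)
The plan is to prove the two inclusions separately. For the $(\Leftarrow)$ direction, I would suppose $\T\subseteq\C$ is cluster-tilting in $\C$ with $\X\subseteq\T$. Then $\T\subseteq\rperp{\T}\subseteq\rperp{\X}$, and since $\EE_{\rperp{\X}}$ is the restriction of $\EE_\C$ by Lemma~\ref{l:closed-extri}, rigidity and both Ext-orthogonal identities for $\T$ in $\rperp{\X}$ transfer immediately from the corresponding conditions in $\C$. Any right or left $\T$-approximation in $\C$ of an object of $\rperp{\X}$ has its other end in $\T\subseteq\rperp{\X}$ and so remains an approximation there, giving functorial finiteness.

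For the $(\Rightarrow)$ direction, suppose $\T'\subseteq\rperp{\X}$ is cluster-tilting in $\rperp{\X}$; I want to show that $\X\subseteq\T'$ and that $\T'$ is cluster-tilting in $\C$. The containment $\X\subseteq\T'$ is immediate, since $\T'$ must contain all projective-injectives of $\rperp{\X}$, and these equal $\add(\X,\PR)$ by Proposition~\ref{Prop:MperpisFrobenius-extri}. Rigidity in $\C$ again follows from equality of Ext bifunctors. For the identity $\rperp{\T'}=\T'$ in $\C$, if $M\in\C$ satisfies $\EE_\C(\T',M)=0$, then $\EE_\C(\X,M)=0$ because $\X\subseteq\T'$, so $M\in\rperp{\X}$, after which the cluster-tilting condition inside $\rperp{\X}$ gives $M\in\T'$. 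The condition $\lperp{\T'}=\T'$ is dual, using $\rperp{\X}=\lperp{\X}$.

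The genuine work is showing functorial finiteness of $\T'$ in $\C$. For $M\in\C$, I plan to build a right $\T'$-approximation in two stages. First, Lemma~\ref{l:MPRstrongly} yields a right $\add(\X,\PR)$-approximation $p\colon X_0\defl M$ (a deflation), whose cocone $M_1$ lies in $\rperp{\add(\X,\PR)}=\rperp{\X}$ by Lemma~\ref{l:weakwakamatsu-extri}. Second, since $\rperp{\X}$ is Frobenius by Proposition~\ref{Prop:MperpisFrobenius-extri} and $\T'$ is cluster-tilting there, $\T'$ contains the projective-injectives of $\rperp{\X}$ and so is strongly contravariantly finite in $\rperp{\X}$ by Remark~\ref{r:weak-to-strong}. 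This produces a deflation $T_1\defl M_1$ in $\rperp{\X}$ that is a right $\T'$-approximation; by Lemma~\ref{l:weakwakamatsu-extri} applied inside $\rperp{\X}$ and the cluster-tilting identity $\rperp{\T'}\cap\rperp{\X}=\T'$, its cocone $K$ lies in $\T'$. Extension closure of $\T'$ in $\rperp{\X}$ (which holds since $\rperp{\T'}$ is extension closed by Lemma~\ref{l:closed-extri}) then forces $M_1\in\T'$. Rigidity now gives $\EE_\C(T'',M_1)=0$ for all $T''\in\T'$, so the long exact sequence obtained by applying $\Hom_\C(T'',-)$ to the conflation $M_1\infl X_0\defl M$ collapses to show that $p$ is itself a right $\T'$-approximation of $M$ in $\C$. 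A dual argument using Lemma~\ref{l:MPRstronglydual} produces left approximations. The step I expect to be most delicate is showing $M_1\in\T'$: it crucially uses the Frobenius structure of $\rperp{\X}$, established in the preceding material, so that the right $\T'$-approximation of $M_1$ can be realised as a deflation, together with the extension closure characteristic of cluster-tilting subcategories applied to the resulting conflation.
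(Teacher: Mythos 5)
Most of your argument is sound and runs along the same perpendicular-category bookkeeping as the paper: both orthogonality identities in $\C$, the containment $\X\subseteq\T'$ (your route via the projective-injectives $\add(\X,\PR)$ of $\rperp{\X}$ is a harmless variant of the paper's direct computation $\X\subseteq{}^{\perp_1}\T'\cap\rperp{\X}=\T'$), and the whole $(\Leftarrow)$ direction. The gap is in your construction of right $\T'$-approximations in $\C$. From the conflation $K\infl T_1\defl M_1$ with $K,T_1\in\T'$ you invoke extension closure of $\T'$ to conclude $M_1\in\T'$, but extension closure only controls the \emph{middle} term of a conflation; here $M_1$ is the cone, and cluster-tilting subcategories are not closed under cones of inflations between their own objects (indeed, every object of $\rperp{\X}$ arises as such a cone, via the approximation conflations used to define indices). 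Concretely, your $M_1$ is the cocone of a right $\add(\X,\PR)$-approximation of an arbitrary $M\in\C$, hence an essentially arbitrary object of $\rperp{\X}$, and it need not lie in $\T'$. Since the next step requires exactly $\EE_\C(T'',M_1)=0$ for all $T''\in\T'$, i.e.\ $M_1\in\T'^{\perp_1}\cap\rperp{\X}=\T'$, the conclusion that $p\colon X_0\to M$ is a right $\T'$-approximation fails: a right $\add(\X,\PR)$-approximation of $M$ is in general not a right $\T'$-approximation.

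The repair is to replace your first stage by a right $\rperp{\X}$-approximation $N\to M$, which exists because $\rperp{\X}$ is functorially finite in $\C$ by Lemma~\ref{l:Mperpff}, and then compose with a right $\T'$-approximation $T\to N$ taken inside $\rperp{\X}$; since $\T'\subseteq\rperp{\X}$, every map $T''\to M$ with $T''\in\T'$ factors through $N$ and then through $T$, so the composite is a right $\T'$-approximation of $M$, and dually for left approximations. This is precisely the paper's argument, which at this point simply observes that $\T'$ is functorially finite in $\rperp{\X}$ and $\rperp{\X}$ is functorially finite in $\C$, so $\T'$ is functorially finite in $\C$; no deflations, Wakamatsu-type lemmas or strong finiteness are needed here.
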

\begin{proof}
Recall that our notation ${}^{\perp_1}\T$ and $\T^{\perp_1}$ refers to the perpendicular categories inside $\C$.
By Lemma~\ref{l:closed-extri}, when $\T\subseteq\rperp{\X}$ the analogous perpendicular categories to $T$ in $\rperp{\X}$ are simply given by ${}^{\perp_1}\T\cap\rperp{X}$ and $\T^{\perp_1}\cap\rperp{\X}$.

If $\T\subseteq\rperp{\X}$, then $\X\subseteq{}^{\perp_1}\T$.
Hence if $\T$ is cluster-tilting in $\rperp{\X}$, then $\X\subseteq{}^{\perp_1}\T\cap\rperp{\X}=\T$.
To see that $\T$ is cluster-tilting in $\C$, first observe that $\T=\T^{\perp_1}\cap\rperp{\X}\subseteq \T^{\perp_1}$, and similarly for the left perpendicular, so it remains to prove the reverse inclusions.
But if $M\in\T^{\perp_1}$, then in particular $M\in\rperp{\X}$ since $\X\subseteq\T$, and so $M\in \T^{\perp_1}\cap\rperp{\X}=\T$.
An entirely analogous argument, using that $\rperp{\X}=\lperp{\X}$, shows that ${}^{\perp_1}\T=\T$. Since $\T$ is functorially finite in $\rperp{\X}$ by definition, and $\rperp{\X}$ is functorially finite in $\C$ by Lemma~\ref{l:Mperpff}, it follows that $\T$ is also functorially finite in $\C$, and so is cluster-tilting in $\C$.

Conversely, if $\T\subseteq\C$ is cluster-tilting and $\X\subseteq\T$, then $\X\subseteq{}^{\perp_1}\T$, and hence $\T\subseteq\rperp{\X}$.
Moreover, $\rperp{\X}\subseteq \T^{\perp_1}$, and so $\T^{\perp_1}\cap\rperp{\X}=\T^{\perp_1}=\T$.
Similarly ${}^{\perp_1}\T\cap\rperp{\X}=\T$, using again that $\rperp{\X}=\lperp{\X}$.
Since $\T$ is functorially finite in $\C$, it is also functorially finite in $\rperp{\X}$, and hence it is also cluster-tilting in this subcategory.
\end{proof}

Summing up, we obtain the main theorem of this section, as follows.
\begin{theorem}
\label{t:reduction}
Let $\C$ be a stably $2$-Calabi--Yau Frobenius extriangulated category and $\X\subseteq\C$ a rigid functorially finite subcategory.
Then $\rperp{\X}$ is also a stably $2$-Calabi--Yau Frobenius extriangulated category, functorially finite in $\C$, with projective-injectives $\add(\X,\PR)$.
If $\C$ is weakly idempotent complete, so is $\rperp{\X}$.
The cluster-tilting subcategories of $\rperp{\X}$ are precisely those cluster-tilting objects $\X\subseteq\C$ such that $\X\subseteq\T$.
\end{theorem}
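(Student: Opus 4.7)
The proof is essentially a matter of assembling the propositions and lemmas already established in the section, so my plan is to check that all the hypotheses of those statements are in force and then invoke them in the right order.

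The first and only non-cosmetic step is to verify that the hypothesis $\rperp{\X}=\lperp{\X}$ appearing in Proposition~\ref{Prop:MperpisFrobenius-extri}, Lemma~\ref{l:Mperpff} and Proposition~\ref{p:ct-bij-extri} is automatic under the stably $2$-Calabi--Yau assumption, as noted in the paragraph immediately before Proposition~\ref{p:Mperp-2cy-extri}. The argument is the standard three-step chain
\[\EE_\C(X,Y)\cong\stabHom_\C(X,\Sigma Y)\cong \Dual\stabHom_\C(Y,\Sigma X)\cong \Dual\EE_\C(Y,X),\]
where the outer isomorphisms use that $\stabHom_{\C}(-,\Sigma -)=\EE_\C(-,-)$ on $\C$, and the middle one is the $2$-Calabi--Yau duality on $\underline{\C}$. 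Thus $\EE_\C(X,Y)=0$ if and only if $\EE_\C(Y,X)=0$, which forces $\rperp{\X}=\lperp{\X}$ for every subcategory $\X$ of $\C$.

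With that hypothesis in hand, the remaining assertions follow by citing the appropriate earlier results. Proposition~\ref{Prop:MperpisFrobenius-extri} gives that $\rperp{\X}$ is a Frobenius extriangulated category with subcategory of projective-injectives equal to $\add(\X,\PR)$, and that weak idempotent completeness descends from $\C$ to $\rperp{\X}$ (the latter through Lemma~\ref{l:Mperpwic}). Proposition~\ref{p:Mperp-2cy-extri} then upgrades this to the stably $2$-Calabi--Yau property, Lemma~\ref{l:Mperpff} supplies functorial finiteness of $\rperp{\X}$ in $\C$, and Proposition~\ref{p:ct-bij-extri} gives the bijection between cluster-tilting subcategories of $\rperp{\X}$ and those cluster-tilting subcategories $\T\subseteq\C$ containing $\X$.

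There is no genuine obstacle: the work has all been done in the preceding lemmas and propositions, and the only conceptual check is the Calabi--Yau symmetry argument above. The proof will therefore be very short, essentially consisting of this symmetry observation followed by a list of references to the earlier statements.
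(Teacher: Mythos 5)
Your proof is correct and matches the paper's, which simply cites Lemmas~\ref{l:Mperpwic} and~\ref{l:Mperpff} and Propositions~\ref{Prop:MperpisFrobenius-extri}, \ref{p:Mperp-2cy-extri} and~\ref{p:ct-bij-extri}. The one piece you spell out — that the stably $2$-Calabi--Yau hypothesis forces $\rperp{\X}=\lperp{\X}$ via the chain $\EE_\C(X,Y)\cong\stabHom_\C(X,\Sigma Y)\cong\Dual\stabHom_\C(Y,\Sigma X)\cong\Dual\EE_\C(Y,X)$ — is exactly what the paper records in the remark immediately preceding Proposition~\ref{p:Mperp-2cy-extri}, so you have just moved that observation into the proof body.
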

\begin{proof}
This follows from Lemmas~\ref{l:Mperpwic} and~\ref{l:Mperpff} and Propositions~\ref{Prop:MperpisFrobenius-extri},~\ref{p:Mperp-2cy-extri} and~\ref{p:ct-bij-extri}.
\end{proof}

Triangulated categories are a special class of Frobenius extriangulated categories (precisely, those for which $\PR=0$ \cite[Cor.~7.6]{NakaokaPalu}).
In the case that $\C$ is triangulated, the reduction $\rperp{\X}$ is exactly the subcategory $\mathcal{U}$ considered by Fu and Keller in \cite[\S6.2]{FuKeller} in the context of categorifying cluster algebras with frozen variables.
Moreover, the stable category of $\rperp{\X}$, which is well-defined and equal to $\rperp{\X}/\add(\X)$ by Proposition~\ref{Prop:MperpisFrobenius-extri}, coincides with the Iyama--Yoshino reduction of $\C$ at $\X$ \cite{IY08}.
We will explore this connection further, for more general Frobenius extriangulated categories, in the next section.

\section{Compatibility with Iyama--Yoshino reduction}
\label{s:IYred}
Assume that $\F$ is a stably $2$-Calabi--Yau Frobenius extriangulated category.
In particular, this means that $\rperp{\X}=\lperp{\X}$ for any $\X\subseteq\F$. We fix such an $\X$, assumed to be rigid and functorially finite.

By Proposition~\ref{Prop:MperpisFrobenius-extri}, the
stable category of $\rperp{\X}$ is $\underline{\rperp{\X}}=\rperp{\X}/\add(\X,\PR)$.
Since $\rperp{\X}$ is a Frobenius extriangulated category by Proposition~\ref{Prop:MperpisFrobenius-extri}, the quotient category
$\underline{\rperp{\X}}$ is naturally triangulated by \cite[Cor.~7.4]{NakaokaPalu} (see Theorem~\ref{t:stabcat}).

We set $\X_{\underline{\mc{F}}}^{\perp_1}$ to be the 
subcategory of $\underline{\mc{F}}$ consisting of objects $M$ in $\underline{\F}$ satisfying
\[\Ext^1_{\underline{\F}}(M,X):=\stabHom_{\F}(M,X[1])=0\]
for all $X\in\X$,
or equivalently, since $\F$ is stably $2$-Calabi--Yau,
satisfying $\Ext_{\underline{\F}}^1(X,M)=0$ for all $X\in\X$.
Because $\Ext^1_{\F}(M,N)= \Ext^1_{\underline{\F}}(M,N)$ for all $M,N\in\F$,
it follows that $\X_{\underline{\F}}^{\perp_1}$ is the image of
$\rperp{\X}$ under the quotient functor
$\F\rightarrow \underline{\F}$.

By \cite[Thm.~4.7]{IY08}, the category $\X_{\underline{\mc{F}}}^{\perp_1}/\add_{\underline{\F}}(\X)$ is triangulated and $2$-Calabi--Yau; this category is the \emph{Iyama--Yoshino reduction} of the triangulated category $\underline{\F}$ with respect to the functorially finite rigid subcategory $\X$.
In this section, we will show (see Theorem~\ref{thm:triangleequivalence-extri}) that there is a triangle equivalence
\[\underline{\rperp{\X}} \simeq\X_{\underline{\mc{F}}}^{\perp_1}/\add_{\underline{\F}}(\X).\]

To begin with, we will need some generalities on functors and equivalences.
The following terminology is not standard, but will provide us with helpful language. %

\begin{definition}
\label{d:tautological}
Let $\U$ be a category, and let $\A$ and $\B$ be categories whose objects and morphisms are (in preferred bijection with) equivalence classes of objects and morphisms from $\U$.
We denote these equivalence classes by $[x]_\A$ and $[x]_\B$, where $x$ is an object or morphism of $\U$.
We say a functor $F\colon\A\to\B$ is \emph{tautological} if $F[x]_{\A}=[x]_{\B}$ for any object or morphism $[x]_\A$ of $\A$.
\end{definition}

\begin{note}
In Definition~\ref{d:tautological}, we do not assume that every object or morphism $x\in\U$ has an associated equivalence class $[x]_\A$ or $[x]_\B$ in $\A$ or $\B$.
However, it is necessary for the existence of a tautological functor $\A\to\B$ that if $x\in\U$ is an object or morphism for which $[x]_\A$ exists, then $[x]_\B$ must also exist.
\end{note}

\begin{example}
If $\A\subseteq\B$ is a subcategory, then the inclusion $i\colon\A\to\B$ is tautological.
If $\B=\A/I$ for some ideal $I$ of morphisms, then the quotient functor $\pi\colon\A\to\B$ is tautological.
(If not otherwise specified, we may take $\U=\B$ in the first example, and $\U=\A$ in the second.)
\end{example}

If a tautological functor in the sense of Definition~\ref{d:tautological} exists, then it is unique: indeed, the definition completely specifies its value on all objects and morphisms of $\A$.
Similarly, if $F\colon\A\to\B$ and $G\colon\B\to\C$ are tautological, so is $G\circ F$.

\begin{proposition}
\label{p:taut-inverse}
If $F\colon\A\to\B$ and $G\colon\B\to\A$ are tautological functors, then $F$ and $G$ are inverse isomorphisms.
\end{proposition}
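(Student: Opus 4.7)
The plan is to reduce the proposition directly to the two observations made in the paragraph immediately before its statement: namely, that (i) a tautological functor is uniquely determined by the definition whenever it exists, and (ii) the composition of two tautological functors is tautological. Once these are in hand, the argument is essentially formal.

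First, I would verify that the identity functor $\id_\A\colon\A\to\A$ is tautological in the sense of Definition~\ref{d:tautological}. Taking the ambient category $\U$ to be the same one that exhibits the objects and morphisms of $\A$ as equivalence classes $[x]_\A$, the tautological condition with both source and target equal to $\A$ reads $\id_\A[x]_\A = [x]_\A$, which holds trivially on the nose. The analogous statement holds for $\id_\B$.

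With this in hand, I would apply observation (ii) to the pair $(F,G)$ to conclude that $G \circ F \colon \A \to \A$ is tautological. Since $\id_\A$ is also tautological with the same source and target, observation (i) forces $G \circ F = \id_\A$. A symmetric argument with the roles of $\A$ and $\B$ reversed gives $F \circ G = \id_\B$, and so $F$ and $G$ are mutually inverse isomorphisms of categories, as claimed.

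There is essentially no obstacle: the content of the proposition is already encoded in the two preceding remarks, and the only step requiring attention is checking that the identity functors fit cleanly into the framework of Definition~\ref{d:tautological}, which they do trivially. The only care needed is to ensure a single ambient $\U$ can be used for both $F$ and $G$ simultaneously, which is implicit in the statement of the proposition since the labelings $[x]_\A$ and $[x]_\B$ are fixed once $\A$ and $\B$ are.
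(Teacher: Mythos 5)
Your proof is correct and is essentially identical to the paper's: both note that the identity functor is tautological, that $G\circ F$ is tautological by closure under composition, and conclude $G\circ F=\id_\A$ (and symmetrically $F\circ G=\id_\B$) by the uniqueness of tautological functors. The extra remark about fixing a single ambient $\U$ is a reasonable sanity check but adds nothing beyond what the paper implicitly assumes.
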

\begin{proof}
The identity functor $1_\A\colon\A\to\A$ is tautological, as is the composition $G\circ F$.
Hence by uniqueness, $G\circ F=1_\A$.
Similarly, $F\circ G=1_{\B}$.
(Note that we have equality of functors here, not just natural isomorphism.)
\end{proof}

\begin{proposition}
\label{p:tautological}
Let $\A$ and $\B$ be categories whose objects and morphisms are equivalence classes of objects and morphisms from some category $\U$, and let $F\colon\U\to\A$ and $G\colon\A\to\B$ be functors.
Then if $F$ and $G\circ F$ are tautological, so is $G$.
\end{proposition}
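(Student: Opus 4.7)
The plan is to verify the defining equation $G[x]_{\A}=[x]_{\B}$ directly for every object and morphism of $\A$, by exploiting that the hypothesis of $F$ being tautological forces its image to cover $\A$ on the nose.

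First, I would fix an arbitrary object or morphism $a$ of $\A$. By the standing assumption that the objects and morphisms of $\A$ are equivalence classes of those of $\U$, there is some $x$ in $\U$ with $a=[x]_{\A}$. The hypothesis that $F\colon\U\to\A$ is tautological then gives $F(x)=[x]_{\A}=a$, so every object and every morphism of $\A$ lies in the image of $F$ and can in fact be written on the nose as $F(x)$ for an appropriate $x\in\U$.

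Next, I would combine the functoriality of $G$ with the tautological property of the composite $G\circ F$ to compute
\[G(a)=G(F(x))=(G\circ F)(x)=[x]_{\B},\]
where the last equality uses that $G\circ F\colon\U\to\B$ is tautological. Substituting $a=[x]_{\A}$ on the left-hand side yields $G[x]_{\A}=[x]_{\B}$, which is precisely the defining condition for $G$ to be tautological in the sense of Definition~\ref{d:tautological}.

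I do not expect any real obstacle here: the argument is essentially a one-line diagram chase. The only point that might appear subtle is the implicit well-definedness check, namely that if $[x]_{\A}=[y]_{\A}$ then one needs $[x]_{\B}=[y]_{\B}$ in order for the prescription $[x]_{\A}\mapsto[x]_{\B}$ to make sense; but this is already absorbed into the hypothesis that $G\circ F$ is tautological, since the computation above simultaneously identifies the common value $G[x]_{\A}=G[y]_{\A}$ with both $[x]_{\B}$ and $[y]_{\B}$. The statement is therefore essentially automatic once the definitions are unpacked, and will play its main role downstream in conjunction with Proposition~\ref{p:taut-inverse} to recognise candidate triangle equivalences such as the one appearing in Theorem~\ref{thm:triangleequivalence-extri}.
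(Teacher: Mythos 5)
Your proof is correct and follows essentially the same approach as the paper's: use that $F$ tautological forces $[x]_{\A}=Fx$, then compute $G[x]_{\A}=GFx=(G\circ F)x=[x]_{\B}$. The extra well-definedness remark is a harmless addendum.
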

\begin{proof}
Let $[x]_\A$ be an object or morphism in $\A$, for some object or morphism $x\in\U$.
Since $F$ is tautological (and $[x]_\U=\{x\}$ is identified with $x$ for all $x\in\U$), we must have $[x]_\A=Fx$, and since $G\circ F$ is tautological we must have $[x]_\B=(G\circ F)x=G[x]_\A$.
Thus $G$ is tautological.
\end{proof}

Next, we recall the construction of the quotient of an additive category by an additive subcategory and its universal properties; we also show that a composition of two such quotients (under a certain assumption) is isomorphic to a single quotient.

\begin{proposition}
\label{prop:additivequotient-extri}
Let $\A$ be an additive category and $\C$ a full additive subcategory of $\A$.
Then there is an additive category $\A/\C$ with the following properties:
\begin{enumerate}
\item the objects of $\A/\C$ are the same as the objects of $\A$;
\item there is a full additive tautological functor $F\colon\A\rightarrow \A/\C$;
\item for all objects $C$ in $\C$, we have $F(C)\cong 0$;
\item if $\B$ is an additive category and $G\colon\A\rightarrow \B$ is an additive functor with the property that $G(C)\cong 0$ for all objects $C$ in $\C$, then there is a unique additive functor $F'\colon\A/\C\rightarrow \B$ such that $F'F=G$.
\end{enumerate}
\end{proposition}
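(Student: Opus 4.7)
The plan is to construct $\A/\C$ as the quotient of $\A$ by the two-sided ideal of morphisms factoring through objects of $\C$. First I would define, for each pair $X,Y\in\A$, the subgroup $\Ical_{\C}(X,Y)\subseteq\Hom_{\A}(X,Y)$ consisting of morphisms $f\colon X\to Y$ admitting a factorisation $f=\beta\alpha$ with $\alpha\colon X\to C$ and $\beta\colon C\to Y$ for some $C\in\C$. The collection $\Ical_{\C}$ is visibly closed under pre- and post-composition by arbitrary morphisms in $\A$, and closed under addition: given $f_i=\beta_i\alpha_i$ factoring through $C_i\in\C$ for $i=1,2$, the sum $f_1+f_2$ factors through $C_1\oplus C_2\in\C$ (using that $\C$ is additive) via the matrix combination of the $\alpha_i$ and $\beta_i$. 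Hence $\Ical_{\C}$ is a two-sided ideal of $\A$.

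Next I would define $\A/\C$ to have the same objects as $\A$, with morphism groups $\Hom_{\A/\C}(X,Y):=\Hom_{\A}(X,Y)/\Ical_{\C}(X,Y)$. Composition inherited from $\A$ is well-defined precisely because $\Ical_{\C}$ is an ideal, and the quotient groups inherit an abelian group structure making composition bilinear; direct sums of objects are inherited from $\A$. The canonical projection $F\colon\A\to\A/\C$, given by the identity on objects and the natural surjection on morphism spaces, is then additive, full, and tautological in the sense of Definition~\ref{d:tautological} (taking $\U=\A$, so that objects of $\A/\C$ are trivial equivalence classes $\{X\}$ and morphism classes are the cosets modulo $\Ical_{\C}$). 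For any $C\in\C$, the identity $1_C$ itself lies in $\Ical_{\C}(C,C)$ via the trivial factorisation $1_C\circ 1_C$, so $F(1_C)=0$ in $\A/\C$, whence $F(C)\cong 0$. This gives properties (a)--(c).

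For the universal property (d), suppose $G\colon\A\to\B$ is additive with $G(C)\cong 0$ for every $C\in\C$. I set $F'(X):=G(X)$ on objects and $F'([f]):=G(f)$ on morphisms; this is well-defined because if $f=\beta\alpha$ lies in $\Ical_{\C}(X,Y)$ with intermediate object $C\in\C$, then $G(f)=G(\beta)\circ G(\alpha)$ factors through $G(C)\cong 0$ and hence vanishes. Additivity and functoriality of $F'$ follow from those of $G$, and $F'F=G$ holds by construction. Uniqueness of $F'$ is forced by this relation, since $F$ is the identity on objects and surjective on morphisms.

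I do not anticipate a serious obstacle: this is a standard construction in additive category theory, and the only point requiring genuine care is the verification that $\Ical_{\C}$ is closed under addition, which is where the assumption that $\C$ is a (full) \emph{additive} subcategory is used. Everything else is a routine check of universal-property bookkeeping, with the tautological nature of $F$ falling out automatically from the identification of $\A/\C$-morphisms with equivalence classes of $\A$-morphisms.
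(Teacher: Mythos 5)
Your proposal is correct and matches the paper's approach exactly: the Note following Proposition~\ref{prop:additivequotient-extri} sketches precisely this construction, taking $\Hom_{\A/\C}(X,Y)=\Hom_{\A}(X,Y)/L$ where $L$ is the subgroup of morphisms factoring through an object of $\C$, with $F$ the identity on objects and the quotient map on morphisms. Your write-up simply supplies the routine verifications (ideal closure via $C_1\oplus C_2\in\C$, well-definedness of $F'$ via $G(\alpha)=0$, etc.) that the paper leaves implicit.
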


\begin{note}
To define $\A/\C$, we take the objects of $\A/\C$ to be the same as the objects of $\A$, and define $\Hom_{\A/\C}(X,Y)=\Hom_\A(X,Y)/L$, where $L$ is the subspace of $\Hom_\A(X,Y)$ consisting of maps which factor through an object in $\C$.
The functor $F$ is the identity on objects, and the quotient map $\Hom_{\A}(X,Y)\to\Hom_{\A}(X,Y)/L$ for each pair of objects $X,Y\in\A$.

Since $\A$ and $\A/\C$ have the same objects, any full subcategory $\D$ of $\A$ determines a full subcategory of $\A/\C$ with the same objects as $\D$ (although with different morphisms).
Starting with the next lemma, we will reuse the notation $\D$ for this second subcategory, with the context making it clear which category we are viewing it is a subcategory of.
\end{note}

\begin{lemma}
\label{lem:doublequotient-extri}
Let $\A$ be an additive category, let $\C$ and $\D$ be full additive subcategories of $\A$, and let $\E=\add(\C,\D)$.
Then there is an isomorphism $R\colon \A/\E\rightarrow (\A/\C)/\D$, with inverse $S$, such that $R$ is tautological (for $\U=\A$) and the following diagram commutes:
\begin{equation}
\label{e:dqdoublediagram-extri}
\begin{tikzcd}
\mc{A}\arrow[r,equal] \arrow{dd}{F} &\mc{A}\arrow[r,equal] \arrow{d}{G}&\mc{A} \arrow{dd}{F} \\
& \mc{A}/\mc{C} \arrow{d}{H}& \\
\mc{A}/\mc{E} \arrow{r}{R} & (\A / \C)/ \D \arrow{r}{S} & \A / \E 
\end{tikzcd}
\end{equation}

\end{lemma}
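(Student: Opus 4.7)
The plan is to invoke the universal property of additive quotients from Proposition~\ref{prop:additivequotient-extri} twice---once to build $R$ and once to build $S$---and then to use Propositions~\ref{p:tautological} and~\ref{p:taut-inverse} to deduce that the resulting $R$ and $S$ are mutually inverse tautological functors. The commutativity of~\eqref{e:dqdoublediagram-extri} then follows essentially from the defining relations that come out of the universal property.

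To construct $R$, I consider the composition $HG\colon\A\to(\A/\C)/\D$. It vanishes on every object of $\C$ because $G$ does, and on every object of $\D$ because $H$ does (recall that $G$ is the identity on objects, so every $D\in\D$ defines an object of $\A/\C$ in the subcategory there also denoted $\D$). By additivity, $HG$ kills every object of $\E=\add(\C,\D)$, so the universal property of $F\colon\A\to\A/\E$ yields a unique additive $R\colon\A/\E\to(\A/\C)/\D$ with $RF=HG$. To construct $S$, I first use that $\C\subseteq\E$, so $F$ vanishes on $\C$ and the universal property of $G$ produces a unique additive $F''\colon\A/\C\to\A/\E$ with $F''G=F$. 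Then since $\D\subseteq\E$, each $D\in\D$ satisfies $F''(D)=F''(G(D))=F(D)\cong 0$, so the universal property of $H$ produces a unique additive $S\colon(\A/\C)/\D\to\A/\E$ with $SH=F''$. The equalities $RF=HG$ (left square) and $SHG=F''G=F$ (right square) give exactly the commutativity of~\eqref{e:dqdoublediagram-extri}.

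It remains to show that $R$ and $S$ are tautological, which by Proposition~\ref{p:taut-inverse} will imply that they are mutually inverse isomorphisms. The quotient functors $F$, $G$ and $H$ are tautological by construction, and compositions of tautological functors are tautological, so $HG$ and $F''G=F$ are tautological. Applying Proposition~\ref{p:tautological} with $\U=\A$ to $F\colon\A\to\A/\E$ and $R\colon\A/\E\to(\A/\C)/\D$, using that $RF=HG$ is tautological, shows that $R$ is tautological. Applying Proposition~\ref{p:tautological} with $\U=\A$ to $G\colon\A\to\A/\C$ and $F''\colon\A/\C\to\A/\E$, using that $F''G=F$ is tautological, shows $F''$ is tautological; applying it again with $\U=\A/\C$ to $H\colon\A/\C\to(\A/\C)/\D$ and $S\colon(\A/\C)/\D\to\A/\E$, using that $SH=F''$ is tautological, shows $S$ is tautological as well.

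The main obstacle is not mathematical depth but bookkeeping: keeping track of which universal property produces which functor, and checking that the identifications needed to apply Proposition~\ref{p:tautological} are legitimate---in particular, that objects and morphisms of $\A/\C$, $\A/\E$ and $(\A/\C)/\D$ can all coherently be viewed as equivalence classes of those of $\A$ (or, for $S$, of $\A/\C$). Once that bookkeeping is in place, the rest is formal.
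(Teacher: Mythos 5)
Your proof is correct and follows essentially the same route as the paper: construct $R$ from the universal property of $F$ via $HG$, construct $S$ in two steps through an intermediate functor $\A/\C\to\A/\E$ (your $F''$, the paper's $S'$), and then conclude via Propositions~\ref{p:tautological} and~\ref{p:taut-inverse}. Your justification that $HG$ kills all of $\E$ by additivity is in fact slightly more careful than the paper's phrasing, but both arguments land on the same fact.
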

\begin{proof}
Let $F\colon\A\rightarrow \A/\E$, $G\colon\A\rightarrow \A/\C$ and $H\colon\A/\C\rightarrow (\A/\C)/\D$ be the quotient functors given by Proposition~\ref{prop:additivequotient-extri}; all of these functors are tautological.

If $E\in\E$, then $G(E)\in\D\subseteq\A/\C$, and so $HG(E)\cong0$ in $(\A/\C)/\D$. Thus there is a unique functor $R\colon\C/\E\rightarrow (\A/\C)/\D$ such that $RF=HG$ \eqref{e:dqlefthandsquare-extri}. By Proposition~\ref{p:tautological}, this formula implies that $R$ is tautological.
\begin{equation}
\label{e:dqlefthandsquare-extri}
\begin{tikzcd}
\mc{A}\arrow[r,equal] \arrow{dd}{F} &\mc{A} \arrow{d}{G}&\\
& \mc{A}/\mc{C} \arrow{d}{H} \\
\mc{A}/\mc{E} \arrow[r,exists]{}{R} & (\A / \C)/ \D
\end{tikzcd}
\end{equation}

If $C\in\C\subseteq\E$, then $F(C)\cong0$ in $\A/\E$, so there is a unique functor $S'\colon\A/\C\rightarrow \A/\E$ such that $F=S'G$, and $S'$ is tautological by Proposition~\ref{p:tautological}.
Similarly, if $D\in\D\subseteq\E$ then $0\cong F(D)=S'G(D)=S'(D)$. Hence there is a unique functor $S\colon(\A/\C)/\D\rightarrow \A/\E$ such that $SH=S'$ \eqref{e:dqrighthandsquare-extri}. Thus $SHG=S'G=F$ and $S$ is tautological by Proposition~\ref{p:tautological} again.

\begin{equation}
\label{e:dqrighthandsquare-extri}
\begin{tikzcd}
\mc{A}\arrow[r,equal] \arrow{d}{G} &\mc{A}\arrow{dd}{F}  \\
\mc{A}/\mc{C} \arrow{d}{H} \arrow[rd,exists]{}{S'} &  \\
(\A / \C)/ \D   \arrow[r,exists]{}{S} &  \mc{A}/\mc{E}
\end{tikzcd}
\end{equation}

Putting the diagrams~\eqref{e:dqlefthandsquare-extri} and~\eqref{e:dqrighthandsquare-extri} together, we have the following commutative diagram, which is part of what we needed to prove.

\begin{equation}
\label{e:dqoverall-extri}
\begin{tikzcd}
\A \arrow[r,equal] \arrow{dd}{F} & \mc{A}\arrow[r,equal] \arrow{d}{G} &\mc{A}\arrow{dd}{F}
\\
& \mc{A}/\mc{C} \arrow{d}{H} \arrow[rd,exists]{}{S'} & 
\\
 \mc{A}/\mc{E} \arrow[r,exists]{}{R} & (\A / \C)/ \D   \arrow[r,exists]{}{S} &  \mc{A}/\mc{E}  
\end{tikzcd}
\end{equation}
Since $R$ and $S$ are both tautological, they are inverse isomorphisms by Proposition~\ref{p:taut-inverse}.
\end{proof}

We next recall the triangulated structure on the stable category of a Frobenius extriangulated category $\F$.
Let $\PR$ be the full subcategory of projective-injective objects of $\F$, and let $\pi_{\F}$ be the quotient functor $\pi_{\F}\colon\F\rightarrow \underline{\F}=\F/\PR$.
Recall that the objects of $\F$ are the same as the objects of $\underline{\F}$ and so $\pi_{\F}$, being tautological, is the identity on objects.
Despite this, it will sometimes be useful to write $\pi_{\F}X$ in place of $X$ to emphasise that we are viewing this object in the category $\underline{\F}$.

By~\cite[Cor.~7.4]{NakaokaPalu} (cf.~\cite[Thm.~I.2.6]{Happel}), the stable category $\underline{\F}$ is a triangulated category.
We make this construction explicit, following the approach in~\cite[\S 3.3]{krause22}.
Firstly, for each object $X$ in $\F$, we fix an inflation $\alpha^{\F}_X\colon X\to I_{\F}(X)$, where $I_{\F}(X)$ is an injective object in $\F$, and let $\bSigma_{\F}(X)$ be the cone of $\alpha^{\F}_X$, so that there is a conflation
\begin{equation}
\label{eq:inj-conf}
\begin{tikzcd}
E_{\F}(X)\colon\quad X \arrow[r,infl]{}{\alpha^{\F}_X} & I_{\F}(X) \arrow[r,defl]{}{\beta^{\F}_X} & \bSigma_{\F}(X) \arrow[r,confl]{}{\delta_X^\F} & \phantom{}
 \end{tikzcd}
\end{equation}
in $\F$. Moreover, given a morphism $f\colon X\to Y$ we may choose a map of conflations
\begin{equation}
\label{eq:induced-morph}
\begin{tikzcd}
X\arrow[infl]{r}{\alpha_X^{\F}}\arrow{d}{f}&I_{\F}(X)\arrow[defl]{r}{\beta_X^{\F}}\arrow{d}{}&\bSigma_{\F}(X)\arrow{d}{\bSigma_{\F}(f)}\arrow[confl]{r}{\delta_X^{\F}}&\phantom{}\\
Y\arrow[infl]{r}{\alpha_Y^{\F}}&I_{\F}(Y)\arrow[defl]{r}{\beta_Y^{\F}}&\bSigma_{\F}(Y)\arrow[confl]{r}{\delta_Y^{\F}}&\phantom{}
\end{tikzcd}
\end{equation}
from $E_{\F}(X)$ to $E_{\F}(Y)$, and so in particular choose a morphism $\bSigma_{\F}(f)\colon\bSigma_{\F}(X)\to\bSigma_{\F}(Y)$.
Given $f$, there exists a map $I_{\F}(X)\to I_{\F}(Y)$ making the left-hand square of \eqref{eq:induced-morph} commute, since $I_{\F}(Y)$ is injective and $\alpha_X^{\F}$ is an inflation, although this map is not unique.
Having chosen this map, we may complete it, again non-uniquely, to the morphism of conflations \eqref{eq:induced-morph} as in \cite[Defn.~2.12(ET3)]{NakaokaPalu}.

\begin{remark}
\label{r:comm-conf}
The fact that \eqref{eq:induced-morph} is a morphism of conflations means not only that the two squares commute, but also that $f_*\delta_X^\F= \bSigma_\F(f)^*\delta_Y^\F$, where $f_*\colon\EE_\F(\bSigma_\F(X),X)\to\EE_\F(\bSigma_\F(X),Y)$ and $\Sigma_\F(f)^*\colon\EE_\F(\bSigma_\F(Y),Y)\to\EE_\F(\bSigma_\F(X),Y)$ are, respectively, the pushout and pullback morphisms; see \cite[Defn.~2.3]{NakaokaPalu}.
\end{remark}

As a consequence of \cite[Cor.~3.5]{NakaokaPalu}, having fixed the conflations $E_{\F}(X)$ for each object $X\in\F$, the morphisms $\pi_\F\bSigma_F(f)$ in $\underline{\F}$ are independent of the choices involved in constructing the diagram \eqref{eq:induced-morph}.
Since $\pi_{\F}$ is surjective on objects and morphisms, we thus obtain a functor $\Sigma_{\F}\colon\underline{\F}\to\underline{\F}$ by defining $\Sigma_{\F}(\pi_{\F}X)=\pi_{\F}\bSigma_{\F}(X)$ and $\Sigma_{\F}(\pi_{\F}f)=\pi_{\F}\bSigma_{\F}(f)$.
This functor will be the suspension in the triangulated structure on $\underline{\F}$.

To define the distinguished triangles, let $E$ be an arbitrary conflation
\[%
\begin{tikzcd} E\colon\quad X \arrow[r, infl]{}{\alpha} & Y \arrow[r,defl]{}{\beta} & Z \arrow[r,confl]{}{\delta} & \phantom{} \end{tikzcd}\]
in $\F$.
Just as for \eqref{eq:induced-morph}, the fact that $I_{\F}(X)$ is injective and $\alpha$ is an inflation means that we may choose a map of conflations
\begin{equation}
\label{eq:induced-tri}
\begin{tikzcd}
X \arrow[r,infl]{}{\alpha} \arrow[d,equal] & Y \arrow[r,defl]{}{\beta} \arrow[d] & Z \arrow[r,confl]{}{\delta} \arrow[d]{}{\gamma} & \phantom{} \\
X \arrow[r,infl]{}{\alpha^{\F}_X} & I_{\F}(X) \arrow[r,defl]{}{\beta^{\F}_X} & \bSigma_{\F}(X) \arrow[r,confl]{}{\delta_X^{\F}} & \phantom{}
\end{tikzcd}
\end{equation}
from $E$ to $E_{\F}(X)$.
Then the sextuple
\begin{equation}
\label{e:standardtriangles-extri}
\begin{tikzcd} \pi_{\F}X \arrow[r]{}{\pi_{\F}\alpha} & \pi_{\F}Y \arrow[r]{}{\pi_{\F}\beta} & \pi_{\F}Z \arrow[r]{}{\pi_{\F}\gamma} & \pi_{\F}\bSigma_{\F}(X)=\Sigma_{\F}(\pi_{\F}X) \end{tikzcd}
\end{equation}
in $\underline{\F}$ is, up to isomorphism, independent of the choice of diagram \eqref{eq:induced-tri}, and we call it a \emph{standard triangle} in $\underline{\F}$.
 
The following theorem may then be proved using the above arguments, and following precisely the strategy of \cite[Thm.~I.2.6]{Happel} or \cite[Prop.~3.3.2]{krause22}, which cover the case that $\F$ is exact (cf.~\cite[Rem.~7.5]{NakaokaPalu} and \cite[Defn.-Prop.~1.25]{INP}).
\begin{theorem}
\label{thm:stabletriangulated-extri}
Let $\F$ be a Frobenius extriangulated category. Then the functor $\Sigma_{\underline{\F}}$ is an autoequivalence and $\underline{\F}$, together with this autoequivalence and the class of sextuples isomorphic in $\underline{\F}$ to the standard triangles~\eqref{e:standardtriangles-extri}, is a triangulated category.
Moreover, for any $X,Y\in\F$, we have
\[\EE_\F(X,Y)=\stabHom_{\F}(X,\Sigma_{\F}Y).\]
\end{theorem}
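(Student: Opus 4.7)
The plan is to follow the strategy of Happel or Krause as suggested, transplanting their arguments for Frobenius exact categories into the extriangulated setting by systematically replacing short exact sequences with conflations, admissible monomorphisms with inflations, and admissible epimorphisms with deflations, and using the exact sequences from \cite[Cor.~3.12]{NakaokaPalu} (or \cite[Prop.~3.3]{NakaokaPalu}) in place of the classical long exact sequences for $\Ext$. All of the relevant manipulations of conflations (pushouts, pullbacks, composition of inflations/deflations) exist in any extriangulated category by the axioms (ET3), (ET4), and their duals from \cite[Defn.~2.12]{NakaokaPalu}.

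First I would confirm that $\Sigma_\F$ is a well-defined autoequivalence of $\underline{\F}$. Well-definedness on objects and morphisms is essentially already established in the paragraphs above the statement, via \cite[Cor.~3.5]{NakaokaPalu} and the standard diagram chase that any two completions of~\eqref{eq:induced-morph} differ by a morphism factoring through $I_\F(X)$, hence become equal after passing to $\underline{\F}$. To see $\Sigma_\F$ is an equivalence, one dually fixes for each $X$ a deflation $P_\F(X)\defl X$ from a projective object, defines $\Omega_\F(X)$ as its cocone, and checks as above that this gives a functor $\Omega_\F\colon\underline{\F}\to\underline{\F}$. Using that every projective is injective (and vice versa) in the Frobenius setting, standard pushout/pullback arguments on the defining conflations produce natural isomorphisms $\Sigma_\F\circ\Omega_\F\cong \id_{\underline{\F}}\cong \Omega_\F\circ\Sigma_\F$.

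Next I would verify the axioms of a triangulated category for the family of standard triangles \eqref{e:standardtriangles-extri}, closed under isomorphism. Axiom (TR1) on identities and existence reduces to the fact that every morphism $f\colon X\to Y$ in $\F$ can be extended to a conflation by pushing out along $\alpha_X^\F$, using (ET4$^{\mathrm{op}}$). Axiom (TR2), rotation, is the delicate step at the level of extriangulated categories: following Happel, one must show that the rotated sextuple is isomorphic in $\underline{\F}$ to a standard triangle built from a conflation starting with $-\beta$, which is obtained by constructing an explicit diagram involving $I_\F(X)$ and the conflation $E_\F(X)$, and then verifying commutativity modulo $\PR$. Axiom (TR3), the morphism axiom, follows from (ET3) applied to the diagrams defining standard triangles. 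The main obstacle, as usual, is the octahedral axiom (TR4), which in the exact case is proved by constructing a $3\times3$ diagram of short exact sequences; in the extriangulated setting this becomes an application of the $3\times3$ lemma and its corollaries \cite[Prop.~3.15]{NakaokaPalu}, allowing one to realise the required third conflation as a suitable pushout/pullback inside $\F$ and then pass to $\underline{\F}$.

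Finally, the identification $\EE_\F(X,Y)=\stabHom_\F(X,\Sigma_\F Y)$ follows directly from the defining conflation $E_\F(Y)$: applying $\Hom_\F(X,-)$ yields the exact sequence
\[
\Hom_\F(X,I_\F(Y))\to\Hom_\F(X,\bSigma_\F(Y))\to\EE_\F(X,Y)\to\EE_\F(X,I_\F(Y))=0,
\]
using injectivity of $I_\F(Y)$ (equivalently $\EE_\F(X,I_\F(Y))=0$ by Lemma~\ref{l:proj-extri}). The image of the left-hand map consists precisely of morphisms factoring through $I_\F(Y)\in\PR$, hence the cokernel is $\stabHom_\F(X,\bSigma_\F(Y))=\stabHom_\F(X,\Sigma_\F Y)$, as required. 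I expect the only real work to be the careful bookkeeping in the octahedron, but all ingredients are already available from \cite{NakaokaPalu}.
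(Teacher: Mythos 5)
Your proposal is essentially the paper's intended argument: the paper does not give a detailed proof but cites precisely the Happel/Krause strategy that you implement, noting it transports to the extriangulated setting via the long exact sequences and axioms of \cite{NakaokaPalu}, and you fill this in correctly (the paper also notes Msapato carried this out in a special case and that one could alternatively combine \cite[Prop.~3.30, Cor.~7.6]{NakaokaPalu}). One small point in your final step deserves a sentence: the image of $\Hom_\F(X,I_\F(Y))\to\Hom_\F(X,\bSigma_\F(Y))$ consists of morphisms factoring through the particular object $I_\F(Y)$, whereas $\stabHom_\F(X,\bSigma_\F(Y))$ is the quotient by morphisms factoring through \emph{any} projective-injective; to identify the two you should observe that any morphism $X\to\bSigma_\F(Y)$ factoring through a projective $P$ lifts along the deflation $\beta_Y^\F$ (by projectivity of $P$) and hence also factors through $I_\F(Y)$.
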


Comparing to \cite[Thm.~4.7]{IY08}, we see that this is precisely the strategy used by Iyama and Yoshino to exhibit a triangulated structure on the category $\X_{\underline{\F}}/\add(\X)$.
Indeed, $\rperp{\X}_{\underline{\F}}$ is a Frobenius extriangulated category with projective-injectives $\add(\X)$, as in Proposition~\ref{Prop:MperpisFrobenius-extri}, so $\rperp{\X}_{\underline{\F}}/\add(\X)$ is its stable category.
Another special case of Theorem~\ref{thm:stabletriangulated-extri}, with the same proof strategy as the general case, is given by Msapato \cite[Thm.~3.3 (proof of reverse direction)]{Msapato}.
Theorem~\ref{thm:stabletriangulated-extri} can also be deduced from the following more general construction by Nakaoka and Palu, combined with \cite[Cor.~7.6]{NakaokaPalu}.

\begin{remark}
\label{r:stabcat-well-def}
If for each $X\in\F$ we choose a second conflation
\[\begin{tikzcd}
E'_{\F}(X)\colon\quad X \arrow[r,infl]{}{\alpha'_X} & I'_{\F}(X) \arrow[r,defl]{}{\beta'_X} & \bSigma'_{\F}(X) \arrow[r,confl]{}{\delta'_X} & \phantom{}
\end{tikzcd}\]
in which $I'_{\F}(X)$ is injective, then for each $X$ there exists a map of conflations
\[\begin{tikzcd}
X \arrow[r,infl]{}{\alpha^{\F}_X}\arrow[equal]{d} & I_{\F}(X) \arrow[r,defl]{}{\beta^{\F}_X}\arrow{d} & \bSigma_{\F}(X) \arrow[r,confl]{}{\delta_X^\F}\arrow{d}{\sigma_X} & \phantom{}\\
X\arrow[infl]{r}{\alpha'_X}&I_\F'(X)\arrow[defl]{r}{\beta'_X}&\bSigma'_\F(X)\arrow[confl]{r}{\delta'_X}&\phantom{}
 \end{tikzcd}\]
from $E_\F(X)$ to $E'_{\F}(X)$.
Using \cite[Cor.~3.5]{NakaokaPalu} again, one may check that the morphism $\pi\sigma_X$ in $\underline{\F}$ is independent of the choice of this map of conflations, that $\pi\sigma_X$ is an isomorphism, and that these isomorphisms collectively define a natural isomorphism $\pi\sigma\colon\Sigma_\F\to\Sigma'_\F$, where $\Sigma'_\F=\pi_\F\bSigma'_\F$ is the endofunctor of $\underline{\F}$ obtained from the conflations $E'_\F(X)$
The standard triangles obtained from the conflations $E'_{\F}(X)$ are isomorphic to those obtained from $E_\F(X)$, and so lead to the same collection of distinguished triangles in $\underline{\F}$.
As a result, we see that the two triangulated structures on $\underline{\F}$ obtained from the two sets of chosen conflations are equivalent, via the identity functor on $\underline{\F}$ and the natural isomorphism $\pi\sigma\colon\Sigma_\F\to\Sigma'_\F$.
\end{remark}

\begin{proposition}[{\cite[Prop.~3.30]{NakaokaPalu}}]
\label{p:partial-stab}
Let $\C$ be an extriangulated category and let $\PR_0$ be a class of projective-injective objects in $\E$, closed under direct sums and summands.
Then there is a natural extriangulated structure on $\C/\PR_0$ such that $\EE_{\C/\PR_0}(\pi_{\PR_0}{-},\pi_{\PR_0}{-})=\EE_{\C}(-,-)$ and $\s_{\C/\PR_0}=\pi_{\PR_0}\s_{\C}$, for $\pi_{\PR_0}\colon\C\to\C/\PR_0$ the projection functor.
\end{proposition}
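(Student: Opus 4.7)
The plan is to produce the extriangulated structure on $\C/\PR_0$ by transporting the data from $\C$, then to verify the axioms using the fact that $\pi_{\PR_0}$ is an additive functor which is the identity on objects.

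First, I would check that the formula $\EE_{\C/\PR_0}(X,Y):=\EE_{\C}(X,Y)$ actually defines a bifunctor on $\C/\PR_0$. The key point is that if a morphism $f\colon X\to X'$ in $\C$ factors as $f=hg$ with $g\colon X\to P$ and $h\colon P\to X'$ for some $P\in\PR_0$, then the induced natural transformation $\EE_{\C}(f,\blank)$ is zero, and dually $\EE_{\C}(\blank,f)$ is zero. Indeed, any conflation $X\infl E\defl P\confl$ splits because $P$ is projective, so $\EE_{\C}(g,\blank)=0$, and hence $\EE_{\C}(f,\blank)=\EE_{\C}(g,\blank)\circ\EE_{\C}(h,\blank)=0$. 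The injectivity of $P$ gives the dual statement. This shows the pushforward and pullback operations descend to well-defined maps on the quotient, so $\EE_{\C/\PR_0}$ is well-defined as a bifunctor on $\C/\PR_0\times\C/\PR_0$.

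Second, I would define $\s_{\C/\PR_0}(\delta)$ as the equivalence class of the image under $\pi_{\PR_0}$ of any conflation realising $\delta$, and check this is well-defined. Two realisations of the same $\delta$ in $\C$ are connected by a morphism of conflations fixing the end terms, and applying $\pi_{\PR_0}$ (which is the identity on objects and additive on morphisms) preserves this equivalence. The composition of consecutive maps remains zero after projection, so the image is a candidate conflation for $\EE_{\C/\PR_0}$, and it is independent of the lift up to equivalence in $\C/\PR_0$.

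Third, I would verify the extriangulated axioms (ET1)--(ET4$^{\mathrm{op}}$) in turn. The bifunctoriality of $\EE_{\C/\PR_0}$ handles (ET1). The compatibility of $\s_{\C/\PR_0}$ with direct sums and with the zero extension (ET2, ET3) follows from applying $\pi_{\PR_0}$ to the corresponding data in $\C$, together with the fact that $\pi_{\PR_0}$ is additive. For (ET3) and its dual, one lifts a square in $\C/\PR_0$ to a square in $\C$ that commutes up to a morphism factoring through $\PR_0$, applies the axiom in $\C$, and observes that the resulting morphism of conflations is preserved up to equivalence under $\pi_{\PR_0}$. The main obstacle is (ET4) and its dual: I would lift the two given conflations in $\C/\PR_0$ to conflations in $\C$, apply (ET4) in $\C$ to obtain the required middle conflation and compatible morphism of conflations, and project everything back down via $\pi_{\PR_0}$. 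Since the compatibility conditions on extensions are equalities in $\EE$-groups, which are identified with those in $\C$ by the first step, these conditions transfer automatically.

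The most delicate part will be the well-definedness arguments rather than the axiom verifications: one must consistently check that different lifts of the same morphism in $\C/\PR_0$ (differing by a morphism factoring through $\PR_0$) produce equivalent or identical data at each step, and the first-step observation that such morphisms act as zero on $\EE_{\C}$-groups is the fundamental input that makes this possible throughout.
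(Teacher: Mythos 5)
The paper itself does not prove this proposition; it is cited as \cite[Prop.~3.30]{NakaokaPalu}, so there is no in-paper argument to compare against. Your strategy --- transporting the bifunctor and realisation along $\pi_{\PR_0}$ and then re-verifying the axioms --- is indeed the content of Nakaoka and Palu's proof, and your first step is correct: the ideal of morphisms factoring through $\PR_0$ acts as zero on $\EE_\C$-groups because an object $P$ is projective-injective if and only if $\EE_\C(P,Y)=0=\EE_\C(Y,P)$ for all $Y$, so for $f=hg$ through $P\in\PR_0$ one has $\EE_\C(f,\blank)=\EE_\C(g,\blank)\circ\EE_\C(h,\blank)=0$ and dually. (Your phrasing via the splitting of a particular conflation is a roundabout route to the same fact.)

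There is, however, a genuine gap in your treatment of (ET3) and (ET3)$^{\opp}$. You write that a square in $\C/\PR_0$ lifts to a square in $\C$ ``that commutes up to a morphism factoring through $\PR_0$'' and then propose to ``apply the axiom in $\C$''; but (ET3) requires an honestly commuting square, so this step does not go through as stated. The missing idea is to rectify the lift before invoking the axiom: if $bx-x'a$ factors as $qp$ with $p\colon A\to P$, $q\colon P\to B'$ and $P\in\PR_0$, then injectivity of $P$ together with $x$ being an inflation yields an extension $p'\colon B\to P$ with $p'x=p$; replacing $b$ by $b-qp'$ produces a square that commutes on the nose in $\C$ while having the same image in $\C/\PR_0$, and now (ET3) applies. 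The dual rectification, using projectivity of $P$ and the deflation, handles (ET3)$^{\opp}$. This rectification is really the heart of the matter --- it is precisely where the hypothesis that $\PR_0$ consists of projective-injective objects enters --- and your verification of the lifting axioms cannot succeed without it. (By contrast, for (ET1), (ET2), the well-definedness of the realisation, and (ET4)/(ET4)$^{\opp}$, the relevant data lift on the nose because conflations in $\C/\PR_0$ are by construction images of conflations in $\C$ and the $\EE$-groups are literally identified, so your outline there is essentially correct.)
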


\begin{remark}
\label{r:extri-functor}
It follows directly from the construction that the projection functor $\pi_{\PR_0}\colon\C\to\C/\PR_0$, together with the identity natural transformation $\EE_{\C}(-,-)\to\EE_{\C/\PR_0}(\pi_{\PR_0}{-},\pi_{\PR_0}{-})$, becomes an extriangulated functor in the sense of Bennett-Tennenhaus and Shah \cite[Defn.~2.32]{BTShah} (see also \cite[Defn.~3.15]{BTHSS}) when $\C/\PR_0$ is equipped with the extriangulated structure from Proposition~\ref{p:partial-stab}.
Indeed, the extension functor $\EE_{\C/\PR_0}(-,-)$ and realisation $\s_{\C/\PR_0}$ are defined in they only way they can be if this statement is to be true, and the content of \cite[Prop.~3.30]{NakaokaPalu} is in checking that they do in fact form part of an extriangulated structure on $\C/\PR_0$.
\end{remark}

\begin{proposition}
\label{p:extri-equiv}
Let $\F$ be a Frobenius extriangulated category and let $\X\subseteq\F$. The quotient functor $\pi\colon\F\to\underline{\F}=\F/\PR$ induces a tautological equivalence of extriangulated categories $\underline{\pi}\colon\rperp{\X}/\PR\overset{\sim}{\to}\rperp{\X}_{\underline{\F}}$.
\end{proposition}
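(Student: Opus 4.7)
The plan is to present both $\rperp{\X}/\PR$ and $\rperp{\X}_{\underline{\F}}$ as categories of equivalence classes of objects and morphisms of $\rperp{\X}$ with the same $\Hom$ spaces and the same extriangulated data, so that $\underline{\pi}$ is simply the tautological functor between these two presentations, and is then automatically an equivalence.

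First I would equip $\rperp{\X}/\PR$ with an extriangulated structure. Since $\PR\subseteq\add(\X,\PR)$ and the latter is the subcategory of projective-injectives of $\rperp{\X}$ by Proposition~\ref{Prop:MperpisFrobenius-extri}, Proposition~\ref{p:partial-stab} applies and endows $\rperp{\X}/\PR$ with an extriangulated structure whose $\EE$-functor agrees with $\EE_{\rperp{\X}}=\EE_{\F}|_{\rperp{\X}}$ (using Lemma~\ref{l:closed-extri}) and whose realisation is obtained by post-composing $\s_{\rperp{\X}}$ with the projection. By the same token, $\underline{\F}=\F/\PR$ carries a natural extriangulated structure (which happens to be triangulated), and then $\rperp{\X}_{\underline{\F}}\subseteq\underline{\F}$ inherits one by Example~\ref{eg:extri-cats}\ref{eg:subcats}, again with $\EE$-functor equal to $\EE_{\F}|_{\rperp{\X}}$ after the identification $\EE_{\underline{\F}}(-,-)=\EE_{\F}(-,-)$ on objects of $\F$. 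The composition $\rperp{\X}\hookrightarrow\F\xrightarrow{\pi}\underline{\F}$ annihilates $\PR$ and factors through $\rperp{\X}/\PR$, and its image is exactly $\rperp{\X}_{\underline{\F}}$ as noted in the discussion preceding Theorem~\ref{thm:triangleequivalence-extri}. The resulting functor $\underline{\pi}$ is the identity on objects and on morphism classes, hence tautological in the sense of Definition~\ref{d:tautological}, with $\U=\rperp{\X}$.

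Next I would record the underlying additive equivalence. Essential surjectivity is built into the construction, and fully faithfulness is immediate because $\rperp{\X}$ is full in $\F$ and $\rperp{\X}_{\underline{\F}}$ is full in $\underline{\F}$, so both $\Hom_{\rperp{\X}/\PR}(M,N)$ and $\Hom_{\rperp{\X}_{\underline{\F}}}(M,N)$ unfold to the same quotient of $\Hom_{\F}(M,N)$ by the morphisms factoring through an object of $\PR$.

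The remaining step is to promote $\underline{\pi}$ to an equivalence of extriangulated categories in the sense of~\cite[Defn.~2.32]{BTShah}. I would take the identity natural transformation $\EE_{\rperp{\X}/\PR}(-,-)\to\EE_{\rperp{\X}_{\underline{\F}}}(\underline{\pi}-,\underline{\pi}-)$, since by the discussion above both bifunctors coincide with $\EE_{\F}|_{\rperp{\X}}$. Compatibility with realisations then reduces to the observation that every conflation in either quotient is represented by a single conflation in $\F$ with all terms in $\rperp{\X}$, so each realisation is just $\s_{\F}$ followed by the same quotient $\F\to\underline{\F}$ sending $\PR$ to zero. I expect this realisation-compatibility to be the main bookkeeping hurdle; but once source and target are identified through $\EE_{\F}|_{\rperp{\X}}$ and $\s_{\F}|_{\rperp{\X}}$, the verification amounts to chasing definitions.
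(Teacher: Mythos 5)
Your proposal follows essentially the same route as the paper's proof: define $\underline{\pi}$ by the universal property of the quotient, observe it is tautological, identify both extension bifunctors with $\EE_{\F}|_{\rperp{\X}}$ (via Proposition~\ref{p:partial-stab} on one side and the equality $\EE_{\underline{\F}}=\EE_{\F}$ on the other), take the identity natural transformation, and check realisation compatibility by chasing definitions through the two projection functors. The only stylistic difference is that the paper concludes by citing~\cite[Prop.~2.13]{NOS} to convert the additive equivalence plus compatible natural isomorphism into an extriangulated equivalence, a reference you would want to add to close the argument formally.
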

\begin{proof}
Let $\underline{\pi}$ be the unique functor making the diagram
\begin{equation}
\label{eq:induct}
\begin{tikzcd}\rperp{\X}\arrow{r}\arrow{d}&\F\arrow{d}{\pi}\\
\rperp{\X}/\PR\arrow{r}{\underline{\pi}}&\underline{\F}\end{tikzcd}
\end{equation}
commute.
Here the unlabelled arrows are the inclusion $\rperp{\X}\to\F$ and the projection $\rperp{\X}\to\rperp{\X}/\PR$.
One can check directly that $\underline{\pi}$ is tautological (or apply Proposition~\ref{p:tautological} with $\U=\rperp{\X}$, $\A=\rperp{\X}/\PR$ and $\B=\pi(\rperp{\X})\subset\underline{\F}$, taking $F$ to be the canonical projection and $G=\underline{\pi}$).

By the construction of the stable category, we have
\begin{equation}
\label{eq:stable-exts}
\EE_{\underline{\F}}(X,Y):=\stabHom_{\F}(X,\Sigma_{\underline{\F}}Y)=\EE_{\F}(X,Y)
\end{equation}
for any $X,Y\in\F$. 
Thus the essential image of $\M$ under $\pi$ is precisely $\M_{\underline{\F}}$.
The kernel of $\pi$ is $\PR\subseteq\M$, and so $\underline{\pi}\colon\M/\PR\to\M_{\underline{\F}}$ is an equivalence of additive categories.

For any $X,Y\in\M$, we have
\[\EE_{\M/\PR}(X,Y)=\EE_{\M}(X,Y):=\EE_\F(X,Y)\]
by Proposition~\ref{p:partial-stab}, and
\[\EE_{\M_{\underline{\F}}}(\underline{\pi}X,\underline{\pi}Y)=\EE_{\M_{\underline{\F}}}(X,Y)=\EE_{\underline{\F}}(X,Y)=\EE_{\F}(X,Y)\]
as in \eqref{eq:stable-exts}.
Thus we have identity maps $\Gamma_{X,Y}\colon\EE_{\M/\PR}(X,Y)\to\EE_{\M_{\underline{\F}}}(\underline{\pi}X,\underline{\pi}Y)$
for each $X,Y\in\M$, and these define a natural transformation \[\Gamma\colon\EE_{\M/\PR}(-,-)\to\EE_{\M_{\underline{\F}}}(\underline{\pi}{-},\underline{\pi}{-}),\]
using the fact that $\underline{\pi}$ is tautological to check commutativity of the necessary diagrams.

Finally, we show that $\Gamma$ is compatible with the realisation maps, i.e. \[\underline{\pi}\circ\s_{\M/\PR}=\s_{\M_{\underline{\F}}}\circ\Gamma_{X,Y}=\s_{\M_{\underline{\F}}}\]
for all $X,Y\in\M$.
Let $X,Y\in\M$ and $\delta\in\EE_{\M/\PR}(X,Y)=\EE_\F(X,Y)=\EE_{\M_{\underline{\F}}}(X,Y)$. Then $\s_{\F}(\delta)$ is a conflation in $\F$ with all terms in $\M$, since $\M$ is extension-closed.
To obtain $\s_{\M_{\underline{\F}}}(\delta)$, we project this conflation to $\underline{\F}$ via $\pi$.
On the other hand, to obtain $\s_{\M/\PR}$ we apply the projection $\M\to\M/\PR$.
It then follows from the commutativity of \eqref{eq:induct} that $\underline{\pi}(\s_{\M/\PR}(\delta))=\s_{\M_{\underline{\F}}}(\delta)$, as required.

Since $\underline{\pi}\colon\M/\PR\to\M_{\underline{\F}}$ is an equivalence of additive categories and $\Gamma$ is a natural isomorphism of functors, the pair is an equivalence of extriangulated categories by \cite[Prop.~2.13]{NOS}.
\end{proof}

\begin{lemma}
\label{l:stab-fun}
Let $\F$ and $\F'$ be Frobenius extriangulated categories, and let $\varphi\colon\F\to\F'$ be an extriangulated functor taking projective-injective objects of $\F$ to projective-injective objects of $\F'$.
Consider the diagram
\begin{equation}
\label{eq:induced-tri-fun}
\begin{tikzcd}
\F\arrow{r}{\varphi}\arrow{d}[swap]{\pi}&\F'\arrow{d}{\pi'}\\
\underline\F\arrow{r}{\underline\varphi}&\underline{\F}',
\end{tikzcd}
\end{equation}
in which the vertical arrows are the projections, and $\underline{\varphi}$ is the unique functor making the diagram commutative.
Then there is a natural isomorphism $\gamma\colon\underline{\varphi}\Sigma_{\F}\to\Sigma_{\F'}\underline{\varphi}$ together with which $\underline{\varphi}$ is a triangle functor.
\end{lemma}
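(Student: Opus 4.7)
The plan is to mimic the strategy of Remark~\ref{r:stabcat-well-def}, where a single Frobenius extriangulated category was equipped with two choices of distinguished conflations $E_\F(X)$ and $E'_\F(X)$: here the two conflations play the role of the image $\varphi(E_\F(X))$ and the fixed conflation $E_{\F'}(\varphi X)$ in $\F'$.

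First I would construct $\gamma$. Fix $X\in\F$ and apply the extriangulated functor $\varphi$ to the conflation $E_\F(X)\colon X\infl I_\F(X)\defl\bSigma_\F(X)$ to obtain a conflation
\[\begin{tikzcd}\varphi(X)\arrow[infl]{r}{\varphi\alpha_X^\F}&\varphi(I_\F(X))\arrow[defl]{r}{\varphi\beta_X^\F}&\varphi(\bSigma_\F(X))\arrow[confl]{r}&\phantom{}\end{tikzcd}\]
in $\F'$, whose middle term is projective-injective by hypothesis. Since $I_{\F'}(\varphi X)$ is injective and $\varphi\alpha_X^\F$ is an inflation, there exists a morphism of conflations
\[\begin{tikzcd}
\varphi(X)\arrow[equal]{d}\arrow[infl]{r}{\varphi\alpha_X^\F}&\varphi(I_\F(X))\arrow{d}\arrow[defl]{r}{\varphi\beta_X^\F}&\varphi(\bSigma_\F(X))\arrow{d}{\sigma_X}\arrow[confl]{r}&\phantom{}\\
\varphi(X)\arrow[infl]{r}{\alpha_{\varphi X}^{\F'}}&I_{\F'}(\varphi X)\arrow[defl]{r}{\beta_{\varphi X}^{\F'}}&\bSigma_{\F'}(\varphi X)\arrow[confl]{r}&\phantom{}
\end{tikzcd}\]
in $\F'$ as in \cite[Defn.~2.12(ET3)]{NakaokaPalu}. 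Set $\gamma_X=\pi'\sigma_X\colon\underline\varphi\Sigma_\F(\pi X)\to\Sigma_{\F'}\underline\varphi(\pi X)$ (using commutativity of \eqref{eq:induced-tri-fun}). Exactly as in Remark~\ref{r:stabcat-well-def}, \cite[Cor.~3.5]{NakaokaPalu} implies that $\gamma_X$ is independent of the choice of lift $\sigma_X$. Swapping the roles of the two conflations, and using that $\varphi(I_\F(X))$ is now injective, gives a candidate inverse, and the same uniqueness argument shows that $\gamma_X$ is an isomorphism.

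Next I would verify naturality. Given $f\colon X\to Y$ in $\F$, both composites $\gamma_Y\circ\underline\varphi\Sigma_\F(\pi f)$ and $\Sigma_{\F'}\underline\varphi(\pi f)\circ\gamma_X$ arise as the third component of a morphism of conflations from $\varphi E_\F(X)$ to $E_{\F'}(\varphi Y)$ whose first component is $\varphi f$: the first composite by first comparing $\varphi E_\F(X)$ with $\varphi E_\F(Y)$ (applying $\varphi$ to \eqref{eq:induced-morph}) and then with $E_{\F'}(\varphi Y)$, and the second composite by doing these steps in the opposite order. By the same application of \cite[Cor.~3.5]{NakaokaPalu}, any two such lifts agree after projecting to $\underline{\F}'$, so $\gamma$ is a natural isomorphism.

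Finally I would check the triangle functor axioms. A standard triangle in $\underline\F$ comes from a conflation $E\colon X\infl Y\defl Z$ together with a morphism of conflations from $E$ to $E_\F(X)$ (cf.~\eqref{eq:induced-tri}). Applying the extriangulated functor $\varphi$ to this data, and then composing the right-hand vertical map with $\sigma_X$, produces a morphism of conflations from $\varphi E$ to $E_{\F'}(\varphi X)$ whose third component equals $\sigma_X\circ\varphi\gamma$, where $\gamma$ is the connecting map of the original triangle. Projecting to $\underline{\F}'$, this exhibits
\[\begin{tikzcd}\underline\varphi\pi X\arrow{r}{\underline\varphi\pi\alpha}&\underline\varphi\pi Y\arrow{r}{\underline\varphi\pi\beta}&\underline\varphi\pi Z\arrow{r}{\gamma_X\circ\underline\varphi\pi\gamma}&\Sigma_{\F'}\underline\varphi\pi X\end{tikzcd}\]
as a standard, hence distinguished, triangle in $\underline{\F}'$, which is exactly the compatibility condition required of a triangle functor. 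The main obstacle is this last verification together with the well-definedness of $\gamma_X$; both ultimately reduce to repeated invocation of \cite[Cor.~3.5]{NakaokaPalu}, along the same lines as the construction of the triangulated structure on $\underline\F$ itself and Remark~\ref{r:stabcat-well-def}.
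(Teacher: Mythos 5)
Your proof is correct, but it takes a genuinely different route from the paper. The paper exploits the ambiguity highlighted in Remark~\ref{r:stabcat-well-def}: since the triangulated structure on $\underline{\F}'$ depends on a choice of conflations $E_{\F'}(Z)$ for each object $Z$, the paper simply \emph{chooses} $E_{\F'}(\varphi X):=\varphi E_{\F}(X)$ for objects in the image of $\varphi$ (and arbitrary conflations elsewhere), so that $\bSigma_{\F'}\varphi=\varphi\bSigma_{\F}$ on the nose and the natural isomorphism $\gamma$ is the identity; the triangle functor condition then becomes a direct check that $\varphi$ takes the chosen diagrams \eqref{eq:induced-morph} and \eqref{eq:induced-tri} to diagrams of the same type in $\F'$. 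You instead work with an arbitrary fixed choice of conflations in both $\F$ and $\F'$ and construct $\gamma$ by hand via (ET3), using \cite[Cor.~3.5]{NakaokaPalu} for well-definedness, invertibility, and naturality, then verify the exactness axiom by comparing standard triangles. The paper's route is shorter and leans on the independence of the triangulated structure from the choice of conflations; yours is more explicit, works without modifying the data defining $\Sigma_{\F'}$, and incidentally avoids a small implicit issue in the paper's argument when $\varphi$ fails to be injective on objects (so that $E_{\F'}(\varphi X):=\varphi E_\F(X)$ could depend on the choice of preimage $X$). Both verifications bottom out in the same application of \cite[Cor.~3.5]{NakaokaPalu}. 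One stylistic note: you reuse the symbol $\gamma$ both for the natural transformation being built and for the connecting morphism of a conflation in the last paragraph; this should be disambiguated in a final write-up.
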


\begin{proof}
Since $\varphi$ is assumed to take projective-injective objects in $\F$ to those in $\F'$, any morphism in $\F$ factoring over a projective-injective object is in the kernel of $\pi'\varphi$, whence the existence and uniqueness of the functor $\underline{\varphi}$.

Recall from Remark~\ref{r:stabcat-well-def} that the suspension $\Sigma_{\F'}$ is only well-defined up to natural isomorphism.
In practice, we will show that there is a particular representative of this natural isomorphism class (depending on $\varphi$) for which we actually have an equality $\underline{\varphi}\Sigma_\F=\Sigma_{\F'}\underline{\varphi}$.

To make the triangulated structure on $\underline{\F}$ explicit, choose all of the necessary conflations \eqref{eq:inj-conf} and diagrams \eqref{eq:induced-morph} and \eqref{eq:induced-tri} in $\F$.
Then applying the extriangulated functor $\varphi$, with associated natural transformation $\Gamma\colon\EE_{\F}(-,-)\to\EE_{\F'}(\varphi{-},\varphi{-})$, to the conflation $E_\F(X)$ yields a conflation
\[\begin{tikzcd}
E_{\F'}(\varphi X)=\varphi E_{\F}(X)\colon\quad \varphi X \arrow[r,infl]{}{\varphi\alpha^{\F}_X} & \varphi I_{\F}(X) \arrow[r,defl]{}{\varphi\beta^{\F}_X} & \varphi\bSigma_{\F}(X) \arrow[r,confl]{}{\Gamma\delta_X^\F} & \phantom{}
 \end{tikzcd}\]
in $\F'$ in which, by our assumption on $\varphi$, the object $\varphi I_\F(X)$ is projective-injective.
For any object $Z$ which does not lie in the image of $\varphi$, we choose a conflation $E_{\F'}(Z)$ arbitrarily.

Similarly, applying $\varphi$ to the diagram \eqref{eq:induced-morph} yields the map of conflations
\begin{equation}
\label{eq:induced-morph-part}
\begin{tikzcd}[column sep=3.5pc]
\varphi X\arrow[infl]{r}{\varphi\alpha_X^{\F}}\arrow{d}{\varphi f}&\varphi I_{\F}(X)\arrow[defl]{r}{\varphi\beta_X^{\F}}\arrow{d}{}&\varphi\bSigma_{\F}(X)\arrow{d}{\varphi\bSigma_{\F}(f)}\arrow[confl]{r}{\Gamma\delta_X^{\F}}&\phantom{}\\
\varphi Y\arrow[infl]{r}{\varphi\alpha_Y^{\F}}&\varphi I_{\F}(Y)\arrow[defl]{r}{\varphi\beta_Y^{\M}}&\varphi\bSigma_{\F}(Y)\arrow[confl]{r}{\Gamma\delta_Y^{\F}}&\phantom{}
\end{tikzcd}
\end{equation}
in $\F'$, and applying $\varphi$ to the diagram \eqref{eq:induced-tri} yields the map of conflations
\begin{equation}
\label{eq:induced-tri-part}
\begin{tikzcd}[column sep=3.5pc]
\varphi X \arrow[r,infl]{}{\varphi\alpha} \arrow[d,equal] & \varphi Y \arrow[r,defl]{}{\varphi\beta} \arrow[d] & \varphi Z \arrow[r,confl]{}{\Gamma\delta} \arrow[d]{}{h} & \phantom{} \\
\varphi X \arrow[r,infl]{}{\alpha^{\F}_X} & \varphi I_{\F}(X) \arrow[r,defl]{}{\varphi\beta^{\F}_X} & \varphi\bSigma_{\F}(X) \arrow[r,confl]{}{\Gamma\delta_X^{\F}} & \phantom{}
\end{tikzcd}
\end{equation}
in $\F'$. The diagrams \eqref{eq:induced-morph-part} and \eqref{eq:induced-tri-part} are exactly of the form required to construct the triangulated structure on $\underline{\F}'$.
As before, we may choose arbitrary diagrams of this form for those morphisms and conflations from $\F'$ which are not in the image of $\varphi$.

With this set of choices, the map $\bSigma_{\F'}$ satisfies $\bSigma_{\F'}\varphi=\varphi\bSigma_{\F}$ on both objects and morphisms.
It follows that the suspension functor $\Sigma_{\F'}$ induced from these choices satisfies $\Sigma_{\F'}\pi'\varphi=\pi'\varphi\bSigma_{\F}$, and that we have distinguished triangles in $\F'$ given by the sextuples
\begin{equation}
\label{eq:standard-triangle-2}
\begin{tikzcd}[column sep=2.5pc]
\pi'\varphi X \arrow[r]{}{\pi'\varphi f} &  \pi'\varphi Y \arrow[r]{}{ \pi'\varphi g} &  \pi'\varphi Z \arrow[r]{}{ \pi'\varphi h} & \Sigma_{\F'}(\pi'\varphi X) \end{tikzcd}
\end{equation}
obtained by applying $\pi'$ to \eqref{eq:induced-tri-part}.

But now commutativity of the diagram \eqref{eq:induced-tri-fun} implies that
\[\underline{\varphi}\Sigma_{\F}\pi=\underline{\varphi}\pi\bSigma_{\F}=\pi'\varphi\bSigma_{\F}=\Sigma_{\F'}\pi'\varphi=\Sigma_{\F'}\underline{\varphi}\pi,\]
and so since $\pi$ is an epimorphic functor it follows that $\underline{\varphi}$ intertwines the suspensions.
Similarly, applying $\underline{\varphi}$ to the standard triangle \eqref{e:standardtriangles-extri} yields the standard triangle \eqref{eq:standard-triangle-2}, so $\underline{\varphi}$ is exact. This completes the proof.
\end{proof}

\begin{corollary}
\label{c:stab-equiv}
If $\F$ and $\F'$ are equivalent as Frobenius extriangulated categories, then the stable categories $\underline{\F}$ and $\underline{\F}'$ are equivalent as triangulated categories.
\end{corollary}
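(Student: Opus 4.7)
The plan is to apply Lemma~\ref{l:stab-fun} to both functors making up the equivalence, and then descend the natural isomorphisms of this equivalence to the stable categories. So suppose $(\varphi,\Gamma)\colon\F\to\F'$ is an equivalence of extriangulated categories in the sense of \cite{BTShah}, with quasi-inverse $(\varphi',\Gamma')\colon\F'\to\F$. The first step is to verify that both $\varphi$ and $\varphi'$ preserve projective-injective objects. This follows because the natural isomorphism $\Gamma\colon\EE_\F(-,-)\to\EE_{\F'}(\varphi{-},\varphi{-})$ together with essential surjectivity of $\varphi$ implies that $\varphi P$ is Ext-injective (respectively Ext-projective) in $\F'$ whenever $P\in\F$ is Ext-injective (respectively Ext-projective); by Lemma~\ref{l:proj-extri}, this is the same as preserving projective-injectives, and symmetrically for $\varphi'$.

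Having checked this, Lemma~\ref{l:stab-fun} produces triangle functors $\underline{\varphi}\colon\underline{\F}\to\underline{\F}'$ and $\underline{\varphi'}\colon\underline{\F}'\to\underline{\F}$, each defined by the universal property of the respective stable quotient. The second step is to show these are quasi-inverse. Using the commutativity of the defining diagram \eqref{eq:induced-tri-fun} for each, we have $\underline{\varphi'}\,\underline{\varphi}\,\pi=\underline{\varphi'}\pi'\varphi=\pi\varphi'\varphi$, and the natural isomorphism $\varphi'\varphi\cong\id_{\F}$ descends under $\pi$ to a natural isomorphism $\pi\varphi'\varphi\cong\pi$, from which one deduces $\underline{\varphi'}\,\underline{\varphi}\cong\id_{\underline{\F}}$ using that $\pi$ is the identity on objects and full. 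Symmetrically $\underline{\varphi}\,\underline{\varphi'}\cong\id_{\underline{\F}'}$, so $\underline{\varphi}$ is an equivalence of categories.

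Finally, one needs to see that this equivalence is compatible with the triangulated structure. But this is already contained in Lemma~\ref{l:stab-fun}: the natural isomorphism $\gamma\colon\underline{\varphi}\Sigma_\F\to\Sigma_{\F'}\underline{\varphi}$ provided there makes $\underline{\varphi}$ into a triangle functor, and any triangle functor which is an equivalence of underlying categories is automatically a triangle equivalence.

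The only place where a little care is required is in the descent argument in the second step: one must confirm that the components of the natural isomorphism $\varphi'\varphi\cong\id_\F$, after being quotiented by $\pi$, assemble into a natural isomorphism between the functors $\underline{\varphi'}\,\underline{\varphi}$ and $\id_{\underline{\F}}$ on $\underline{\F}$. This is a routine check using the tautological nature of $\pi$ and the universal property by which $\underline{\varphi}$ and $\underline{\varphi'}$ are defined, so no substantial obstacle is expected.
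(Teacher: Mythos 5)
Your proposal is correct and takes essentially the same approach as the paper: both hinge on applying Lemma~\ref{l:stab-fun} and on the principle that a triangle functor which is an equivalence of underlying additive categories is a triangle equivalence. You add useful detail that the paper leaves implicit (spelling out via Ext-projectivity and Lemma~\ref{l:proj-extri} why $\varphi$ preserves projective-injectives, and constructing the quasi-inverse $\underline{\varphi'}$ explicitly rather than directly checking $\underline{\varphi}$ is fully faithful and dense), but the underlying argument is the same.
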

\begin{proof}
If the functor $\varphi$ from Lemma~\ref{l:stab-fun} is an equivalence of extriangulated categories then it is in particular an equivalence of additive categories, and it moreover preserves projective-injective objects.
We may thus check directly that $\underline{\varphi}$ is an equivalence of additive categories.
Since it is also a triangle functor, by Lemma~\ref{l:stab-fun}, it is an equivalence of triangulated categories; this is a special case of the result \cite[Prop.~2.13]{NOS} for extriangulated functors in general, but was also well-known for triangle functors prior to this (see, for example, \cite[Prop.~1.41]{Huybrechts06}).
\end{proof}

\begin{corollary}
\label{c:2-step-stab}
Let $\F$ be a Frobenius extriangulated category, and let $\PR\subseteq\F$ be a class of projective-injective objects in $\F$.
Then the stable categories of $\F$ and $\F/\PR$ are triangle equivalent.
\end{corollary}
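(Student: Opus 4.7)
The plan is to apply Lemma~\ref{l:stab-fun} to the projection functor $\pi_\PR\colon\F\to\F/\PR$, which we will show is an extriangulated functor between Frobenius extriangulated categories that preserves projective-injectives, and then to verify that the resulting triangle functor $\underline{\pi_\PR}\colon\underline{\F}\to\underline{\F/\PR}$ is an equivalence by identifying it with the tautological additive isomorphism produced by Lemma~\ref{lem:doublequotient-extri}.

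Write $\PR_\F$ for the full subcategory of all projective-injective objects of $\F$, so that $\PR\subseteq\PR_\F$ by hypothesis. By Proposition~\ref{p:partial-stab}, the quotient $\F/\PR$ carries a canonical extriangulated structure whose extension bifunctor agrees with that of $\F$, and by Remark~\ref{r:extri-functor} the projection $\pi_\PR$ is an extriangulated functor. Since $\EE_{\F/\PR}(X,Y)=\EE_{\F}(X,Y)$ for all $X,Y$, an object is Ext-projective (resp.\ Ext-injective) in $\F/\PR$ if and only if it has the same property in $\F$. Combined with Lemma~\ref{l:proj-extri}, this shows that $\F/\PR$ is a Frobenius extriangulated category whose subcategory of projective-injectives is the image under $\pi_\PR$ of $\PR_\F$, and that $\pi_\PR$ preserves projective-injectives. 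Lemma~\ref{l:stab-fun} then produces a triangle functor $\underline{\pi_\PR}\colon\underline{\F}\to\underline{\F/\PR}$.

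It remains to show that $\underline{\pi_\PR}$ is an equivalence. For this, we invoke Lemma~\ref{lem:doublequotient-extri} with $\A=\F$, $\C=\PR$, and $\D=\PR_\F$. Since $\PR\subseteq\PR_\F$ gives $\add(\PR,\PR_\F)=\PR_\F$, the lemma yields a tautological additive isomorphism $R\colon\underline{\F}=\F/\PR_\F\to(\F/\PR)/\PR_\F=\underline{\F/\PR}$. On the other hand, since both $\pi_\F\colon\F\to\underline{\F}$ and the composite $\pi_{\F/\PR}\circ\pi_\PR\colon\F\to\underline{\F/\PR}$ are tautological, Proposition~\ref{p:tautological} (with $\U=\F$) shows that $\underline{\pi_\PR}$ is tautological as well. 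By the uniqueness of tautological functors we conclude $R=\underline{\pi_\PR}$, so $\underline{\pi_\PR}$ is both an additive equivalence and a triangle functor, and therefore a triangle equivalence.

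The main subtlety will be the identification of the projective-injectives of $\F/\PR$ with (the image of) $\PR_\F$; this hinges on Proposition~\ref{p:partial-stab} leaving the extension bifunctor unchanged, which is what allows Ext-projectivity to be transported along $\pi_\PR$ in both directions and hence makes the hypotheses of Lemma~\ref{l:stab-fun} available.
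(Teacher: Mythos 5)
Your proof is correct and follows essentially the same route as the paper: apply Proposition~\ref{p:partial-stab} and Remark~\ref{r:extri-functor} to see that $\pi\colon\F\to\F/\PR$ is an extriangulated functor between Frobenius categories preserving projective-injectives, feed it into Lemma~\ref{l:stab-fun}, and check that the induced functor on stable categories is an additive equivalence. Where the paper merely cites ``the third isomorphism theorem'' for this last check, you make it explicit via Lemma~\ref{lem:doublequotient-extri} and the uniqueness of tautological functors, which is exactly the intended justification.
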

\begin{proof}
Recall from Proposition~\ref{p:partial-stab} that in this situation $\F/\PR$ is itself a Frobenius extriangulated category.
We deduce the result by applying Lemma~\ref{l:stab-fun} to the tautological projection functor $\pi\colon\F\to\F/\PR$; this is an extriangulated functor as in Remark~\ref{r:extri-functor}, and preserves projective-injective objects since these coincide in $\F$ and $\F/\PR$.
We may check (using the third isomorphism theorem) that the induced functor $\underline{\pi}\colon\underline{\F}\to\underline{\smash{\F/\PR}}$ from \eqref{eq:induced-tri-fun} is an equivalence of additive categories, and thus conclude as in the proof of Corollary~\ref{c:stab-equiv}.
\end{proof}

We now have everything we need to prove the equivalence we want.

\begin{theorem}
\label{thm:triangleequivalence-extri}
Let $\F$ be a stably $2$-Calabi--Yau Frobenius extriangulated category, and let $\X\subseteq\F$ be rigid and functorially finite.
Then there is a triangle equivalence
\[\underline{\rperp{\X}} \simeq \X_{\underline{\mc{F}}}^{\perp_1}/\add(\X).\]
\end{theorem}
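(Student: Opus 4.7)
The plan is to deduce the equivalence by combining three ingredients already assembled in the paper: the description of $\underline{\rperp{\X}}$ as a two-step quotient of $\rperp{\X}$, the extriangulated equivalence of Proposition~\ref{p:extri-equiv}, and the fact that equivalences of (or passages to sub-quotients within) Frobenius extriangulated categories descend to triangle equivalences of stable categories.

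First, by Proposition~\ref{Prop:MperpisFrobenius-extri}, $\rperp{\X}$ is a Frobenius extriangulated category with subcategory of projective-injectives $\add(\X,\PR)$, so by Theorem~\ref{t:stabcat} its stable category $\underline{\rperp{\X}} = \rperp{\X}/\add(\X,\PR)$ is triangulated. Applying Lemma~\ref{lem:doublequotient-extri} to $\A = \rperp{\X}$, $\C = \PR$, $\D = \add(\X)$ (so that $\E = \add(\X,\PR)$) gives an additive isomorphism
\[\underline{\rperp{\X}} = \rperp{\X}/\add(\X,\PR) \;\cong\; (\rperp{\X}/\PR)/\add(\X).\]

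Next, observe that $\rperp{\X}/\PR$ is itself a Frobenius extriangulated category by Proposition~\ref{p:partial-stab} (with the same projective-injectives as $\rperp{\X}$ modulo $\PR$, namely $\add(\X)$), and Corollary~\ref{c:2-step-stab} applied to $\rperp{\X}$ with class of projective-injectives $\PR$ gives a triangle equivalence
\[\underline{\rperp{\X}} \;\simeq\; \underline{\smash{\rperp{\X}/\PR}}.\]
Here the right-hand stable category is exactly $(\rperp{\X}/\PR)/\add(\X)$, which matches the additive identification from the previous paragraph; the content of Corollary~\ref{c:2-step-stab} is the crucial promotion of this additive isomorphism to a triangle equivalence.

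Now Proposition~\ref{p:extri-equiv} provides a tautological equivalence of extriangulated categories $\underline{\pi}\colon \rperp{\X}/\PR \overset{\sim}{\to} \rperp{\X}_{\underline{\F}} = \X^{\perp_1}_{\underline{\F}}$. Being tautological on objects, it sends $\add(\X)\subseteq\rperp{\X}/\PR$ onto $\add(\X)\subseteq\X^{\perp_1}_{\underline{\F}}$, which are the projective-injectives in both sides (for the right-hand side this is by Proposition~\ref{Prop:MperpisFrobenius-extri} applied to the Frobenius extriangulated category $\underline{\F}$, whose projective-injectives are zero). Applying Corollary~\ref{c:stab-equiv} to this equivalence yields a triangle equivalence of stable categories
\[\underline{\smash{\rperp{\X}/\PR}} \;\simeq\; \underline{\X^{\perp_1}_{\underline{\F}}} = \X^{\perp_1}_{\underline{\F}}/\add(\X),\]
where the last equality is again Theorem~\ref{t:stabcat}. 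Concatenating this with the previous triangle equivalence completes the proof.

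The main thing to verify is that all the intermediate equivalences are genuinely triangle equivalences rather than only additive or extriangulated ones, but this is exactly the purpose of Corollaries~\ref{c:stab-equiv} and~\ref{c:2-step-stab}; once those are in place, the argument reduces to bookkeeping with the tautological quotient functors. I do not anticipate any serious obstacle beyond confirming that the triangulated structure on $\X^{\perp_1}_{\underline{\F}}/\add(\X)$ produced here coincides with the one defined by Iyama--Yoshino, which follows because in both cases the structure is constructed via the Happel-type recipe from Theorem~\ref{thm:stabletriangulated-extri} applied to the Frobenius extriangulated category $\X^{\perp_1}_{\underline{\F}}$ with projective-injectives $\add(\X)$, as noted immediately after Theorem~\ref{thm:stabletriangulated-extri}.
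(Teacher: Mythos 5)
Your proof is correct and takes essentially the same route as the paper's, combining Proposition~\ref{Prop:MperpisFrobenius-extri}, Proposition~\ref{p:extri-equiv}, Corollary~\ref{c:stab-equiv}, and Corollary~\ref{c:2-step-stab} to factor the desired triangle equivalence through the intermediate category $\rperp{\X}/\PR$. The only cosmetic difference is that you additionally invoke Lemma~\ref{lem:doublequotient-extri} explicitly for the additive identification $\underline{\rperp{\X}}\cong(\rperp{\X}/\PR)/\add(\X)$, which the paper leaves implicit in its applications of the corollaries.
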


\begin{proof}
The category $\X_{\underline{\mc{F}}}^{\perp_1}$ is a Frobenius extriangulated category with stable category $\rperp{\X}_{\underline{\F}}/\add(\X)$ by Proposition~\ref{Prop:MperpisFrobenius-extri}, and $\rperp{\X}_{\underline{\F}}\simeq\rperp{\X}/\PR$ as extriangulated categories by Proposition~\ref{p:extri-equiv}, we have a triangle equivalence
\[\underline{\rperp{\X}/\PR}\simeq\rperp{\X}_{\underline{\F}}/\add(\X)\]
of stable categories by Corollary~\ref{c:stab-equiv}.
Moreover, there is a triangle equivalence $\underline{\rperp{\X}}\simeq\underline{\smash{\rperp{\X}/\PR}}$ by Corollary~\ref{c:2-step-stab}, and the result follows.
\end{proof}

\begin{remark}
\label{r:compare-extri}
We note that $\underline{\rperp{\X}}$ is the stable category of the reduction $\rperp{\X}$ of $\F$ at $\X$, whereas $\rperp{\X}_{\underline{\F}}/\add(\X)$ is the Iyama--Yoshino reduction of the stable category $\underline{\F}$ at $\X$.
Thus, the content of Theorem~\ref{thm:triangleequivalence-extri} is that taking the stable category intertwines the notion of reduction considered here for Frobenius extriangulated categories with Iyama--Yoshino's reduction for triangulated categories.
\end{remark}

\begin{remark}
In Theorem~\ref{thm:triangleequivalence-extri}, the category $\rperp{\X}$ is stably $2$-Calabi--Yau by Proposition~\ref{p:Mperp-2cy-extri}.
We can also see this by combining Theorem~\ref{thm:triangleequivalence-extri} with~\cite[Thm.~4.7]{IY08}, which gives that the category $\X_{\underline{\mc{F}}}^{\perp_1}/\add(\X)$ is triangulated $2$-Calabi--Yau.
\end{remark}

\begin{corollary}
Assume that $X\in\F$ is a rigid object, and $T\in\rperp{X}$ a cluster-tilting object. Then $\underline{A}'=\stabEnd_{\rperp{X}}(T)^{\opp}$ is related to $\underline{A}=\stabEnd_{\F}(T)^{\opp}$ by $\tau$-tilting reduction, in the sense of Jasso \cite{Jasso}, at the projective $\underline{A}$-module $P=\stabHom_{\F}(T,X)$.
In particular, support $\tau$-tilting modules for $\underline{A}'$ are in bijection with support $\tau$-tilting modules for $\underline{A}$ with $P$ in their additive closure.
\end{corollary}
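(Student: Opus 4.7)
The plan is to reduce the statement to Jasso's original result \cite{Jasso} by passing to the stable category and invoking Theorem~\ref{thm:triangleequivalence-extri} as a bridge. First, since $T\in\rperp{X}$ is cluster-tilting in $\rperp{X}$, Proposition~\ref{p:ct-bij-extri} implies that $T$ is also cluster-tilting in $\F$ and that $X\in\add(T)$. Consequently, $P=\stabHom_{\F}(T,X)$ is (isomorphic to) an indecomposable projective summand of the regular left $\underline{A}$-module, where $\underline{A}=\stabEnd_{\F}(T)^{\opp}$.

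Next, apply Theorem~\ref{thm:triangleequivalence-extri} to obtain a triangle equivalence
\[\underline{\rperp{X}} \simeq \rperp{X}_{\underline{\F}}/\add(X),\]
where the right-hand side is the Iyama--Yoshino reduction of the stably $2$-Calabi--Yau triangulated category $\underline{\F}$ at the rigid object $X$. Since this equivalence is assembled from the tautological functors of Section~\ref{s:IYred} (see Definition~\ref{d:tautological}), it sends the cluster-tilting object $T$ of $\underline{\rperp{X}}$ to the cluster-tilting object $T$ of $\rperp{X}_{\underline{\F}}/\add(X)$, inducing an isomorphism of $\field$-algebras
\[\underline{A}' = \stabEnd_{\rperp{X}}(T)^{\opp} \simeq \End_{\rperp{X}_{\underline{\F}}/\add(X)}(T)^{\opp}.\]

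Finally, we invoke Jasso's main theorem on $\tau$-tilting reduction, applied to the $2$-Calabi--Yau triangulated category $\underline{\F}$ and its cluster-tilting object $T = X\oplus T'$. This theorem identifies the endomorphism algebra of $T$ in the Iyama--Yoshino reduction at $X$ with the $\tau$-tilting reduction of $\underline{A}=\stabEnd_{\F}(T)^{\opp}$ at the projective summand corresponding to $X$, which is precisely $P$; and provides the bijection between support $\tau$-tilting modules for the reduced algebra $\underline{A}'$ and those for $\underline{A}$ having $P$ in their additive closure. Combining this with the algebra isomorphism from the previous paragraph yields both assertions of the corollary.

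The main obstacle is conceptual rather than technical: one must verify that the triangle equivalence of Theorem~\ref{thm:triangleequivalence-extri} transports the cluster-tilting object $T$ and its endomorphism algebra in a way compatible with Jasso's setup, so that our projective $P$ really does correspond to the projective at which the $\tau$-tilting reduction takes place. Because the whole chain of equivalences is produced from tautological functors (so it acts as the identity on the underlying objects and morphisms, modulo the various quotient relations), this compatibility is automatic, and no further calculation is needed.
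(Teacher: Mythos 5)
Your proposal is correct and takes essentially the same route as the paper: both pass to the stable category via Theorem~\ref{thm:triangleequivalence-extri} to identify $\stabEnd_{\rperp{X}}(T)^{\opp}$ with the endomorphism algebra of $T$ in the Iyama--Yoshino reduction $\rperp{X}_{\underline{\F}}/\add(X)$, and then invoke Jasso's $\tau$-tilting reduction theorem (specifically \cite[Thm.~4.24]{Jasso}). The additional bookkeeping you include — spelling out why $X\in\add(T)$ so that $P$ is projective, and why the chain of equivalences in Theorem~\ref{thm:triangleequivalence-extri} acts as the identity on $T$ — is implicit in the paper's shorter proof but not a genuinely different argument.
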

\begin{proof}
By Theorem~\ref{thm:triangleequivalence-extri}, the algebra $\stabEnd_{\rperp{X}}(T)^{\opp}$ is isomorphic to the endomorphism algebra of $T$ in the Iyama--Yoshino reduction $\rperp{X}_{\underline{\F}}/\add(X)$ of $\underline{\F}$ (in which $T$ is also cluster-tilting by Proposition~\ref{p:ct-bij-extri}).
Thus the result follows from \cite[Thm.~4.24]{Jasso}.
\end{proof}

\section{Cluster characters}
\label{s:CC}

Let $\F$ be a Krull--Schmidt, stably $2$-Calabi--Yau Frobenius extriangulated category with a cluster tilting object $T$.
Then $\F$ is in particular idempotent complete \cite[Cor.~4.4]{KrauseKS}, hence weakly idempotent complete \cite[Lem.~A.6.2]{TT90}.
By work of Wang, Wei and Zhang \cite{WWZ2}, the category $\F$ admits a cluster character, a function taking each object of $\F$ to a Laurent polynomial and satisfying various multiplication formulae relating the product of cluster characters of objects $M$ and $N$ to the cluster characters of possible middle terms of extensions between them.
This builds on many precursors, notably Caldero--Chapoton \cite{CC06} for abelian categories, Palu \cite{Palu} for triangulated categories, and Fu--Keller \cite{FuKeller} for Frobenius exact categories.
We also refer to Keller--Wu \cite{KellerWu} for the special case of Higgs categories, certain stably $2$-Calabi--Yau Frobenius extriangulated categories associated to an ice quiver with potential by Wu \cite{Wu}.

Now if $M\in\add(T)$, then $\M$ is also a stably $2$-Calabi--Yau Frobenius extriangulated category, in which $T$ is again a cluster-tilting object, by Theorem~\ref{t:reduction}, and so it will often carry its own cluster character.
In this section, we demonstrate that, as expected, the cluster character on $\M$ is obtained by restricting that on $\F$.
We begin by recalling the ingredients of the cluster character formula.

\begin{definition}
Let $\F$ be a Frobenius extriangulated category and let $\T$ be a cluster-tilting subcategory. For each $X\in\F$, choose a conflation
\begin{equation}
\label{eq:index-confl}
\begin{tikzcd}
T^1_X\arrow[infl]{r}&T^0_X\arrow[defl]{r}{r}&X\arrow[confl]{r}&\phantom{}
\end{tikzcd}
\end{equation}
with $T^0_X,T^1_X\in\T$.
Then the \emph{index} of $X$ with respect to $\T$ is $\ind_{\T}(X)=[T^0_X]-[T^1_X]\in\K_0(\T)$, this Grothendieck group being the free group on the isoclasses of indecomposable objects of the rigid subcategory $\T$.
\end{definition}

We note that the choice of conflation \eqref{eq:index-confl} is equivalent to the choice of a right $\T$-approximation $r\colon T^0_X\to X$.
Such an approximation exists, and is a deflation, by Remark~\ref{r:weak-to-strong}, recalling that $\F$ is weakly idempotent complete.
Moreover, $\ind_{\T}(X)$ is independent of this choice by \cite[Lem.~4.36, Rem.~4.37]{PPPP}, and hence is well-defined.
As usual, if $\T=\add(T)$ for $T$ a cluster-tilting object, we write $\ind_{T}:=\ind_{\T}$.

Now assume $\F$ has a cluster-tilting object $T$, which we fix for the rest of the section.
Let $\underline{A}=\stabEnd_{\F}(T)^\opp$ be its stable endomorphism algebra.
Since $\F$ is stably $2$-Calabi--Yau, $\underline{\F}$ is in particular Hom-finite, and so $\underline{A}$ is a finite-dimensional algebra.
Thus the category $\module{\underline{A}}$ of finitely generated (equivalently, finite-dimensional) $\underline{A}$-modules is abelian, and we write $\K_0(\module{\underline{A}})$ for its Grothendieck group as an abelian category.

\begin{definition}
\label{def:Theta}
For each $Y\in\F$, choose a conflation
\begin{equation}
\label{eq:Theta-confl}
\begin{tikzcd}
Y\arrow[infl]{r}&P\arrow[defl]{r}&Y'\arrow[confl]{r}&\phantom{}
\end{tikzcd}
\end{equation}
with $P\in\PR$, and set $\Theta(Y)=\ind_{T}(Y)-[P]+\ind_{T}(Y')\in\K_0(\add(T))$.
\end{definition}
The function $\Theta$ is well-defined as in \cite[\S3]{WWZ2} (although loc.\ cit.\ write $\Theta(Y')$ in place of our $\Theta(Y)$).
Note that $\PR\subseteq\add(T)$ since $T$ is cluster-tilting, and so $[P]=\ind_{T}(P)$.

Write $G=\Ext^1(T,-)\colon\F\to\module{\underline{A}}$; this functor is essentially surjective by \cite[Cor.~4.4]{KoenigZhu} (see also \cite[Prop.~2.1]{KellerReiten07}, \cite[Thm.~A]{BMR}).
We remark that this is different from the functor denoted by $\mathbb{G}$ in \cite{WWZ2}, although both functors factor over the stable category $\underline{\F}$, on which $G=\mathbb{G}\circ\Sigma$; our preference for $G$ over $\mathbb{G}$ is also the reason for the change in convention in the definition of $\Theta$.
It nevertheless follows from \cite[Thm.~3.9]{WWZ2} that $\Theta(Y)$ depends only on $[GY]\in\K_0(\module{\underline{A}})$, and so we get an induced function $\theta\colon\K_0(\module{\underline{A}})\to\K_0(\add(T))$ defined on classes of objects by $\theta[GY]=\Theta(Y)$.

\begin{definition}[{\cite[Defn.~4.2]{WWZ2}}]
\label{d:clucha}
Let $\F$ be a Krull--Schmidt, stably $2$-Calabi--Yau Frobenius extriangulated category, and let $T\in\F$ be a cluster-tilting object.
Write $\underline{A}=\stabEnd_{\F}(T)^\opp$ and $G=\EE_\F(T,-)\colon\F\to\module{\underline{A}}$.
Then for each $X\in\F$, we define
\begin{equation}
\label{eq:WWZCC}
\clucha{\F}{T}(X)=x^{\ind_T(X)}\sum_d\chi(\QGr{d}{GX})x^{-\theta(d)}\in\QQ[\K_0(\add(T))].
\end{equation}
Here the sum is taken over possible dimension vectors $d$ of $\underline{A}$-modules, $\QGr{d}{GX}$ denotes the Grassmannian of submodules of $GX$ of dimension vector $d$,
and $\chi$ denotes the Euler--Poincaré characteristic.
\end{definition}

In order to rewrite \eqref{eq:WWZCC} as in \cite{WWZ2}, we may assume $T$ is basic, and choose a decomposition $T=\bigoplus_{i=1}^nT_i$ of $T$ into indecomposable direct summands.
This yields a basis (or, strictly speaking, a free generating set) $[T_i]$ of $\K_0(\add(T))$, and allows us to write $\clucha{\F}{T}(X)$ as a Laurent polynomial in the variables $x_i=x^{[T_i]}$.
Writing $v=\sum_{i=1}^n[v:T_i][T_i]$ for each $v\in\K_0(\add(T))$, the resulting expression is
\begin{equation}
\label{eq:WWZCC2}
\clucha{\F}{T}(X)=x_i^{[\ind_T(X):T_i]}\sum_d\chi(\QGr{d}{GX})x^{-[\theta(d):T_i]}\in\QQ[x_1^{\pm1},\dotsc,x_n^{\pm1}].
\end{equation}
Since $GT_i=0$ and $\ind_T(T_i)=[T_i]$, it follows immediately that $\clucha{\F}{T}(T_i)=x_i$.
While we have taken coefficients in $\QQ$ for compatibility with \eqref{eq:WWZCC} (and large parts of the cluster algebras literature), the Euler--Poincaré characteristics of quiver Grassmannians in fact lie in $\ZZ$.

By \cite[Thm.~4.4]{WWZ2}, the function $\clucha{\F}{T}$ is a cluster character in the sense of \cite[Defn.~4.1]{WWZ2} (following \cite[Defn.~1.2]{Palu}).
While this explains why the function is interesting from a cluster-theoretic point of view, we will not actually use this property here.
A comparison of Definition~\ref{d:clucha} with other formulae for cluster characters in the literature can be found in \cite[\S4.4]{WWZ2}.
In loc.\ cit., it is shown for exact categories that the cluster character from Definition~\ref{d:clucha} coincides with Fu--Keller's cluster character \cite{FuKeller} only under an additional technical assumption, which turns out to be redundant.

\begin{proposition}
\label{p:FKCC}
Assume that $\F$ is a Krull--Schmidt, stably $2$-Calabi--Yau Frobenius exact category, and assume that $T\in\F$ is a cluster-tilting object such that $A=\End_{\F}(T)^\opp$ is Noetherian.
Then the cluster character $\clucha{\F}{T}$ from Definition~\ref{d:clucha} coincides with Fu--Keller's \cite{FuKeller}.
\end{proposition}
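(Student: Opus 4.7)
My plan is to compare the formula \eqref{eq:WWZCC} directly with Fu--Keller's cluster character and reduce the proposition to verifying that the exponent function $\theta$ from Definition~\ref{def:Theta} coincides with the antisymmetrised bilinear form appearing in \cite{FuKeller}. Recall that Fu--Keller's formula takes the shape
\[\FKcc{\F}{T}^{\mathrm{FK}}(X) = x^{\ind_T(X)} \sum_d \chi(\QGr{d}{GX})\, x^{-\psi(d)},\]
where $\psi\colon \K_0(\module \underline{A}) \to \K_0(\add T)$ is determined on classes of simple $\underline{A}$-modules by the Euler form on $\underline{A}$-modules, pulled back along the functor $G = \EE_\F(T,-)$. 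The prefactor $x^{\ind_T(X)}$ and the quiver Grassmannian factors match Definition~\ref{d:clucha} term by term, so the proposition reduces to the equality $\theta = \psi$ of group homomorphisms on $\K_0(\module \underline{A})$.

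To check this equality, I would first invoke the comparison carried out by Wang--Wei--Zhang in \cite[\S4.4]{WWZ2}, which establishes $\theta = \psi$ under an additional technical hypothesis on $\F$ --- essentially that every $\underline{A}$-module admits a two-term presentation of the form $GT' \to GT'' \to N \to 0$ coming from a conflation in $\F$ with outer terms in $\add(T)$, so that $\Theta$ descends to a well-defined function of $[GY] \in \K_0(\module \underline{A})$ in a way compatible with Fu--Keller's sign conventions. The core of the present proof is to verify that this hypothesis holds automatically in our setting. Since $T$ is cluster-tilting in a Krull--Schmidt, stably $2$-Calabi--Yau Frobenius exact category, every $\underline{A}$-module arises as $GY$ for some $Y \in \F$ by \cite[Cor.~4.4]{KoenigZhu}, and the Noetherian hypothesis on $A$ ensures that the syzygies produced by iterating right $\add(T)$-approximations are again finitely generated. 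The strong contravariant finiteness of $\add(T)$ (Remark~\ref{r:weak-to-strong}) then allows us to realise any such presentation by a genuine conflation.

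The main obstacle is the bookkeeping in this second step: one must check that the discrepancy between the two formulas measured by WWZ vanishes without the extra hypothesis, essentially by showing that $\Theta(Y)$ depends only on $[GY]$ unconditionally in the Noetherian exact setting. The cleanest approach I would try is to compute both sides on generators, namely on the classes $[S_i]$ of simple $\underline{A}$-modules corresponding to the indecomposable summands $T_i$ of $T$: for each such $S_i$, one picks a minimal conflation $T^1_i \infl T^0_i \defl Y_i$ in $\F$ with $GY_i = S_i$, computes $\Theta(Y_i)$ using Definition~\ref{def:Theta} together with a projective presentation obtained from~\eqref{eq:Theta-confl}, and matches the result with the antisymmetrised Euler form $\psi([S_i])$. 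Once this is verified on generators, additivity of both $\theta$ and $\psi$ in the Grothendieck group yields the equality in general, completing the identification of the two cluster characters.
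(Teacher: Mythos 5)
Your high-level framing is right: the job is to verify the compatibility condition between $\theta$ and Fu--Keller's exponent map that Wang--Wei--Zhang leave as an extra hypothesis, and the two formulas agree once this is done. However, the proposal misidentifies what that hypothesis actually is, and as a result the proposed verification does not engage with the real content of the proof.

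The condition in \cite[Cond.~4.7, Prop.~4.8]{WWZ2} is not about the existence of two-term presentations $GT' \to GT'' \to N \to 0$; such presentations always exist here because $T$ is cluster-tilting, so there is nothing to check on that front. The condition is the numerical identity $[\theta[GX]:T_i]=\langle GX,S_i\rangle_3$ for each projectively presented indecomposable summand $T_i$ of $T$, where $\langle-,-\rangle_3$ is the antisymmetrised form on $\module\underline{A}$ built from $\Hom$ and $\Ext^1$. Your plan to ``compute both sides on the classes $[S_i]$'' by picking conflations with $GY_i=S_i$ does not match this: the paper fixes $S_i$ and lets $GX$ vary, and the real work is in the Euler characteristic bookkeeping, not in producing presentations.

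The two ingredients that actually make the argument close --- and which the proposal never invokes --- are, first, the fact from \cite[Prop.~4(c)]{KellerReiten07} that every $M\in\module\underline{A}$ has projective dimension at most $3$ over $A$, which gives a well-defined truncated Euler pairing $\langle-,-\rangle_{\textrm{Eul}}$; and second, the relative Calabi--Yau symmetry $\dim\Ext^i_A(M,N)=\dim\Ext^{3-i}_A(N,M)$, which identifies $\langle-,-\rangle_{\textrm{Eul}}$ with $\langle-,-\rangle_3$. Once that is in place, the identity $[\Theta(X):T_i]=\langle GX,S_i\rangle_{\textrm{Eul}}$ falls out of a direct computation: apply $F=\Hom_{\F}(T,-)$ to the index conflation \eqref{eq:index-confl} and to the conflation \eqref{eq:Theta-confl}, observe that this produces projective resolutions over $A$, and use that $\langle FT',S_i\rangle_{\textrm{Eul}}=[T':T_i]$ for $T'\in\add(T)$. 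Your proposal gestures at Definition~\ref{def:Theta} and projective presentations but supplies none of this, and the discussion of Noetherianness and strong contravariant finiteness is addressing a non-issue; without the projective dimension bound and the CY duality, the reduction to the Euler form cannot be made, so the proposal as written has a genuine gap.
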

\begin{proof}
First observe that assumptions on $\F$ and $T$ are sufficient for the existence of the Fu--Keller cluster character, despite the fact that we only assume that $\End_{\C}(T)^{\opp}$ is Noetherian, rather than that $\C$ is Hom-finite.
Indeed the proof of this statement in \cite[Thm.~3.3]{FuKeller} remains valid (cf.~\cite[Thm.~6.8]{Pre22}).

By \cite[Prop.~4.8]{WWZ2} (which is also valid under our slightly weaker hypotheses), it is sufficient to check that
\[[\theta[GX]:T_i]=\langle GX,S_i\rangle_3\]
whenever $T_i$ is a projective indecomposable summand of $T$, with $S_i$ the corresponding simple module of $A$, and where
\[\langle M,N\rangle_3=\dim\Hom_{\underline{A}}(M,N) - \dim\Ext^1_{\underline{A}}(M,N) + \dim\Ext^1_{\underline{A}}(N,M) - \dim\Hom_{\underline{A}}(N,M).\]
Note that we have adjusted \cite[Cond.~4.7]{WWZ2} to reflect our definition of $\Theta$ and of the functor $G$.

By \cite[Prop.~4(c)]{KellerReiten07} (and its proof), any $M\in\module{\underline{A}}$ has projective dimension at most $3$ as an $A$-module.
As a result, there is a well-defined Euler pairing $\langle-,-\rangle_{\textrm{Eul}}\colon\K_0(\module{\underline{A}})\times K_0(\module{A})\to\ZZ$ defined by
\[\langle M,N\rangle_{\textrm{Eul}}=\dim\Hom_A(M,N)-\dim\Ext^1_A(M,N)+\dim\Ext^2_A(M,N)-\dim\Ext^3_A(M,N)\]
for $M\in\module{\underline{A}}$ and $N\in\module{A}$.
We may moreover use the relative Calabi--Yau property of $A$ \cite[Prop.~4(c)]{KellerReiten07} to see that in this case $\dim\Ext^i_A(M,N)=\dim\Ext^{3-i}_A(N,M)$, and hence $\langle M,N\rangle_{\textrm{Eul}}=\langle M,N\rangle_3$.
Noting that $GX\in\module{\underline{A}}$, and $\theta[GX]=\Theta(X)$ by definition, we thus need to show that
\[[\Theta(X):T_i]=\langle GX,S_i\rangle_{\textrm{Eul}}.\]

To this end, let $F=\Hom_{\F}(T,-)$ be the covariant Yoneda functor, inducing an equivalence $F\colon\add(T)\stackrel{\sim}{\to}\projmod{A}$.
Applying $F$ to \eqref{eq:index-confl}, which is a short exact sequence since $\F$ is exact, we obtain the exact sequence
\[\begin{tikzcd}
0\arrow{r}&FT^1_X\arrow{r}&FT^0_X\arrow{r}&FX\arrow{r}&GT_1=0,
\end{tikzcd}\]
a projective resolution of $FX\in\module{A}$.
Similarly, applying $F$ to the exact sequence \eqref{eq:Theta-confl} for $X$ yields
\[\begin{tikzcd}
0\arrow{r}&FX\arrow{r}&FP\arrow{r}&FX'\arrow{r}&GX\arrow{r}&GP=0.
\end{tikzcd}\]
Now if $T'\in\add(T)$ then $FT'$ is projective, and so
\[\langle FT',S_i\rangle_{\textrm{Eul}}=\dim\Hom_{A}(FT',S_i)=[T':T_i].\]
It follows that
\begin{align*}
\langle GX,S_i\rangle_{\textrm{Eul}}&=
\langle FX',S_i\rangle_{\textrm{Eul}}-
\langle FP,S_i\rangle_{\textrm{Eul}}+
\langle FX,S_i\rangle_{\textrm{Eul}}
\\
&=
\langle FT^0_{X'},S_i\rangle_{\textrm{Eul}}-\langle FT^1_{X'},S_i\rangle_{\textrm{Eul}}-\langle FP,S_i\rangle_{\textrm{Eul}}+\langle FT^0_X,S_i\rangle_{\textrm{Eul}}-\langle FT^1_X,S_i\rangle_{\textrm{Eul}}\\
&=[T^0_{X'}:T_i]-[T^1_{X'}:T_i]-[P:T_i]+[T^0_X:T_i]-[T^1_X:T_i]\\
&=[\ind_T(X'):T_i]-[P:T_i]+[\ind_T(X):T_i]\\
&=[\Theta(X):T_i],
\end{align*}
as required.
\end{proof}

\begin{theorem}
\label{t:cc_restrict}
Let $\F$ be a Krull--Schmidt, stably $2$-Calabi--Yau Frobenius extriangulated category.
Let $T\in\F$ be a cluster-tilting object, and let $M\in\add(T)$.
Then $\M$ is again a Krull--Schmidt, stably $2$-Calabi--Yau Frobenius extriangulated category, and
\[\FKcc{\M}{T}=\FKcc{\F}{T}|_{\M}.\]
\end{theorem}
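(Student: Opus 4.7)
\emph{Plan.} The strategy is to prove the identity $\FKcc{\M}{T}(X)=\FKcc{\F}{T}(X)$ for $X\in\M$ by verifying, ingredient by ingredient in~\eqref{eq:WWZCC2}, that its three components---the index $\ind_T(X)$, the module $GX$ with its quiver Grassmannians, and the function $\theta$---take the same values when computed in $\F$ and in $\M$; we adorn each quantity with a superscript where needed to indicate the ambient category. By Theorem~\ref{t:reduction} and Proposition~\ref{p:ct-bij-extri}, $\M$ is Krull--Schmidt (inherited as an additively closed full subcategory of $\F$), stably $2$-Calabi--Yau Frobenius extriangulated, with projective-injectives $\add(M,\PR)$, and contains $T$ as a cluster-tilting object, so $\FKcc{\M}{T}$ is defined to begin with.

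For the index, a right $\add(T)$-approximation $T^0\defl X$ in $\F$ (a deflation by Remark~\ref{r:weak-to-strong}) has cocone $T^1\in\add(T)^{\perp_1}=\add(T)\subseteq\M$, by Lemma~\ref{l:weakwakamatsu-extri} together with the cluster-tilting property of $T$. Hence the index conflation $T^1\infl T^0\defl X$ lies entirely in $\M$ and also realises a right $\add(T)$-approximation there, giving $\ind_T^\M(X)=[T^0]-[T^1]=\ind_T^\F(X)$. For the module, Lemma~\ref{l:closed-extri} yields $\EE_\M=\EE_\F|_\M$, so $G^\F X=G^\M X$ as $\field$-vector spaces; and because $\EE_\F(M,X)=0$ for $X\in\M$, the surjection $\underline{A}_\F=\stabEnd_\F(T)^\opp\twoheadrightarrow\stabEnd_\M(T)^\opp=\underline{A}_\M$ (well-defined since $\PR\subseteq\add(M,\PR)$) intertwines the module structures: any endomorphism of $T$ factoring through $\add(M)$ acts as zero on $\EE_\F(T,X)$. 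Thus $G^\F X$ is supported on the simples of $\underline{A}_\F$ which survive in $\underline{A}_\M$, its submodule lattices in $\module{\underline{A}_\F}$ and $\module{\underline{A}_\M}$ agree, the quiver Grassmannians match, and the summations in~\eqref{eq:WWZCC2} effectively range over the same set of dimension vectors with equal Euler characteristics.

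The main obstacle is the final matching $\theta^\F(d)=\theta^\M(d)\in\K_0(\add(T))$ for every dimension vector $d$ supported on $\underline{A}_\M$-simples. By essential surjectivity of $G^\M$ (via \cite[Cor.~4.4]{KoenigZhu} applied in $\M$), $d=[G^\M Y]$ for some $Y\in\M$, and the claim reduces to $\Theta^\F(Y)=\Theta^\M(Y)$. To compare these, I would choose compatible conflations: fix an $\M$-injective envelope $Y\infl P\oplus M'\defl Y'$, with $P\in\PR$ and $M'\in\add(M)$, so $Y'\in\M$, and fix an $\F$-injective envelope $M'\infl Q\defl Z$, with $Q\in\PR$. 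The composition $Y\infl P\oplus M'\infl P\oplus Q$ is then an inflation in $\F$ whose cone $\tilde Y$ fits, via axiom (ET4) of \cite{NakaokaPalu}, into a conflation $Y'\infl\tilde Y\defl Z$. Substituting into the definitions of $\Theta^\F$ and $\Theta^\M$,
\[\Theta^\F(Y)-\Theta^\M(Y)=[M']-[Q]+\ind_T(\tilde Y)-\ind_T(Y');\]
since $M'$ is rigid, $GM'=0$ and $\Theta^\F(M')=0$, so the $\F$-conflation $M'\infl Q\defl Z$ forces $\ind_T(Z)=[Q]-[M']$. The difference above therefore equals $\ind_T(\tilde Y)-\ind_T(Y')-\ind_T(Z)$.

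The hardest step is to show that this last quantity vanishes, i.e.\ that $\ind_T$ is additive over the specific pushout conflation $Y'\infl\tilde Y\defl Z$. General index additivity fails in extriangulated categories, its failure being measured by an extriangulated analogue of Palu's formula~\cite{Palu}, so the argument must exploit the specific structure of the pushout: the $\EE$-class of $Y'\infl\tilde Y\defl Z$ is obtained from that of $M'\infl Q\defl Z$ by pushout along the map $M'\to Y'$ coming from the original $\M$-conflation. A careful comparison of the long exact sequences obtained by applying $\Hom_\F(T,-)$ to the two conflations should then identify their additivity defects with one another, which when combined with $\Theta^\F(M')=0$ yields the required vanishing and completes the proof.
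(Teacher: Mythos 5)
Your treatment of the first three ingredients in~\eqref{eq:WWZCC2}---the index, the module $GX$, and the summation over quiver Grassmannians---is correct and essentially the paper's argument. The interesting divergence, and the place where your proof has a genuine gap, is the comparison of $\Theta^\F$ and $\Theta^\M$.

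You set up a $3\times3$ diagram with the $\M$-injective conflation $Y\infl P\oplus M'\defl Y'$ as the top row and the split conflation $P\oplus M'\infl P\oplus Q\defl Z$ as the middle column, so the target of index additivity is the right column $Y'\infl\tilde Y\defl Z$. But $Y'$ is an arbitrary object of $\M$, not generally in $\rperp{T}=\add(T)$, so the additivity criterion of \cite[Lem.~3.8(3)]{WWZ1}---used for the conflation $P\oplus M'\infl P\oplus Q\defl Z$ just before---does not apply to this column. You correctly identify that the class of $Y'\infl\tilde Y\defl Z$ is $f_*[\delta]$ for $[\delta]\in\EE_\F(Z,M')$ and $f\colon M'\to Y'$, so its connecting map $\Hom_\F(T,Z)\to\EE_\F(T,Y')$ factors through $\EE_\F(T,M')=0$ and hence vanishes. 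This observation does suffice: one can build $\add(T)$-resolutions of $Y'$ and $Z$ termwise, lift the one for $Z$ to $\tilde Y$ because the connecting map vanishes, and conclude additivity by a $3\times3$ argument. However, you do not carry this out, and the phrase ``identify their additivity defects with one another'' is misleading---additivity defects are not natural in the conflation class in the way a direct comparison of long exact sequences would exploit; the point is rather that a vanishing connecting map forces the defect to be zero.

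The paper avoids all of this by arranging the $3\times3$ the other way round: it takes the $\F$-injective conflation $Y\infl P\defl Y'$ as the top row and the $\M$-injective conflation $Y\infl Q\defl Y''$ as the left column (in the paper's notation). The middle row $Q\infl P\oplus Y''\defl Y'$ then has first term $Q\in\add(M,\PR)\subseteq\add(T)=\rperp{T}$, so the cited additivity lemma applies verbatim, giving $[Q]=[P]+\ind_T(Y'')-\ind_T(Y')$ directly. Your approach can be completed, but it requires proving (or locating in the literature) the stronger ``zero connecting map $\Rightarrow$ index additivity'' statement, whereas the paper's choice of diagram makes the additivity hypothesis hold for free.
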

\begin{proof}
The required properties of $\M$ follow from Theorem~\ref{t:reduction} together with Krause's characterisation of Krull--Schmidt categories \cite[Cor.~4.4]{KrauseKS}; the fact that $\M$ is idempotent complete is proved analogously to Lemma~\ref{l:Mperpwic}.
Since $T$ is cluster-tilting in $\F$, $T\in \M$ and $M\in \add(T)$, it follows from Proposition~\ref{p:ct-bij-extri} that $T$ is cluster-tilting in
$\M$. So Definition~\ref{d:clucha} also applies to the object $T$ in $\M$, yielding the cluster character $\clucha{\M}{T}$.

Now let $X\in\M$; we aim to show that $\clucha{\M}{T}(X)=\clucha{\F}{T}(X)$, and will do this by a direct comparison of the formulae \eqref{eq:WWZCC} for these two cluster characters.
The definition of $\ind_T(X)$ is insensitive to whether we view $X\in\M$ or $X\in\F$, so the two leading factors agree.

Since every projective-injective object in $\F$ is also projective-injective in $\M$ by Proposition~\ref{Prop:MperpisFrobenius-extri}, there is a surjective algebra homomorphism from $\underline{A}=\stabEnd_{\F}(T)^{\opp}$ to $\underline{A}'=\stabEnd_{\M}(T)^{\opp}$.
We may thus use the induced fully-faithful restriction functor to view
\[\module{\underline{A}'}\subseteq\module{\underline{A}}.\]
Now for any $X\in\M$, we have $\EE(M,X)=0$ by definition, and so $GX\in\module{\underline{A'}}$.
Thus $\QGr{d}{GX}=\varnothing$ unless $d$ is a dimension vector for $\underline{A}'$, and so while the summation set in the expression for $\clucha{T}{\F}(X)$ is larger than that in the expression for $\clucha{T}{\M}(X)$, the additional terms are all zero.

To complete the proof, we show that if $Y\in\M$ and
\[\begin{tikzcd}
Y\arrow[infl]{r}&Q\arrow[defl]{r}&Y''\arrow[confl]{r}&\phantom{}
\end{tikzcd}\]
is a conflation in $\M$ with $Q\in\add(M,\PR)$, then $\Theta(Y)=\ind_T(Y)-[Q]+\ind_T(Y'')$.
Indeed, $\add(M,\PR)$ is the category of projective-injective objects in $\M$ by Lemma~\ref{l:enough-extri}, and so this will imply that the definition of $\Theta$ is insensitive to whether we view $Y\in\M$ or $Y\in\F$.

By the dual of \cite[Lem.~A.11]{ChenThesis} (stated explicitly in \cite[Prop.~1.19]{Palu-Extri}), there is a commutative diagram
\[\begin{tikzcd}
Y\arrow[infl]{r}\arrow[infl]{d}&P\arrow[defl]{r}\arrow[infl]{d}&Y'\arrow[confl]{r}\arrow[equal]{d}&\phantom{}\\
Q\arrow[infl]{r}\arrow[defl]{d}&P\oplus Y''\arrow[defl]{r}\arrow[defl]{d}&Y'\arrow[confl]{r}&\phantom{}\\
Y''\arrow[equal]{r}\arrow[confl]{d}&Y''\arrow[confl]{d}\\
\phantom{}&\phantom{}
\end{tikzcd}\]
in which the top row is a conflation as in Definition~\ref{def:Theta} and the middle vertical conflation splits since $P\in\PR$ is injective.
Since $Q\in\rperp{T}$, we have
\[[Q]=[P]+\ind_T(Y'')-\ind_T(Y')\]
by \cite[Lem.~3.8(3)]{WWZ1} (see also \cite[Prop.~2.2]{Palu}), noting that $\ind_T(P)=[P]$ and $\ind_T(Q)=[Q]$ since $P,Q\in\add(T)$.
It follows that
\[\Theta(Y)=\ind_T(Y)-[P]+\ind_T(Y')=\ind_T(Y)-[Q]+\ind_T(Y''),\]
as required.
\end{proof}

When $\F$ is triangulated, in which case the cluster character is due to Palu \cite{Palu}, Fu and Keller also give some results, most notably \cite[Thm.~6.3]{FuKeller}, concerning the restriction of the cluster character to $\M$ for a rigid object $M\in\F$.

\section{Internally Calabi--Yau algebras}
\label{s:icy}

We recall the notion of a bimodule internally $3$-Calabi--Yau algebra, and the main properties of such an algebra, from \cite{Pre17}.

\begin{definition}[cf.~{\cite[Defn.~2.4]{Pre17}}]
\label{def:bi3cy}
Let $A$ be a Noetherian algebra and $e\in A$ an idempotent.
We say that $(A,e)$ is bimodule internally $3$-Calabi--Yau if $A$ is perfect with projective dimension at most $3$ when considered as an $A$-bimodule, and fits into a triangle
\[A\longrightarrow\Omega_A[3]\longrightarrow C\longrightarrow\Sigma A\]
of such bimodules, where $\Omega_A=\RHom_{A\otimes A^{\opp}}(A,A\otimes A^{\opp})$
is the bimodule dual, and $\RHom_A(C,M)=0=\RHom_{A^{\opp}}(C,N)$ for any complexes $M$ and $N$ with finite-dimensional total cohomologies such that $eM=0$, respectively $Ne=0$.
\end{definition}

While this definition is somewhat technical, we will only need a few consequences.
For example, if $(A,e)$ is bimodule internally $3$-Calabi--Yau, then it follows that $A$ has global dimension at most $3$, and that there is a functorial duality
\[\Ext^i_A(N,M)=\mathrm{D}\Ext^{3-i}_A(M,N)\]
when $M$ is an arbitrary $A$-module and $N$ is finite-dimensional with $eN=0$, cf.~\cite[Cor.~2.9]{Pre17}.
Recall that $\mathrm{D}=\Hom_{\field}(-,\field)$ denotes duality over the ground field.
Moreover, we can use a bimodule internally Calabi--Yau algebra to construct a stably $2$-Calabi--Yau Frobenius exact category with a cluster-tilting object, as follows.

\begin{theorem}[{\cite[Thms.~4.1, 4.10]{Pre17}}]
\label{thm:Frobcat-constr}
Let $A$ be a Noetherian algebra and $e\in A$ an idempotent such that the quotient algebra $A/AeA$ is finite-dimensional.
Write $B=eAe$.
If $(A,e)$ is bimodule internally $3$-Calabi--Yau, then
\begin{enumerate}
\item the algebra $B$ is an Iwanaga--Gorenstein algebra and so the category
\[\GP(B)=\{X\in\module{A}:\text{$\Ext^i_B(X,B)=0$ for all $i>0$}\}\]
of Gorenstein projective $B$-modules is Frobenius exact,
\item the category $\GP(B)$ is stably $2$-Calabi--Yau,
\item the $B$-module $eA$ is a cluster-tilting object in $\GP(B)$ and
\item the natural map $A\to\End_B(eA)^{\opp}$ is an isomorphism, inducing an isomorphism $A/AeA\to\underline{\End}_B(eA)^{\opp}$.
\end{enumerate}
\end{theorem}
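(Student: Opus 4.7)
The plan is to follow the broad strategy of Pressland's argument, exploiting the bimodule Calabi--Yau hypothesis to establish homological control over the pair $(A,B)$ and then transferring it to the subcategory $\GP(B)$. The foundation is provided by the two formal consequences of Definition~\ref{def:bi3cy} already recorded: $A$ has global dimension at most $3$, and there is a functorial duality $\Ext^i_A(N,M)\cong\mathrm{D}\Ext^{3-i}_A(M,N)$ whenever $N$ is finite-dimensional with $eN=0$. The hypothesis that $A/AeA$ is finite-dimensional ensures that this class of modules $N$ is rich enough to control the quotient categories that will appear in the argument.

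For part~(1), I would bound the injective dimension of $B=eAe$ on each side by propagating the global dimension bound on $A$ through the adjunctions between $\module{A}$ and $\module{B}$ induced by the bimodule structure on $eA$ and $Ae$. Once $B$ is shown to be Iwanaga--Gorenstein, the Frobenius exact structure on $\GP(B)$ is standard by Buchweitz's theorem. For part~(2), I would compute stable morphism spaces in the stable category of $\GP(B)$ by passing to $A$-module resolutions whose discrepancy from their $B$-module counterparts is killed by $e$, and then apply the $A$-level Calabi--Yau duality to derive the required isomorphism $\stabHom_{\GP(B)}(X,\Sigma Y)\cong\mathrm{D}\stabHom_{\GP(B)}(Y,\Sigma X)$ for $X,Y\in\GP(B)$.

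For part~(3), one first verifies $eA\in\GP(B)$ via the homological bounds from~(1), noting that $eA$ contains $B=eAe$ as a summand of left $B$-modules so that $\add(eA)$ contains all the projective-injectives of $\GP(B)$. Rigidity follows immediately. The substantive content is the cluster-tilting condition: any $X\in\GP(B)$ with $\Ext^1_B(eA,X)=0$ must lie in $\add(eA)$. To prove it, I would take a right $\add(eA)$-approximation $eA^n\to X$, which is a deflation by the preceding observation, analyse its kernel $K$ using the $2$-Calabi--Yau symmetry together with the vanishing assumption to extract a splitting, and conclude that $X\in\add(eA)$. For part~(4), the map $a\mapsto r_a$ given by right multiplication on $eA$ is well-defined and $B$-linear; injectivity follows under the standing hypotheses from faithfulness of the right $A$-action on $eA$, and surjectivity by identifying an arbitrary $\varphi\in\End_B(eA)^{\opp}$ with right multiplication by a unique element of $A$. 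The induced isomorphism on the stable endomorphism algebra then reflects the fact, established by~(3), that morphisms factoring through a projective-injective of $\GP(B)$ are precisely those corresponding to elements of the two-sided ideal $AeA\subseteq A$.

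The main obstacle is the non-trivial direction of part~(3). Ext-projectivity of $eA$ and the inclusion $\add(eA)\subseteq\rperp{eA}\cap\GP(B)$ are essentially formal, but the reverse inclusion is where the full strength of the bimodule internally $3$-Calabi--Yau hypothesis is used: it is what distinguishes this setup from that of an arbitrary idempotent in a Noetherian algebra, and genuinely requires both the global dimension bound on $A$ and the Calabi--Yau symmetry relative to modules killed by $e$.
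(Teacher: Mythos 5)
This theorem is not proved in the paper at all: it is cited verbatim from \cite[Thms.~4.1, 4.10]{Pre17}, so there is no in-paper proof to compare against. Evaluating your sketch against Pressland's actual arguments, the broad architecture is right---one does use the global dimension bound and the relative Calabi--Yau duality from Definition~\ref{def:bi3cy}, transfers homological information between $\module A$ and $\module B$ via the bimodule $eA$, and treats the cluster-tilting condition as the substantive step---but several of your subsidiary claims are either unjustified or not how the argument actually goes.

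The clearest concrete gap is in part~(4). You assert that injectivity of the natural map $A\to\End_B(eA)^{\opp}$ ``follows from faithfulness of the right $A$-action on $eA$,'' but faithfulness of that action is not a consequence of the stated hypotheses and is not obvious: there is no a priori reason why $eAa=0$ should force $a=0$ for a general Noetherian algebra with idempotent $e$. In Pressland's treatment this isomorphism is a genuine theorem that requires work, obtained from the structure of the bimodule projective resolution of $A$ together with the Gorenstein property of $B$; it is precisely the kind of statement that the bimodule internal Calabi--Yau condition is designed to deliver, and it cannot be waved through by appeal to faithfulness. Relatedly, your description of the induced stable isomorphism as ``reflecting'' the fact that maps factoring through projective-injectives correspond to elements of $AeA$ inverts the logical order: one first establishes the algebra isomorphism, then identifies the stable kernel.

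Part~(3) is where you say the full strength of the hypothesis is needed, and that instinct is correct, but your proposed argument---take a right $\add(eA)$-approximation, analyse the kernel using $2$-CY symmetry, extract a splitting---stops exactly at the point where the proof becomes nontrivial. The issue is that $\Ext$-vanishing over $B$ must be related to $\Ext$-vanishing over $A$ (so that the relative CY duality, which is a statement about $\Ext_A$, can be brought to bear), and controlling this comparison requires careful use of the fact that $A\cong\End_B(eA)^{\opp}$ identifies $\module A$ with a subcategory of $\module B$, together with the global dimension bound on $A$. As written, your sketch presupposes the machinery it is trying to build. The same caution applies to part~(1): ``propagating the global dimension bound through adjunctions'' is directionally right, but the Iwanaga--Gorenstein property of $B$ does not follow formally from $\gldim A\le 3$ for an arbitrary idempotent $e$; the relative CY condition is again essential.

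In summary, your proposal correctly identifies the roadmap but does not yet contain the proofs of the individual waypoints, and the claimed justification for part~(4) is wrong as stated. Since the paper defers entirely to \cite{Pre17}, you should compare your sketch against that source directly rather than against anything in the present article.
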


\begin{remark}
\label{r:GPBwic}
Since $\module{B}$ is idempotent complete and $\Ext$-vanishing conditions are closed under taking direct summands, it follows that $\GP(B)$ is also idempotent complete (and hence weakly idempotent complete; see e.g.~\cite[Lem.~A.6.2]{TT90}). %
\end{remark}

\begin{definition}
Let $A$ be an algebra.
We make the set of idempotent elements of $A$ into a poset by defining $e'\geq e$ if and only if $e'e=e=ee'$.
\end{definition}

\begin{remark}
We will usually work with algebras presented by a quiver $Q$ with relations.
In this context, it will be enough to consider idempotents of the form
\[e=\sum_{i\in S}e_i\]
for some set $S\subset Q_0$, where $e_i$ denotes the vertex idempotent at vertex $i$.
The poset structure on the idempotents of this form coincides precisely with the standard one on the power set of $Q_0$, given by inclusion of subsets.
\end{remark}

Assuming $(A,e)$ is bimodule internally $3$-Calabi--Yau with respect to $e$, choose an idempotent $e'\geq e$ and write $B=eAe$ and $B'=e'Ae'$.
It then follows directly from Definition~\ref{def:bi3cy} that $(A,e')$ is also bimodule internally $3$-Calabi--Yau.
Since $AeA\subset Ae'A$ in this situation, finite-dimensionality of $A/AeA$ also implies that of $A/Ae'A$.
Thus, when this finite-dimensionality holds, we may use Theorem~\ref{thm:Frobcat-constr} (and Remark~\ref{r:GPBwic}) to see that $\GP(B)$ and $\GP(B')$ are weakly idempotent complete stably $2$-Calabi--Yau Frobenius exact categories, with cluster-tilting objects $eA$ and $e'A$ respectively.
By Theorem~\ref{thm:Frobcat-constr} again, both of these cluster-tilting objects have endomorphism algebra isomorphic to $A$.

Our aim for the remainder of the section is to relate these two exact categories, by showing that $\GP(B')$ is equivalent to the reduction $\M\subseteq\GP(B)$ with respect to the rigid $B$-module $M=eAe'$.

\begin{lemma}
\label{lem:M-to-B'}
There is a natural isomorphism $B'\to\End_B(M)^{\opp}$.
\end{lemma}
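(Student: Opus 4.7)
The plan is to exhibit the natural map as right multiplication and reduce to Theorem~\ref{thm:Frobcat-constr}(4). Explicitly, for $b' \in B'$ and $m \in M$, I set $\eta(b')(m) = mb'$; since $m = me'$ and $b' = e'b'e'$, one has $mb' = mb'e' \in eAe' = M$, so $\eta(b')$ is a $B$-linear endomorphism of $M$, and $b' \mapsto \eta(b')$ is an algebra homomorphism $\eta\colon B' \to \End_B(M)^{\opp}$, with naturality built into the construction.

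The key structural input is the isomorphism $A \cong \End_B(eA)^{\opp}$ from Theorem~\ref{thm:Frobcat-constr}(4), also given by right multiplication. To leverage it, I would use the assumption $ee' = e = e'e$ to see that $e$ and $e'' := e' - e$ are orthogonal idempotents of $A$, yielding a complete orthogonal decomposition $1 = e + e'' + (1-e')$ and hence a $B$-module decomposition $eA = M \oplus N$, with $N := eA(1-e')$. Under the identification $A \cong \End_B(eA)^{\opp}$, the idempotent $e' \in A$ corresponds to the projection of $eA$ onto $M$ along $N$, so conceptually the lemma is the Peirce corner identity $e'Ae' \cong \End_B(M)^{\opp}$ at this idempotent.

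For injectivity of $\eta$, if $\eta(b')=0$ then taking $m = eae'$ for arbitrary $a \in A$ and using $e'b' = b'$ gives $eab' = 0$ for all $a$, so $b'$ lies in the right annihilator of $eA$ in $A$, which is zero by Theorem~\ref{thm:Frobcat-constr}(4). For surjectivity, given $\varphi \in \End_B(M)$ I would extend it to $\tilde\varphi \in \End_B(eA)$ by $\tilde\varphi|_M = \varphi$ and $\tilde\varphi|_N = 0$ (this is $B$-linear since $B$ preserves both summands), represent $\tilde\varphi$ as right multiplication by a unique $a \in A$, and then translate $\tilde\varphi(N) = 0$ and $\tilde\varphi(M) \subseteq M$ into $(1-e')a = 0$ and $a(1-e') = 0$ respectively, again via triviality of the right annihilator of $eA$. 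Together these give $a = e'ae' \in B'$ with $\eta(a) = \varphi$. The main thing to watch throughout is the opposite-algebra convention and the bookkeeping for the summand decomposition.
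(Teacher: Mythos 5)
Your proof is correct and rests on the same key input as the paper, namely the isomorphism $A\cong\End_B(eA)^{\opp}$ from Theorem~\ref{thm:Frobcat-constr}. The paper then finishes in one line by citing the Peirce-corner identity $e'\End_B(eA)^{\opp}e' = \End_B(eAe')^{\opp}$ --- exactly the ``conceptual'' reformulation you mention in passing --- whereas you instead verify injectivity and surjectivity of right multiplication by hand, using the decomposition $eA = M\oplus N$ with $N = eA(1-e')$. Both routes are valid and of comparable length. One small remark: the orthogonal idempotent $e'' = e' - e$ and the full decomposition $1 = e + e'' + (1-e')$ are never actually used in your argument; the only decomposition you invoke is $eA = eAe'\oplus eA(1-e')$, which follows from $e'$ being idempotent alone, so the detour through $e''$ can be omitted.
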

\begin{proof}
The natural map $A\to\End_B(eA)^{\opp}$ induces a natural map
\[B'=e'Ae'\to e'\End_B(eA)^{\opp}e'=\End_B(eAe')^{\opp}=\End_B(M)^{\opp}.\]
By Theorem~\ref{thm:Frobcat-constr}, this map is an isomorphism.
\end{proof}

Note that Lemma~\ref{lem:M-to-B'} recovers the famous isomorphism $B\to\End_B(B)^{\opp}$ when $e=e'$, using that $B=eAe$.

As observed above, there are isomorphisms $A\cong\End_{B}(eA)^{\opp}$ and $A\cong\End_{B'}(e'A)^{\opp}$. This implies that $\add(eA)\subset\M$ and $\add(e'A)\subset\GP(B')$ are equivalent categories, and we claim that an explicit equivalence is given by $\Hom_B(M,-)$.

\begin{lemma}
\label{lem:Hom-M-eA}
$\Hom_B(M,eA)=e'A$.
\end{lemma}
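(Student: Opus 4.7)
The plan is to construct an explicit isomorphism $\phi\colon e'A \to \Hom_B(eAe', eA)$ sending $x \in e'A$ to the right-multiplication map $\lambda_x\colon m \mapsto mx$, and then to verify bijectivity using the isomorphism $A \cong \End_B(eA)^{\opp}$ provided by Theorem~\ref{thm:Frobcat-constr}. Equivalently, one can chain the tensor--hom adjunction
\[
\Hom_B(eA \otimes_A Ae', eA) \cong \Hom_A(Ae', \Hom_B(eA, eA))
\]
with the identifications $eA \otimes_A Ae' \cong eAe'$ (via the multiplication map) and $\Hom_B(eA, eA) \cong A$ (Theorem~\ref{thm:Frobcat-constr}), together with the standard fact that $\Hom_A(Ae', A) \cong e'A$. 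Well-definedness of $\phi$ is routine: for $m \in eAe'$ and $x \in e'A$, the product $mx$ lies in $eAe' \cdot e'A \subseteq eA$, and $\lambda_x$ is $B$-linear by associativity.

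For surjectivity, given a $B$-linear $f\colon eAe' \to eA$, I would extend $f$ to a $B$-linear endomorphism $\tilde f \colon eA \to eA$ by $\tilde f(y) = f(ye')$; this is well-defined because $ye' \in eA \cdot e' = eAe'$, and $B$-linearity of $\tilde f$ follows from that of $f$. Theorem~\ref{thm:Frobcat-constr} then supplies a unique $a \in A$ with $\tilde f(y) = ya$ for all $y \in eA$. Since every $m \in eAe' \subseteq Ae'$ satisfies $m = me'$, we obtain $f(m) = \tilde f(m) = ma = m(e'a)$, so $f = \phi(e'a)$ with $e'a \in e'A$. The key observation, which is the main content of the argument, is that right multiplication by $a$ and by $e'a$ agree on $eAe'$, allowing the element of $A$ produced by Theorem~\ref{thm:Frobcat-constr} to be replaced by an element of $e'A$.

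For injectivity, suppose $\phi(x) = 0$ for some $x \in e'A$. Then $eae' \cdot x = 0$ for all $a \in A$; since $x = e'x$, this rewrites as $eax = 0$ for all $a$, i.e., right multiplication by $x$ is the zero endomorphism of $eA$. The injectivity of the map $A \to \End_B(eA)^{\opp}$ in Theorem~\ref{thm:Frobcat-constr} then forces $x = 0$. The resulting isomorphism $\phi$ respects the natural $(B', A)$-bimodule structures on both sides, which is why one can write the identification as an equality of subsets of $A$.
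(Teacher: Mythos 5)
Your proof is correct and follows essentially the same route as the paper: both arguments reduce the statement to the isomorphism $A\cong\End_B(eA)^{\opp}$ from Theorem~\ref{thm:Frobcat-constr}, then cut down by the idempotent $e'$ to identify $e'A$ with $\Hom_B(eAe',eA)$. The paper phrases this in one line as the observation that the natural map $A\to\End_B(eA)^{\opp}$ induces $e'A\to e'\End_B(eA)^{\opp}=\Hom_B(eAe',eA)$, whereas you construct $\phi$ explicitly and verify bijectivity by hand (extending a $B$-linear $f$ on $eAe'$ to $eA$ via $y\mapsto f(ye')$, and using $m=me'$ to replace $a$ by $e'a$); these are the same argument at different levels of detail, and your tensor--hom reformulation is also a valid packaging.
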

\begin{proof}
Similar to the proof of Lemma~\ref{lem:M-to-B'}, the natural map $A\to\End_B(eA)^{\opp}$ induces a natural map
\[e'A\to e'\End_B(eA)^{\opp}=\Hom_B(eAe',eA)=\Hom_B(M,eA),\]
of vector spaces, which is an isomorphism by Theorem~\ref{thm:Frobcat-constr}.
(Note that the algebra structure on $\End_B(eA)^{\opp}$ is only used here to make sense of the subspace of elements divisible by $e'$ on the left.)
\end{proof}

Now consider the recollement
\[\begin{tikzcd}
\module{B'/B'eB'}\arrow{r}&\module{B'}\arrow{r}{e}\arrow[bend left]{l}\arrow[bend right]{l}&\module{B}\arrow[bend left]{l}{\Hom_B(eB',-)}\arrow[bend right,swap]{l}{B'e\otimes_B-}
\end{tikzcd}
\]
induced by the idempotent $e\in B'$.
Note that $eB'=eAe'=M$ so $\Hom_B(eB',-)=\Hom_B(M,-)$; here we use that $e'\geq e$.
By standard recollement theory, e.g.\ \cite{PV14}, the functor $\Hom(M,-)\colon\module(B)\rightarrow \module(B')$ is fully faithful.
The following is then immediate from this observation together with Lemma~\ref{lem:Hom-M-eA}.

\begin{lemma}
\label{l:clustertilting}
The functor $\Hom_B(M,-)$ induces an equivalence $\add eA\simeq \add e'A$.
\end{lemma}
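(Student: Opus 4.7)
The plan is to extract the equivalence directly from the three ingredients already assembled just before the lemma: additivity of $\Hom_B(M,-)$, the identity $\Hom_B(M,eA)=e'A$ of Lemma~\ref{lem:Hom-M-eA}, and the full faithfulness of $\Hom_B(M,-)\colon\module B\to\module{B'}$ coming from the recollement.

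First I would check that $\Hom_B(M,-)$ sends $\add(eA)$ into $\add(e'A)$. Since $\Hom_B(M,-)$ is additive, Lemma~\ref{lem:Hom-M-eA} gives
\[\Hom_B(M,(eA)^n)\cong\Hom_B(M,eA)^n=(e'A)^n\]
for every $n\geq0$, and any additive functor preserves direct summands, so the image of an object in $\add(eA)$ is a summand of some $(e'A)^n$, hence lies in $\add(e'A)$. Combined with the full faithfulness statement recalled just above the lemma, this already shows that $\Hom_B(M,-)$ restricts to a fully faithful functor $\add(eA)\to\add(e'A)$.

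It remains to verify essential surjectivity. Given $Y\in\add(e'A)$, pick $n$ and an idempotent $\varepsilon\in\End_{B'}((e'A)^n)$ whose image is $Y$. Under the isomorphism $(e'A)^n\cong\Hom_B(M,(eA)^n)$ and the full faithfulness of $\Hom_B(M,-)$, the idempotent $\varepsilon$ comes from a unique idempotent $\tilde\varepsilon\in\End_B((eA)^n)$. Since $\module B$ is idempotent complete, $\tilde\varepsilon$ has an image $X$, which is a summand of $(eA)^n$ and therefore lies in $\add(eA)$; applying $\Hom_B(M,-)$ to the splitting of $\tilde\varepsilon$ and using additivity shows $\Hom_B(M,X)\cong Y$.

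There is no real obstacle here: the whole argument is a formal consequence of the two preceding lemmas together with idempotent completeness of $\module B$, which is why the authors describe it as ``immediate.'' The only point requiring minor care is keeping track of the identifications $\Hom_B(M,(eA)^n)=(e'A)^n$ at the level of endomorphism algebras, so that lifting idempotents via full faithfulness genuinely produces an object of $\add(eA)$ mapping to the chosen summand of $(e'A)^n$.
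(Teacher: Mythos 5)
Your proof is correct and spells out exactly what the paper leaves implicit: the paper declares the lemma ``immediate'' from the full faithfulness of $\Hom_B(M,-)$ (from the recollement) together with $\Hom_B(M,eA)=e'A$ (Lemma~\ref{lem:Hom-M-eA}), and your argument is precisely the routine verification of that claim, with the only non-trivial step being the idempotent-lifting argument for essential surjectivity, which uses idempotent completeness of $\module B$ as you note.
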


Thus we see that $\Hom_B(M,-)$ is a fully faithful functor from $\module B$ to $\module B'$ taking the cluster-tilting object $eA\in\GP(B)$ to the cluster-tilting object $e'A\in\GP(B')$.
We restrict $\Hom_B(M,-)$ to $\M$, where $eA$ is still cluster-tilting by Proposition~\ref{p:ct-bij-extri}, since $M\in\add(eA)$, and we denote this restricted functor by $F$.

\begin{lemma}
\label{l:liesin}
If $X\in\M$, then $FX=\Hom_{B}(M,X)\in\GP(B')$.
\end{lemma}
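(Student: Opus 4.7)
The plan is to construct a two-term $\add(eA)$-coresolution of $X$ in $\GP(B)$, apply the functor $F=\Hom_B(M,-)$ to obtain a short exact sequence in $\module{B'}$ whose outer terms lie in $\GP(B')$, and then conclude via the resolving property of the class of Gorenstein projectives.

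The main step is to produce, for any $X\in\GP(B)$, a short exact sequence
\[ 0 \to X \to T^0 \to T^1 \to 0 \]
with $T^0,T^1\in\add(eA)$. Since $eA$ is cluster-tilting in $\GP(B)$ by Theorem~\ref{thm:Frobcat-constr}, it is in particular covariantly finite, so a left $\add(eA)$-approximation $X\to T^0$ exists. This map is an inflation because the injective envelope of $X$ in the Frobenius exact category $\GP(B)$ lies in $\add(B)\subseteq\add(eA)$ and therefore factors through it. Setting $T^1=\coker(X\to T^0)$, the Wakamatsu-style argument dual to Lemma~\ref{l:weakwakamatsu-extri} (using rigidity of $eA$, so that no minimality is required) yields $\Ext^1_B(T^1,eA)=0$, while a dimension shift in $\Hom_B(-,B)$ (combined with the fact that the approximation property makes $\Hom_B(T^0,B)\to\Hom_B(X,B)$ surjective, since $B\in\add(eA)$) shows that $T^1\in\GP(B)$. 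Because $\add(eA)=\lperp{(eA)}\cap\GP(B)$ by the cluster-tilting property, this places $T^1$ in $\add(eA)$.

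Next I would apply $F$ to this sequence. The hypothesis $X\in\M$ gives $\Ext^1_B(M,X)=0$, and rigidity of $eA$ combined with $M,T^0,T^1\in\add(eA)$ gives $\Ext^1_B(M,T^i)=0$. The long exact $\Ext$-sequence thus collapses to
\[ 0 \to FX \to FT^0 \to FT^1 \to 0 \]
in $\module{B'}$. By Lemma~\ref{l:clustertilting} and additivity, both $FT^0$ and $FT^1$ lie in $\add(e'A)$, which is contained in $\GP(B')$ because $e'A$ is cluster-tilting in $\GP(B')$ (applying Theorem~\ref{thm:Frobcat-constr} to the bimodule internally $3$-Calabi--Yau pair $(A,e')$).

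Finally, since $B'$ is Iwanaga--Gorenstein, the class $\GP(B')$ is resolving in $\module{B'}$; in particular it is closed under kernels of epimorphisms between its objects, and so the short exact sequence above forces $FX\in\GP(B')$. The main obstacle is the first step: although the construction of a two-term coresolution by a cluster-tilting subcategory is essentially classical, the argument bundles Wakamatsu vanishing, Gorenstein projectivity of the cokernel, and the cluster-tilting characterisation together, all of which rely on the internally Calabi--Yau structure in order to combine coherently.
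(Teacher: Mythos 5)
Your proposal is correct and follows the same strategy as the paper: construct a two-term $\add(eA)$-coresolution of $X$, apply $F$ (using $\Ext^1_B(M,X)=0$ to get a short exact sequence), and deduce Gorenstein projectivity of $FX$ from that of $FT^0,FT^1\in\add(e'A)$. The only cosmetic differences are that the paper builds the coresolution directly in $\M$ (where the cluster-tilting property of $eA$ already delivers it), whereas you reprove this construction from scratch in $\GP(B)$ via Wakamatsu vanishing and a dimension shift, and that the paper replaces your appeal to $\GP(B')$ being resolving by writing out the same long exact sequence in $\Ext_{B'}(-,B')$ explicitly.
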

\begin{proof}
Let $X$ be an object in $\M$.
Then, because $eA\in\M$ is cluster-tilting, there is a short exact sequence
\[\begin{tikzcd}
0\arrow{r}&X\arrow{r}&T_1\arrow{r}&T_2\arrow{r}&0
\end{tikzcd}\]
in $\M$, such that $T_i\in \add(eA)$ for $i=1,2$ (obtained by choosing the map $X\to T_1$ to be a left $\add(T)$-approximation of $X$).
Because $X\in\M$, applying $F$ yields a short exact sequence
\[\begin{tikzcd}
0\arrow{r}&FX\arrow{r}&FT_1\arrow{r}&FT_2\arrow{r}&0.
\end{tikzcd}\]
Applying $\Hom_{B'}(-,B')$ to this sequence gives exact sequences
\[\begin{tikzcd}
\Ext^j_{B'}(FT_1,B')\arrow{r}&\Ext^j_{B'}(FX,B')\arrow{r}&\Ext^{j+1}_{B'}(FT_2,B')
\end{tikzcd}\]
for all $j>0$. Since $e'A\in\GP(B')$ and $FT_i\in\add{e'A}$ for $i=1,2$, we have that $\Ext^j_{B'}(FT_1,B')=0=\Ext^{j+1}_{B'}(FT_2,B')$, and hence
$\Ext^j_{B'}(FX,B')=0$, for all $j>0$.
Thus $FX\in\GP(B')$.
\end{proof}

Thus $F$ is an exact and fully faithful functor from $\M$ to $\GP(B')$ taking the cluster-tilting object $eA$ in $\M$ to the cluster-tilting object $e'A$ in $\GP(B')$. This turns out to imply that $F$ is an equivalence.

For the following proposition, recall that a cluster-tilting subcategory (of some category $\C$) is a full and functorially finite subcategory $\T$ such that ${}^{\perp_1}\T=\T=\T^{\perp_1}$. In particular, if $T$ is a cluster-tilting object then $\add(T)$ is a cluster-tilting subcategory.

\begin{proposition}
\label{p:ctenough}
Let $\C$ and $\C'$ be stably $2$-Calabi--Yau Frobenius exact categories, and assume that $\C$ is weakly idempotent complete.
Suppose that there are cluster-tilting subcategories $\T\subset\C$ and $\T'\subset\C'$, and that $G\colon\C\rightarrow \C'$ is an exact functor restricting to an equivalence $\T\to\T'$.
Then $G$ is an equivalence.
\end{proposition}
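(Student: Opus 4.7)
The strategy is to prove that $G$ is fully faithful and essentially surjective.

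For fully faithfulness, I would apply the 5-lemma twice, exploiting the cluster-tilting property. By (the dual of) Lemma~\ref{l:weakwakamatsu-extri}, every $X \in \C$ fits into a conflation $T_1 \infl T_0 \defl X$ with $T_0, T_1 \in \T$, and dually every $Y \in \C$ into $Y \infl T^0 \defl T^1$ with $T^0, T^1 \in \T$. For $X \in \T$ and $Y \in \C$ arbitrary, apply $\Hom_\C(X, -)$ to the left $\T$-approximation conflation of $Y$ and compare with the corresponding sequence in $\C'$ obtained via $G$; the middle and right terms only involve objects of $\T$, where $G|_\T$ is already an equivalence, so the 5-lemma yields $\Hom_\C(X, Y) \cong \Hom_{\C'}(GX, GY)$. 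For general $X, Y \in \C$, apply $\Hom_\C(-, Y)$ to the right $\T$-approximation of $X$ and invoke the 5-lemma together with the previous case. Extending the long exact sequences by one further term (and using rigidity of $\T$ and $\T'$ to kill $\Ext^1$ between cluster-tilting objects) shows in the same way that $G$ induces isomorphisms on $\Ext^1$.

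For essential surjectivity, fix $X' \in \C'$ with right $\T'$-approximation conflation $0 \to T'_1 \xrightarrow{g'} T'_0 \to X' \to 0$, and lift $g'$ through $G|_\T$ to a morphism $g \colon T_1 \to T_0$ in $\T$. Choose an inflation $\alpha \colon T_1 \infl J$ into a projective-injective $J \in \PR_\C$. By the WIC condition (Remark~\ref{rem:WIC}), applied to the factorisation $\pi_2 \circ (g, \alpha) = \alpha$, the morphism $(g, \alpha) \colon T_1 \to T_0 \oplus J$ is itself an inflation in $\C$, and hence has a cokernel $X \in \C$. Applying $G$ and comparing with the Mayer--Vietoris conflation of the pushout of $Gg$ along $G\alpha$ in $\C'$ (both of which are inflations there, the first because of the resolution of $X'$, the second because $G$ is exact), one identifies $GX$ with this pushout, and hence obtains a conflation $0 \to GJ \to GX \to X' \to 0$ in $\C'$.

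The remaining step --- and the main obstacle --- is to split this conflation, thereby exhibiting $X'$ as a direct summand of $GX$. The natural route is to show that $G$ preserves projective-injectives, so that $GJ \in \PR_{\C'}$ and the conflation splits automatically: the $\Ext^1$-isomorphism already gives $\Ext^1_{\C'}(GJ, -) = 0$ on the essential image of $G$, and one must propagate this vanishing to arbitrary objects of $\C'$ by combining cluster-tilting resolutions in $\C'$ with the $2$-Calabi--Yau duality in $\underline{\C}'$. Once $GX \cong GJ \oplus X'$ in $\C'$, fullness of $G$ lifts the resulting retraction $GX \to GJ$ to a morphism $r \colon X \to J$ in $\C$ (with $r j = 1_J$ by faithfulness, where $j \colon J \to X$ is the canonical map), and the weak idempotent completeness of $\C$ then produces a direct sum decomposition $X \cong J \oplus \widetilde{X}$ with $G\widetilde{X} \cong X'$, completing the argument.
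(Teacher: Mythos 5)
Your full-faithfulness argument is essentially the paper's, up to which approximation is used first: you apply $\Hom_{\C}(X,-)$ for $X\in\T$ to a \emph{left} $\T$-approximation conflation $Y\infl T^0\defl T^1$ of $Y$, whereas the paper applies $\Hom_{\C}(T,-)$ for $T\in\T$ to a \emph{right} $\T$-approximation conflation $T_2\infl T_1\defl X$ of $X$ (so that rigidity kills the $\Ext^1$ term and yields a genuine short exact sequence in the first step). Both two-stage comparisons work, and your observation that the same scheme gives $\Ext^1_{\C}(X,Y)\cong\Ext^1_{\C'}(GX,GY)$ is correct and is also implicitly needed later.

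For essential surjectivity you take a genuinely different route. Given the lift $g\colon T_1\to T_0$ of the right $\T'$-approximation kernel map $g'$, the paper shows directly that $g$ is an inflation in $\C$: it fixes an inflation $i\colon T_1\infl Q$ with $Q$ projective-injective, extends $Gi$ along $Gg$ (which requires $GQ$ to be injective in $\C'$), lifts the extension through $G|_{\T}$ to some $\psi\colon T_0\to Q$, deduces $\psi g=i$ by faithfulness, and concludes via weak idempotent completeness; the cokernel of $g$ then lands on $X'$ under $G$. Your variant adjoins the projective-injective to the \emph{target} and uses $(g,\alpha)\colon T_1\to T_0\oplus J$, which is an inflation for free by weak idempotent completeness — a neat way to postpone the injectivity issue — at the cost of a surplus conflation $GJ\infl GX\defl X'$ that must then be split. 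The two approaches are otherwise cleanly parallel, and your descent of the splitting back to $\C$ (lift the retraction by fullness, apply weak idempotent completeness to the resulting section) is fine.

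The place where a genuine gap remains is exactly where you flag it: you need $GJ$ to be projective-injective in $\C'$ (equivalently $\Ext^1_{\C'}(X',GJ)=0$), and the paper's route needs the analogous statement for $GQ$. But your sketched remedy — propagating the vanishing $\Ext^1_{\C'}(GJ,G(-))=0$ from the essential image of $G$ to all of $\C'$ using cluster-tilting resolutions and $2$-Calabi--Yau duality — is circular. Concretely, applying $\Hom_{\C'}(-,GJ)$ to the right $\T'$-approximation conflation $T_1'\infl T_0'\defl X'$ and using rigidity of $\T'$ shows $\Ext^1_{\C'}(X',GJ)\cong\operatorname{coker}\bigl(\Hom_{\C'}(T_0',GJ)\to\Hom_{\C'}(T_1',GJ)\bigr)$, which via $G|_{\T}$ is $\operatorname{coker}\bigl(\Hom_{\C}(T_0,J)\xrightarrow{\,g^*\,}\Hom_{\C}(T_1,J)\bigr)$; showing this vanishes is equivalent to showing every map $T_1\to J$ factors through $g$, which is exactly what injectivity of $J$ would give \emph{if} $g$ were known to be an inflation, or what injectivity of $GJ$ would give directly — the very things you are trying to prove. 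So this step needs a different argument. (For what it is worth, the paper's own justification of the corresponding step, namely that $GQ$ is injective "because $G$ is exact", is likewise not a complete explanation, since exact functors do not in general preserve injectives; the claim ultimately needs to be checked.)
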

\begin{proof}
We adapt the argument from \cite[Lem.~4.5]{KellerReiten08}.
Let $X$ be an object in $\C$.
Then, since $\T$ is cluster-tilting, there is an exact sequence
\begin{equation}
\label{eq:X-approximation}
\begin{tikzcd}
0\arrow{r}&T_2\arrow{r}&T_1\arrow{r}&X\arrow{r}&0,
\end{tikzcd}
\end{equation}
with $T_1,T_2\in\T$ (obtained by taking $T_1\to X$ to be a right $\T$-approximation of $X$).
Choosing $T\in\T$ and applying $\Hom_{\C}(T,-)$ to this sequence we get
\[\begin{tikzcd}
0\arrow{r}&\Hom_{\C}(T,T_2)\arrow{r}&\Hom_{\C}(T,T_1)\arrow{r}&\Hom_{\C}(T,X)\arrow{r}&0,
\end{tikzcd}\]
noting that $\Ext^1_{\C}(T,T_2)=0$ since $\T$ is cluster-tilting.
Since $G$ is exact we also have an exact sequence
\[\begin{tikzcd}
0\arrow{r}&GT_2\arrow{r}&GT_1\arrow{r}&GX\arrow{r}&0,
\end{tikzcd}\]
to which we may apply $\Hom_{\C'}(GT,-)$ to obtain
\[\begin{tikzcd}
0\arrow{r}&\Hom_{\C'}(GT,GT_2)\arrow{r}&\Hom_{\C'}(GT,GT_1)\arrow{r}&\Hom_{\C'}(GT,GX)\arrow{r}&0,
\end{tikzcd}\]
observing that $\Ext^1_{\C'}(GT,GT_2)=0$ since $G(\T)=\T'$ is cluster-tilting in $\C'$.

We now consider the commutative diagram
\begin{equation}
\label{eq:h}
\begin{tikzcd}[column sep=1.5pc]
0\arrow{r}&\Hom_{\C}(T,T_2)\arrow{r}\arrow{d}{f}&\Hom_{\C}(T,T_1)\arrow{r}\arrow{d}{g}&\Hom_{\C}(T,X)\arrow{r}\arrow{d}{h}&0\\
0\arrow{r}&\Hom_{\C'}(GT,GT_2)\arrow{r}&\Hom_{\C'}(GT,GT_1)\arrow{r}&\Hom_{\C'}(GT,GX)\arrow{r}&0
\end{tikzcd}
\end{equation}
with exact rows, in which each vertical map is induced by the functor $G$.
Since $G$ restricts to an equivalence $\T\to\T'$, both $f$ and $g$ are isomorphisms, and hence so is $h$.

Now let $X$ and $Y$ be arbitrary objects in $\C$ and choose a sequence \eqref{eq:X-approximation} with $T_1,T_2\in\add{T}$.
Applying $\Hom_{\C}(-,Y)$ yields an exact sequence
\[\begin{tikzcd}
0\arrow{r}&\Hom_{\C}(X,Y)\arrow{r}&\Hom_{\C}(T_1,Y)\arrow{r}&\Hom_{\C}(T_2,Y),
\end{tikzcd}\]
to which we further apply the left exact functors $G$ and $\Hom_{\C'}(-,GY)$ to obtain
\[\begin{tikzcd}
0\arrow{r}&\Hom_{\C'}(GX,GY)\arrow{r}&\Hom_{\C'}(GT_1,GY)\arrow{r}&\Hom_{\C'}(GT_2,GY).
\end{tikzcd}\]
We may thus construct a commutative diagram
\[\begin{tikzcd}
0\arrow{r}&\Hom_{\C}(X,Y)\arrow{r}\arrow{d}{p}&\Hom_{\C}(T_1,Y)\arrow{r}\arrow{d}{q}&\Hom_{\C}(T_2,Y)\arrow{d}{r}\\
0\arrow{r}&\Hom_{\C'}(GX,GY)\arrow{r}&\Hom_{\C'}(GT_1,GY)\arrow{r}&\Hom_{\C'}(GT_2,GY)
\end{tikzcd}\]
in which the vertical maps are once again induced from $G$. Both $q$ and $r$ are isomorphisms for the same reason as $h$ in \eqref{eq:h}.
Thus $p$ is also an isomorphism, and so $G$ is fully faithful.

To see that $G$ is dense, let $Y$ be an arbitrary object in $\C'$.
Then, since $\T'$ is cluster-tilting in $\C'$ and $G$ induces an equivalence from $\T$ to $\T'$, there are objects $T_1,T_2\in\T$ and a map $\varphi\colon T_1\rightarrow T_2$ from which we may form a short exact sequence
\begin{equation}
\begin{tikzcd}
0\arrow{r}&GT_1\arrow{r}{G\varphi}&GT_2\arrow{r}&Y\arrow{r}&0
\end{tikzcd}
\label{e:Ysequence}
\end{equation}
in $\C'$. 

First we show that $\varphi$ is an inflation in $\C$.
Let $i\colon T_2\rightarrow Q$ be an injective envelope of $T_2$.
Then $GQ$ is injective because $G$ is exact, and $G\varphi$ is an inflation, so there exists a map $\psi'\colon GT_1\to GQ$ such that the diagram
\[\begin{tikzcd}
GT_2\arrow{r}{G\varphi}\arrow[swap]{d}{Gi}&GT_1\arrow{dl}{\psi'}\\GQ
\end{tikzcd}\]
commutes. 
Since $F$ induces an equivalence between $\add T$ and $\add T'$, and $Q$ is a direct summand of $T$, there is a map $\psi:T_1\rightarrow Q$ such that $G\psi=\psi'$.
Furthermore, the preceding diagram is the image under $G$ of the commuting diagram
\[\begin{tikzcd}
T_2\arrow{r}{\varphi}\arrow[swap]{d}{i}&T_1\arrow{dl}{\psi}\\Q
\end{tikzcd}\]
in $\C$, from which we see that $\psi\varphi=i$ is an inflation.
Since $\C$ is weakly idempotent complete, we may therefore apply (the dual of) \cite[Prop.~7.6]{Buehler} to see that $\varphi$ is an inflation as required. 

Thus there is a short exact sequence
\[\begin{tikzcd}
0\arrow{r}&T_1\arrow{r}{\varphi}&T_2\arrow{r}&X\arrow{r}&0
\end{tikzcd}\]
in $\C$.
Since $G$ is exact, the image of this sequence under $G$ is isomorphic to the exact sequence~\eqref{e:Ysequence}, hence in particular $GX\cong Y$ and $G$ is dense as required.
\end{proof}

\begin{theorem}
\label{t:GP-reduction}
The functor $\Hom_B(M,-)$ induces an exact equivalence between the Frobenius exact categories $\M$ and $\GP(B')$ taking the cluster-tilting object $eA$ in $\M$ to the cluster-tilting object $e'A$ in $\GP(B')$.
\end{theorem}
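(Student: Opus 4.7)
The plan is to assemble the pieces already in place and verify the hypotheses of Proposition~\ref{p:ctenough}. First I would note that by Theorem~\ref{t:reduction} applied to $\GP(B)$ (which is stably $2$-Calabi--Yau, Frobenius exact, and weakly idempotent complete by Theorem~\ref{thm:Frobcat-constr} and Remark~\ref{r:GPBwic}) and the rigid object $M=eAe'\in\add(eA)$, the reduction $\M$ is itself a stably $2$-Calabi--Yau Frobenius extriangulated category, weakly idempotent complete, with cluster-tilting object $eA$ (by Proposition~\ref{p:ct-bij-extri}, since $M\in\add(eA)$). Moreover, as $\M$ is extension-closed in the exact category $\GP(B)$, it inherits an exact (not merely extriangulated) structure by Example~\ref{eg:extri-cats}\ref{eg:subcats}. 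On the other side, by Theorem~\ref{thm:Frobcat-constr} applied to the internally $3$-Calabi--Yau pair $(A,e')$, the category $\GP(B')$ is a stably $2$-Calabi--Yau Frobenius exact category with cluster-tilting object $e'A$.

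Next I would verify that the restriction $F=\Hom_B(M,-)|_{\M}$ is an exact functor from $\M$ to $\GP(B')$. Lemma~\ref{l:liesin} already ensures that $F$ takes values in $\GP(B')$. Given a conflation (short exact sequence)
\[
\begin{tikzcd}
0\arrow{r}&X\arrow{r}&Y\arrow{r}&Z\arrow{r}&0
\end{tikzcd}
\]
in $\M$, applying $\Hom_B(M,-)$ yields a long exact sequence beginning
\[
\begin{tikzcd}
0\arrow{r}&FX\arrow{r}&FY\arrow{r}&FZ\arrow{r}&\Ext^1_B(M,X).
\end{tikzcd}
\]
But $\Ext^1_B(M,X)=0$ since $X\in\M$, so the sequence of $B'$-modules is short exact. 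Because $\GP(B')$ inherits its exact structure from $\module{B'}$, this is also a conflation in $\GP(B')$, proving exactness of $F$.

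Finally, Lemma~\ref{l:clustertilting} gives that $F$ restricts to an equivalence $\add(eA)\simeq\add(e'A)$ between the two cluster-tilting subcategories. All the hypotheses of Proposition~\ref{p:ctenough} are then in place, and I would conclude that $F\colon\M\to\GP(B')$ is an equivalence of exact categories sending $eA$ to $e'A$.

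The only potentially delicate point is the verification of exactness, which requires the observation that the defining condition $\Ext^1_B(M,X)=0$ for objects of $\M$ is precisely what is needed to make $\Hom_B(M,-)$ send conflations in $\M$ to short exact sequences; after that, the proof is a straightforward application of Proposition~\ref{p:ctenough}, and all nontrivial input (the Frobenius, $2$-Calabi--Yau, and weak idempotent completeness properties of the two categories, together with the identification of their cluster-tilting objects) has already been established.
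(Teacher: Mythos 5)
Your proposal is correct and follows essentially the same route as the paper: establish the relevant properties of $\M$ and $\GP(B')$, use Lemma~\ref{l:liesin} (and the exactness argument) to get that $F$ is an exact functor into $\GP(B')$, use Lemma~\ref{l:clustertilting} to get the equivalence on cluster-tilting subcategories, and conclude by Proposition~\ref{p:ctenough}. Your explicit verification that $F$ sends conflations in $\M$ to conflations in $\GP(B')$ (via $\Ext^1_B(M,X)=0$) is a useful clarification of a step the paper attributes a bit loosely to Lemma~\ref{l:liesin}, but it is the same idea that appears inside the proof of that lemma.
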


\begin{proof}
Since $\GP(B)$ is idempotent complete, so is $\M$. By Lemma~\ref{l:liesin}, the restriction $F$ of $\Hom_B(M,-)$ to $\M$ is a exact functor from $\M$ to $\GP(B')$. By Lemma~\ref{l:clustertilting}, $F$ induces an equivalence from $\add(eA)$ to $\add(e'A)$. Hence, by Proposition~\ref{p:ctenough}, $F$ is an equivalence as required.
\end{proof}

\section{Examples}
\label{s:examples}

In this section, we apply the reduction method developed above to some Grassmannian cluster categories from~\cite{JKS16}.
This will allow us to make a connection to frieze patterns in Section~\ref{s:friezes}.  %

Let $n$ and $k$ be positive integers such that $1\leq k\leq n-1$, and let $\Gr(k,n)$ denote the Grassmannian of $k$-dimensional subspaces of $\mathbb{C}^n$. By~\cite{Scott06}, the homogeneous coordinate ring $\mathbb{C}[\Grcone(k,n)]$ of $\Gr(k,n)$ (i.e.\ the coordinate ring of the affine cone $\Grcone(k,n)$) is a cluster algebra.
The article~\cite{JKS16} defines a category $\CM(C_{k,n})$ which, by~\cite{JKS16} and~\cite{BKM16}, categorifies this cluster algebra. We recall the definition and properties of this category from~\cite[\S3]{JKS16}. We will actually follow the set-up in~\cite{CKP}, adapting the results from~\cite{JKS16} appropriately. Thus we consider a cyclic quiver with $n$ vertices $Q_0=\{1,2,\ldots, n\}$ and arrows $x_i$ (respectively, $y_i$), $i\in C_1=\{1,2,\ldots ,n\}$ (taken modulo $n$), joining adjacent vertices clockwise (respectively, anticlockwise). The quiver for $n=6$ is shown in Figure~\ref{f:quiver6}.
We choose labels for the vertices here, as in~\cite{JKS16}, since we will need to refer explicitly to the corresponding vector spaces when working with modules.

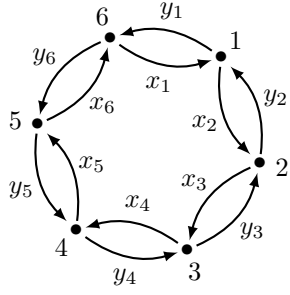
\begin{figure}
\begin{tikzpicture} [scale=1,
qarrow/.style={-latex, thick}]  
\foreach \j in {1,...,6}
{ \path (110-60*\j:1.5) node (w\j) {};
  \path (170-60*\j:1.5) node (v\j) {};
  \draw (w\j) node [black] {$\bullet$};
  \path [qarrow] (v\j) edge [bend right=26] (w\j);
  \path [qarrow] (w\j) edge [bend right=26] (v\j);
  \draw (140-60*\j:1.8) node[black] {$y_{\j}$};
  \draw (140-60*\j:0.8) node[black] {$x_{\j}$}; 
  \draw (110-60*\j:1.8) node[black] {$\j$}; 
}
\end{tikzpicture}
\caption{The cyclic quiver $Q$ for $n=6$.}
\label{f:quiver6}
\end{figure}

Consider the algebra with quiver $Q$ and relations, starting at any vertex, given by $xy=yx$ and $y^k=x^{n-k}$, where we interpret $x$ and $y$ as the appropriate arrows $x_i$ and $y_i$. Let $C_{k,n}$ be the completion of this algebra at the ideal generated by the arrows.
The article~\cite{JKS16} considers the algebra $\widetilde{C}_{k,n}$ with the relation $y^k=x^{n-k}$ replaced with $x^k=y^{n-k}$; there is an isomorphism
$\alpha\colon C_{k,n}\rightarrow \widetilde{C}_{k,n}$ sending $x_i$ to $y_{n+1-i}$ and $y_i$ to $x_{n+1-i}$.

Then $\C_{k,n}=\CM(C_{k,n})$ is the category of (maximal) Cohen--Macaulay $C_{k,n}$-modules; by \cite[Cor.~3.7]{JKS16} this coincides with the category $\GP(C_{k,n})$ of Gorenstein projective $C_{k,n}$-modules, as appearing in Section~\ref{s:icy}.
The isomorphism $\alpha$ induces an equivalence $F$ from $\C_{k,n}$ to the category $\widetilde{\C}_{k,n}$ of (maximal) Cohen--Macaulay $\widetilde{C}_{k,n}$-modules.
By~\cite[Cor.~3.7]{JKS16}, $\widetilde{\C}_{k,n}$ is a Frobenius exact category and, as pointed out in~\cite[Rem.~3.3]{JKS16}, $\widetilde{\C}_{k,n}$ has Auslander--Reiten sequences and an Auslander--Reiten quiver by~\cite{Auslander86}; hence $\C_{k,n}$ has these properties also.

Each object in $\C_{k,n}$ is, by definition, free as a $Z$-module, where $Z=\powser{\CC}{t}$ is the centre of $C_{k,n}$, generated by $t=xy$. Let $K$ denote the field of fractions of $Z$; then the rank of an object in $\C_{k,n}$ is defined to be the length of $M\otimes_Z K$ as a $B\otimes_Z K$-module, noting that $B\otimes_Z K$ is isomorphic to the simple algebra $M_n(K)$ of $n\times n$ matrices over $K$.

If $I$ is a $k$-subset of $C_1$ (i.e.\ a subset of cardinality $k$), then, as in~\cite[Defn.~3.2]{CKP}
(adapted from~\cite[Defn.\ 5.1]{JKS16}), we define a $C_{k,n}$-module $M_I$ as follows. For $j\in Q_0$, set $V_j=Z$ and, for $a\in C_1$, define
$$x_a=\begin{cases} \text{multiplication by $t$,} & a\in I; \\ \text{multiplication by $1$,} & a\not\in I;\end{cases} \quad\quad
y_a=\begin{cases} \text{multiplication by $1$,} & a\in I; \\ \text{multiplication by $t$,} & a\not\in I.\end{cases}$$
Note that $F(M_I)\cong \widetilde{M}_{I'}$, where
$\widetilde{M}_I$ is the $\widetilde{C}_{k,n}$-module defined in~\cite[Defn.\ 5.1]{JKS16} and $I'$ is the $k$-subset of $[1,n]$ obtained by replacing each element $i$ of $I$ with $n+1-i$.
In terms of the combinatorics of profiles as in \cite[\S6]{JKS16}, our notational convention means that the set $I$ records the upward steps in the profile of $M_I$. We take this difference into account below.

By~\cite[Prop.\ 5.2]{JKS16}, every rank $1$ $C_{k,n}$-module is isomorphic to a module $M_I$ for a unique $k$-subset $I$.

\begin{definition}
\label{d:morphisms}
By~\cite[Rem.\ 5.4]{JKS16}, $\Hom_{C_{k,n}}(M_I,M_J)$ is a free rank $1$ $Z$-module. 
A \emph{monomial} morphism (see~\cite[Defn.\ 7.3]{JKS16}) $f\in \Hom_{C_{k,n}}(M_I,M_J)$ is given by a tuple $(f_j)_{j\in Q_0}$
of $Z$-maps, where each $f_j$ is given by multiplication by $t^{\alpha_j}$
for nonnegative integers $\alpha_j$ satisfying
\[\alpha_{j}-\alpha_{j-1}=\begin{cases} 1, & j\in J\setminus I; \\ -1, & j\in I\setminus J; \\ 0, & \text{otherwise}.\end{cases}\]
A generator $\varphi^I_J$ of $\Hom_{C_{k,n}}(M_I,M_J)$ as a $Z$-module is given by the unique solution to this equation for which the tuple $\alpha=(\alpha_j)_{j\in Q_0}$ has at least one zero component.
\end{definition}

Let $I$ and $J$ be $k$-subsets of $[1,n]$. Then $I$ and $J$ are said to be \emph{non-crossing}~\cite{Scott06} (or \emph{weakly separated}~\cite{LeclercZelevinsky98}) if there do not exist $a,b,c,d$, cyclically ordered, such that $a,c\in I\setminus J$ and $b,d\in J\setminus I$. Otherwise, $I$ and $J$ are said to be \emph{crossing}.
By~\cite[Prop.~5.6]{JKS16}, $\Ext^1_{C_{k,n}}(M_I,M_J)=0$ if and only if $I$ and $J$ are non-crossing.

By~\cite[Rem.~3.3]{JKS16}), $\C_{k,n}$ has Auslander--Reiten sequences.
The Auslander--Reiten quiver, $\Gamma_{2,n}$, of $\C_{2,n}$ (shown in Figure~\ref{f:ARquiver2n}) was already given in~\cite[Eg.~5.3]{JKS16}, but the irreducible maps were not explicitly described, so we describe them here.

\begin{lemma}
\label{lem:ARsequence}
For any $2$-subset $\{i,j\}$ of $[1,n]$ with $j\not=i\pm 1$, the Auslander--Reiten sequence with first
term $M_{i,j}$ is, up to equivalence of short exact sequences, the sequence
\begin{equation}
\begin{tikzcd}
0 \arrow{r}{} & M_{i+1,j+1} \arrow{r}{f} & M_{i+1,j}\oplus M_{i,j+1}
\arrow{r}{g} & M_{i,j} \arrow{r}{} & 0,
\end{tikzcd}
\label{e:ARsequence}
\end{equation}
where $f=\begin{pmatrix} \varphi^{i+1,j+1}_{i+1,j} \\[5pt] \varphi^{i+1,j+1}_{i,j+1} \end{pmatrix}$
and
$g=\begin{pmatrix} \varphi^{i+1,j}_{i,j} &
-\varphi^{i,j+1}_{i,j} \end{pmatrix}$.
\end{lemma}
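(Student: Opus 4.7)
The strategy is to verify that the displayed sequence satisfies the defining properties of an Auslander--Reiten sequence with final term $M_{i,j}$: it must be a non-split short exact sequence and the morphism $g$ must be right almost split. The hypothesis $j\neq i\pm 1$ ensures that $M_{i,j}$ is not projective, since the projective-injective rank-one modules are precisely the $M_{\ell,\ell+1}$; hence by the existence of Auslander--Reiten sequences in $\C_{2,n}$, the Auslander--Reiten sequence ending at $M_{i,j}$ exists and is unique up to equivalence, so it suffices to confirm the three properties above for the displayed sequence.

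For exactness, first observe that each $\varphi^{I}_{J}$ appearing in $f$ and $g$ is a legitimate monomial morphism: the symmetric difference $|I\triangle J|$ equals $2$ in every case, and Definition~\ref{d:morphisms} then yields a unique non-negative solution to the exponent recursion. A vertex-by-vertex comparison of the resulting exponent tuples shows that $gf=0$, since at each vertex the contributions from the two summands of the middle term cancel. Exactness follows because at each vertex $\ell\in Q_0$ the sequence is of the form $Z\to Z^2\to Z$ between free $Z$-modules; the map $g$ is surjective at $\ell$ because the non-vanishing exponent of $\varphi^{i+1,j}_{i,j}$ is supported near $i$ while that of $\varphi^{i,j+1}_{i,j}$ is supported near $j$, so at least one of the two exponents vanishes at $\ell$, and one then checks directly that the image of $f$ equals the kernel of $g$. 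Non-splitness is an immediate consequence of Krull--Schmidt in $\C_{2,n}$: a splitting would provide two distinct decompositions $M_{i+1,j}\oplus M_{i,j+1}\cong M_{i+1,j+1}\oplus M_{i,j}$ into indecomposables, but the four indexing $2$-subsets are pairwise distinct (using again $j\neq i\pm 1$).

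The main obstacle is verifying the right almost-split property for $g$. By \cite[Eg.~5.3]{JKS16}, every indecomposable object of $\C_{2,n}$ is a rank-one module $M_K$, and by Definition~\ref{d:morphisms} (together with \cite[Rem.~5.4]{JKS16}) the space $\Hom_{C_{2,n}}(M_K,M_{i,j})$ is a free $Z$-module of rank one generated by $\varphi^K_{i,j}$ whenever this morphism is non-zero. The question thus reduces to showing that for every $K\neq\{i,j\}$, the generator $\varphi^K_{i,j}$ factors through $g$; this is handled by a case analysis on the cyclic position of $K$ relative to $\{i,j\}$, in each case constructing explicit monomial morphisms $a\colon M_K\to M_{i+1,j}$ and $b\colon M_K\to M_{i,j+1}$ such that $\varphi^{i+1,j}_{i,j}\circ a-\varphi^{i,j+1}_{i,j}\circ b=\varphi^K_{i,j}$. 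An alternative would be to invoke the known description of the Auslander--Reiten translate in $\C_{2,n}$ as cyclic rotation of the indexing set, which identifies $M_{i+1,j+1}$ with $\tau M_{i,j}$ directly and allows one to conclude from the non-split short exact sequence alone.
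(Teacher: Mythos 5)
Your main route---verifying the right almost-split property for $g$ directly---is a genuinely different and considerably more laborious approach than the paper's. The paper's proof begins by citing \cite[Eg.~3.7]{BBGE20}: for $k=2$, $\Ext^1_{C_{2,n}}(M_I, M_J) \cong \mathbb{C}$ whenever $I$ and $J$ are crossing. Since the shape of the Auslander--Reiten quiver is already known from \cite[Eg.~5.3]{JKS16} (so that $\tau M_{i,j}\cong M_{i+1,j+1}$), this one-dimensionality immediately implies that any non-split exact sequence with the displayed ends is equivalent to the Auslander--Reiten sequence. Thus the paper only needs to check exactness, which it does vertex-by-vertex much as you sketch, and observes non-splitness from Krull--Schmidt as you do.

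Both branches of your argument have gaps. In the main route, the crucial step---constructing explicit monomial factorisations $a,b$ for each $K$---is left entirely to an unspecified ``case analysis,'' so no proof is actually given; you would moreover need to handle $K=\{i,j\}$ as well, since the non-isomorphism $t\cdot\mathrm{id}_{M_{i,j}}$ must also lift through $g$. Your closing ``alternative'' comes close to the paper's argument but is not sound as stated: knowing $\tau M_{i,j}\cong M_{i+1,j+1}$ and exhibiting one non-split short exact sequence does not by itself identify it as the Auslander--Reiten sequence, because in general $\Ext^1(M_{i,j},\tau M_{i,j})$ could be higher-dimensional, with only the socle class (as $\End$-module) realising the AR sequence. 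The missing ingredient is precisely the $\Ext^1\cong\mathbb{C}$ observation from \cite{BBGE20} that opens, and does the real work in, the paper's proof.
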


\begin{proof}
Since $k=2$, we have $\Ext^1_{C_{k,n}}(M_I,M_J)\cong \mathbb{C}$ whenever $I$ and $J$ are crossing \cite[Eg.~3.7]{BBGE20} (see also \cite{BBGE20C}). So it is enough to show that the sequence~\eqref{e:ARsequence} is exact (since then it is clearly not split).

From the description in Definition~\ref{d:morphisms}, the morphisms $\varphi^{i+1,j+1}_{i,j+1}$ and $\varphi^{i+1,j}_{i,j}$ multiply the component corresponding to vertex $i$ by $t$ and all other components by $1$, while
the morphisms $\varphi^{i+1,j+1}_{i+1,j}$ and $\varphi^{i,j+1}_{i,j}$ multiply the component corresponding to vertex $i$ by $t$ and all other components by $1$.

Since all of the maps $\varphi^I_J$ are injective, $f$ is injective. If $m=(z_a)_{a\in Q_0}$ is an arbitrary element of $M_{i,j}$, then we see that
$m=g((x_a)_{a\in C_1},(y_a)_{a\in Q_0})$, where $x_a=z_a$ for $a\not=j$, $x_j=0$, and $y_a=0$ for $a\not=j$, $y_j=z_j$, so $g$ is surjective.

The image of $f$ is the set of pairs
$(t^{\delta_{ai}}z_a)_{a\in Q_0},t^{\delta_{aj}}(z_a)_{a\in Q_0})$, where $z_a\in Z$ for all $a\in Q_0$. The kernel of $g$ is the set of
pairs
$((x_a)_{a\in Q_0},(y_a)_{a\in Q_0})$ satisfying $t^{\delta_{aj}}x_a=t^{\delta_{ai}}y_a$.
If $m=((t^{\delta_{ai}}z_a)_{a\in Q_0},t^{\delta_{aj}}(z_a)_{a\in Q_0})$ lies in the
image of $f$, then we have
$t^{\delta_{aj}}t^{\delta_{ai}}z_a=t^{\delta_{ai}}t^{\delta_{aj}}z_a$, so $m$ lies in the kernel of $g$.
Conversely, if
$m=((x_a)_{a\in Q_0},(y_a)_{a\in Q_0})$ lies in the kernel of $g$, then it satisfies $t^{\delta_{aj}}x_a=t^{\delta_{ai}}y_a$. We have
$$((x_a)_{a\in Q_0},(y_a)_{a\in Q_0})=
f((z_a)_{a\in Q_0}),$$ where $z_a=x_a=y_a$ if $a\not\in \{i,j\}$, $z_i=y_i$ and $z_j=x_j$, so $m$ lies in the image of $f$. We have shown that the sequence~\eqref{e:ARsequence} is a non-split exact sequence and the result follows.
\end{proof}
\begin{figure}
\[\begin{tikzpicture}
\foreach \x/\y/\a/\b/\e in {0/6/1/n/, 2/6/n-1/n/, 4/6/n-2/n-1/, 10/6/2/3/, 12/6/1/2/r, 1/5/1/n-1/, 3/5/n-2/n/, 9/5/2/4/, 11/5/1/3/r, 2/4/1/n-2/, 8/4/2/5/, 10/4/1/4/r, 4/2/1/3/, 6/2/2/n/, 8/2/1/n-1/r, 5/1/1/2/, 7/1/1/n/r}
{\draw (\x,\y) node (M\a\b\e) {$M_{\a,\b}$};}
\foreach \s/\t in {1n/1n-1, 1n-1/n-1n, 1n-1/1n-2, n-1n/n-2n, 1n-2/n-2n, n-2n/n-2n-1, 23/13r, 13r/12r, 24/23, 24/14r, 14r/13r, 25/24, 13/12, 12/2n, 2n/1nr, 1nr/1n-1r}
{\path [->] (M\s) edge (M\t);}
\foreach \s/\t in {1n-1/n-2n, 24/13r, 25/14r, 13/2n, 2n/1n-1r}
{\path [dashed] (M\s) edge (M\t);}
\foreach \x/\y/\a/\b in {6/6/n-3/n-2, 8/6/3/4, 5/5/n-3/n-1, 7/5/3/5, 4/4/n-3/n, 3/3/1/n-3, 5/3/n-4/n, 7/3/2/6, 9/3/1/5}
{\draw (\x,\y) node (M\a\b) {};}
\foreach \s/\t in {n-2n-1/n-3n-1, n-2n/n-3n, 1n-2/1n-3, 34/24, 35/25, 26/25, 15/14r}
{\path [->] (M\s) edge (M\t);}
\foreach \s/\t in {1n-3/13, n-3n/n-4n, 2n/26, 1n-1r/15, n-2n-1/34, n-3n-1/35}
{\path [dotted] (M\s) edge (M\t);}
\end{tikzpicture}\]
\caption{The Auslander--Reiten quiver of $\C_{2,n}$.}
\label{f:ARquiver2n}
\end{figure}
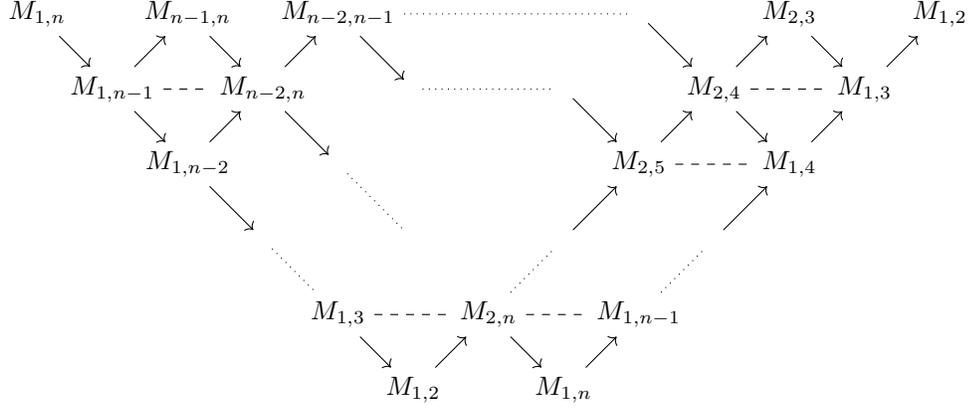
\begin{theorem}
\label{thm:ARexistence}
\begin{enumerate}
\item\label{thm:ARexistance-RvdB} \cite[Thm.\ 1.2.4]{RvdB02} Let $\mathcal{C}$ be a Hom-finite $k$-linear Krull--Schmidt triangulated category, where $k$ is a field.
Then $\mathcal{C}$ has a Serre functor if and only if $\mathcal{C}$ has Auslander--Reiten triangles.
\item\label{thm:ARexistanec-AR} Let $\mathcal{F}$ be a Frobenius exact category for which $\underline{\mathcal{F}}$ is Krull--Schmidt.
If $\underline{\mathcal{F}}$ has a Serre functor, then $\mathcal{F}$ has Auslander--Reiten sequences.
\end{enumerate}
\end{theorem}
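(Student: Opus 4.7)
The plan is to deduce (b) directly from (a). Since $\underline{\F}$ is Krull--Schmidt and carries a Serre functor (the latter forcing Hom-finiteness), part (a) provides Auslander--Reiten triangles in $\underline{\F}$. I would then lift each such triangle ending at a non-projective indecomposable $C\in\F$ to an Auslander--Reiten sequence in $\F$, using the identification $\Hom_{\underline{\F}}(C,\Sigma A)=\Ext^1_{\F}(C,A)$ recalled in the excerpt.

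Fix a non-projective indecomposable $C\in\F$. The first technical step is to verify that $C$ remains indecomposable in $\underline{\F}$. This is a standard Frobenius-category argument: the kernel of the surjection $\End_{\F}(C)\twoheadrightarrow\stabEnd_{\F}(C)$ consists of morphisms factoring through a projective-injective, none of which can be an isomorphism since $C$ is non-projective, and so this kernel lies in the Jacobson radical of $\End_{\F}(C)$. Idempotents therefore lift through the surjection, so any non-trivial idempotent in $\stabEnd_{\F}(C)$ would contradict the indecomposability of $C$. Applying (a) to $\underline{\F}$ now yields an Auslander--Reiten triangle
\[\begin{tikzcd} A\arrow{r} & B\arrow{r} & C\arrow{r}{\delta} & \Sigma A\end{tikzcd}\]
in $\underline{\F}$, and realising $\delta\neq 0$ as an element of $\Ext^1_{\F}(C,A)$ produces a non-split short exact sequence
\[\begin{tikzcd} 0\arrow{r} & A\arrow{r}{\tilde f} & B'\arrow{r}{\tilde g} & C\arrow{r} & 0\end{tikzcd}\]
in $\F$ that projects to the Auslander--Reiten triangle in $\underline{\F}$.

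The heart of the argument is verifying the Auslander--Reiten property for this sequence. Given a non-retraction $h\colon X\to C$ in $\F$, I would first show $\underline{h}$ is not a retraction in $\underline{\F}$: any right inverse of $\underline{h}$ would lift to some $s\colon C\to X$ with $hs=\id_C+\eta$, where $\eta$ factors through a projective-injective; since $\eta$ lies in the Jacobson radical of $\End_{\F}(C)$, the map $hs$ is a unit, making $h$ a retraction, a contradiction. The Auslander--Reiten property of the triangle in $\underline{\F}$ then supplies $\underline{k}\colon X\to B'$ with $\underline{\tilde g}\,\underline{k}=\underline{h}$; choosing any lift $k$ of $\underline{k}$ in $\F$, the defect $\tilde g k-h$ factors as $ba$ through a projective-injective $P$, and since $\tilde g$ is a deflation and $P$ is projective, $b=\tilde g b'$ for some $b'\colon P\to B'$. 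Then $\tilde g(k-b'a)=h$, giving the desired factorisation in $\F$. The main obstacle I expect is precisely this careful passage of morphisms and factorisations between $\F$ and $\underline{\F}$; once it is handled, the sequence satisfies the standard characterisation of an Auslander--Reiten sequence, and the proof is complete.
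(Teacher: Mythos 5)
Your overall strategy coincides with the paper's: apply part~(a) to obtain Auslander--Reiten triangles in $\underline{\F}$, then lift these to Auslander--Reiten sequences in $\F$. The difference is that the paper dispenses with the second step by citing \cite[Lem.~3]{roggenkamp96}, whereas you reprove that lifting lemma from scratch. Your direct argument is essentially a correct reconstruction of the cited result: realising the connecting morphism $\delta\in\Hom_{\underline{\F}}(C,\Sigma A)=\Ext^1_{\F}(C,A)$ as a short exact sequence in $\F$, and then transporting the lifting property across $\pi\colon\F\to\underline{\F}$ by absorbing the defect (which factors through a projective-injective) using projectivity of $P$ and the fact that $\tilde g$ is an admissible epimorphism. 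You only spell out the right-hand factorisation property, but the left-hand one is dual. The trade-off is the usual one: citing Roggenkamp keeps the proof to a line, while your version is self-contained and makes visible exactly where the Frobenius structure is used.

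One small caveat worth flagging: in two places you pass from ``non-isomorphism factoring through a projective-injective'' to ``lies in the Jacobson radical of $\End_{\F}(C)$''. That inference needs $\End_{\F}(C)$ to be local, i.e.\ it implicitly uses that $\F$ itself (not only $\underline{\F}$) is Krull--Schmidt on the relevant object. This is harmless for the paper's applications ($\C_{k,n}$ and $\M$ are Krull--Schmidt by \cite[Rem.~3.3]{JKS16}), and the same localness is needed elsewhere anyway, but it is slightly stronger than the stated hypothesis that $\underline{\F}$ be Krull--Schmidt, so you should either state it explicitly or argue for indecomposability of $C$ in $\underline{\F}$ differently (e.g.\ by lifting idempotents directly from $\stabEnd_{\F}(C)$).
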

\begin{proof}
For \ref{thm:ARexistanec-AR}, the category $\underline{\mathcal{F}}$ has Auslander--Reiten triangles by \ref{thm:ARexistance-RvdB}.
Therefore $\mathcal{F}$ has Auslander--Reiten sequences, by~\cite[Lem.~3]{roggenkamp96}.
\end{proof}

\begin{proposition}
\label{p:CMproperties}
\begin{enumerate}
\item\label{p:CMproperties-KS} \cite[Rem.\ 3.3]{JKS16} The categories $\C_{k,n}$ and $\underline{\C_{k,n}}$ are Krull--Schmidt.
\item\label{p:CMproperties-2CY} \cite[Prop.\ 2.11]{BBGEL},~\cite[Cor.\ 4.6]{JKS16},~\cite[Prop.\ 3.4]{GLS08}
The category $\underline{\C_{k,n}}$ is $2$-Calabi--Yau.
\end{enumerate}
\end{proposition}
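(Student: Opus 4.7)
Both claims are extracted from the cited references, but natural proof strategies exist using the tools already in play. The plan for part \ref{p:CMproperties-KS} is to exploit the fact that $C_{k,n}$ is module-finite over the complete discrete valuation ring $Z=\powser{\CC}{t}$. Every object of $\CM(C_{k,n})$ is by definition free (of finite rank) over $Z$, so its endomorphism ring is a module-finite algebra over a complete local Noetherian ring; any such ring is semiperfect, so indecomposable objects have local endomorphism rings and $\C_{k,n}$ is Krull--Schmidt by the standard characterisation. The stable category $\underline{\C_{k,n}}$ then inherits the property because its endomorphism rings are quotients of those in $\C_{k,n}$: a local ring remains local (or becomes zero) in any quotient, and decompositions of objects lift trivially along the projection.

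For part \ref{p:CMproperties-2CY}, the natural route given the machinery of Section~\ref{s:icy} is to exhibit $C_{k,n}$ in the form $eAe$ for a pair $(A,e)$ that is bimodule internally $3$-Calabi--Yau with $A/AeA$ finite-dimensional. The candidate $A$ is the endomorphism algebra $\End_{C_{k,n}}(T)^{\opp}$ of a cluster-tilting object $T$, for instance $T=\bigoplus_I M_I$ with $I$ ranging over the $k$-subsets appearing in a maximal non-crossing collection. Once such an $(A,e)$ is produced, Theorem~\ref{thm:Frobcat-constr} directly identifies $\CM(C_{k,n})=\GP(C_{k,n})$ as a stably $2$-Calabi--Yau Frobenius exact category and hence yields the claim.

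The main obstacle lies in verifying the bimodule internally $3$-Calabi--Yau condition on $(A,e)$. This requires constructing an explicit projective bimodule resolution of $A$ of length at most three and exhibiting a self-duality on this resolution modulo morphisms annihilated by $e$ on either side. In the Grassmannian setting, this analysis is carried out via the quiver with potential presenting $A$ (a dimer model on a disc) and is precisely the content of the references cited in the statement; we do not reproduce it here. An alternative route, more in the spirit of \cite{JKS16}, avoids constructing $A$ explicitly and instead establishes the Auslander--Reiten duality $\Ext^1_{C_{k,n}}(X,Y)\cong\dual\Ext^1_{C_{k,n}}(Y,X)$ directly from the description of rank one modules $M_I$ and the combinatorics of crossings, bootstrapping from the rank one case to arbitrary Cohen--Macaulay modules via approximation sequences.
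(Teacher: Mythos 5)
Your part \ref{p:CMproperties-KS} argument is correct and is in the spirit of what \cite[Rem.~3.3]{JKS16} indicates: objects of $\C_{k,n}$ are finitely generated free over the complete DVR $Z=\powser{\CC}{t}$, so endomorphism rings are module-finite over a complete local Noetherian ring and hence semiperfect, giving the Krull--Schmidt property, which then passes to the additive quotient $\underline{\C_{k,n}}$.

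For part \ref{p:CMproperties-2CY} you take a genuinely different route from the paper. The paper's remark after the proposition uses the stable equivalence $\C_{k,n}\simeq\Sub Q_k$ from \cite[Cor.~4.6]{JKS16}, combined with Hom-finiteness as in \cite[Prop.~2.11]{BBGEL} and the $2$-Calabi--Yau property of $\stabSub Q_k$ from \cite[Prop.~3.4]{GLS08}. You instead propose to realise $C_{k,n}$ as $eAe$ for a bimodule internally $3$-Calabi--Yau pair $(A,e)$ with $A=\End_{C_{k,n}}(T)^{\opp}$ for a cluster-tilting $T$, and then invoke Theorem~\ref{thm:Frobcat-constr}. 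That is also a valid route — it is the strategy of \cite{Pre22}, which verifies the internally Calabi--Yau condition for the relevant dimer algebras. However, your attribution is slightly off: you say the internally Calabi--Yau verification "is precisely the content of the references cited in the statement", but those references establish stably $2$-Calabi--Yau via the preprojective-algebra comparison, not via the internally CY framework; the dimer-algebra verification you are relying on lives in \cite{Pre22} (and \cite{BKM16} for the boundary-algebra isomorphism), not in the cited trio. Your second alternative route, establishing Auslander--Reiten duality combinatorially from the rank-one modules, is also plausible in outline, but is not how \cite{JKS16} actually argues and would require bootstrapping from rank one to arbitrary rank, which is not trivial; the approach via the stable equivalence to $\Sub Q_k$ sidesteps that entirely.
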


We add some remarks on part \ref{p:CMproperties-2CY}. Let $Q_k$ denote the indecomposable injective module over the preprojective algebra of type $A_{n-1}$ on vertices $1,2,\ldots ,k-1$. By~\cite[Cor. 4.6]{JKS16}
(see Theorem~\ref{t:clusterstructure}\ref{t:clusterstructure-JKS}), $\Sub Q_k$ and $\C_{k,n}$ are stably equivalent, so $\underline{\C_{k,n}}$ is Hom-finite (as remarked in~\cite[Prop.\ 2.11]{BBGEL}).
The remaining $2$-Calabi--Yau property is then shown by combining~\cite[Cor.\ 4.6]{JKS16} with~\cite[Prop.\ 3.4]{GLS08}.

\begin{corollary}
\label{c:ARexistence}
Let $M$ be a rigid object in $\C_{k,n}$.
Then $\M$ has Auslander--Reiten sequences.
\end{corollary}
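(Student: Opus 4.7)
The plan is to apply Theorem~\ref{thm:ARexistence}\ref{thm:ARexistanec-AR} to the category $\M$. For this we must verify that $\M$ is a Frobenius exact category, that $\underline{\M}$ is Krull--Schmidt, and that $\underline{\M}$ admits a Serre functor.

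First, I would establish the Frobenius exact structure on $\M$ by invoking Theorem~\ref{t:reduction}. By Proposition~\ref{p:CMproperties}, the ambient category $\C_{k,n}$ is a stably $2$-Calabi--Yau Frobenius exact category, and $\add(M)$ is rigid by hypothesis. The one nontrivial input is functorial finiteness of $\add(M)$ in $\C_{k,n}$: I would deduce this from the fact that every object of $\C_{k,n}$ is a finitely generated module over the centre $Z=\powser{\CC}{t}$ of $C_{k,n}$ (being Cohen--Macaulay), so that $\Hom_{C_{k,n}}(X,M)$ and $\Hom_{C_{k,n}}(M,X)$ are finitely generated $Z$-modules for every $X\in\C_{k,n}$. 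Choosing $Z$-generators $f_1,\ldots,f_r$ of $\Hom_{C_{k,n}}(X,M)$ assembles into a map $(f_1,\ldots,f_r)^T\colon X\to M^r$ which is a left $\add(M)$-approximation, exploiting that each element of $Z$ induces an endomorphism of $M$; the dual argument gives contravariant finiteness. Since $\M$ is extension-closed in the exact category $\C_{k,n}$, the extriangulated structure provided by Theorem~\ref{t:reduction} is in fact exact by Example~\ref{eg:extri-cats}\ref{eg:subcats}, so $\M$ is Frobenius exact.

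Next, I would check that $\underline{\M}$ is Krull--Schmidt with a Serre functor. The subcategory $\M$ is closed under direct summands, since the defining condition $\Ext^1_{C_{k,n}}(M,-)=0$ is preserved under summands, so $\M$ inherits the Krull--Schmidt property from $\C_{k,n}$ (Proposition~\ref{p:CMproperties}\ref{p:CMproperties-KS}). The stable quotient $\underline{\M}=\M/\add(M,\PR)$, with projective-injectives identified via Proposition~\ref{Prop:MperpisFrobenius-extri}, is itself Krull--Schmidt, since quotients of Krull--Schmidt categories by additive subcategories closed under summands remain Krull--Schmidt at the level of indecomposables. By Proposition~\ref{p:Mperp-2cy-extri}, $\underline{\M}$ is Hom-finite and $2$-Calabi--Yau, so the shift $\Sigma^2$ provides a Serre functor. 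Applying Theorem~\ref{thm:ARexistence}\ref{thm:ARexistanec-AR} then yields the desired AR sequences in $\M$.

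The delicate point is verifying functorial finiteness of $\add(M)$ in $\C_{k,n}$, since $\C_{k,n}$ is not Hom-finite over $\field$; this is resolved through the central action of $Z$, which renders the relevant Hom spaces finitely generated and hence allows the construction of explicit approximations.
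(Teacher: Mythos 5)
Your proof is correct and follows essentially the same route as the paper: apply Theorem~\ref{thm:ARexistence}\ref{thm:ARexistanec-AR} after verifying that $\M$ is Frobenius exact with $\underline{\M}$ Krull--Schmidt, Hom-finite, and admitting a Serre functor, using Propositions~\ref{p:CMproperties}, \ref{Prop:MperpisFrobenius-extri} and~\ref{p:Mperp-2cy-extri}. The one point you spell out that the paper leaves implicit is the functorial finiteness of $\add(M)$ in $\C_{k,n}$, required to invoke the reduction machinery; your argument via $Z$-generators of the finitely generated $\Hom$-modules is the standard one for $\CM$ categories over orders and is correct.
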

\begin{proof}
By Proposition~\ref{p:CMproperties}\ref{p:CMproperties-2CY},
$\C_{k,n}$ is stably $2$-Calabi--Yau.
Hence, by Proposition~\ref{p:Mperp-2cy-extri}, $\M$ is also stably $2$-Calabi--Yau; in particular, $\underline{\M}$ has a Serre functor.

By Proposition~\ref{p:CMproperties}\ref{p:CMproperties-KS}, $\C_{k,n}$ is Krull--Schmidt, and hence so is $\M$.
By Proposition~\ref{p:CMproperties}\ref{p:CMproperties-2CY},
$\underline{\C_{k,n}}$ is Hom-finite.
For objects $X,Y$ in $\M$, $\stabHom_{\M}(X,Y)$ is a quotient of $\stabHom_{\C_{k,n}}(X,Y)$ by Proposition~\ref{Prop:MperpisFrobenius-extri}, so $\underline{\M}$ is also Hom-finite.
Hence, by Theorem~\ref{thm:ARexistence}\ref{thm:ARexistanec-AR}, $\M$ has Auslander--Reiten sequences.
\end{proof}

By~\cite[Rem.~3.3]{JKS16}, $\C_{k,n}$ itself has Auslander--Reiten sequences. Taking $M=0$, Corollary~\ref{c:ARexistence} gives an alternative proof of this fact.

\subsection{Example: the Grassmannian \texorpdfstring{$\Gr(2,6)$}{Gr(2,6)}} \label{ss:Gr(2,6)}
We now focus on the example $\C_{2,6}$.
The Auslander--Reiten quiver of $\C_{2,6}$ is shown in Figure~\ref{f:ARquiver26} (with some objects drawn twice to give a better picture).
We take $M$ to be the indecomposable object $M_{14}$.
By Corollary~\ref{c:ARexistence}, $M_{14}^{\perp_1}$ has Auslander--Reiten sequences. Its indecomposable objects are shown in bold in Figure~\ref{f:ARquiver26}.

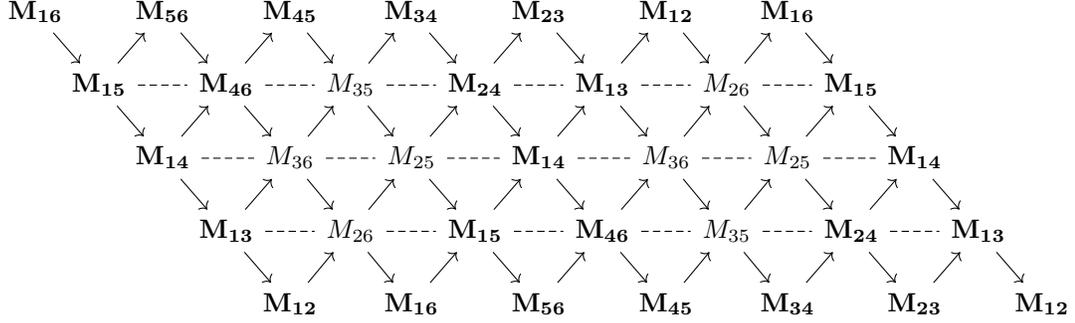
\begin{figure}
\makebox[\textwidth][c]{
\begin{tikzcd}[row sep=1.2em, column sep=-0.5em,ampersand replacement=\&]
\mathbf{M_{16}} \ar[dr] 
\&\& \mathbf{M_{56}}  \ar[dr] \&\& \mathbf{M_{45}}  \ar[dr] \&\& \mathbf{M_{34}} \ar[dr] \&\& \mathbf{M_{23}}  \ar[dr] \&\& \mathbf{M_{12}}  \ar[dr] \&\& \mathbf{M_{16}} \ar[dr] \\
\& \mathbf{M_{15}} \arrow[rr,dashed,no head] \ar[ur] \ar[dr] \&\& \mathbf{M_{46}} \arrow[rr,dashed,no head] \ar[ur] \ar[dr] \&\& M_{35} \arrow[rr,dashed,no head] \ar[ur] \ar[dr] \&\& \mathbf{M_{24}} \arrow[rr,dashed,no head] \ar[ur] \ar[dr] \&\& \mathbf{M_{13}} \arrow[rr,dashed,no head] \ar[ur] \ar[dr] \&\& M_{26} \arrow[rr,dashed,no head] \ar[ur] \ar[dr] \&\& \mathbf{M_{15}} \ar[dr] \\
\&\& \mathbf{M_{14}} \arrow[rr,dashed,no head] \ar[ur] \ar[dr] \&\& M_{36} \arrow[rr,dashed,no head] \ar[ur] \ar[dr] \&\& M_{25} \arrow[rr,dashed,no head] \ar[ur] \ar[dr] \&\& \mathbf{M_{14}} \arrow[rr,dashed,no head] \ar[ur] \ar[dr] \&\& M_{36} \arrow[rr,dashed,no head] \ar[ur] \ar[dr] \&\& M_{25} \arrow[rr,dashed,no head] \ar[ur] \ar[dr] \&\& \mathbf{M_{14}} \ar[dr] \\
\&\&\& \mathbf{M_{13}} \arrow[rr,dashed,no head] \ar[ur] \ar[dr] \&\& M_{26} \arrow[rr,dashed,no head] \ar[ur] \ar[dr] \&\& \mathbf{M_{15}} \arrow[rr,dashed,no head] \ar[ur] \ar[dr] \&\& \mathbf{M_{46}} \arrow[rr,dashed,no head] \ar[ur] \ar[dr] \&\& M_{35} \arrow[rr,dashed,no head] \ar[ur] \ar[dr] \&\& \mathbf{M_{24}} \arrow[rr,dashed,no head] \ar[ur] \ar[dr] \&\& \mathbf{M_{13}} \ar[dr] \\
\&\&\&\& \mathbf{M_{12}}  \ar[ur] \&\& \mathbf{M_{16}} \ar[ur] \&\& \mathbf{M_{56}}  \ar[ur] \&\& \mathbf{M_{45}}  \ar[ur] \&\& \mathbf{M_{34}}  \ar[ur] \&\& \mathbf{M_{23}} \ar[ur] \&\& \mathbf{M_{12}}  
\end{tikzcd}
}
\caption{The Auslander--Reiten quiver of $\C_{2,6}$.}
\label{f:ARquiver26}
\end{figure}

\begin{proposition}
The Auslander--Reiten quiver of $M_{14}^{\perp_1}$ in
$\C_{2,6}$ is as shown in Figure~\ref{f:ARquiverM14perpredrawn}.
\end{proposition}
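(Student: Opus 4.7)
The plan is to apply the reduction theory from Section~3 alongside the combinatorial description of $\C_{2,6}$. Theorem~\ref{t:reduction} together with Proposition~\ref{p:CMproperties} shows that $M_{14}^{\perp_1}$ is a weakly idempotent complete, stably $2$-Calabi--Yau Frobenius extriangulated category, with projective-injective subcategory $\add(M_{14},\PR)$, where $\PR$ is the additive hull of the six frozen projectives $M_{12},M_{23},M_{34},M_{45},M_{56},M_{16}$. Corollary~\ref{c:ARexistence} then guarantees that $M_{14}^{\perp_1}$ has Auslander--Reiten sequences, so the AR quiver is well-defined.

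To enumerate the indecomposables, I would use \cite[Prop.~5.2]{JKS16}, which identifies every indecomposable in $\C_{2,6}$ as some $M_{ij}$, together with the non-crossing criterion \cite[Prop.~5.6]{JKS16}: we have $M_{ij}\in M_{14}^{\perp_1}$ iff $\{i,j\}$ and $\{1,4\}$ are non-crossing. A direct check on the hexagon produces exactly the eleven $2$-subsets listed in the statement; of these, seven (the six frozen modules together with $M_{14}$) are projective-injective in $M_{14}^{\perp_1}$ by the previous paragraph, and the remaining four, $M_{13}, M_{15}, M_{24}, M_{46}$, are non-projective.

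It then remains to compute the AR sequence ending at each of the four non-projective indecomposables. For $M_{13}$ and $M_{46}$ the AR sequences in $\C_{2,6}$ given by Lemma~\ref{lem:ARsequence},
\[
0\to M_{24}\to M_{14}\oplus M_{23}\to M_{13}\to 0, \qquad 0\to M_{15}\to M_{14}\oplus M_{56}\to M_{46}\to 0,
\]
already sit entirely inside $M_{14}^{\perp_1}$, and hence remain AR sequences there: since $\ind(M_{14}^{\perp_1})\subseteq\ind(\C_{2,6})$, the factorisation property characterising an AR sequence is inherited. For $M_{15}$ and $M_{24}$ the AR sequences from Lemma~\ref{lem:ARsequence} involve $M_{25}, M_{26}, M_{35}$, which lie outside $M_{14}^{\perp_1}$; I would instead produce the conflations
\[
0\to M_{46}\to M_{14}\oplus M_{56}\to M_{15}\to 0, \qquad 0\to M_{13}\to M_{12}\oplus M_{34}\to M_{24}\to 0,
\]
realising the non-trivial classes of $\Ext^1(M_{15},M_{46})\cong\CC$ and $\Ext^1(M_{24},M_{13})\cong\CC$, with middle terms prescribed by the Plücker-type relations $P_{15}P_{46}=P_{14}P_{56}+P_{16}P_{45}$ and $P_{13}P_{24}=P_{12}P_{34}+P_{14}P_{23}$.

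The main obstacle is verifying that these last two conflations really are AR sequences in $M_{14}^{\perp_1}$, rather than arbitrary non-split extensions. By Proposition~\ref{p:Mperp-2cy-extri} the stable category $\underline{M_{14}^{\perp_1}}$ is $2$-Calabi--Yau, so the AR translate $\tau'$ agrees with the stable shift $\Sigma'$ and is detected by $\Ext^1$. A non-crossing scan within the four non-projectives $\{M_{13}, M_{15}, M_{24}, M_{46}\}$ shows that $\Ext^1_{M_{14}^{\perp_1}}(M_{15},-)$ vanishes on every non-projective indecomposable except $M_{46}$, where it is one-dimensional, and symmetrically for $M_{24}$ and $M_{13}$. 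This forces $\tau' M_{15}=M_{46}$ and $\tau' M_{24}=M_{13}$; combined with the one-dimensionality of the relevant extension groups, the displayed middle terms are then the unique choices, producing the AR sequences and hence the stated quiver.
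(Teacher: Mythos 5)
Your overall strategy — enumerate the eleven indecomposables via non-crossing, identify the projective-injectives as $\add(M_{14},\PR)$, then determine the four AR sequences using one-dimensionality of $\Ext^1$ — is essentially the same as the paper's, and your handling of $M_{46}$ and $M_{13}$ (inheriting the AR sequences from $\C_{2,6}$) and of $M_{24}$ is correct. However, there is a genuine error for $M_{15}$.

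You propose $0\to M_{46}\to M_{14}\oplus M_{56}\to M_{15}\to 0$, but this is not exact: writing out the monomial morphisms $\varphi^{14}_{15}$ and $\varphi^{56}_{15}$ as in Definition~\ref{d:morphisms}, the candidate deflation $M_{14}\oplus M_{56}\to M_{15}$ multiplies \emph{both} summands by $t$ at vertices $1,2,3,5$, so the image at those vertices is contained in $tZ$ and the map is not surjective. The correct middle term is $M_{16}\oplus M_{45}$, as in Figure~\ref{f:ARquiverM14perpredrawn}. Your own Pl\"ucker relation $P_{15}P_{46}=P_{14}P_{56}+P_{16}P_{45}$ actually signals this: since the AR sequence ending at $M_{46}$ already realizes $M_{14}\oplus M_{56}$, the extension in the opposite direction must realize the \emph{other} monomial, $M_{16}\oplus M_{45}$. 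Note that one-dimensionality of $\Ext^1(M_{15},M_{46})$ tells you the non-split extension is unique up to equivalence, but it does not tell you what its middle term is; that requires a calculation like the one in Lemma~\ref{lem:ARsequence}, which you omit. Finally, your proof does not account for the two irreducible maps $M_{45}\to M_{34}$ and $M_{12}\to M_{16}$ between projective-injective objects of $M_{14}^{\perp_1}$, which appear in the AR quiver but lie in no AR sequence, so the argument as written does not reproduce the full quiver of Figure~\ref{f:ARquiverM14perpredrawn}.
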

\begin{proof}
The Auslander--Reiten sequences ending in $M_{46}$ and $M_{13}$ in $\C_{2,6}$ are entirely in $M_{14}^{\perp_1}$. Since $\Ext^1_{C_{2,6}}(M_{46},M_{15})$ and $\Ext^1_{C_{2,6}}(M_{13},M_{24})$ are $1$-dimensional, these sequences must be the Auslander--Reiten sequences in $M_{14}^{\perp_1}$. We also have the sequences
\begin{equation}
\begin{tikzcd}
0 \arrow{r}{} & M_{13} \arrow{r}{f} & M_{34}\oplus M_{12}
\arrow{r}{g} & M_{24} \arrow{r}{} & 0,
\end{tikzcd}
\label{e:ARMperp}
\end{equation}
where $f=\begin{pmatrix} \phi^{13}_{34} \\[3pt] \phi^{13}_{12} \end{pmatrix}$
and
$g=\begin{pmatrix} \phi^{34}_{24} &
-\phi^{12}_{24} \end{pmatrix}$,
and
\begin{equation}
\begin{tikzcd}
0 \arrow{r}{} & M_{46} \arrow{r}{h} & M_{45}\oplus M_{26}
\arrow{r}{k} & M_{15} \arrow{r}{} & 0,
\end{tikzcd}
\label{e:ARMperp2}
\end{equation}
where $h=\begin{pmatrix} \phi^{46}_{45} \\[3pt] \phi^{46}_{26} \end{pmatrix}$
and
$k=\begin{pmatrix} \phi^{45}_{15} &
-\phi^{26}_{15} \end{pmatrix}$.
Arguing as in the proof of Lemma~\ref{lem:ARsequence}, we can check that the sequences~\eqref{e:ARMperp} and~\eqref{e:ARMperp2} are exact and non-split.
Since
$\Ext^1_{C_{2,6}}(M_{24},M_{13})$ and
$\Ext^1_{C_{2,6}}(M_{15},M_{46})$ are $1$-dimensional, we see that \eqref{e:ARMperp} and \eqref{e:ARMperp2} must be Auslander--Reiten sequences in $M_{14}^{\perp_1}$.

We can also observe that there are irreducible maps $M_{45}\rightarrow M_{34}$ and $M_{12}\rightarrow M_{16}$ between projective-injective objects in $M_{14}^{\perp_1}$ which do not appear in any Auslander--Reiten sequence.
We thus have the Auslander--Reiten quiver of $M_{14}^{\perp_1}$ as shown in Figure~\ref{f:ARquiverM14perp}.
We have redrawn this in a nicer way: see Figure~\ref{f:ARquiverM14perpredrawn}.
\end{proof}

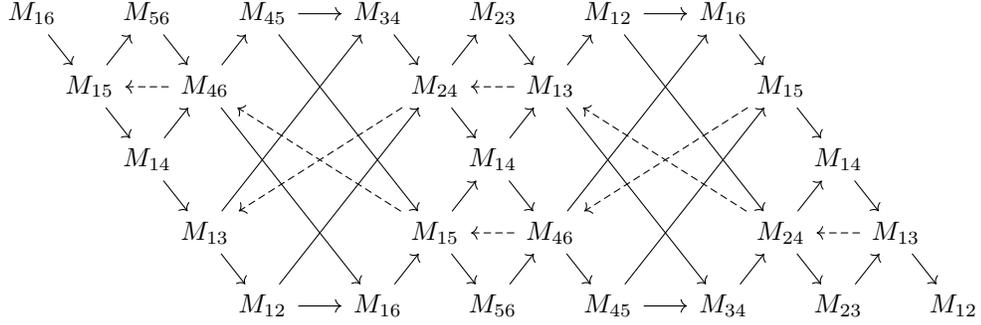
\begin{figure}
\begin{tikzcd}[row sep=1.2em, column sep=-0.5em]
M_{16} \ar[dr]  
&& M_{56}  \ar[dr] && M_{45} \ar[rr] \ar[dddrrr] && M_{34}  \ar[dr] && M_{23}  \ar[dr] && M_{12} \ar[rr] \ar[rrrddd] && M_{16} \ar[dr] \\
& M_{15} \ar[ur] \ar[dr] && M_{46}  \ar[dddrrr] \arrow[ll,dashed]  \ar[ur] &&  && M_{24} \arrow[ddllll,dashed] \ar[ur] \ar[dr] && M_{13} \ar[dddrrr] \arrow[ll,dashed] \ar[ur] &&  && M_{15} \arrow[ddllll,dashed] \ar[dr] \\
&& M_{14} \ar[ur] \ar[dr] && && && M_{14} \ar[ur] \ar[dr] &&  &&  && M_{14} \ar[dr] \\
&&& M_{13} \ar[uuurrr] \ar[dr] && \phantom{M_{26}} && M_{15} \arrow[lllluu,dashed]  \ar[ur] \ar[dr] && M_{46} \arrow[rrruuu]\arrow[ll,dashed]  \ar[dr] && \phantom{M_{35}} && M_{24} \arrow[uullll,dashed] \ar[ur] \ar[dr] && M_{13} \ar[dr] \arrow[ll,dashed] \\
&&&& M_{12} \ar[rr] \ar[uuurrr] && M_{16}  \ar[ur] && M_{56}  \ar[ur] && M_{45} \ar[rr] \ar[rrruuu] && M_{34}  \ar[ur] && M_{23}  \ar[ur] && M_{12}
\end{tikzcd}
\caption{The Auslander--Reiten quiver of $M_{14}^{\perp_1}$ in $\C_{2,6}$.}
\label{f:ARquiverM14perp}
\end{figure}

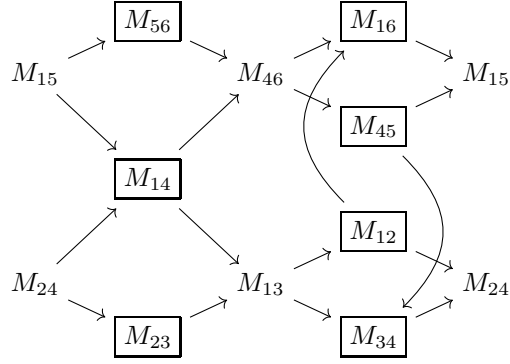
\begin{figure}
\[\begin{tikzpicture}[yscale=0.7,xscale=1.5]
\foreach \x/\y/\n/\l in {0/2/15l/15, 0/-2/24l/24, 2/2/46/46, 2/-2/13/13, 4/2/15r/15, 4/-2/24r/24}
{\draw (\x,\y) node (\n) {$M_{\l}$};}

\foreach \x/\y/\l in {1/3/56, 1/0/14, 1/-3/23,
3/3/16, 3/1/45, 3/-1/12, 3/-3/34}
{\draw (\x,\y) node (\l) {$\boxed{M_{\l}}$};}

\foreach \s/\t in {15l/56, 15l/14, 24l/14, 24l/23, 56/46, 14/46, 14/13, 23/13, 46/16, 46/45, 13/12, 13/34, 16/15r, 45/15r, 12/24r, 34/24r}
{\path [->] (\s) edge (\t);}
\path [->, bend left=25] (45) edge (34);
\path [->, bend left=25] (12) edge (16);

\end{tikzpicture}\]
\caption{The Auslander--Reiten quiver of $M_{14}^{\perp_1}$ in $\C_{2,6}$, redrawn.}
\label{f:ARquiverM14perpredrawn}
\end{figure}

\subsection{Example: the Grassmannian \texorpdfstring{$\Gr(3,6)$}{Gr(3,6)}} \label{ss:Gr(3,6)}
The subcategory $M_{236}^{\perp_1}$ has Auslander--
Reiten sequences (by an argument similar to that used 
for the $\Gr(2,6)$ example above).
In Figure~
\ref{f:ARquiverp145}, we show the Auslander--Reiten 
quiver of $M_{236}^{\perp_1}$.

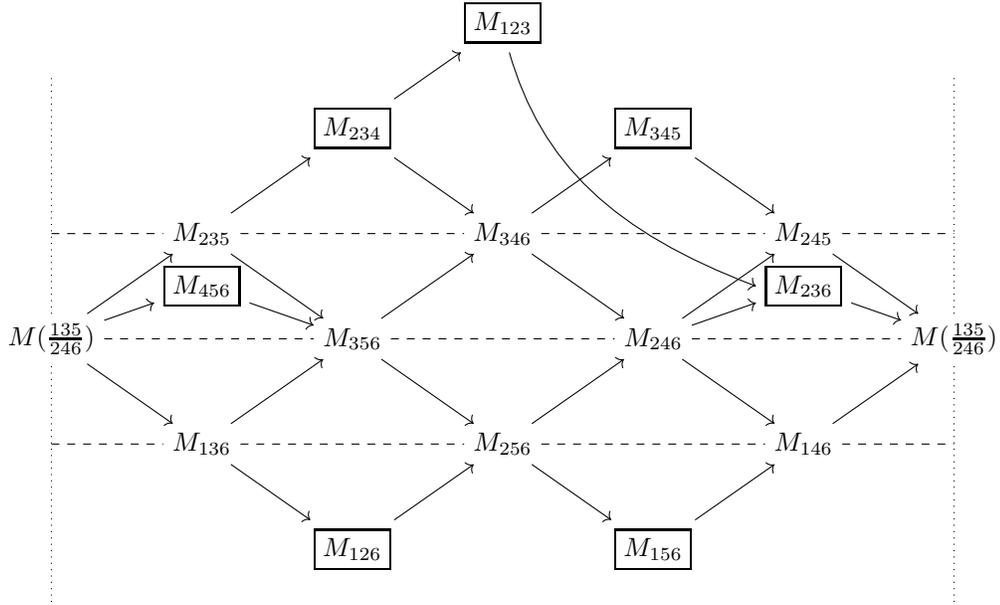
\begin{figure}
\[\begin{tikzpicture}[yscale=1.4,xscale=2]
\foreach \x/\y/\l in {1/1/235, 1/-1/136, 2/0/356, 3/1/346, 3/-1/256, 4/0/246, 5/1/245, 5/-1/146}
{\draw (\x,\y) node (\l) {$M_{\l}$};}
\draw (0,0) node (2l) {$M(\frac{135}{246})$};
\draw (6,0) node (2r) {$M(\frac{135}{246})$};

\foreach \x/\y/\l in {1/0.5/456, 2/2/234, 2/-2/126, 3/3/123, 4/2/345, 4/-2/156, 5/0.5/236}
{\draw (\x,\y) node (\l) {$\boxed{M_{\l}}$};}

\foreach \s/\t in {2l/235, 2l/456, 2l/136, 235/234, 235/356, 456/356, 136/356, 136/126, 234/123, 234/346, 356/346, 356/256, 126/256, 346/345, 346/246, 256/246, 256/156, 345/245, 246/245, 246/236, 246/146, 156/146, 245/2r, 236/2r, 146/2r}
{\path [->] (\s) edge (\t);}
\path [->, bend right=25] (123) edge (236.west);

\foreach \l/\r in {2l/356, 235/346, 136/256, 356/246, 346/245, 256/146, 246/2r}
{\path [dashed] (\l) edge (\r);}
\path [dashed] (0,1) edge (235);
\path [dashed] (0,-1) edge (136);
\path [dashed] (245) edge (6,1);
\path [dashed] (146) edge (6,-1);

\draw [dotted] (0,-2.5) -- (2l) -- (0,2.5);
\draw [dotted] (6,-2.5) -- (2r) -- (6,2.5);
\end{tikzpicture}\]
\caption{The Auslander--Reiten quiver of $M_{236}^{\perp_1}\subseteq\C_{3,6}$.}
\label{f:ARquiverp145}
\end{figure}

\section{Reduction of friezes from \texorpdfstring{$\C_{k,n}$}{\mathcal{C}\textunderscore k,n}}
\label{s:friezes}

One of our motivations for the study of reductions of Frobenius extriangulated categories was to study the effect on friezes associated to $\C_{k,n}$, in particular for the finite types $(2,n)$, for $n\geq 4$, and $(3,n)$, $n \in \{6,7,8\}$.
In these cases one can associate a \emph{mesh frieze} $\mc{F}_{k,n}$ to $\C_{k,n}$ \cite[Defn.~4.9]{BFGST}, and also a \emph{Pl\"ucker frieze} $\mc{P}_{k,n}$ and \emph{specialized Pl\"ucker frieze} $s\mc{P}_{k,n}$ \cite[Defn.~2.5, Defn.~2.9]{BFGST}.
Note that in the $k=2$ case the mesh frieze and the Pl\"ucker frieze coincide \cite[Eg.~4.10]{BFGST}.

Recall that we say that $\C_{k,n}$ has \emph{finite type} if it has finitely many indecomposable objects up to isomorphism.
Assuming without loss of generality that $k\leq n/2$, the category $\C_{k,n}$ has finite type if and only if $k=2$, or $k=3$ and $n=6$, $7$ or $8$, by~\cite[\S3]{JKS16}.

Let $\Lambda$ denote the preprojective algebra of type $A_{n-1}$ on vertices $1,2,\dots ,n-1$. For $1\leq k\leq n-1$, let $Q_k$ denote the indecomposable injective module over the preprojective algebra of type $A_{n-1}$.
In~\cite{GLS08}, the authors studied the Frobenius exact category $\Sub Q_k$ consisting of modules with socle supported only at vertex $k$ (and used it to categorify the cluster structure on the unipotent cell in the Grassmannian).

We will need the following results.

\begin{theorem}
\label{t:clusterstructure}
\begin{enumerate}
\item\label{t:clusterstructure-JKS} \cite[Prop.\ 4.6]{JKS16}
The Frobenius exact categories $\Sub Q_k$ and $\C_{k,n}$ are stably equivalent.
\item\label{t:clusterstructure-Pre22} \cite[Thm.\ 6.11]{Pre22} (see also~\cite{GLS06},~\cite{GLS08} and~\cite{BIRS09}).
The cluster-tilting objects in $\underline{\C_{k,n}}\simeq \stabSub Q_k$ form a cluster structure (in the sense of~\cite[\S II.1]{BIRS09}).
\end{enumerate}
\end{theorem}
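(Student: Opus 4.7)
The statement to be proved gathers two substantial results from the literature, so the plan is to reduce each part to the cited source while outlining the strategy one would use to verify it independently.

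For part~\ref{t:clusterstructure-JKS}, the plan is to construct an explicit exact functor $\Phi\colon\C_{k,n}\to\Sub Q_k$ descending to a stable equivalence. Recall that every $M\in\C_{k,n}$ is free over the centre $Z=\powser{\field}{t}$, with $t=xy$ acting as the ``rotation'' central element. I would define $\Phi(M)=M/tM$, equipped with its induced $C_{k,n}/(t)$-module structure; the algebra $C_{k,n}/(t)$ has a natural quotient map to (a subalgebra of) the preprojective algebra $\Lambda$ of type $A_{n-1}$, under which $\Phi(M)$ naturally becomes a $\Lambda$-module. The first step would be to verify that this $\Lambda$-module actually lies in $\Sub Q_k$, using the rank/dimension calculation for the rank-$1$ modules $M_I$ (for which $\Phi(M_I)$ is easy to compute explicitly) and extending by additivity together with the existence of rank filtrations for Cohen--Macaulay modules. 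Next, I would identify the projective-injective objects on both sides and check $\Phi$ sends them to projective-injectives, so that $\Phi$ induces a well-defined functor on stable categories. Finally, I would show full faithfulness on the stable categories by reducing to rank-$1$ computations and match of Hom-spaces through Definition~\ref{d:morphisms}, and prove density by exhibiting a preimage (a lift from $\Lambda$-modules to Cohen--Macaulay modules via formal power series).

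For part~\ref{t:clusterstructure-Pre22}, the plan is to invoke the framework of Section~\ref{s:icy}. First, I would verify that $C_{k,n}$ together with a suitable idempotent $e$ forms a bimodule internally $3$-Calabi--Yau pair in the sense of Definition~\ref{def:bi3cy}; this is the technical heart of \cite{Pre22} and would be checked by producing an explicit projective bimodule resolution of $C_{k,n}$ and exhibiting the required self-duality. Once this is established, Theorem~\ref{thm:Frobcat-constr} gives that $\C_{k,n}=\GP(C_{k,n})$ is stably $2$-Calabi--Yau Frobenius exact, containing a canonical cluster-tilting object. To upgrade this to a cluster structure in the sense of \cite[\S II.1]{BIRS09}, I would verify the mutation axioms directly: given a cluster-tilting object $T=T_k\oplus \overline{T}$, exchange triangles produce another cluster-tilting object $T_k^*\oplus\overline{T}$, and the relevant quiver of $\End(T)^{\opp}$ at vertex $k$ is a Fomin--Zelevinsky mutation. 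This would follow from the general machinery of \cite{IY08} applied to $\underline{\C_{k,n}}$ together with the no-loop, no-$2$-cycle conditions that can be read off the Gabriel quiver of the stable endomorphism algebra, as in \cite{BIRS09}.

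The main obstacle is part~\ref{t:clusterstructure-JKS}: both the construction of $\Phi$ and the verification of its properties are delicate because the two categories look very different on the nose, so intermediate computations (especially checking that $\Phi$ lands in $\Sub Q_k$ and is fully faithful on the stable categories) require a fair amount of explicit module-theoretic work. Part~\ref{t:clusterstructure-Pre22}, by contrast, reduces once the internal Calabi--Yau property is in hand to a largely formal application of known general machinery.
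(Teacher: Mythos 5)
The paper does not prove this theorem: it records it as a citation to \cite{JKS16} and \cite{Pre22}, adding only the Note immediately afterwards that the key no-loop/no-$2$-cycle verification needed in part~(b) is carried out in \cite[Thm.~4.2]{JKS2} and \cite[Prop.~5.17]{Pre20}. Your proposal is therefore in the same spirit --- defer to the cited sources --- which is appropriate here.

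Two points in your sketches are worth flagging, however. For part~(a), the proposed functor $\Phi(M)=M/tM$ is a reasonable first guess, but be careful: $C_{k,n}$ lives on the cyclic quiver with $n$ vertices, while $\Lambda$ is the type-$A_{n-1}$ preprojective algebra on $n-1$ vertices, so passing from $C_{k,n}/(t)$-modules to $\Lambda$-modules is not simply a quotient of algebras; the argument in \cite{JKS16} is more indirect, comparing the two categories via endomorphism algebras and the Iwanaga--Gorenstein framework rather than an explicit reduction modulo $t$. For part~(b), your claim that the no-loop/no-$2$-cycle conditions ``can be read off the Gabriel quiver of the stable endomorphism algebra'' understates the content: establishing these conditions for the stable endomorphism algebra of \emph{every} cluster-tilting object (which is what is needed for the mutation axioms of \cite[\S II.1]{BIRS09}) is itself a substantive theorem, and the paper's Note specifically directs the reader to \cite{JKS2} and \cite{Pre20} for exactly this step. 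Your outline otherwise aligns well with the paper's own framework in Section~\ref{s:icy} for deriving the stably $2$-Calabi--Yau Frobenius structure from the internally Calabi--Yau property.
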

\begin{note} A key part of checking \ref{t:clusterstructure-Pre22} includes verifying that the endomorphism algebras of cluster-tilting objects have no loops or $2$-cycles; this was done for $\C_{k,n}$ in~\cite[Thm.\ 4.2]{JKS2} and~\cite[Prop.\ 5.17]{Pre20} (see also \cite{Pre21C}).
\end{note}

We see in~\cite{JKS16} that, in the finite type cases, the stable Auslander--Reiten quiver of $\C_{k,n}$ is the same as that of the cluster category of the same cluster type as $\Gr(k,n)$. In fact, more is true.

\begin{theorem}
\label{t:stablecluster}
Let $\C_{k,n}$ be a Grassmannian cluster category of finite type, where $k\leq n/2$. Then the stable category $\underline {\C_{k,n}}$ is triangle equivalent to the cluster category (in the sense of~\cite{BMRRT06}) of the same cluster type as $\Gr(k,n)$.
\end{theorem}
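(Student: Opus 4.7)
The plan is to apply Keller and Reiten's recognition theorem for algebraic, Hom-finite, $2$-Calabi--Yau triangulated categories admitting a cluster-tilting object with acyclic quiver \cite{KellerReiten08}: such a category is triangle equivalent to the cluster category $\C_{\field Q}$ where $Q$ is the Gabriel quiver of the endomorphism algebra of the cluster-tilting object. Since for Dynkin $Q$ the cluster category $\C_{\field Q}$ depends only on the underlying Dynkin type (as the derived category of $\field Q$ does), it suffices to produce, in $\underline{\C_{k,n}}$, a cluster-tilting object whose Gabriel quiver is an acyclic Dynkin quiver of the cluster type of $\Gr(k,n)$, namely $A_{n-3}$ for $k=2$ and $D_4$, $E_6$, $E_8$ for $(k,n)=(3,6),(3,7),(3,8)$ respectively.

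The hypotheses of the recognition theorem are verified as follows. The category $\underline{\C_{k,n}}$ is Hom-finite and stably $2$-Calabi--Yau by Proposition~\ref{p:CMproperties}\ref{p:CMproperties-2CY}; it is algebraic because $\C_{k,n}$ is a Frobenius exact category and hence its stable category carries a canonical DG enhancement; and it admits cluster-tilting objects by Theorem~\ref{t:clusterstructure}\ref{t:clusterstructure-Pre22}. Moreover, by the same result the cluster-tilting objects form a cluster structure, so their endomorphism quivers have no loops or $2$-cycles.

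The main obstacle is the explicit construction of a cluster-tilting object with the prescribed Dynkin quiver in each finite-type case. For $k=2$, one takes $T=\bigoplus_{j=3}^{n-1}M_{1,j}$, the direct sum of Pl\"ucker modules corresponding to the diagonals of a fan triangulation of the $n$-gon through vertex $1$. Rigidity of $T$ follows from the non-crossing criterion for $\Ext$-vanishing (\cite[Prop.~5.6]{JKS16}), and Lemma~\ref{lem:ARsequence} together with a direct computation of irreducible maps between the $M_{1,j}$ identifies the Gabriel quiver of $\stabEnd_{\C_{2,n}}(T)^\opp$ with a linearly oriented $A_{n-3}$. For $(k,n)\in\{(3,6),(3,7),(3,8)\}$, Scott \cite{Scott06} exhibits seeds of $\CC[\Grcone(k,n)]$ consisting of Pl\"ucker coordinates whose exchange quivers are acyclic Dynkin of types $D_4$, $E_6$ and $E_8$ respectively; via the categorification \cite{JKS16,BKM16} and the cluster structure of Theorem~\ref{t:clusterstructure}\ref{t:clusterstructure-Pre22}, each such seed lifts to a cluster-tilting object $T=\bigoplus_I M_I\in\underline{\C_{k,n}}$ whose Gabriel quiver is this Dynkin diagram.

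Applying the Keller--Reiten recognition theorem to the cluster-tilting objects produced in the preceding paragraph then yields triangle equivalences $\underline{\C_{k,n}}\simeq\C_{\field Q}$ with $Q$ of the appropriate Dynkin type in each case, completing the proof. The subtlest point to check is that the endomorphism algebras of the candidate $T$ have no relations beyond those inherent to the Dynkin path algebra; this is automatic once the Gabriel quiver is acyclic and the cluster-tilting hypothesis holds, since in that situation the Keller--Reiten theorem forces the endomorphism algebra to be hereditary of type $Q$.
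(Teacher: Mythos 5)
Your broad strategy is the same as the paper's: produce a cluster-tilting object in $\underline{\C_{k,n}}$ whose Gabriel quiver is acyclic Dynkin of the correct type and invoke the Keller--Reiten recognition theorem \cite{KellerReiten08}. The $k=2$ and $(k,n)=(3,6)$ cases go through. However, for $(3,7)$ and $(3,8)$ there is a genuine gap. You assert that Scott \cite{Scott06} exhibits seeds of $\CC[\Grcone(k,n)]$ consisting of Pl\"ucker coordinates whose exchange quivers are acyclic Dynkin of types $E_6$ and $E_8$, and deduce that the corresponding rank-one object $T=\bigoplus_I M_I$ has that Dynkin Gabriel quiver. That is not what Scott does. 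Scott identifies the cluster type by starting from a Postnikov-diagram seed (all of whose cluster variables are Pl\"ucker coordinates) and applying a nontrivial sequence of mutations to reach a Dynkin seed; the Dynkin seed is no longer a Pl\"ucker seed, and the Postnikov seed itself does not have an acyclic Dynkin quiver. Categorically, the cluster-tilting object $\bigoplus_I M_I$ built from a maximal weakly separated collection has a stable Gabriel quiver that, after deleting frozen vertices, is not Dynkin: the paper shows one mutation is needed for $(3,7)$ and a sequence of twelve for $(3,8)$.

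The repair is available from ingredients you already cite but do not deploy: because the cluster-tilting objects form a cluster structure (Theorem~\ref{t:clusterstructure}\ref{t:clusterstructure-Pre22}), the Pl\"ucker cluster-tilting object can be mutated in $\underline{\C_{k,n}}$ along Scott's sequence, yielding a cluster-tilting object $T'$ (no longer a direct sum of rank-one $M_I$'s) whose quiver is acyclic Dynkin of type $E_6$ or $E_8$; Keller--Reiten then applies to $T'$. A second, smaller point of comparison: to pass from ``the quiver of $\stabEnd_{\C_{k,n}}(T')^{\opp}$ is an acyclic Dynkin quiver $Q$'' to ``the algebra is $\field Q$'', you appeal to the recognition theorem itself, whereas the paper pins this down directly via Pasquali's identification of the stable endomorphism algebra with an uncompleted Jacobian algebra $\mathcal{J}(\underline{Q},\underline{W})$, whose potential is necessarily zero once $\underline{Q}$ is acyclic. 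Your shortcut is correct, but it deserves to be justified rather than declared ``automatic''.
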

\begin{proof}
A $(k,n)$-alternating strand diagram (or Postnikov diagram) $D$ (in the sense of~\cite[\S 14]{Postnikov}) gives rise to a cluster for $\Gr(k,n)$ by~\cite[Thm.\ 3]{Scott06} and hence a cluster-tilting object $T$ in $\C_{k,n}$ by~\cite[Rem.\ 9.6]{JKS16}. By~\cite[Thm.\ 11.2]{BKM16}, we obtain a description of $\End_{C_{k,n}}(T)^\opp$ as a completed dimer algebra given by the quiver $Q(D)$ of $D$ together with potential $W$ which is the sum of the minimal anticlockwise cycles in $Q$ minus the sum of the minimal clockwise cycles.

By~\cite[Cor.~4.4]{Pasquali20}, the stable endomorphism algebra $\stabEnd_{C_{k,n}}(T)^\opp$ is isomorphic to the uncompleted Jacobian algebra defined by the quiver $\underline{Q}$, obtained by removing the boundary vertices from $Q$, and potential $\underline{W}$, the image of $W$ under the quotient map $\mathbb{C}Q\rightarrow \mathbb{C}\underline{Q}$.
Thus $\underline{W}$ is just $W$ with the cycles containing a boundary vertex removed.

Assume first that $k=2$, so $n\geq 4$.
A Postnikov diagram for $\Gr(2,n)$ can be constructed using Scott's construction~\cite[\S 3]{Scott06} applied to a fan triangulation of a regular $(n+3)$-sided polygon (i.e.\ a triangulation in which all of the diagonals are incident with a fixed vertex).
In this case, the quiver $\underline{Q}$ is the linearly oriented quiver of type $A_{n-3}$.

Thus $\underline{\C_{2,n}}$ is a triangulated $2$-Calabi--Yau category with a cluster-tilting object whose endomorphism algebra is a quiver of Dynkin type $A_{n-3}$.
By Proposition~\ref{p:CMproperties}, $\underline{\C_{2,n}}$ is also Hom-finite and Krull--Schmidt. Hence it is equivalent to the cluster category of type $A_{n-3}$ by~\cite[Cor.\ 2.1]{KellerReiten08}.

For $\Gr(3,6)$, an alternating strand diagram is given in~\cite[Fig.\ 4]{MR20}. In this case, the quiver $\underline{Q}$ is an orientation of the $D_4$ quiver, giving the result in this case by arguing as above.

For $\Gr(3,7)$, an alternating strand diagram is given 
in~\cite[Fig.\ 13]{MR20}. In this case, the quiver $\underline{Q}$ is shown in Figure~\ref{f:Quiver37}.
Mutating this quiver at the vertex $4$ gives an orientation of the Dynkin quiver of type $E_6$. Since the cluster-tilting objects in $\underline{\C_{3,7}}$ form a cluster structure (Theorem~\ref{t:clusterstructure}\ref{t:clusterstructure-Pre22}), there is a cluster-tilting object in $\underline{\C_{3,7}}$ whose endomorphism algebra has quiver of type $E_6$.
The result in this case now follows from~\cite[Cor.\ 2.1]{KellerReiten08} as above.

For $\Gr(3,8)$, an alternating strand diagram is given 
in~\cite[Fig.\ 11]{Scott06}. In this case, the quiver $\underline{Q}$ is shown in Figure~\ref{f:Quiver38}, and Scott gives a sequence of mutations ($2$, $6$, $3$, $4$, $8$, $1$, $7$, $6$, $5$, $3$, $4$, $5$ in our numbering) resulting in an orientation of the Dynkin quiver of type $E_8$.
Since the cluster-tilting objects in $\underline{\C_{3,8}}$ form a cluster structure (Theorem~\ref{t:clusterstructure}\ref{t:clusterstructure-Pre22}), there is a cluster-tilting object in $\underline{\C_{3,8}}$ whose endomorphism algebra has quiver of type $E_8$.
The result in this case now follows from~\cite[Cor.\ 2.1]{KellerReiten08} as above.
\end{proof}

\begin{figure}
\[\begin{tikzpicture}[yscale=0.5,xscale=0.7]
\foreach \x/\y/\n in {0/2/1, 2/2/2, 4/2/3, 0/0/4, 2/0/5, 4/0/6}
{\draw (\x,\y) node (\n) {${\n}$};}

\foreach \s/\t in {2/1, 2/3, 5/4, 6/5, 1/4, 2/5, 4/2}
{\path [->] (\s) edge (\t);}
\end{tikzpicture}\]
\caption{The quiver $\underline{Q}$ for the alternating strand diagram for $\Gr(3,7)$ in~\cite[Fig.\ 13]{MR20}.}
\label{f:Quiver37}
\end{figure}
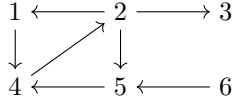

\begin{figure}
\[\begin{tikzpicture}[yscale=0.5,xscale=0.7]
\foreach \x/\y/\n in {0/2/1, 2/2/2, 4/2/3, 6/2/4, 0/0/5, 2/0/6, 4/0/7, 6/0/8}
{\draw (\x,\y) node (\n) {${\n}$};}

\foreach \s/\t in {1/2, 2/6, 6/5, 5/1, 3/2, 6/7, 7/3, 3/4, 4/8, 8/7}
{\path [->] (\s) edge (\t);}
\end{tikzpicture}\]
\caption{The quiver $\underline{Q}$ for the alternating strand diagram for $\Gr(3,8)$ in~\cite[Fig.\ 11]{Scott06}.}
\label{f:Quiver38}
\end{figure}

\begin{proposition}
\label{p:brick}
Let $\C(Q)$ be the cluster category of a Dynkin quiver $Q$, and let $M\in\C(Q)$ be an indecomposable object. Then $\Hom_{\C(Q)}(M,M)\cong\CC$. In particular, $\stabHom_{\C_{k,n}}(M,M)\cong\CC$ whenever $\C_{k,n}$ is a Grassmannian cluster category of finite type and $M\in\C_{k,n}$ is indecomposable and non-projective.
\end{proposition}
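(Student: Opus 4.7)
The second assertion will follow from the first via Theorem~\ref{t:stablecluster}, which identifies $\underline{\C_{k,n}}$ in the finite-type cases with $\C(Q)$ for an appropriate Dynkin quiver $Q$. A non-projective indecomposable $M\in\C_{k,n}$ remains indecomposable and non-zero in the stable category, so $\stabHom_{\C_{k,n}}(M,M)$ is isomorphic to the endomorphism algebra in $\C(Q)$ of the corresponding indecomposable, and the first assertion then applies.

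For the first assertion, the plan is to use Keller's presentation of $\C(Q)$ as the orbit category $D^b(\field Q)/F^{\ZZ}$ with $F=\tau^{-1}\circ[1]$, which gives the canonical isomorphism
\[\Hom_{\C(Q)}(M,M)\cong\bigoplus_{n\in\ZZ}\Hom_{D^b(\field Q)}(N,F^n N)\]
for $N$ a representative of $M$ in a chosen fundamental domain (an indecomposable $\field Q$-module or an indecomposable shifted projective $P[1]$). Because $\field Q$ is hereditary of finite representation type, only the $n$ for which $F^nN$ is concentrated in cohomological degree $0$ or $1$ contribute; a direct analysis shows that at most two summands are non-zero, namely the $n=0$ term $\End_{D^b(\field Q)}(N)$ and one further term of the form $\Ext^1_{\field Q}(N,N')$, where $N'$ is determined by the action of $F$.

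The $n=0$ summand is one-dimensional because Gabriel's theorem asserts that every indecomposable $\field Q$-module (in particular every indecomposable projective) in Dynkin type is a brick. The remaining summand vanishes because $M$ is rigid in $\C(Q)$: in the Dynkin case, $\C(Q)$ is cluster-tilting finite, so every indecomposable appears as a summand of some basic cluster-tilting object and therefore satisfies $\Ext^1_{\C(Q)}(M,M)=0$. Unwinding this identity through the analogous orbit-category decomposition of $\Ext^1_{\C(Q)}(M,M)=\Hom_{\C(Q)}(M,M[1])$ and applying Serre duality in $D^b(\field Q)$ (whose Serre functor is $\tau\circ[1]$) forces both contributing Ext summands of $\Ext^1_{\C(Q)}(M,M)$ to vanish, and this in turn kills the surviving Ext summand of $\Hom_{\C(Q)}(M,M)$. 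The main obstacle I anticipate is bookkeeping: treating the shifted-projective representative $N=P[1]$ uniformly with the module case, and keeping the Serre- and Auslander--Reiten-duality identifications consistent. If these identifications become unwieldy, a backup plan is to invoke the classification of indecomposables of $\C(Q)$ in bijection with the almost positive roots of the associated Dynkin diagram and verify the brick property by direct inspection of the AR quiver.
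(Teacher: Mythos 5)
Your opening reduction to the first assertion via Theorem~\ref{t:stablecluster} matches the paper, and so does the overall framework: both you and the paper use the orbit-category decomposition of $\Hom_{\C(Q)}(M,M)$ (in the paper via \cite[Prop.~1.5]{BMRRT06}) and Gabriel's theorem for the $n=0$ summand. The difficulty is your treatment of the tail term.

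The tail summand of $\Hom_{\C(Q)}(M,M)$ is $\Hom_{D^b(\CC Q)}(N,\tau^{-1}N[1])\cong\Ext^1_{\CC Q}(N,\tau^{-1}N)$, which by Serre duality is $D\Hom_{\CC Q}(\tau^{-1}N,\tau N)$. On the other hand, rigidity of $M$ in $\C(Q)$ unwinds (through the same decomposition) to the vanishing of $\Ext^1_{\CC Q}(N,N)$ and $\Hom_{\CC Q}(N,\tau N)$, which are the two Serre-dual pieces of $\Ext^1_{\C(Q)}(M,M)$. These are different Hom spaces from $\Hom_{\CC Q}(\tau^{-1}N,\tau N)$, and the implication you assert (rigid in $\C(Q)$ $\Rightarrow$ brick in $\C(Q)$) is in fact false for cluster categories of hereditary algebras. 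A concrete counterexample: take $Q$ of affine type with a rank-two regular tube (e.g.\ $\widetilde D_4$) and $N$ a quasi-simple in that tube. Then $N$ is exceptional, so $\Ext^1_{\CC Q}(N,N)=0=\Hom_{\CC Q}(N,\tau N)$ and the corresponding object of $\C(Q)$ is rigid; but $\tau^{-1}N\cong\tau N$, so $\Ext^1_{\CC Q}(N,\tau^{-1}N)\cong D\End_{\CC Q}(\tau N)\neq 0$, and $\End_{\C(Q)}(M)\cong\CC[\epsilon]/(\epsilon^2)$ is two-dimensional. Hence rigidity alone cannot close the argument.

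What is actually needed, and what the paper uses (implicitly, in passing from $\Hom_{\CC Q}(X,X)\oplus\Hom_{\CC Q}(X,\tau^2X)$ to $\Hom_{\CC Q}(X,X)$), is the Dynkin-specific fact that $\Hom_{\CC Q}(X,\tau^m X)=0$ for all indecomposables $X$ and $m\geq 1$: the AR quiver of a representation-finite hereditary algebra is a finite convex subquiver of $\ZZ\Delta$, in which morphisms only go ``forward'' (in the $\tau^{-1}$-direction). Your backup plan of reading this off the AR quiver is exactly the missing ingredient; the rigidity argument as written is not a substitute for it.
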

\begin{proof}
Let $\mathcal{C}(Q)$ be the cluster category of the path algebra $\mathbb{C}Q$, where $Q$ is a Dynkin quiver, and let $X$ be an indecomposable $\mathbb{C}Q$-module.
Then $\Hom_{\mathbb{C}Q}(X,X)\cong \mathbb{C}$ (this follows, for example, from~\cite[VII.1.5(b), IV.2.15(c)]{ASS06}, since every indecomposable module is preprojective). By~\cite[Prop.\ 1.5]{BMRRT06}, denoting by $[1]$ the shift in the bounded derived category $D^\bdd(\mathbb{C}Q)$ of $\mathbb{C}Q$, we have
\begin{align*}
\Hom_{\mathcal{C}(Q)}(X,X) &\cong \Hom_{\mathbb{C}Q}(X,X)\oplus \Hom_{\mathbb{C}Q}(X,\tau^{-1}X[1]) \\
&\cong \Hom_{\mathbb{C}Q}(X,X)\oplus \Hom_{\mathbb{C}Q}(X,\tau^2 X) \\
&\cong \Hom_{\mathbb{C}Q}(X,X)\\
&\cong\CC,
\end{align*}
using $\Ext^1_{\mathbb{C}Q}(X,Y)\cong \Hom_{D^\bdd(\mathbb{C}Q)}(X,Y[1])$ and the Auslander--Reiten formula.

Now if $\C_{k,n}$ is a Grassmannian cluster category of finite type, and $M\in\C_{k,n}$ is indecomposable and non-projective, then $M$ is also indecomposable in the stable category $\underline{\C_{k,n}}$.
By Theorem~\ref{t:stablecluster}, there is a triangle equivalence $\underline{\C_{k,n}}\simeq\C(Q)$ for a Dynkin quiver $Q$, and the result follows.
\end{proof}

\begin{remark}
One can in fact show for any $k$ and $n$ that $\underline {\C_{k,n}}$ is equivalent to Amiot's generalised cluster category \cite{Amiot} associated to the quiver with potential $(\underline{Q},\underline{W})$.
Since this combines a range of results in the literature, involving concepts we will not otherwise need, we give only a brief sketch.
The relative Ginzburg algebra of the ice quiver with potential associated to a $(k,n)$-Postnikov diagram is concentrated in degree $0$ by \cite[Thm.~3.7]{Pre22} and \cite[Lem.~5.11]{Wu-Mutation}.
Thus its Higgs category coincides with the category of Gorenstein projective modules over its boundary algebra by \cite[Thm.~4.17]{KellerWu}, and the stable Higgs category with the relevant generalised cluster category by \cite[Cor.~4.9, Thm.~4.15]{KellerWu}. Note here that the relevant quiver with potential $(\underline{Q},\underline{W})$ is Jacobi-finite by \cite[Prop.~4.4]{Pre22}. 
Finally, this boundary algebra is isomorphic to $C_{k,n}$ by \cite[Thm.~11.2]{BKM16}, and $\GP(C_{k,n})=\C_{k,n}$ by \cite[Cor.~3.7]{JKS16}.
\end{remark}

Recall that $\underline{\C_{k,n}}$ is $2$-Calabi--Yau, so in particular has a Serre functor $S=[2]$. By~\cite[I.2.3, I.2.4]{RvdB02}, we have $S=\tau\circ [1]$ in $\underline{\C_{k,n}}$, where $\tau$ is the Auslander--Reiten translate. Hence
\begin{equation}
\label{e:ARformula}
\stabHom_{C_{k,n}}(X,Y)\cong \Ext^1_{C_{k,n}}(Y,\tau X)
\end{equation}
for all objects $X,Y$ in $\underline{\C_{k,n}}$.
By Proposition~\ref{p:Mperp-2cy-extri}, the category $\underline{\M}$ also has a Serre functor (since it is also $2$-Calabi--Yau), giving us
\begin{equation}
\label{e:ARformula2}
\stabHom_{\M}(X,Y)\cong \Ext^1_{\M}(Y,\tau_{\M} X).
\end{equation}
Note that while $\Ext^1_{\M}(Y,\tau_{\M} X)=\Ext^1_{C_{k,n}}(Y,\tau_{\M} X)$ by Lemma~\ref{l:closed-extri}, the objects $\tau X$ and $\tau_{\M}X$ may be non-isomorphic. Similarly, $\stabHom_{C_{k,n}}(X,Y)$ and $\stabHom_{\M}(X,Y)$ may differ since $\M$ typically has more projective-injective objects than $\underline{\C_{k,n}}$. Nevertheless, we have the following result.

\begin{proposition}
\label{p:Extonedim}
Suppose that $\C_{k,n}$ has finite type.
Let $X$ be a non-projective indecomposable object in $\underline{\C_{k,n}}$.
Then $\Ext^1_{C_{k,n}}(X,\tau X)\cong \mathbb{C}$. Furthermore, if $M$ is a rigid object in $\C_{k,n}$, then $\Ext^1_{\M}(X,\tau_{\M}X)\cong \mathbb{C}$.
\end{proposition}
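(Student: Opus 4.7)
The plan is to reduce both assertions to computations of stable endomorphism rings via the Auslander--Reiten formulas \eqref{e:ARformula} and \eqref{e:ARformula2}, and then invoke Proposition~\ref{p:brick} for the bricklike property of indecomposables in a Grassmannian cluster category of finite type.

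For the first assertion, I would simply set $Y = X$ in \eqref{e:ARformula} to obtain
\[\Ext^1_{C_{k,n}}(X, \tau X) \cong \stabHom_{C_{k,n}}(X, X).\]
Since $X$ is indecomposable and non-projective in $\underline{\C_{k,n}}$, Proposition~\ref{p:brick} immediately yields that the right-hand side is one-dimensional, which is the claim.

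For the second assertion, I would apply \eqref{e:ARformula2} in the reduction $\M$ with $Y = X$, reducing to showing $\stabHom_{\M}(X, X) \cong \mathbb{C}$. By Theorem~\ref{t:reduction}, the projective-injectives of $\M$ are $\add(M, \PR)$, which contains $\PR$. Hence every morphism factoring through a projective-injective of $\C_{k,n}$ also factors through a projective-injective of $\M$, giving a surjection
\[\stabHom_{C_{k,n}}(X, X) \twoheadrightarrow \stabHom_{\M}(X, X).\]
Combined with the first part, this shows $\stabHom_{\M}(X, X)$ is a quotient of $\mathbb{C}$, hence either zero or $\mathbb{C}$.

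The only delicate point---and the main obstacle---is ruling out the zero case. Here one must make explicit the implicit hypothesis that $X$ is non-projective in $\M$ (i.e.\ $X \notin \add(M, \PR)$), which is needed anyway for $\tau_{\M} X$ to behave as expected. Granting this, any factorization of $\id_X$ through some $P \in \add(M, \PR)$ would, by indecomposability of $X$ and the Krull--Schmidt property of $\C_{k,n}$ (Proposition~\ref{p:CMproperties}\ref{p:CMproperties-KS}), force $X$ to be a direct summand of $P$ and hence itself lie in $\add(M, \PR)$---a contradiction. Thus $\id_X$ remains non-zero in $\stabHom_{\M}(X, X)$, so $\stabHom_{\M}(X,X) \cong \mathbb{C}$, completing the proof.
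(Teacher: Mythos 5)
Your proposal is correct and follows essentially the same route as the paper: apply the Auslander--Reiten formulas \eqref{e:ARformula} and \eqref{e:ARformula2} with $Y=X$ to reduce to stable endomorphism rings, use Proposition~\ref{p:brick} for the first assertion, and observe that $\stabHom_{\M}(X,X)$ is a quotient of $\stabHom_{C_{k,n}}(X,X)\cong\mathbb{C}$ for the second. The one place you go beyond the paper is useful: the paper merely asserts that $\stabHom_{\M}(X,X)$ ``cannot be zero,'' whereas you correctly note the implicit hypothesis that $X$ must be non-projective in $\M$ (i.e.\ $X\notin\add(M,\PR)$, which is needed anyway for $\tau_{\M}X$ to exist), and you supply the Krull--Schmidt argument ruling out a factorisation of $\id_X$ through a projective-injective of $\M$. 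That is exactly the right way to close the gap the paper leaves implicit.
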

\begin{proof}
We have $\stabHom_{C_{k,n}}(X,X)\cong \mathbb{C}$ by Proposition~\ref{p:brick}, so the result for
$\C_{k,n}$ follows from~\eqref{e:ARformula}.
Moreover, $\stabHom_{\M}(X,X)$ is a quotient of $\stabHom_{C_{k,n}}(X,X)\cong \mathbb{C}$ by Proposition~\ref{Prop:MperpisFrobenius-extri}, and so, since it cannot be zero, we must have $\stabHom_{\M}(X,X)\cong \mathbb{C}$.
Hence $\Ext^1_{\M}(X,\tau_{\M} X)\cong \mathbb{C}$ by~\eqref{e:ARformula2}.
\end{proof}

\begin{definition} \label{d:frieze} \cite[Defn.\ 4.9]{BFGST}
Suppose that $\C_{k,n}$ is a Grassmannian cluster category of finite type. A \emph{mesh frieze} $F$ for $\C_{k,n}$ is a positive integer $F(M)$ for each indecomposable object in $\C_{k,n}$ such that $F(P)=1$ for every projective object $P$, and for every Auslander--Reiten sequence
\[\begin{tikzcd}
0\arrow{r}& \tau X\arrow{r}& \bigoplus_{i=1}^l E_i \arrow{r}& X\arrow{r}& 0
\end{tikzcd}\]
in $\C_{k,n}$, we have
$$F(X)F(\tau X)=\prod_{i=1}^l F(E_i)+1.$$
\end{definition}

This definition extends verbatim to finite type exact categories with Auslander--Reiten sequences, such as $\M$ for $M\in\C_{k,n}$ a rigid object.
We can now give an alternative proof of~\cite[Prop.\ 5.3]{BFGST}.
For this we use the cluster character as discussed in Section~\ref{s:CC}; note that $\C_{k,n}$ is Krull--Schmidt by \cite[Rem.~3.3]{JKS16}, since we took took the completion when defining the algebra $C_{k,n}$.

\begin{theorem} \cite[Prop. 5.3]{BFGST} \label{Thm:friezered}
Suppose that $\C_{k,n}$ is of finite type.
Let $F$ be a mesh frieze on $\C_{k,n}$.
Let $M$ be a rigid indecomposable non-projective object in $\C_{k,n}$, and suppose that $F(M)=1$.
Then $F\vert_{\M}$ is a mesh frieze for $\M$.
\end{theorem}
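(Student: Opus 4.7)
I verify the two defining conditions of a mesh frieze (Definition~\ref{d:frieze}) for the restriction $F|_{\M}$. The projective-injective condition is immediate: by Lemma~\ref{l:enough-extri}, the indecomposable projective-injective objects of $\M$ are $M$ together with the indecomposable projective-injective objects of $\C_{k,n}$; the latter all have $F$-value $1$ because $F$ is a mesh frieze on $\C_{k,n}$, and $F(M)=1$ by hypothesis. What remains is to verify the mesh relation at every AR-sequence $0 \to \tau_{\M} X \to \bigoplus_i E_i \to X \to 0$ in $\M$ (which exists by Corollary~\ref{c:ARexistence}) for $X$ indecomposable non-projective-injective in $\M$.

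The approach is to recognise the mesh relation as a specialisation of the cluster character multiplication formula. By Theorem~\ref{t:clusterstructure}\ref{t:clusterstructure-Pre22}, the rigid indecomposable $M$ occurs as a summand of some cluster-tilting object $T$ of $\C_{k,n}$; by Proposition~\ref{p:ct-bij-extri}, $T$ is also cluster-tilting in $\M$, and by Theorem~\ref{t:cc_restrict} we have $\FKcc{\M}{T} = \FKcc{\C_{k,n}}{T}|_{\M}$. The two exchange conflations associated to the pair $(X,\tau_{\M} X)$ in $\M$ are the AR-sequence above and a non-split conflation $0 \to X \to I \to \tau_{\M} X \to 0$ with $I$ an injective envelope of $X$ in $\M$, so that $I \in \add(M,\PR)$; uniqueness of this ``reverse'' extension uses that $\dim \Ext^1_{\M}(X,\tau_{\M} X)=1$, by Proposition~\ref{p:Extonedim}. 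The Wang--Wei--Zhang multiplication formula (the exchange property satisfied by the cluster character from Definition~\ref{d:clucha}; see \cite[Thm.~4.4]{WWZ2}) then yields
\[
\FKcc{\M}{T}(X)\,\FKcc{\M}{T}(\tau_{\M} X) \;=\; \prod_i \FKcc{\M}{T}(E_i) \;+\; \FKcc{\M}{T}(I).
\]

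Next, set $a_j := F(T_j)$ for each indecomposable summand $T_j$ of $T$, so that $a_M = 1$ and $a_j=1$ for every $T_j\in\PR$ (since every cluster-tilting object contains all projective-injectives). Both $F$ and the specialisation $\FKcc{\C_{k,n}}{T}(\,\cdot\,)|_{x_j=a_j}$ are $\QQ$-valued functions on indecomposables of $\C_{k,n}$ satisfying the mesh recurrence (the latter by the analogous Wang--Wei--Zhang identity in $\C_{k,n}$, whose second exchange conflation at any AR-pair also has projective-injective middle term) and agreeing on the summands of $T$; since $\C_{k,n}$ has finite type, propagation of the mesh recurrence across its connected AR-quiver forces them to agree everywhere. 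Restricting via Theorem~\ref{t:cc_restrict}, $F|_{\M}$ is the corresponding specialisation of $\FKcc{\M}{T}$. Since $I\in\add(M,\PR)$, Definition~\ref{d:clucha} gives $\FKcc{\M}{T}(I)$ as a Laurent monomial in the variables $x_j$ for $T_j\in\add(M,\PR)$ only, each of which specialises to $1$, so $\FKcc{\M}{T}(I)|_{x_j=a_j}=1$. Specialising the displayed identity yields $F(X)F(\tau_{\M}X) = \prod_i F(E_i) + 1$, as required.

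\emph{Main obstacle.} The technical heart of the argument is the identification of the second exchange conflation in $\M$ at the AR-pair $(X,\tau_{\M} X)$: its middle term must lie in $\add(M,\PR)$, so that after specialisation the cluster character evaluates to $1$. This uses the one-dimensional Ext of Proposition~\ref{p:Extonedim} together with the computation of injective envelopes in the Frobenius extriangulated category $\M$. Given this, the hypothesis $F(M)=1$ is exactly what converts the Wang--Wei--Zhang exchange formula into the desired mesh relation, since $a_M=1$ collapses the monomial $\FKcc{\M}{T}(I)$ to $1$.
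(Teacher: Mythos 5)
Your proof is correct and follows essentially the same route as the paper: specialise a Wang--Wei--Zhang/Fu--Keller cluster character at the values of $F$ on the summands of a cluster-tilting object $T$ with $M\in\add(T)$, observe it agrees with $F$ by propagation from a slice, restrict to $\M$ via Theorem~\ref{t:cc_restrict}, and convert the cluster-character exchange relation into the mesh relation using $\dim\Ext^1_{\M}(X,\tau_{\M}X)=1$ from Proposition~\ref{p:Extonedim}. The one place you are more explicit than the paper is in identifying the second exchange conflation $X\infl I\defl\tau_{\M}X$ and showing that $I$ lies in $\add(M,\PR)$ (so that its cluster character specialises to $1$ because $F(M)=1$ and $F\equiv 1$ on $\PR$); the paper delegates this to the definition of a cluster character and to \cite[\S2.5]{FuKeller}, and implicitly relies on the same fact that the cosyzygy of $X$ with minimal injective envelope agrees with $\tau_{\M}X$ because $\Sigma=\tau$ in a $2$-Calabi--Yau stable category. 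Spelling out that step, as you do, is a genuine improvement in clarity, and your remark that $F(M)=1$ is exactly what collapses the monomial $\FKcc{\M}{T}(I)$ to $1$ is the right way to see where the hypothesis enters.
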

\begin{proof}
Let $T=\bigoplus_{i=1}^NT_i$ be a cluster-tilting object in $\C_{k,n}$ such that the non-projective summands form a slice in the Auslander--Reiten quiver of $\C_{k,n}$.
Let $\Phi_{\C_{k,n}}^T$ be the corresponding cluster character, as defined in~\eqref{eq:WWZCC}; this coincides with the Fu--Keller cluster character by Proposition~\ref{p:FKCC}.
Let $\chi$ be the function on indecomposable objects in $\C_{k,n}$ obtained by specialising $x_i=F(T_i)$ in the expression~\eqref{eq:WWZCC2} for $\Phi_{\C_{k,n}}$.

By Proposition~\ref{p:Extonedim}, $\Ext^1_{C_{k,n}}(X,\tau X)\cong \mathbb{C}$ for every non-projective indecomposable object in $\C_{k,n}$, which means that $\chi$ is a mesh frieze on $\C_{k,n}$, by the definition of a cluster character~\cite[Defn.\ 1.2]{Palu} (see~\cite[\S 2.5]{FuKeller} for the exact case). Since the values of a mesh frieze on $\C_{k,n}$ are determined by the values on a slice, we see that $\chi$ and $F$ coincide.

By Theorem~\ref{t:cc_restrict}, the function
$\Phi_{\C_{k,n}}^T\vert_{\M}$ is a cluster character on $\M$. Note also that $F(M)=1$, so $F$ takes the value $1$ on the projective indecomposable objects of $\M$ (see Proposition~\ref{Prop:MperpisFrobenius-extri}). By Proposition~\ref{p:Extonedim} it follows that $F\vert_{\M}=\chi\vert_{\M}$ is a mesh frieze on $\M$ as required.
\end{proof}

\begin{example} \label{Ex:CCfriezered}  Consider the example $\C_{2,6}$ from Section \ref{ss:Gr(2,6)}. The Auslander--Reiten quiver is shown in Figure~\ref{f:ARquiver26}. Choose the cluster tilting object  $T$ with non-projective direct summands $M_{13} \oplus M_{14} \oplus M_{15}$, that is $T=M_{13} \oplus M_{14} \oplus M_{15} \oplus \big(\bigoplus_{i=1}^6 M_{i,i+1}\big)$ in  $\C_{2,6}$. As above, by specialising $x_i=1$ in $\Phi_{\C_{2,6}}^T$ one obtains the mesh frieze (which is the same as a specialised Pl\"ucker frieze $s\mc{P}_{2,6}$) in Figure~\ref{f:Frieze26}.
\begin{figure}
\begin{tikzcd}[row sep=1.2em, column sep=-0.5em]
\col{\mathbf{1}}&&\col{\mathbf{1}} &&\col{\mathbf{1}} && \mathbf{1} && \mathbf{1}   && \mathbf{1}   && \mathbf{1}  && \mathbf{1}   && \mathbf{1}   && \mathbf{1} &&\col{\mathbf{1}} && \col{\mathbf{1}} && \col{\mathbf{1}}\\
&\col{\mathbf{2}}&& \col{\mathbf{1}}&& \col{4} && \mathbf{1}     && \mathbf{2}     && 2     && \mathbf{2}     && \mathbf{1}     && 4     && \mathbf{1} && \col{\mathbf{2}} && \col{2}  && \col{\mathbf{2}}\\
&& \col{\mathbf{1}}&&\col{3}&&\col{3}&& \mathbf{1}     && 3     && 3     && \mathbf{1}     && 3    && 3     && \mathbf{1} &&\col{3} &&\col{3} &&\col{\mathbf{1}} \\
&&& \col{\mathbf{2}} &&\col{2}&&\col{\mathbf{2}}&& \mathbf{1}     && 4     && \mathbf{1}     && \mathbf{2}     && 2     && \mathbf{2}     && \mathbf{1} && \col{4} && \col{\mathbf{1}} && \col{\mathbf{2}}\\
&&&& \col{\mathbf{1}} &&\col{\mathbf{1}}&&\col{\mathbf{1}} && \mathbf{1}   && \mathbf{1}  && \mathbf{1}   && \mathbf{1}   && \mathbf{1}   && \mathbf{1}  && \mathbf{1}  && \col{\mathbf{1}} &&\col{\mathbf{1}} && \col{\mathbf{1}}
\end{tikzcd}
\caption{A mesh frieze on $\C_{2,6}$.}
\label{f:Frieze26}
\end{figure}
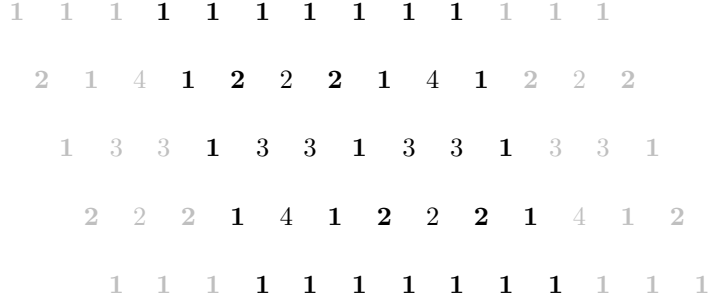

This can be seen combinatorially as follows: there is a correspondence between the modules $M_{ij}$ of $\C_{2,6}$ and the arcs $(i,j)$ of a hexagon with vertices $1, \ldots, 6$, as shown in Figure~\ref{f:26arcs}.
The boundary arcs $(i,i+1)$ thus correspond to the projective modules $M_{i,i+1}$, and the cluster-tilting object $T$ corresponds to the triangulation highlighted in Figure~\ref{f:26arcs} (left).
The entries in the frieze satisfy the Ptolemy relations, see e.g.~\cite[Defn.~3.1]{CuntzHolmJorgensen}.  

On the other hand, the category $M^{\perp_1}_{14}$ is obtained from $\C_{2,6}$ by deleting all $M \in \C_{2,6}$ such that $\Ext^1(M,M_{14}) \neq 0$, making $M_{14}$ projective.
The reduction, shown in bold in Figure~\ref{f:Frieze26}, satisfies the frieze relations, by Theorem~\ref{Thm:friezered}.
The colours in Figure~\ref{f:redFrieze26} indicate how one can see the reduced frieze as two Conway--Coxeter friezes of rank $1$ glued together at the entry corresponding to the module $M_{14}$.
In the combinatorial picture this corresponds to ``freezing'' the arc $(1,4)$, so the hexagon becomes two quadrilaterals glued along this arc $(1,4)$.
Further, one removes all arcs that intersect $(1,4)$ from the hexagon; see Figure~\ref{f:26arcs} (right).
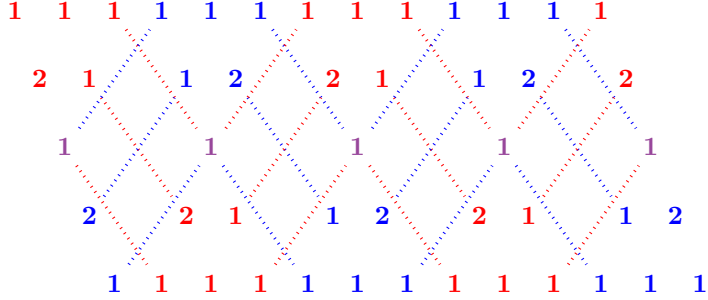
\begin{figure}
\begin{tikzcd}[row sep=1.2em, column sep=-0.5em]
\ye{\mathbf{1}}&&\ye{\mathbf{1}} &&\ye{\mathbf{1}} \arrow[ddrrrr,dotted, very thick, -,color=red]&& \bl{\mathbf{1}} && \bl{\mathbf{1}}   && \bl{\mathbf{1}} \arrow[ddrrrr,dotted, very thick, -,color=blue]  && \ye{\mathbf{1}}  && \ye{\mathbf{1}}   && \ye{\mathbf{1}} \arrow[ddrrrr,dotted, very thick, -,color=red]  && \bl{\mathbf{1}} &&\bl{\bl{\mathbf{1}}} && \bl{\mathbf{1}} \arrow[ddrrrr,dotted, very thick, -,color=blue] && \ye{\mathbf{1}}\\
&\ye{\mathbf{2}}&& \ye{\mathbf{1}} \arrow[ddrrrr,dotted, very thick, -,color=red]&& \phantom{\bl{4}} && \bl{\mathbf{1}}     && \bl{\mathbf{2}} \arrow[ddrrrr,dotted, very thick, -,color=blue]    && \phantom{2}     && \ye{\mathbf{2}}     && \ye{\mathbf{1}}   \arrow[ddrrrr,dotted, very thick, -,color=red]  && \phantom{4}     && \bl{\mathbf{1}} && \bl{\mathbf{2}} \arrow[ddrrrr,dotted, very thick, -,color=blue]&& \phantom{\bl{2}}  && \ye{\mathbf{2}}\\
&& \gr{\mathbf{1}} \arrow[ddrrrr,dotted, very thick, -,color=red]  \arrow[uurrrr,dotted, very thick, -,color=blue] &&\phantom{\col{3}}&&\phantom{\col{3}}&& \gr{\mathbf{1}}  \arrow[ddrrrr,dotted, very thick, -,color=blue]  \arrow[uurrrr,dotted, very thick, -,color=red]   && \phantom{3}     && \phantom{3}     && \gr{\mathbf{1}}  \arrow[ddrrrr,dotted, very thick, -,color=red]  \arrow[uurrrr,dotted, very thick, -,color=blue]   && \phantom{3}    && \phantom{3}     && \gr{\mathbf{1}} \arrow[ddrrrr,dotted, very thick, -,color=blue]  \arrow[uurrrr,dotted, very thick, -,color=red] &&\phantom{\col{3}} &&\phantom{\col{3}} &&\col{\gr{\mathbf{1}}} \\
&&& \bl{\mathbf{2}} \arrow[uurrrr,dotted, very thick, -,color=blue] &&\phantom{\ye{2}}&&\ye{\mathbf{2}}&& \ye{\mathbf{1}}   \arrow[uurrrr,dotted, very thick, -,color=red]    && \phantom{4}     && \bl{\mathbf{1}}     && \bl{\mathbf{2}}    \arrow[uurrrr,dotted, very thick, -,color=blue]   && \phantom{2}     && \ye{\mathbf{2}}     && \ye{\mathbf{1}}  \arrow[uurrrr,dotted, very thick, -,color=red] && \phantom{\ye{4}} && \bl{\mathbf{1}} && \bl{\mathbf{2}}\\
&& && \bl{\mathbf{1}} \arrow[uurrrr,dotted, very thick, -,color=blue]  &&\ye{\ye{\mathbf{1}}}&&\ye{\ye{\mathbf{1}}} && \ye{\mathbf{1}}   \arrow[uurrrr,dotted, very thick, -,color=red]  && \bl{\mathbf{1}}  && \bl{\mathbf{1}}   && \bl{\mathbf{1}}  \arrow[uurrrr,dotted, very thick, -,color=blue]   && \ye{\mathbf{1}}   && \ye{\mathbf{1}}  && \ye{\mathbf{1}}  \arrow[uurrrr,dotted, very thick, -,color=red]  && \bl{\mathbf{1}} &&\bl{\mathbf{1}} && \bl{\mathbf{1}}
\end{tikzcd}
\caption{Reduction of the frieze from Figure~\ref{f:Frieze26} to $M^{\perp_1}_{14}$.}
\label{f:redFrieze26}
\end{figure}
\begin{figure}%
    \begin{tikzpicture}[scale=3,cap=round,>=latex]

    \node  at (90:0.65) {$1$};
        \node   at (30:0.65) {$2$};
    \node  at (330:0.65) {$3$};
        \node at (270:0.65) {$4$};
    \node at (210:0.65) {$5$};
        \node at (150:0.65) {$6$};
         
    \coordinate  (p1) at (90:0.5) {}; 
        \coordinate  (p2) at (30:0.5){};
   \coordinate (p3) at (330:0.5){};
    \coordinate (p4) at (270:0.5){};
   \coordinate (p5) at (210:0.5){}; 
    \coordinate (p6) at (150:0.5){};

 \draw[fill=black!5] (p1)--(p2)--(p3)--(p4)--(p5)--(p6)--(p1); 

\draw[-,thick] (p1)--(p3); 
\draw[-,thick] (p1)--(p4); 
\draw[-,thick] (p1)--(p5); 
  
   \draw[-,black,very thick] (p1)--(p2); 
      \draw[-,very thick] (p2)--(p3); 
      \draw[-,very thick] (p3)--(p4); 
            \draw[-,very thick] (p4)--(p5); 
       \draw[-,very thick] (p5)--(p6); 
         \draw[-,very thick] (p6)--(p1); 

             \draw[dotted] (p2)--(p4); 
           \draw[dotted] (p2)--(p5);
           \draw[dotted] (p2)--(p6); 
             \draw[dotted] (p2)--(p5);             
              \draw[dotted] (p3)--(p5); 
             \draw[dotted] (p3)--(p6);  
              \draw[dotted] (p4)--(p6);

	\draw (p1) node[fill=black,circle,inner sep=0.039cm] {} circle (0.01cm);	        
     \draw (p2) node[fill=black,circle,inner sep=0.039cm] {} circle (0.01cm);
     \draw (p3) node[fill=black,circle,inner sep=0.039cm] {} circle (0.01cm);
     \draw (p4) node[fill=black,circle,inner sep=0.039cm] {} circle (0.01cm);
     \draw (p5) node[fill=black,circle,inner sep=0.039cm] {} circle (0.01cm);
     \draw (p6) node[fill=black,circle,inner sep=0.039cm] {} circle (0.01cm);
    \end{tikzpicture}
    \ \hspace{2cm} \
     \begin{tikzpicture}[scale=3,cap=round,>=latex]

    \node  at (90:0.65) {$1$};
        \node   at (30:0.65) {$2$};
    \node  at (330:0.65) {$3$};
        \node at (270:0.65) {$4$};
    \node at (210:0.65) {$5$};
        \node at (150:0.65) {$6$};
         
    \coordinate  (p1) at (90:0.5) {}; 
        \coordinate  (p2) at (30:0.5){};
   \coordinate (p3) at (330:0.5){};
    \coordinate (p4) at (270:0.5){};
   \coordinate (p5) at (210:0.5){}; 
    \coordinate (p6) at (150:0.5){};

 \draw[fill=black!5] (p1)--(p2)--(p3)--(p4)--(p5)--(p6)--(p1); 

\draw[-,thick, red] (p1)--(p3); 
\draw[-,thick, blue] (p1)--(p5);   

   \draw[-,red,very thick] (p1)--(p2); 
      \draw[-,red,very thick] (p2)--(p3); 
      \draw[-,red,very thick] (p3)--(p4); 
            \draw[-,blue,very thick] (p4)--(p5); 
       \draw[-,blue,very thick] (p5)--(p6); 
         \draw[-,blue,very thick] (p6)--(p1); 
             \draw[-,Purple,very thick] (p1)--(p4);  %

             \draw[dotted,red] (p2)--(p4); 
              \draw[dotted,blue] (p4)--(p6);

	\draw (p1) node[fill=black,circle,inner sep=0.039cm] {} circle (0.01cm);	        
     \draw (p2) node[fill=black,circle,inner sep=0.039cm] {} circle (0.01cm);
     \draw (p3) node[fill=black,circle,inner sep=0.039cm] {} circle (0.01cm);
     \draw (p4) node[fill=black,circle,inner sep=0.039cm] {} circle (0.01cm);
     \draw (p5) node[fill=black,circle,inner sep=0.039cm] {} circle (0.01cm);
     \draw (p6) node[fill=black,circle,inner sep=0.039cm] {} circle (0.01cm);
    \end{tikzpicture}
\caption{Combinatorial picture for $\C_{2,6}$, with triangulation corresponding to $M_{13} \oplus M_{14} \oplus M_{15}$ (left), reduced to $M^{\perp}_{14}$ (right). The colours of the arcs on the right-hand side correspond to the colours in the frieze in Figure~\ref{f:redFrieze26}.}
\label{f:26arcs}
\end{figure}
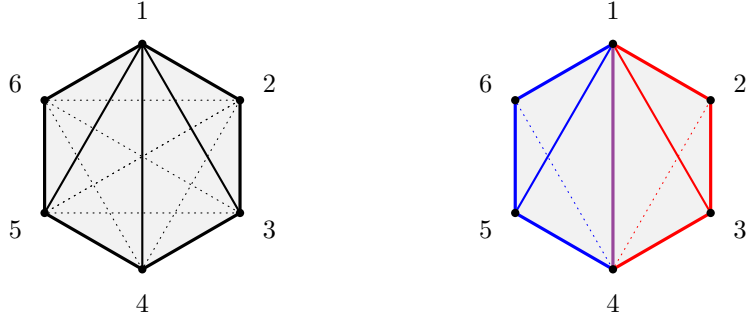

\end{example}

\begin{example} \label{Ex:redfriezecoeff} Consider again $\C_{2,6}$, but now take the cluster tilting object $T'=M_{26}\oplus M_{36} \oplus M_{46}\oplus  \big(\bigoplus_{i=1}^6 M_{i,i+1}\big)$.
The Conway--Coxeter frieze (Figure~\ref{f:Frieze26differentT}) contains the same numbers as the one from Example \ref{Ex:CCfriezered}, but all diagonals are shifted to the right.
\begin{figure}%
\begin{tikzcd}[row sep=1.2em, column sep=-0.5em]
\col{\mathbf{1}}&&\col{\mathbf{1}} &&\col{\mathbf{1}} && \mathbf{1} && \mathbf{1}   && \mathbf{1}   && \mathbf{1}  && \mathbf{1}   && \mathbf{1}   && \mathbf{1} &&\col{\mathbf{1}} && \col{\mathbf{1}} && \col{\mathbf{1}}\\
&\col{\mathbf{2}}&& \col{\mathbf{2}}&& \col{1} && \mathbf{4}     && \mathbf{1}     && 2     && \mathbf{2}     && \mathbf{2}     && 1     && \mathbf{4} && \col{\mathbf{1}} && \col{2}  && \col{\mathbf{2}}\\
&& \col{\mathbf{3}}&&\col{1}&&\col{3}&& \mathbf{3}     && 1     && 3     && \mathbf{3}     && 1    && 3     && \mathbf{3} &&\col{1} &&\col{3} &&\col{\mathbf{3}} \\
&&& \col{\mathbf{1}} &&\col{2}&&\col{\mathbf{2}}&& \mathbf{2}     && 1    && \mathbf{4}     && \mathbf{1}     && 2     && \mathbf{2}     && \mathbf{2} && \col{1} && \col{\mathbf{4}} && \col{\mathbf{1}}\\
&&&& \col{\mathbf{1}} &&\col{\mathbf{1}}&&\col{\mathbf{1}} && \mathbf{1}   && \mathbf{1}  && \mathbf{1}   && \mathbf{1}   && \mathbf{1}   && \mathbf{1}  && \mathbf{1}  && \col{\mathbf{1}} &&\col{\mathbf{1}} && \col{\mathbf{1}}
\end{tikzcd}
\caption{The frieze obtained from $\C_{2,6}$ with $T'=M_{26} \oplus M_{36} \oplus M_{46}$. }
\label{f:Frieze26differentT}
\end{figure}
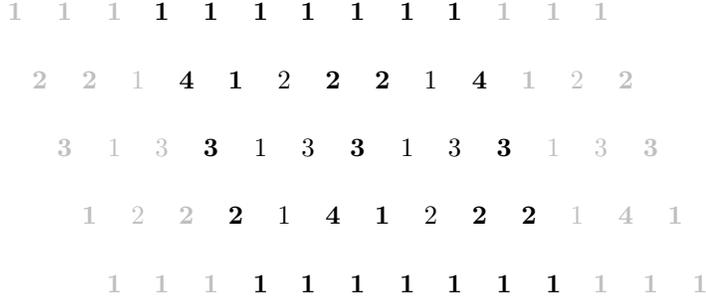

One can again reduce the frieze to the subcategory $M^{\perp_1}_{14}$, but now Theorem~\ref{Thm:friezered} does not apply, since $\Phi^{T'}_{\C_{2,6}}(M_{14}) =3 \neq 1$.
However, the reduction of (Figure~\ref{f:redFrieze26differentT}) is a frieze with coefficients in the sense of Cuntz--Holm--J{\o}rgensen \cite[Defn.~2.1]{CuntzHolmJorgensen}, see also \cite{CuntzHolm}.
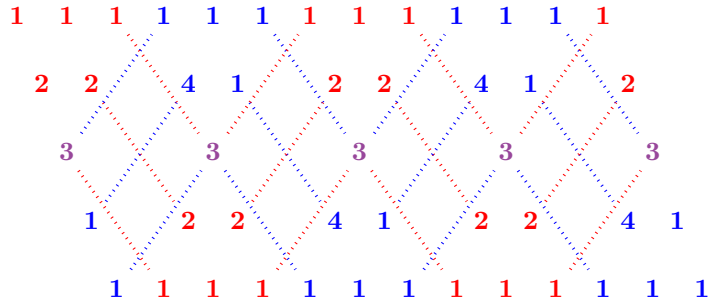
\begin{figure}%
\begin{tikzcd}[row sep=1.2em, column sep=-0.5em]
\ye{\mathbf{1}}&&\ye{\mathbf{1}} &&\ye{\mathbf{1}} \arrow[ddrrrr,dotted, very thick, -,color=red]&& \bl{\mathbf{1}} && \bl{\mathbf{1}}   && \bl{\mathbf{1}} \arrow[ddrrrr,dotted, very thick, -,color=blue]  && \ye{\mathbf{1}}  && \ye{\mathbf{1}}   && \ye{\mathbf{1}} \arrow[ddrrrr,dotted, very thick, -,color=red]  && \bl{\mathbf{1}} &&\bl{\bl{\mathbf{1}}} && \bl{\mathbf{1}} \arrow[ddrrrr,dotted, very thick, -,color=blue] && \ye{\mathbf{1}}\\
&\ye{\mathbf{2}}&& \ye{\mathbf{2}} \arrow[ddrrrr,dotted, very thick, -, color=red]&& \phantom{\bl{1}} && \bl{\mathbf{4}}     && \bl{\mathbf{1}} \arrow[ddrrrr,dotted, very thick, -,color=blue]    && \phantom{2}     && \ye{\mathbf{2}}     && \ye{\mathbf{2}}   \arrow[ddrrrr,dotted, very thick, -,color=red]  && \phantom{1}     && \bl{\mathbf{4}} && \bl{\mathbf{1}} \arrow[ddrrrr,dotted, very thick, -,color=blue]&& \phantom{\bl{2}}  && \ye{\mathbf{2}}\\
&& \gr{\mathbf{3}} \arrow[ddrrrr,dotted, very thick, -,color=red]  \arrow[uurrrr,dotted, very thick, -,color=blue] &&\phantom{\col{1}}&&\phantom{\col{3}}&& \gr{\mathbf{3}}  \arrow[ddrrrr,dotted, very thick, -,color=blue]  \arrow[uurrrr,dotted, very thick, -,color=red]   && \phantom{1}     && \phantom{3}     && \gr{\mathbf{3}}  \arrow[ddrrrr,dotted, very thick, -,color=red]  \arrow[uurrrr,dotted, very thick, -,color=blue]   && \phantom{1}    && \phantom{3}     && \gr{\mathbf{3}} \arrow[ddrrrr,dotted, very thick, -,color=blue]  \arrow[uurrrr,dotted, very thick, -,color=red] &&\phantom{\col{1}} &&\phantom{\col{3}} &&\col{\gr{\mathbf{3}}} \\
&&& \bl{\mathbf{1}} \arrow[uurrrr,dotted, very thick, -,color=blue] &&\phantom{\ye{2}}&&\ye{\mathbf{2}}&& \ye{\mathbf{2}}   \arrow[uurrrr,dotted, very thick, -,color=red]    && \phantom{1}     && \bl{\mathbf{4}}     && \bl{\mathbf{1}}    \arrow[uurrrr,dotted, very thick, -,color=blue]   && \phantom{2}     && \ye{\mathbf{2}}     && \ye{\mathbf{2}}  \arrow[uurrrr,dotted, very thick, -,color=red] && \phantom{\ye{1}} && \bl{\mathbf{4}} && \bl{\mathbf{1}}\\
&& && \bl{\mathbf{1}} \arrow[uurrrr,dotted, very thick, -,color=blue]  &&\ye{\ye{\mathbf{1}}}&&\ye{\ye{\mathbf{1}}} && \ye{\mathbf{1}}   \arrow[uurrrr,dotted, very thick, -,color=red]  && \bl{\mathbf{1}}  && \bl{\mathbf{1}}   && \bl{\mathbf{1}}  \arrow[uurrrr,dotted, very thick, -,color=blue]   && \ye{\mathbf{1}}   && \ye{\mathbf{1}}  && \ye{\mathbf{1}}  \arrow[uurrrr,dotted, very thick, -,color=red]  && \bl{\mathbf{1}} &&\bl{\mathbf{1}} && \bl{\mathbf{1}}
\end{tikzcd}
\caption{Reduction of the frieze to $M^{\perp_1}_{14}$.}
\label{f:redFrieze26differentT}
\end{figure}

In particular, one can see that the modified frieze rule is implied by the multiplication formula for the cluster character \cite[Defn.~3.1]{FuKeller}.
In the combinatorial picture (Figure~\ref{f:redFrieze26differentTcombinatorial}), one again gets two quadrilaterals glued together at the arc $(1,4)$, but now the quadrilateral with vertices $1,2,3,4$ is not triangulated.
 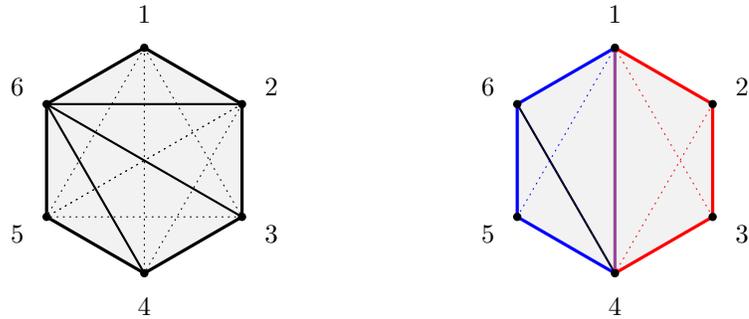
\begin{figure}%
    \begin{tikzpicture}[scale=3,cap=round,>=latex]

    \node  at (90:0.65) {$1$};
        \node   at (30:0.65) {$2$};
    \node  at (330:0.65) {$3$};
        \node at (270:0.65) {$4$};
    \node at (210:0.65) {$5$};
        \node at (150:0.65) {$6$};
         
    \coordinate  (p1) at (90:0.5) {}; 
        \coordinate  (p2) at (30:0.5){};
   \coordinate (p3) at (330:0.5){};
    \coordinate (p4) at (270:0.5){};
   \coordinate (p5) at (210:0.5){}; 
    \coordinate (p6) at (150:0.5){};

 \draw[fill=black!5] (p1)--(p2)--(p3)--(p4)--(p5)--(p6)--(p1); 

\draw[-,thick] (p2)--(p6); 
\draw[-,thick] (p3)--(p6); 
\draw[-,thick] (p4)--(p6); 
  
   \draw[-,very thick] (p1)--(p2); 
      \draw[-,very thick] (p2)--(p3); 
      \draw[-,very thick] (p3)--(p4); 
            \draw[-,very thick] (p4)--(p5); 
       \draw[-,very thick] (p5)--(p6); 
         \draw[-,very thick] (p6)--(p1); 

             \draw[dotted] (p2)--(p4); 
           \draw[dotted] (p2)--(p5);
           \draw[dotted] (p1)--(p4); 
             \draw[dotted] (p2)--(p5);             
              \draw[dotted] (p3)--(p5); 
             \draw[dotted] (p1)--(p5);  
              \draw[dotted] (p1)--(p3);

	\draw (p1) node[fill=black,circle,inner sep=0.039cm] {} circle (0.01cm);	        
     \draw (p2) node[fill=black,circle,inner sep=0.039cm] {} circle (0.01cm);
     \draw (p3) node[fill=black,circle,inner sep=0.039cm] {} circle (0.01cm);
     \draw (p4) node[fill=black,circle,inner sep=0.039cm] {} circle (0.01cm);
     \draw (p5) node[fill=black,circle,inner sep=0.039cm] {} circle (0.01cm);
     \draw (p6) node[fill=black,circle,inner sep=0.039cm] {} circle (0.01cm);
    \end{tikzpicture}
       \ \hspace{2cm} \
     \begin{tikzpicture}[scale=3,cap=round,>=latex]

    \node  at (90:0.65) {$1$};
        \node   at (30:0.65) {$2$};
    \node  at (330:0.65) {$3$};
        \node at (270:0.65) {$4$};
    \node at (210:0.65) {$5$};
        \node at (150:0.65) {$6$};
         
    \coordinate  (p1) at (90:0.5) {}; 
        \coordinate  (p2) at (30:0.5){};
   \coordinate (p3) at (330:0.5){};
    \coordinate (p4) at (270:0.5){};
   \coordinate (p5) at (210:0.5){}; 
    \coordinate (p6) at (150:0.5){};

 \draw[fill=black!5] (p1)--(p2)--(p3)--(p4)--(p5)--(p6)--(p1); 

\draw[-,thick] (p4)--(p6);   

   \draw[-,red,very thick] (p1)--(p2); 
      \draw[-,red,very thick] (p2)--(p3); 
      \draw[-,red,very thick] (p3)--(p4); 
            \draw[-,blue,very thick] (p4)--(p5); 
       \draw[-,blue,very thick] (p5)--(p6); 
         \draw[-,blue,very thick] (p6)--(p1); 
             \draw[-,Purple,very thick] (p1)--(p4);  %

             \draw[dotted,red] (p2)--(p4); 
           \draw[dotted,red] (p3)--(p1);
           \draw[dotted,blue] (p5)--(p1);            
              \draw[dotted,blue] (p4)--(p6);

	\draw (p1) node[fill=black,circle,inner sep=0.039cm] {} circle (0.01cm);	        
     \draw (p2) node[fill=black,circle,inner sep=0.039cm] {} circle (0.01cm);
     \draw (p3) node[fill=black,circle,inner sep=0.039cm] {} circle (0.01cm);
     \draw (p4) node[fill=black,circle,inner sep=0.039cm] {} circle (0.01cm);
     \draw (p5) node[fill=black,circle,inner sep=0.039cm] {} circle (0.01cm);
     \draw (p6) node[fill=black,circle,inner sep=0.039cm] {} circle (0.01cm);
    \end{tikzpicture}
    \caption{Combinatorial picture for the triangulation corresponding to $T'=M_{26}\oplus M_{36}\oplus M_{46}$, reduced at $M_{14}$.}
\label{f:redFrieze26differentTcombinatorial}
     \end{figure}
\end{example}

\begin{remark}
Similar to Examples \ref{Ex:CCfriezered} and \ref{Ex:redfriezecoeff}, for any $\C_{2,n}$ one obtains a Conway--Coxeter frieze of width $n-3$ by first considering the cluster character $\Phi_{\C_{2,n}}^T$, where $T$ is a cluster tilting module, and then specialising the values of $\Phi_{\C_{2,n}}^T(T_i)$ to $1$ for all $i=1, \ldots, 2n-3$.
Then one also gets a cluster character $\Phi_{\C_{2,n}}^T|_{M^{\perp_1}_{ij}}$ on any subcategory $M^{\perp_1}_{ij}$ for $i \neq j+1$ by Theorem \ref{t:cc_restrict}.
If $M_{ij}$ is a direct summand of $T$, then the combinatorial picture is as in Example \ref{Ex:CCfriezered} and the reduced frieze can be thought of as two smaller Conway--Coxeter friezes glued together at the entry corresponding to $M_{ij}$.
If $M_{ij}$ is not a direct summand of $T$, then the reduction of the frieze will be two friezes with coefficients glued together at the entry corresponding to $M_{ij}$ as in Example \ref{Ex:redfriezecoeff}.
This gives a more representation-theoretic interpretation of friezes with coefficients \cite{CuntzHolmJorgensen}.
\end{remark}


\newcommand{\etalchar}[1]{$^{#1}$}

\end{document}